\newtheorem{theorem}{Theorem}[subsection]
\newtheorem{proposition}[theorem]{Proposition}
\newtheorem{lemma}[theorem]{Lemma}
\newtheorem{corollary}[theorem]{Corollary}
\newtheorem{pro}[theorem]{Proposition}
\newtheorem{lem}[theorem]{Lemma}
\newtheorem{corol}[theorem]{Corollary}
\theoremstyle{definition}
\newtheorem{definition}[theorem]{Definition}
\newtheorem{defn}[theorem]{Definition}
\newtheorem{example}[theorem]{Example}
\newtheorem{defin}[theorem]{Definition}
\newtheorem{agreement}[theorem]{Agreement}
\theoremstyle{remark}
\newtheorem{remark}[theorem]{Remark}
\numberwithin{equation}{subsection}
\newcommand{\cat}[1]{\mathcal{#1}}
\DeclareMathOperator{\inj}{inj}
\newcommand{\sF}{\mathscr{F}}
\newcommand{\sW}{\mathscr{W}}
\newcommand{\sQ}{\mathscr{Q}}
\DeclareMathOperator{\colim}{colim}
\newcommand{\gint}{\mbox{$\int$}}
\newcommand{\NN}{\mathbb{N}}
\newcommand{\M}{\mathbb{M}}
\newcommand{\bbR}{\mathbb{R}}
\newcommand{\bbL}{\mathbb{L}}
\newcommand{\Sp}{\Sigma}
\newcommand{\MM}{\mathsf{M}}
\newcommand{\Sm}{\mathsf{S}}
\newcommand{\Br}{\mathrm{Br}}
\newcommand{\h}{{\mathcal P}^{{\mathcal P}_{f,g}}}
\newcommand{\ho}{\mathcal{P}^{\mathcal{P}+2\mathcal{A}}}
\newcommand{\hos}{\mathcal{P}^{ 3\mathcal{A}}}
\newcommand{\cop}{\mathcal{P}^{\mathcal{P}+\mathcal{A}}}
\newcommand{\ts}{\mbox{$\mathbf{p}$}}
\newcommand{\td}{\mbox{$\mathbf{t}$}}
\newcommand{\wa}{\mbox{${\bf w}$}}
\newcommand{\qa}{\mbox{${\bf q}$}}
\newcommand{\la}{\mbox{${\bf l}$}}
\newcommand{\ua}{\mbox{${\bf u}$}}
\newcommand{\LX}{\mbox{${\bf L}^{}$}}
\newcommand{\CC}{\mbox{$\mathcal C$}}
\newcommand{\E}{\mbox{$\mathcal {E}^{}$}}
\newcommand{\ON}{\mbox{${O}^{(n)}$}}
\newcommand{\NE}{\mbox{${\mathcal NO}^{(n)}$}}
\newcommand{\boxx}{\hspace{0.5mm}{\unitlength=0.2mm
\begin{picture}(11,2)(440,0)
\put(440,10){\line(0,-1){10}}
\put(440,0){\line(1,0){10}}
\put(450,0){\line(0,1){10}}
\put(450,10){\line(-1,0){10}}
\end{picture}\hspace{0.5mm} }}
\def\Int{\mathrm{Int}}
\def\Ww{{\mathtt{W}}}
\def\VV{\mathbb{V}}
\def\Cc{\mathcal{C}}
\def\NN{\mathbb{N}}
\def\Alg{\mathrm{Alg}}
\def\Pp{\mathcal{P}}
\def\Tt{\mathcal{T}}
\def\Ss{\mathcal{S}}
\def\colim{\mathrm{colim}}
\def\CC{\mathbb{C}}
\def\DD{\mathbb{D}}
\def\Set{\mathbb{S}\mathrm{et}}
\def\FinSet{\mathrm{FinSet}}
\def\Cat{\mathbb{C}\mathrm{at}}
\def\Int{\mathrm{Int}}
\def\inc{\hookrightarrow}
\def\Aa{\mathcal{A}}
\def\SO{\mathcal{SO}}
\def\On{\mathcal{O}_n}
\def\Qn{\mathcal{Q}^{op}_n}
\def\QQ{\mathrm{Q}}
\def\JJ{\mathrm{J}}
\def\SM{\mathcal{S}^{op}}
\def\Dd{\mathcal{D}}
\def\Bb{\mathcal{B}}
\def\Uu{\mathcal{U}}
\def\AA{\mathbb{A}}
\def\BB{\mathbb{B}}
\def\Ho{\mathbb{H}\mathrm{o}}
\title{Homotopy theory of algebras of substitudes and their localisation}
\author{Michael Batanin}
\address{Institute of Mathematics of Czech Academy of Sciences, Zitna 25, Prague 1, Czech Republic}
\email{bataninmichael@gmail.com}
\author{David White}
\address{Department of Mathematics and Computer Science \\ Denison University
\\ Granville, OH 43023}
\email{david.white@denison.edu}
\begin{document}

\begin{abstract}

We study the category of algebras of substitudes (also known to be equivalent to  the regular patterns of Getzler and operads coloured by a category) equipped with a (semi)model structure lifted from the model structure on the underlying presheaves. 
We are especially interested in the case when the model structure on presheaves is a Cisinski style localisation with respect to a proper Grothendieck fundamental localiser. For example, for  $\Ww= \Ww_{\infty}$  the minimal fundamental localiser, the local objects in such a localisation are locally constant presheaves, and local algebras of substitudes are exactly algebras whose underlying presheaves are locally constant.     

We investigate when this localisation has nice properties. We single out a class of such substitudes which we call left localisable and show that the substitudes for $n$-operads, symmetric, and braided operads are in this class. As an application we develop a homotopy theory of higher braided operads and prove a stabilisation theorem for their $\Ww_k$-localisations. This theorem implies, in particular, a generalisation of the  Baez-Dolan stabilisation hypothesis for higher categories.

\end{abstract}

\subjclass{18D20 , 18D50, 55P48}

\maketitle \setcounter{tocdepth}{1} 
 \setcounter{tocdepth}{2}
     {\small \tableofcontents}

\

\section{Introduction}

In the paper \cite{batanin-berger} of Batanin and Berger a general approach to the homotopy theory of algebras over polynomial monads was proposed. 
The category of polynomial monads is equivalent to the category of symmetric $\Sigma$-free operads in $\Set$ \cite{GK}.    Algebras of such monads   include covariant presheaves, monoids, and many types of generalised operads (nonsymmetric, symmetric, cyclic, modular, $n$-operads, etc.). It makes sense to consider the category of algebras $\Alg_\Tt(\VV)$ of a polynomial monad $\Tt$  with values in an arbitrary symmetric monoidal category  $\VV.$ If $\VV$ is equipped with a model  structure  one can try to invert those morphisms in $\Alg_\Tt(\VV)$ which are weak equivalences on underlying collections. This way we obtain the homotopy category  of algebras $\Ho\Alg_\Tt(\VV).$ 

An important problem is whether we can realise $\Ho\Alg_\Tt(\VV)$ as the homotopy category of a model category. 
To solve the realisation problem, we can try to transfer the product model structures along the forgetful functor
$$\eta^*_0:\Alg_\Tt(\VV)  \to [T_0,\VV] \cong \prod_{Ob(T_0)} \VV$$
where $T_0$ is the discrete category on the objects set (colours) of $\Tt$ and  $[A,\VV]$ stands for the category of covariant presheaves on a small category $A$ with values in $\VV.$ In this transfer process we define a morphism $f\in \Alg_\Tt(\VV)$ to be a weak equivalence or fibration provided $\eta^*_0(f)$ is a weak equivalence or fibration in $[T_0,\VV].$ In the  paper \cite{batanin-berger} some very general model theoretical conditions on $\VV$ and combinatorial conditions on $\Tt$ were found which guarantee that the transfer process, indeed, leads to a model structure on algebras. Furthermore, this model structure on algebras satisfies several additional good properties \cite{batanin-berger} that we will use.

The transfer approach above, though a very useful method, is not, however, the only desirable way to get a model category structure on algebras of a polynomial monads. Any polynomial monad has an underlying category of unary operations $U(\Tt).$ Therefore, we can choose a small subcategory $A$ of this category (which we can assume to have the same set of objects $T_0$)  and consider a  morphism of polynomial monads $\eta:\Aa\to \mathcal{U}(\Tt) \to \Tt$  and, hence, a restriction functor
$$\eta^*:\Alg_\Tt(\VV)  \to [A,\VV].$$    
The category of presheaves $[A,V]$ can have a model structure of its own and we can try to transfer this model structure to the category of algebras. For example, we can first transfer a model structure from $[T_0,\VV]$ to $[A,\VV].$ The resulting model category $[A,V]_{proj}$  is known as the \textit{projective} model structure. If we transfer the projective model structure along $\eta^*$ we  get exactly the same structure as  transferred along $\eta^*_0.$  

The projective model structure is not the only model structure we can seek to transfer. For example, $[A,\VV]$ can have a Reedy model structure. This was considered by Benoit Fresse  \cite[Part 2, \S 8.3]{Fresse} in the context of transferred model structures on operads. 
 
In this paper we explore yet another possibility. One can take $[A,\VV]_{proj}$ and  take a left Bousfield localisation of  it  with respect to an appropriate set of morphisms. One can ask then, if this local model structure is transferrable to $\Alg_{\Tt}(\VV)$. If it is, the result of the transfer is itself a Bousfield localisation of the category of algebras \cite{batanin-white-eilenberg-moore}.

This question in general does not, probably, admit a satisfactory answer. Though, if we restrict the class of localisations we get some reasonable conditions on the monad $\Tt$ with a distinguished subcategory of unary operations $A.$ We consider what we call \textit{Cisinski localisations} of the category of presheaves. For this we fix a class $\Ww$, of functors between small categories, that satisfies certain conditions. Namely $\Ww$ must be a proper  fundamental localiser in the sense of Grothendieck \cite{cis06,Maltsiniotis}. In this case there is a localisation $[A,\VV]^\Ww_{proj}$ of the category $[A,\VV]_{proj}$ whose local objects are so-called \textit{$\Ww$-locally constant presheaves}. For example, if $\Ww =\Ww_\infty$ is the minimal fundamental localiser \cite{cis06} then the $\Ww_\infty$ locally constant presheaves are exactly those presheaves $X:A\to \VV$ for which $X(f)$ is a weak equivalence  for any morphism $f$ in $A$ \cite{cis}. The homotopy  category  $\Ho[A,\VV]^{\Ww_\infty}_{proj}$ is therefore the homotopy category of $\infty$-category of representations of the  groupoid $\Pi_\infty(A)$ in $\VV.$ If a transfer of this localisation to $\Alg_\Tt(\VV)$ exists, the corresponding homotopy category  of algebras can be thought as algebras of $\Tt$ in which  unary operations from $A$ are invertible up to all higher homotopies. 

\begin{remark} A different approach to the question of weak inversion of unary operations in case of algebras of one coloured operad was implemented in \cite{tillmann1}. The authors construct an explicit operadic version of Dwyer-Kan hammock localisation  to settle this question.
\end{remark} 

Here is the plan of the paper. 
In Part \ref{part:prelim} we provide some general homotopy theoretical facts  which we will need in the rest of the paper.  
To handle Bousfield localisation in sufficient level of generality we need to leave the environment of  full model structures and allow ourselves to work  in what is known as semimodel category structures. We recall what these are in Section \ref{sec:semi}. 
Transferred semimodel  structures often exist even when full model structures do not.    We recall the main ingredients of the transfer procedure in this section.   Even more striking is the fact that semimodel left Bousfield localisation exists without left properness. This is the content of our paper  \cite{bous-loc-semi} but in Section \ref{sec:loc} we briefly outline this theory.  Section \ref{sec:beck} is devoted to  the theory of homotopy Beck-Chevalley squares which will be our main tool for the comparison of various localised categories of algebras. To our surprise we did not find in the literature some sufficiently elementary results on how the homotopy Beck-Chevalley property can be used to lift Quillen equivalences. So, we include this discussion in this section.

In Part \ref{part:transfer} we come to our main object of study: $\Sigma$-free substitudes. Substitudes in general were defined by Day and Street as a common generalisation of lax-monoidal structures and operads \cite{DS1,DS2}. They are also known to be equivalent to the regular patterns of Getzler \cite{G,BKW} or category-coloured operads \cite{BD3,Pet}. Basically they are coloured operads together with the structure of a small category $A$ on the set of colours. In particular, the category of algebras of a substitude $P$ admits a natural forgetful functor to the category of covariant presheaves on $A.$

A $\Sigma$-free substitude is a substitude in $\Set$ in which the symmetric group action is free.  This is exactly the data for a polynomial monad together with a distinguished set of unary operations as we discussed at the beginning of the Introduction.  In Section \ref{sec:convolution} we recall the definition of the Day-Street convolution operation for substitudes, which will play a primary technical role in our paper. Section \ref{sec:classifiers} reminds the reader about the theory of internal algebra classifiers developed through many years in \cite{SymBat,EHBat,batanin-berger,batanin de leger}. And in Section \ref{sec:unary} we generalise to $\Sigma$-free substitudes the theory of tame polynomial monads developed in \cite{batanin-berger}. Tameness is the main combinatorial criteria used in \cite{batanin-berger} to handle  the question of transfer. Here, we generalise it to the notion of {\it unary tame substitude} to take into consideration the action of unary operations. 

Our main observation (Proposition \ref{retract}) is that for a unary tame substitude the semifree coproduct (a coproduct of an algebra $X$ with a free algebra on an $A$-presheaf $K$) is a natural retract of a certain convolution of the underlying presheaf $\eta^*(X)$ and $K.$ This allows us to deduce good homotopy theoretical properties of the semifree coproduct (and as a consequence good homotopy properties of free algebra extensions) from good properties of the convolution. In Section \ref{sec:transfer} we use this fact to get a general transfer theorem (Theorem \ref{semitransfer}) from model structures on the category of presheaves to semimodel structures on the category of algebras, provided the convolution operation of the substitude is a left Quillen functor of many variables.
The proof of Theorem \ref{semitransfer} follows the same basic plan as the proof of the transfer theorem for tame polynomial monads from \cite{batanin-berger}, but also can be considered as a lax-monoidal generalisation of the Schwede-Shipley proof of the transfer theorem for monoids \cite{SS}.

In Part \ref{part:localization} we apply the Transfer Theorem to the localised category of presheaves. In Section \ref{sec:localisers} we first construct the model categories of $\Ww$-locally constant presheaves  following the ideas of Cisinski from \cite{cis} where such a localisation was constructed for the minimal fundamental localiser $\Ww_\infty.$ 

We come back to the localisation of categories of algebras in Section \ref{sec:loc-alg}. A ${P}$-algebra $X$ with value in a symmetric monoidal model category $\VV$ is called $\Ww$-locally constant if its underlying presheaf $\eta^*(X):A\to \VV$ is a $\Ww$-locally constant presheaf. In general it is a difficult question whether there exists a left Bousfield localisation of the category of $P$-algebras with good properties, whose local objects are exactly  $\Ww$-locally constant algebras.  We call a substitude \textit{left localisable} (Definition \ref{defleftloc}) if such a Bousfield localisation exists for any symmetric monoidal combinatorial model category $\VV$ with cofibrant unit and any proper fundamental localiser $\Ww.$ The significance of this notion for us (Corollary \ref{BC_for_W}) stems from the fact that the local semimodel category of algebras over left localisable substitudes behave especially well with respect to the Beck-Chevalley morphisms of substitudes defined in Subsection \ref{BCsub}.     

We then show using our Transfer Theorem that a substitude is left localisable  provided  it is unary tame and satisfies one more combinatorial condition.  At the end of  Section \ref{sec:loc-alg} we consider our first examples of left localisable substitudes. In particular, we show that Cisinski's model structure for $\Ww$-locally constant presheaves on a category $C$ can be constructed relatively to any subcategory $A$ of $C$ and, therefore, one can ``weakly" invert only a subset of morphisms of $C$ in full analogy with the classical categorical localisation. We also consider a monoidal version of Cisinski's localisation as an example of the application of our methods.

Part \ref{part:n-operads} is our main application. After reviewing preliminary definitions in Section \ref{sec:n-operads}, we solve the relevant realisability problem in Section \ref{sec:locally constant n operads}, producing a model structure for locally constant $n$-operads. Using the criterion from Section \ref{sec:loc-alg} we demonstrate in Section \ref{sec:examples} that substitudes whose algebras are symmetric, braided, or $n$-operads in the sense of the first author \cite{EHBat}  are left localisable. Using this, we describe the category of higher braided operads for every $n\ge 0$ as a localisation of the category of $n$-operads. For the minimal localiser $\Ww_\infty$ the local objects of this category are exactly the $n$-operads on which quasibijections of $n$-ordinals act as weak equivalences. Since the homotopy type of the category of quasibijections is the same as the homotopy type of unordered configuration spaces of points in $\mathbb{R}^n$ these locally constant $n$-operads play the role of higher braided operads. In Section \ref{sec:stablization}, we use these results to prove a stabilisation theorem for these operads from which the Baez-Dolan stabilisation hypothesis for higher categories is a consequence. This completes the promise we made in \cite{batanin-white-CRM} and \cite{white-oberwolfach}. Our methods are very general, and so we are able to prove that the Baez-Dolan stabilization hypothesis is true in Rezk's model of $\Theta_n$-spaces, in Ara's model of $n$-quasi-categories, in models of Bergner and Rezk for Segal and complete Segal objects in $\Theta_{n-1}$-spaces, and in Simpson's models (which include Tamsamani weak $n$-categories and higher Segal categories).

\part{Some general homotopy theory} \label{part:prelim}

In this part, we recall a few definitions and results from abstract homotopy theory, that we will require. We assume the reader is familiar with model categories at the level discussed in \cite{hirschhorn, hovey-book}.

\section{Semimodel categories} \label{sec:semi}

In this section we set up our definition of semimodel category and recall the techniques of transferring of semimodel structures along a right adjoint.

\subsection{Definition of semimodel category} Our definition of a semimodel category (sometimes written `semi-model category'), is taken from \cite{Barwieck} (and slightly generalising Spitzweck's notion of a $J$-semi model category \cite{spitzweck}). We follow the definition with an explanation of what semimodel categories are good for, and how semimodel categories arise. Recall that, for a set of morphisms $S$, $\inj S$ refers to the class of morphisms having the right lifting property with respect to $S$.

\begin{defn} \label{defn:semi}
A \textit{semimodel structure} on a category $\M$ consists of classes of weak equivalences $\sW$, fibrations $\sF$, and cofibrations $\sQ$ satisfying the following axioms:

\begin{enumerate}
\item[M1] Fibrations are closed under pullback.
\item[M2] The class $\sW$ is closed under the two out of three property.
\item[M3] $\sW,\sF,\sQ$ are all closed under retracts.
\item[M4] 
\begin{enumerate}
\item[i] Cofibrations have the left lifting property with respect to trivial fibrations.
\item[ii] Trivial cofibrations whose domain is cofibrant have the left lifting property with respect to fibrations.
\end{enumerate}
\item[M5] 
\begin{enumerate}
\item[i] Every map in $\M$ can be functorially factored into a cofibration followed by a trivial fibration. 
\item[ii] Every map whose domain is cofibrant can be functorially factored into a trivial cofibration followed by a fibration.
\end{enumerate} 
\end{enumerate}

If, in addition, $\M$ is bicomplete, then we call $\M$ a \textit{semimodel category}.
$\M$ is said to be \textit{cofibrantly generated} if there are sets of morphisms $I$ and $J$ in $\M$ such that $\inj I$ is the class of trivial fibrations, $\inj J$ is the class of fibrations in $\M$, the domains of $I$ are small relative to $I$-cell, and the domains of $J$ are small relative to maps in $J$-cell whose domain is cofibrant. We will say $\M$ is \textit{combinatorial} if it is cofibrantly generated and locally presentable.
\end{defn}

Note that, in a semimodel category $\M$, the axioms of a full model structure are satisfied on the subcategory of cofibrant objects. Furthermore, $\M$ has a cofibrant replacement functor defined on every object. Consequently, every result about model categories has a semimodel categorical analogue, usually obtained by cofibrantly replacing as needed. This includes the Fundamental Theorem of Model Categories (characterising morphisms in the homotopy category), left and right Quillen functors, Ken Brown's lemma, path and cylinder objects, the retract argument, the characterisation of cofibrations (resp. trivial cofibrations with cofibrant domains) as morphisms with the left lifting property with respect to trivial fibrations (resp. fibrations), the cube lemma, simplicial mapping spaces, hammock localisation, projective/injective/Reedy semimodel structures, latching and matching objects, cosimplicial and simplicial resolutions, computations of homotopy limits and colimits, and more. Many examples are detailed in \cite{bous-loc-semi}. We remark that a pair of adjoint functors between semimodel categories is called a Quillen pair if the right adjoint preserves fibrations and trivial fibrations. In practice, a semimodel structure is just as useful as a full model structure. 

Definition \ref{defn:semi} departs slightly from the definitions given in \cite{Barwieck} and \cite{spitzweck}, where the authors include an axiom that the initial object is cofibrant. This axiom is redundant: the statement can easily be deduced from a factorisation, lifting, and retract argument applied to the identity morphism on the initial object.

\subsection{Transfer of semimodel structures} 

Suppose $\M$ is a semimodel category, $\Tt$ is a monad on $\M$, $U$ is the forgetful functor from $\Tt$-algebras to $\M$ and $F$ is its left adjoint. For the \textit{transferred (semi)model structure} on $\Tt$-algebras, we define a morphism $f$ to be a weak equivalence (resp. fibration) if and only if $U(f)$ is a weak equivalence (resp. fibration) in $\M$.  The class of cofibrations is defined by the left lifting property with respect to trivial fibrations. An algebra $X$ is cofibrant provided the unique morphism $F(0)\to X$ is a cofibration, where $0$ is the initial object in $\M.$ A simple adjunction argument shows $F(0)$ is itself cofibrant. 

We now recall a condition is required for producing functorial factorisations in the category of $\Tt$-algebras. Let $I$ (resp. $J$) denote the set of generating (trivial) cofibrations of $\M$. 
Following \cite[Section 12.1.3]{fresse-book}, we say $FI$ (resp. $FJ$) \textit{permits the small object argument} if the domains of morphisms in $FI$ (resp. $FJ$) are small relative to the class of relative $FI$-cell complexes (resp. relative $FJ$-cell complexes with cofibrant domain). 

Recall that, for a set of cofibrations $\cat K$, the class of \textit{relative $\cat K$-cell complexes} is the class of morphisms obtained as transfinite compositions of pushouts of morphisms in $\cat K$. When we speak of a \textit{$\cat FJ$-cell complex with cofibrant domain}, this means we are considering pushouts of $\Tt$-algebras of the form 
\begin{align} \label{free-first}
\xymatrix{
F(K) \ar[r]^{F(f)} \ar[d]_{g} & F(L) \ar[d]^{} \\
X \ar[r]^{p} & B
}
\end{align}
where $X$ is a \textit{cofibrant} $\Tt$-algebra, and $f:K\to L$ is in $J$. When we speak of a \textit{relative $\cat FJ$-cell complex with cofibrant domain}, we mean a transfinite composition of morphisms $p$ as above. In particular, each object in the transfinite composition must be cofibrant (because the morphisms $F(f)$ are cofibrations, as we will see in Theorem \ref{thm:semi-transfer-general}). 

There are several semimodel category existence theorems already in the literature \cite[Theorem 12.1.4]{fresse-book}, \cite[Theorem 2]{spitzweck}, \cite[Theorem 2.2.2]{white-yau1}, each producing a slightly different variant of a semimodel structure. Each of these theorems starts with a base model category $\M$. The following existence theorem starts with a base semimodel category $\M$, but its proof is exactly like the previous existence theorems.

\begin{theorem} \label{thm:semi-transfer-general}
Let $\M$ be a cofibrantly generated semimodel category, with sets of generating (trivial) cofibrations $I$ and $J$, such that $U$ preserves filtered colimits. Assume that the sets $FI$ and $FJ$ permit the small object argument, and that relative $FJ$-cell complexes with cofibrant domain are weak equivalences.

Then the category of algebras $\Alg_\Tt$ admits a transferred cofibrantly generated semimodel structure with generating (trivial) cofibrations $FI$ and $FJ$.
\end{theorem}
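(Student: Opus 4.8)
The plan is to follow the standard Kan-style transfer argument adapted to the semimodel setting, verifying the axioms of Definition \ref{defn:semi} one at a time. First I would establish the formal, adjunction-theoretic axioms that require no homotopy input. Since weak equivalences and fibrations in $\Alg_\Tt$ are created by $U$, axiom M2 (two out of three) and the parts of M3 (retracts) concerning $\sW$ and $\sF$ follow immediately from the corresponding axioms in $\M$, because $U$ preserves retracts and reflects the defining conditions. Axiom M1 (fibrations closed under pullback) holds because $U$ is a right adjoint and hence preserves pullbacks, so a pullback of a fibration in $\Alg_\Tt$ maps under $U$ to a pullback of a fibration in $\M$. Cofibrations are defined by the left lifting property against trivial fibrations, so M3 for $\sQ$ and the lifting axiom M4(i) are automatic from the usual retract-and-lifting formalism. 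The adjunction $(F,U)$ gives the identification $\inj(FI)$ is the class of trivial fibrations and $\inj(FJ)$ is the class of fibrations, via the natural bijection between lifting problems for $F(f)$ against $g$ in $\Alg_\Tt$ and lifting problems for $f$ against $U(g)$ in $\M$; this simultaneously shows $FI$ and $FJ$ are the generating sets and that $\sW$, $\sF$ are detected correctly.

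Next I would produce the functorial factorisations of M5 using the small object argument applied to $FI$ and $FJ$. For M5(i), the hypothesis that $FI$ permits the small object argument lets us factor an arbitrary morphism as a relative $FI$-cell complex followed by a map in $\inj(FI)$, i.e. a cofibration followed by a trivial fibration. For M5(ii), we restrict to morphisms with cofibrant domain: the hypothesis that $FJ$ permits the small object argument (with cofibrant domains) yields a factorisation as a relative $FJ$-cell complex with cofibrant domain followed by a map in $\inj(FJ)$, which is a fibration. The crucial point here, and the one that upgrades this to a semimodel structure rather than merely a factorisation system, is the hypothesis that \emph{relative $FJ$-cell complexes with cofibrant domain are weak equivalences}: this guarantees that the left factor in M5(ii) is a trivial cofibration, so that M5(ii) genuinely factors through a trivial cofibration. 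The restriction to cofibrant domains throughout M5(ii), M4(ii) is precisely the reason the output is a semimodel rather than a full model category, and is why we must track cofibrancy along the transfinite composition as emphasised in the discussion preceding \eqref{free-first}.

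Finally I would verify the lifting axiom M4(ii), that trivial cofibrations with cofibrant domain lift against fibrations. Here I use the retract argument: given such a trivial cofibration $i\colon A \to B$ with $A$ cofibrant, factor it by M5(ii) as $A \xrightarrow{j} C \xrightarrow{p} B$ with $j$ a relative $FJ$-cell complex with cofibrant domain and $p$ a fibration. By construction $j$ is a weak equivalence, and since $i$ is also a weak equivalence, two out of three forces $p$ to be a trivial fibration. Then $i$ has the left lifting property against $p$ (as $i$ is a cofibration and $p$ a trivial fibration, by M4(i)), so the usual retract argument exhibits $i$ as a retract of $j$; hence $i$ inherits the left lifting property against fibrations from $j$, because maps in $\inj(FJ)$ are exactly the fibrations and $j$, being a relative $FJ$-cell complex, lifts against them.

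The step I expect to be the main obstacle is ensuring that the \textbf{weak-equivalence hypothesis on relative $FJ$-cell complexes with cofibrant domain} interacts correctly with the cofibrancy bookkeeping, since it is assumed as a hypothesis rather than derived. Concretely, one must check that in a pushout of the form \eqref{free-first} with $X$ cofibrant the resulting object $B$ is again cofibrant, so that the transfinite composition stays within the cofibrant objects and the hypothesis applies at each stage; this rests on $F(f)$ being a cofibration (asserted in the theorem statement) together with cofibrations being closed under pushout and transfinite composition. I would note that since $\M$ is only a semimodel category, every appeal to a homotopical lemma must be made on cofibrant objects or after cofibrant replacement, but as remarked in the text every such lemma has a semimodel analogue, so no essentially new difficulty arises beyond careful tracking of cofibrancy.
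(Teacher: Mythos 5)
Your proposal is correct and follows essentially the same route as the paper's proof: defining the classes via $U$, identifying $\inj(FI)$ and $\inj(FJ)$ by adjunction, running the small object argument for the two factorisations (with the hypothesis on relative $FJ$-cell complexes with cofibrant domain supplying the trivial-cofibration half of M5ii), and deducing M4(ii) from M4(i) and M5(ii) by the retract argument, with the same cofibrancy bookkeeping. The paper merely compresses these steps by citing the model-category transfer theorem of Fresse together with Barwick's lemma, which you have spelled out explicitly.
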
 

\begin{proof}
The proof proceeds just like the transfer theorem for model categories \cite[Theorem 11.1.13]{fresse-book}. The class of weak equivalences is defined as the class of morphisms $f$ such that $Uf$ is a weak equivalence in $\M$. The class of fibrations is defined as $FJ$-inj, i.e., morphisms with the right lifting property with respect to $FJ$. The class of cofibrations is defined as $FI$-cof, i.e., morphisms with the left lifting property with respect to $FI$-inj. An adjunction argument tells us $FI$-inj is exactly the class of trivial fibrations (hence, axiom (M4i) is automatic), and that $f$ is a fibration if and only if $U(f)$ is a fibration. Since $J$ is contained in the cofibrations of $\M$, a lifting and adjunction argument shows that $FJ$ is contained in the cofibrations of $\Alg_{\Tt}$, using \cite[Lemma 1.7]{Barwieck} (it follows that $FJ$-cell is contained in the cofibrations). Axioms (M1), (M2), and (M3) are inherited from $\M$. Axiom (M5i) follows from the small object argument and \cite[Lemma 2.1.10]{hovey-book}. Axiom (M4ii) follows from (M4i) and (M5ii) just as in \cite[Theorem 11.1.13]{fresse-book} (adding the requirement that the domain be cofibrant). Lastly, Axiom (M5ii) follows just as in \cite[Theorem 11.1.13]{fresse-book}, via the small object argument in $\Alg_\Tt$. The factorisation is only required for morphisms with cofibrant domain, so the left leg is a relative $FJ$-cell complex with cofibrant domain, hence a weak equivalence by our assumption (and a cofibration, as we have already observed). The right leg is in $FJ$-inj, so is a fibration by definition.
\end{proof}

The conditions of Theorem \ref{thm:semi-transfer-general} have been verified for many categories of algebras over coloured operads \cite{white-yau1}. For general monads, it can be hard to verify that $FJ$-cell complexes with cofibrant domain are weak equivalences. Proposition \ref{prop:semi-helper} provides one technique for doing so. The smallness conditions in Theorem \ref{thm:semi-transfer-general} (i.e., permitting the small object argument) are automatically satisfied in locally-presentable settings. However, the result below provides a way to side-step the smallness conditions, and also to learn about when the forgetful functor $U$ preserves cofibrant objects. This result goes back to \cite[Theorem 2]{spitzweck} (it is the reason for the assumption on $\mathbb{T}I$-cell therein) and \cite[Corollary 5.2]{BergerMoerdijk}. It is also included in \cite[Theorem 12.1.4]{fresse-book} and \cite[Proposition 6.2.5]{white-yau1}.

\begin{theorem} \label{prop:semi-helper}
Let $\M$ be a cofibrantly generated semimodel category, with sets of generating (trivial) cofibrations $I$ and $J$, such that $U$ preserves filtered colimits. Assume that, for every (trivial) cofibration $i: K\to L$ between cofibrant objects in $\M$, and every pushout
\begin{align} \label{free cofibration}
\xymatrix{
F(K) \ar[r]^{F(i)} \ar[d]_{g} & F(L) \ar[d]^{} \\
A \ar[r]^{p} & B
}
\end{align}
where $A \in \Alg_\Tt$ is an $FI$-cell complex such that $U(A)$ is cofibrant in $\M$, the morphism $U(p)$ is a (trivial) cofibration in $\M$. Then 
\begin{enumerate}
\item $\Alg_\Tt$ admits a transferred cofibrantly generated semimodel structure with generating (trivial) cofibrations $FI$ and $FJ$;
\item The functor $U$ maps cofibrations with a cofibrant domain to cofibrations;
\item If $\Tt(0)$ is a cofibrant object in $\M$ then $U$ preserves cofibrant objects.
\end{enumerate} 
\end{theorem}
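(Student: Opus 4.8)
The plan is to extract a single transfinite induction from the one-step assumption of the theorem and then read off the three conclusions as special cases. First I would record a harmless reduction: since $\M$ is cofibrantly generated I may take the generating (trivial) cofibrations $I$ (resp. $J$) to be maps between cofibrant objects (a cofibration out of a cofibrant object has cofibrant codomain, so it suffices to arrange cofibrant domains). Then every cell $F(i)$ appearing in an $FI$- or $FJ$-cell complex is the free map on a (trivial) cofibration between cofibrant objects, which is exactly the shape for which the assumption is phrased.

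The engine is the following claim, proved by transfinite induction on the length of a cell presentation: if $A_0$ is a $\Tt$-algebra with $U(A_0)$ cofibrant and $A_0\to A_\lambda$ is a relative $FI$-cell complex (resp. $FJ$-cell complex), then $U(A_0)\to U(A_\lambda)$ is a cofibration (resp. trivial cofibration) in $\M$ and each $U(A_\beta)$ is cofibrant. At a successor stage $A_{\beta+1}=A_\beta\cup_{F(K)}F(L)$ the inductive hypothesis supplies both inputs of the theorem's assumption, namely that $A_\beta$ is an $FI$-cell complex and that $U(A_\beta)$ is cofibrant; the assumption then returns that $U(A_\beta)\to U(A_{\beta+1})$ is a (trivial) cofibration, and since cofibrant objects are closed under cofibrations $U(A_{\beta+1})$ is again cofibrant. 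At a limit stage I would use that $U$ preserves filtered colimits, so that $U$ of the transfinite composite is the transfinite composite of the maps $U(A_\beta\to A_{\beta+1})$; as (trivial) cofibrations are closed under transfinite composition and cofibrant objects are preserved by the resulting colimit, the induction closes.

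With the engine available the three statements follow quickly. For (3), when $\Tt(0)=U(F(0))$ is cofibrant the base case holds with $A_0=F(0)$, so $U$ of every $FI$-cell complex is cofibrant; a cofibrant $\Tt$-algebra is a retract of such a cell complex and $U$ preserves retracts, whence $U$ preserves cofibrant objects. For (2), a cofibration with cofibrant domain is a retract of a relative $FI$-cell complex, so the cofibration case of the engine, together with closure of cofibrations under retracts and preservation of retracts by $U$, shows that its image under $U$ is a cofibration. For (1), the only hypothesis of Theorem \ref{thm:semi-transfer-general} not already in force is that relative $FJ$-cell complexes with cofibrant domain are weak equivalences; this is precisely the trivial-cofibration case of the engine, since a trivial cofibration is in particular a weak equivalence. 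Combined with preservation of filtered colimits and the small object argument (automatic in the locally presentable setting), this yields the transferred semimodel structure with generating (trivial) cofibrations $FI$ and $FJ$.

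The main obstacle is the successor step, where the two clauses of the induction are genuinely interlocked: to invoke the assumption at stage $\beta$ one needs $U(A_\beta)$ to be cofibrant, which is not given for free but must be carried as part of the same induction, and this is maintained only because the assumption outputs an honest (trivial) cofibration rather than merely a map inverted by $U$ up to weak equivalence. The delicate point is therefore the base of the induction, namely the cofibrancy of the underlying object at the bottom of the cell tower; this is exactly where $\Tt(0)$ enters, and why the strongest conclusion (3) is the one assuming $\Tt(0)$ cofibrant, while (1) and (2) are stated relatively so that the induction can be run over the cell complexes that actually arise.
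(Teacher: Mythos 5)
You have reconstructed the right engine: the transfinite induction that simultaneously carries ``$U$ of the tower is a (trivial) cofibration'' and ``the underlying objects stay cofibrant'' is exactly what drives the paper's proof, which runs through \cite[Lemma 12.1.5 and Proposition 11.1.14]{fresse-book}. But two of your steps are genuine gaps. The first is the opening ``harmless reduction.'' In a cofibrantly generated (semi)model category there is no general procedure for replacing $I$ and $J$ by generating sets whose domains are cofibrant; this is a genuinely additional property (it is exactly the ``tractability'' condition that the paper elsewhere imposes as an extra hypothesis, e.g.\ in Theorem \ref{thm:bous-loc-semi} and in the discussion of Simpson's tractable model categories), and nothing in the present theorem grants it. Since your successor step can only invoke the hypothesis for (trivial) cofibrations between cofibrant objects, the induction breaks down the moment a cell $F(i)$ with non-cofibrant domain is attached. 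The paper closes precisely this hole by citing the categorical argument of \cite[Lemma 12.1.5]{fresse-book}: the pushout of $F(i)$ along $g\colon F(K)\to A_\beta$ is canonically isomorphic to the pushout of $F(\bar\imath)$ along the algebra structure map $F(U A_\beta)\to A_\beta$, where $\bar\imath\colon U A_\beta\to U A_\beta\cup_K L$ is the pushout of $i$ in $\M$ along the adjoint of $g$; since $U A_\beta$ is cofibrant by your inductive hypothesis, $\bar\imath$ is a (trivial) cofibration between cofibrant objects, and the hypothesis applies to it. With this rewriting your engine runs for arbitrary $I$ and $J$; without it (or the unjustified reduction) it does not.

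The second gap is smallness. You dispose of the small object argument as ``automatic in the locally presentable setting,'' but $\M$ is only assumed cofibrantly generated --- local presentability is not a hypothesis, and the paper states immediately before this theorem that its purpose is to side-step the smallness conditions of Theorem \ref{thm:semi-transfer-general}, not to assume them. One must derive that $FI$ and $FJ$ permit the small object argument from the pushout hypothesis itself: the engine shows that $U$ takes the relevant cell towers to towers of (trivial) cofibrations in $\M$ (with cofibrant stages in the $FJ$ case), so smallness of the domains of $I$ and $J$ in $\M$ transports across the adjunction $\Alg_\Tt(F(K),-)\cong \M(K,U(-))$. A smaller bookkeeping point: for conclusions (1) and (2) the base of your tower is a cofibrant algebra, i.e.\ only a retract of an $FI$-cell complex, whereas the stated hypothesis requires $A$ itself to be an $FI$-cell complex; you need the standard retract trick (push the tower out along the section into the cell complex, run the engine there, and retract back, using that $U$ preserves retracts and that cofibrations and weak equivalences are retract-closed) before the engine applies. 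Modulo these repairs, your deduction of (1)--(3) from the engine agrees with the paper's.
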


\begin{proof}
The proof proceeds exactly as in \cite{fresse-book}. The categorical argument in \cite[Lemma 12.1.5]{fresse-book} demonstrates that the assumption in (\ref{free cofibration}) holds for every (trivial) cofibration, even if $K$ and $L$ are not cofibrant. That $FI$ (resp. $FJ$) permits the small object argument follows from a cellular extension argument in $\M$, the fact that $I$ (resp. $J$) permits the small object argument in $\M$, and an adjunction argument. For the case of $FJ$, it is easy to check that all the objects $U(B_i)$ and $L_i$ in \cite[Proposition 11.1.14]{fresse-book} are cofibrant in $\M$, under our hypothesis on $A$, and hence that the required factorisations exist in $\M$. The key point for the existence of the semimodel structure on $\Alg_\Tt$ is that the $J$ part of the assumption in (\ref{free cofibration}) implies the hypothesis that relative $FJ$-cell complexes with cofibrant domain are weak equivalences, so we can use Theorem \ref{thm:semi-transfer-general}. As for the verification of (2) and (3), this proceeds exactly as in \cite[Proposition 11.1.14]{fresse-book}, and only relies on the $I$ part of the assumption in (\ref{free cofibration}). 
\end{proof}

The verification of the hypotheses of Proposition \ref{prop:semi-helper} is easier when the domains of $I$ and $J$ are cofibrant, as we can then reduce to analyzing (\ref{free cofibration}) for maps $i$ in $I$ and $J$. 
Procedures for creating cofibrantly generated semimodel structures, where the domains of the generating trivial cofibrations $J$ are cofibrant, can be found in \cite[Theorem B]{bous-loc-semi} and in \cite[Theorem 6.3.1]{white-yau1}.

\section{Left Bousfield localisations and their liftings} \label{sec:loc}

Left Bousfield localisation is a fundamental tool that allows us to study the homotopy theory of a model category $\M$ after a chosen class of morphisms $\cat C$ is homotopically inverted \cite{hirschhorn}. Formally, a left localisation of a model category $\M$ with respect to a chosen class of morphisms $\cat C$ is a new model category $L_{\cat{C}} \M$ and a left Quillen functor $j: \M \to L_{\cat C} \M$ that is the universal left Quillen functor out of $\M$ that takes morphisms in $\cat C$ to weak equivalences. We call the weak equivalences of $L_{\cat C} \M$ the\textit{ $\cat C$-local equivalences}. A particular construction of $L_{\cat C} \M$ is given by left Bousfield localisation, where $j$ is the identity on $\M$, the cofibrations of $L_{\cat C}\M$ are the same as those of $\M$, where the $\cat C$-local equivalences are defined from $\cat C$ using simplicial mapping spaces \cite[Chapter 3]{hirschhorn}, and where the $\cat C$-local fibrations are defined via the right lifting property. 

\begin{defn} \label{defn:admits a localisation and its lifting}
We will say that a model category $\M$ {\em admits a localisation} with respect to a chosen class of morphisms $\cat C$ if the classes of $\cat C$-local equivalences, cofibrations, and $\cat C$-local fibrations, satisfy the axioms of a semimodel category. 
\end{defn}

Local equivalences and local objects are defined with respect to simplicial mapping spaces (which exist even if $\M$ is only a semimodel category). Recall from \cite[Chapter 3]{hirschhorn} that an object $W$ is called \textit{$\cat C$-local} if $map(f,W)$ is a weak equivalence of simplicial sets for all $f\in \cat C$. And, a morphism $g$ in $\M$ is a \textit{$\cat C$-local equivalence} if $map(g,W)$ is a weak equivalence for all $\cat C$-local objects $W$. We will always assume we are localising a set $\cat C$ of cofibrations, but this is no loss of generality, thanks to cofibrant replacement.

As is the case for transferred model structures, $L_{\cat C} \M$ admits a semimodel structure much more frequently than a model structure. In our setting, the model categories of $n$-operads are not known to be left proper, but they still admit a localisation in the semimodel categorical sense \cite{bous-loc-semi}, and this is enough for our purposes. For the reader's convenience, we restate the main theorem from \cite{bous-loc-semi}, which we will need in Section \ref{sec:examples}.

\begin{theorem} \label{thm:bous-loc-semi}
Suppose that $\M$ is a combinatorial semimodel category whose generating cofibrations have cofibrant domain, and $\cat C$ is a set of morphisms of $\M$. Then there is a semimodel structure $L_{\cat C}(\M)$ on $\M$, whose weak equivalences are the $\cat C$-local equivalences, whose cofibrations are the same as $\M$, and whose fibrant objects are the $\cat C$-local objects. Furthermore, $L_{\cat C}(\M)$ satisfies the universal property that, for any left Quillen functor of semimodel categories $F:\M\to \NN$ taking $\cat C$ into the weak equivalences of $\NN$, then $F$ is a left Quillen functor when viewed as $F:L_{\cat C}(\M)\to \NN$.
\end{theorem}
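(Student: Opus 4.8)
The plan is to construct the localised semimodel structure $L_{\cat C}(\M)$ directly, mimicking the classical Bousfield localisation machinery of \cite{hirschhorn} but being careful to only invoke those model-categorical facts that survive in the semimodel setting (which, as remarked after Definition \ref{defn:semi}, is essentially all of them on the cofibrant objects). First I would fix a fibrant replacement and a cosimplicial resolution functor on $\M$ so that the simplicial mapping spaces $map(-,-)$ are defined and homotopically meaningful; these exist because $\M$ has functorial factorisations (M5) and cofibrant replacement. Using these, I would define the $\cat C$-local objects and $\cat C$-local equivalences as recalled just before the statement, and take the cofibrations of $L_{\cat C}(\M)$ to be exactly those of $\M$, with fibrations defined by the right lifting property against trivial cofibrations with cofibrant domain.

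The core of the argument is to produce a set $\cat{J}_{\cat C}$ of generating trivial cofibrations for the localisation and to run the small object argument. Following Smith's approach to combinatorial localisations (as in \cite[Chapter 4]{hirschhorn}), I would use local presentability to find a set $\cat J_{\cat C}$ of cofibrations, containing $J$ and built from the horns on $\cat C$, such that an object with cofibrant replacement is $\cat C$-local precisely when it has the right lifting property against $\cat J_{\cat C}$ (up to the usual fibrancy). The key steps, in order, are: (1) verify the two-out-of-three and retract axioms (M2), (M3) for $\cat C$-local equivalences, which are formal from the definition via $map(-,W)$; (2) verify (M1) and the lifting axioms (M4i) is inherited since cofibrations and trivial fibrations are unchanged; (3) construct $\cat J_{\cat C}$ and prove the characterisation of $\cat J_{\cat C}$-injective-with-cofibrant-domain maps as the local trivial fibrations, which yields the factorisation axioms (M5i), (M5ii) through the small object argument; and (4) establish (M4ii) from (M5) and the lifting characterisation of cofibrations, exactly as in the transfer arguments above. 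The restriction of factorisation (M5ii) and lifting (M4ii) to cofibrant domains is precisely what lets us avoid left properness, which is the whole point of working semimodel-categorically.

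The hard part will be step (3): showing that the class of $\cat C$-local equivalences is \emph{accessible} (so that a generating set $\cat J_{\cat C}$ exists) and that the resulting $\cat J_{\cat C}$-injectives are correctly identified with the local objects. In the classical combinatorial setting this rests on Smith's theorem on accessibly-embedded subcategories of the arrow category, and I would need to confirm that the characterisation of local equivalences as an accessible class goes through using only cofibrant-replacement and the semimodel structure on cofibrant objects, rather than a full model structure. The hypothesis that the generating cofibrations of $\M$ have cofibrant domain is what makes this uniform: it guarantees that relevant mapping-space computations and the domains appearing in $\cat J_{\cat C}$ stay within the cofibrant objects where $\M$ behaves like an honest model category.

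Finally, I would verify the universal property. Given a left Quillen functor $F:\M\to\NN$ of semimodel categories sending $\cat C$ into weak equivalences, I must check that $F$ remains left Quillen on $L_{\cat C}(\M)$, i.e.\ that $F$ still sends cofibrations to cofibrations (immediate, since the cofibrations are unchanged) and sends $\cat C$-local trivial cofibrations with cofibrant domain to trivial cofibrations in $\NN$. This last point follows by expressing any generating local trivial cofibration as a retract of a transfinite composite of pushouts of horns on $\cat C$, applying $F$, and using that $F$ preserves these colimits and cofibrations together with the hypothesis $F(\cat C)\subseteq \sW_{\NN}$ and the derived adjunction characterising local equivalences. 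I expect this verification to be routine once step (3) is in hand, since it is the same retract-and-colimit bookkeeping used throughout the transfer theorems above.
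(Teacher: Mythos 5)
First, a point about the comparison itself: this paper does not prove Theorem \ref{thm:bous-loc-semi} at all. It is restated, explicitly ``for the reader's convenience,'' from the companion paper \cite{bous-loc-semi}, and is used in the present paper as a black box. So your proposal can only be measured against the strategy of that cited work, which your outline does broadly reconstruct: a Smith-type combinatorial localisation argument in which the semimodel axioms' restriction of factorisation and lifting to cofibrant-domain maps is what replaces left properness. The architecture is the right one.

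As a proof, however, there is a genuine gap, and it sits exactly where you yourself place ``the hard part'': everything that makes this statement a theorem is in your step (3), and you defer it rather than carry it out. Concretely, to run a Smith-type recognition principle you need not only accessibility of the class of $\cat C$-local equivalences but also that the class of cofibrations which are $\cat C$-local equivalences is closed under pushout and transfinite composition. In the classical setting this closure is precisely where left properness is invoked, and without left properness it fails in general for pushouts along maps out of non-cofibrant objects. The actual mathematical content of \cite{bous-loc-semi} is that (a) every semimodel category is left proper ``at cofibrant objects''; (b) for cofibrant $P$, $Q$, $X$ and fibrant $\cat C$-local $W$, the square expressing $map(X\cup_P Q,W)$ as a pullback of $map(X,W)\to map(P,W)\leftarrow map(Q,W)$ is a homotopy pullback, so pushouts of local-equivalence cofibrations between cofibrant objects remain local equivalences; and (c) since the generating cofibrations of $\M$ have cofibrant domains and semimodel factorisations are only demanded for maps with cofibrant domain, every stage of the relevant small object argument stays cofibrant, so the weakened closure in (b) suffices. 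None of (a)--(c) appears in your outline; saying you ``would need to confirm'' that the accessibility characterisation goes through names the problem rather than solving it. A smaller but real defect: your universal-property verification treats the generating local trivial cofibrations as retracts of transfinite composites of pushouts of horns on $\cat C$, but your step (3) produced the generating set abstractly via Smith's theorem, where no such cellular description is available. The correct route is the adjunction argument you only gesture at in your final sentence: the right adjoint $G$ of $F$ takes fibrant objects of $\NN$ to $\cat C$-local objects (because $map_{\NN}(Fc,Z)\simeq map_{\M}(c,GZ)$ and $F(\cat C)$ consists of weak equivalences), whence $F$ takes $\cat C$-local equivalences between cofibrant objects to weak equivalences of $\NN$, in particular local trivial cofibrations with cofibrant domain to trivial cofibrations.
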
 

\begin{remark} The following notations for a pair of  adjoint functors will be often use in our paper: $\alpha^*\vdash\alpha_! .$ Here $\alpha^*$ is a right adjoint and $\alpha_!$ is its left adjoint.  \end{remark}

We also have the following useful generalisation of  \cite[Theorem 3.3.20]{hirschhorn}:
\begin{theorem}\label{loclifting} Let $\M$ be a semimodel category and $\cat C$ be a set of morphisms between cofibrant objects in $\M$   such that the left Bousfield localisation $\M\to L_{\cat{C}}(\M)$ exists. Let $\NN$ be a combinatorial semimodel categories whose  generating cofibrations have cofibrant domains, and let
 $$\xymatrix@C=3em{
\NN \ar@<-1pt>[r]_{\alpha^*} & \ar@<-3.5pt>[l]_{\alpha_!} \M  }$$ 
 be a Quillen adjunction. Then \begin{enumerate} \item There is a localisation $\NN\to L_{\alpha_!\cat{C}}(\NN)$ called the lifting of $\M\to L_{\cat{C}}(\M)$ along $\alpha^*$    which makes the following commutative diagram a diagram of Quillen adjunctions:
\begin{equation*}
\xymatrix@R=3em@C=3em{
\NN \hspace{1.5mm} \ar@<2.5pt>[r]^{id\hspace{2mm}} \ar@<-0.5pt>[d]^{\alpha^*}
& L_{\alpha_!\cat{C}}(\NN)  \ar@<2.5pt>[l]^{id\hspace{2mm}} \ar@<-2.5pt>[d]^{\beta^*} \\
\M \hspace{1.5mm} \ar@<2.5pt>[r]^{id}\ar@<4.5pt>[u]^{\alpha_!}  
&L_{\cat C}(\M); \ar@<2.5pt>[l]^{id} \ar@<6.5pt>[u]^{\beta_!}
}
\end{equation*}
\item The functor $\alpha^*$ reflects and preserves local fibrant objects. That is, a fibrant object $X\in \NN$ is local  if and only if $\alpha^*(X)$ is local in $\M;$

\item If $\alpha^*\vdash\alpha_! $ is a pair of Quillen equivalences then $\beta^*\vdash\beta_! $ is also a pair of Quillen equivalences. \end{enumerate}
      \end{theorem}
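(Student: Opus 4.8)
The plan is to prove the three statements in order, using the universal property of Bousfield localisation (Theorem \ref{thm:bous-loc-semi}) as the central tool and treating each part as a formal consequence of the adjunction structure. First I would construct the lifted localisation $L_{\alpha_!\cat C}(\NN)$. Since $\NN$ is combinatorial with generating cofibrations having cofibrant domains, Theorem \ref{thm:bous-loc-semi} applies to the set $\alpha_!\cat C$, producing a semimodel structure whose cofibrations agree with those of $\NN$ and whose weak equivalences are the $\alpha_!\cat C$-local equivalences. (Here I note that $\cat C$ is a set of morphisms between cofibrant objects, so $\alpha_!\cat C$ is again a legitimate set of cofibrations between cofibrant objects, since the left Quillen functor $\alpha_!$ preserves cofibrations and cofibrant objects — allowing us to apply the theorem.) The two top horizontal adjunctions $(id, id)$ are then Quillen by the definition of Bousfield localisation, and similarly on the bottom row for $L_{\cat C}(\M)$. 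To see that the square commutes as a diagram of Quillen adjunctions, I would verify that the original Quillen adjunction $\alpha^*\vdash\alpha_!$ descends to a Quillen adjunction $\beta^*\vdash\beta_!$ between the localised categories. The key observation is the universal property: the composite $\NN \xrightarrow{\alpha_!} \M \to L_{\cat C}(\M)$ sends $\alpha_!\cat C$ into the weak equivalences of $L_{\cat C}(\M)$ (because $\alpha_!$ of each map in $\cat C$ lands in $\alpha_!\cat C$, which is inverted), so by the universal property of $L_{\alpha_!\cat C}(\NN)$ the functor $\alpha_!$ remains left Quillen as a map $L_{\alpha_!\cat C}(\NN)\to L_{\cat C}(\M)$. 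Setting $\beta_! := \alpha_!$ and $\beta^* := \alpha^*$ (the underlying functors are unchanged) then yields the desired Quillen adjunction.

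For part (2), I would argue directly from the characterisation of local objects via mapping spaces. A fibrant $X\in\NN$ is $\alpha_!\cat C$-local iff $map(\alpha_! f, X)$ is a weak equivalence for all $f\in\cat C$. By the (derived) adjunction $\alpha^*\vdash\alpha_!$, there is a weak equivalence $map_{\NN}(\alpha_! f, X)\simeq map_{\M}(f, \alpha^* X)$, at least when $f$ has cofibrant domain and codomain (which holds by hypothesis) and $X$ is fibrant. Thus $X$ is local in $\NN$ iff $map_{\M}(f,\alpha^*X)$ is a weak equivalence for all $f\in\cat C$, which is exactly the statement that $\alpha^*X$ is $\cat C$-local in $\M$. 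This gives both the preservation and reflection of local fibrant objects simultaneously.

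Part (3) is where I expect the real work to lie. Assuming $\alpha^*\vdash\alpha_!$ is a Quillen equivalence, I want to show $\beta^*\vdash\beta_!$ is too; since the underlying functors coincide with $\alpha^*$ and $\alpha_!$, what changes is only the class of weak equivalences. The strategy is to use the fibrant-object criterion for a Quillen equivalence: it suffices to check that for every cofibrant $Y\in L_{\alpha_!\cat C}(\NN)$ and every local-fibrant $X\in L_{\cat C}(\M)$, a map $\alpha_! Y\to X$ is a $\cat C$-local equivalence iff its adjunct $Y\to\alpha^* X$ is an $\alpha_!\cat C$-local equivalence. The main obstacle is controlling derived units and counits against the \emph{new} (local) weak equivalences rather than the original ones. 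I would reduce to local objects: for $X$ fibrant in $L_{\cat C}(\M)$, i.e.\ $\cat C$-local, part (2) guarantees $\alpha^* X$ is $\alpha_!\cat C$-local, so $\alpha^*$ sends local-fibrant objects to local-fibrant objects and the derived counit computed in the localised categories agrees with the one computed before localisation on such objects. The crux is then the derived unit: for cofibrant $Y$, I must compare the local fibrant replacements in the two structures. Here I would use that a map which is an original weak equivalence is automatically a local equivalence, together with the two-out-of-three property, to transport the Quillen-equivalence condition for $\alpha^*\vdash\alpha_!$ through the localisation functors $\M\to L_{\cat C}(\M)$ and $\NN\to L_{\alpha_!\cat C}(\NN)$, which are themselves left Quillen. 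Concretely, I expect the cleanest route is to invoke a general lemma that a Quillen equivalence descends to Bousfield localisations provided the localising sets correspond under the left adjoint (here $\alpha_!\cat C$ is by definition the image of $\cat C$), reducing the whole of (3) to checking that local equivalences and local objects match up under the derived adjunction — which is precisely what parts (1) and (2) have already supplied.
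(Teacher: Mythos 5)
Your treatment of parts (2) and (3), and the existence half of part (1), follows essentially the paper's own route: existence of $L_{\alpha_!\cat C}(\NN)$ from Theorem \ref{thm:bous-loc-semi}; the pure-adjunction comparison of homotopy function complexes $map_{\NN}(\alpha_!f,X)\simeq map_{\M}(f,\alpha^*X)$ for part (2), which is exactly the argument of Lemma 3.3 of \cite{batanin-white-eilenberg-moore} that the paper cites; and the descent of Quillen equivalences for part (3), which is the pattern of \cite[Theorem 3.3.20]{hirschhorn} that the paper replays.

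However, the step in part (1) where you establish that the square is a diagram of Quillen adjunctions fails as written. In the paper's conventions $\alpha_!$ is the left adjoint $\M\to\NN$, so the composite ``$\NN\stackrel{\alpha_!}{\to}\M\to L_{\cat C}(\M)$'' does not typecheck, and neither does your conclusion that $\alpha_!$ is left Quillen as a map $L_{\alpha_!\cat C}(\NN)\to L_{\cat C}(\M)$: the left adjoint $\beta_!$ of the localised adjunction goes from $L_{\cat C}(\M)$ to $L_{\alpha_!\cat C}(\NN)$, not the other way. More importantly, the universal property of $L_{\alpha_!\cat C}(\NN)$ supplied by Theorem \ref{thm:bous-loc-semi} governs left Quillen functors \emph{out of} $\NN$; it cannot produce a Quillen structure on a functor mapping \emph{into} $L_{\alpha_!\cat C}(\NN)$. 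The correct instantiation uses the universal property of the \emph{other} localisation: the composite $\M\stackrel{\alpha_!}{\to}\NN\stackrel{id}{\to}L_{\alpha_!\cat C}(\NN)$ is left Quillen and sends $\cat C$ into weak equivalences (each $\alpha_!f$ lies in the localising set), so it should descend to a left Quillen functor $L_{\cat C}(\M)\to L_{\alpha_!\cat C}(\NN)$. But note that the hypotheses grant only the \emph{existence} of $L_{\cat C}(\M)$ --- $\M$ is not assumed combinatorial --- so you may not quote Theorem \ref{thm:bous-loc-semi} for its universal property either. You must either prove that an existing left Bousfield localisation of a semimodel category automatically has this universal property (the semimodel analogue of \cite[Theorem 3.3.19]{hirschhorn}), or bypass it entirely: trivial fibrations are unchanged by left Bousfield localisation, so $\beta^*=\alpha^*$ preserves them, and for fibrations it suffices, by adjunction, that $\alpha_!$ carry cofibrations with cofibrant domain that are $\cat C$-local equivalences to $\alpha_!\cat C$-local equivalences --- which is precisely the mapping-space argument you already give in part (2). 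The repair, then, is to prove part (2) first and deduce the Quillen adjunction claim of part (1) from it, rather than the other way around.
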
 
      
\begin{proof} The proof of this theorem follows exactly the same patterns as the proof of  \cite[Theorem 3.3.20]{hirschhorn}. The existence of the lifted  localisation is guaranteed by Theorem \ref{thm:bous-loc-semi}.  The proof of the statement about local objects is identical to the proof of Lemma 3.3 from \cite{batanin-white-eilenberg-moore}, where it was proved for the forgetful functor from the category of algebras of a monad, but the argument relies  only on the adjunction. 
\end{proof}

 \section{Beck-Chevalley squares} \label{sec:beck}
 
  \subsection{Beck-Chevalley squares and homotopy Beck-Chevalley squares}

Recall \cite{Malt} that a square of right adjoints and a natural transformation as displayed below 
 
 \begin{equation}\label{BScondition}\xymatrix@R=0.7em@C=0.7em{
\AA 
\ar@<0pt>[ddd]_{\beta^*}
& & &
\BB \ar@<0pt>[lll]_{\psi^*}
 \ar@<0pt>[ddd]^{\alpha^*} \\
  &\ar@2{->}[rd]^b &  & \\ 
  & & & \\
\CC 
& & &
\DD \ar@<0pt>[lll]_{\phi^*} 
}
\end{equation}

\noindent is called Beck-Chevalley if the natural transformation
\begin{equation}\label{BC transformation} \mathbf{bc}:  \phi_!  \beta^* \to  \alpha^*  \psi_! \end{equation}
is an isomorphism. 
The natural transformation $\mathbf{bc}$ is defined as vertical pasting  
 \begin{equation*}
\xymatrix@R=0.5em@C=1em{
\AA 
\ar@<0pt>[ddd]_{id}
& & &
\AA \ar@<0pt>[lll]_{id}
 \ar@<0pt>[ddd]^{\psi_!} \\
  &\ar@2{->}[rd]^\epsilon &  & \\ & & & \\
 \AA 
\ar@<0pt>[ddd]_{\beta^*} 
& & &
\BB \ar@<0pt>[lll]_{\psi^*}
 \ar@<0pt>[ddd]^{\alpha^*}                      
                           \\
 &\ar@2{->}[rd]^b &  & \\ & & & \\                                                    
\CC  \ar@<0pt>[ddd]_{\phi_!} 
& & &
\DD \ar@<0pt>[lll]_{\phi^*} 
  \ar@<0pt>[ddd]^{id}         \\
 &\ar@2{->}[rd]^\eta &  & \\ & & & \\   
\CC
& & &
\DD \ar@<0pt>[lll]_{id} 
}
\end{equation*} 
where $\eta$ is the counit and $\epsilon$ is the unit of corresponding adjunctions.

\begin{agreement} In all our applications the natural transformation $b$ is the identity, so we will assume in the rest of the paper that the square of right adjoints (\ref{BScondition}) strictly commutes. 
\end{agreement} 

The following property of Beck-Chevalley squares allows us to lift equivalences along right adjoints. 

\begin{proposition} \label{lemma:Beck-Chevalley implies E}
Let  (\ref{BScondition}) 
be a  Beck-Chevalley square. Assume that $(\phi_!,\phi^*)$ is a pair of adjoint  equivalences, and that the right adjoint functors $\beta^*$ and $\alpha^*$ reflect isomorphisms. Then $(\psi_!,\psi^*)$ is a pair of adjoint equivalences.
\end{proposition}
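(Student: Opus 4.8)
The plan is to show that both the unit $\epsilon\colon \mathrm{id}_\AA \to \psi^*\psi_!$ and the counit of the adjunction $(\psi_!,\psi^*)$ are natural isomorphisms, which is precisely what it means for $(\psi_!,\psi^*)$ to be a pair of adjoint equivalences. Throughout I will use that, since $(\phi_!,\phi^*)$ is a pair of adjoint equivalences, its counit $\eta\colon \phi_!\phi^* \to \mathrm{id}_\DD$ is an isomorphism and $\phi_!$, being an equivalence, reflects isomorphisms. I will also repeatedly use the strict commutativity of (\ref{BScondition}), which in the present orientation reads $\beta^*\psi^* = \phi^*\alpha^*$, together with the explicit description of $\mathbf{bc}$ coming from its defining pasting:
\begin{equation*}
\mathbf{bc} = (\eta\,\alpha^*\psi_!)\circ(\phi_!\beta^*\,\epsilon)\colon \phi_!\beta^* \longrightarrow \alpha^*\psi_!,
\end{equation*}
where the middle identification $\phi_!\beta^*\psi^*\psi_! = \phi_!\phi^*\alpha^*\psi_!$ uses $\beta^*\psi^* = \phi^*\alpha^*$.

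First I would treat the unit. By the displayed factorisation, $\mathbf{bc}$ is the composite of $\phi_!\beta^*\,\epsilon = \phi_!(\beta^*\epsilon)$ followed by $\eta\,\alpha^*\psi_!$. The latter is an isomorphism because $\eta$ is, and $\mathbf{bc}$ is an isomorphism by the Beck--Chevalley hypothesis; hence $\phi_!(\beta^*\epsilon)$ is an isomorphism. Since $\phi_!$ reflects isomorphisms, $\beta^*\epsilon$ is an isomorphism, and since $\beta^*$ reflects isomorphisms by hypothesis, $\epsilon$ is an isomorphism. Thus the unit of $(\psi_!,\psi^*)$ is invertible.

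Next I would treat the counit, writing $\delta\colon \psi_!\psi^* \to \mathrm{id}_\BB$ for it. Whiskering $\mathbf{bc}$ on the right by $\psi^*$ and using $\beta^*\psi^* = \phi^*\alpha^*$ to rewrite its source yields an isomorphism $\mathbf{bc}\,\psi^*\colon \phi_!\phi^*\alpha^* \to \alpha^*\psi_!\psi^*$. The key claim is then the coherence identity
\begin{equation*}
\alpha^*(\delta)\circ(\mathbf{bc}\,\psi^*) = \eta\,\alpha^*\colon \phi_!\phi^*\alpha^* \longrightarrow \alpha^*.
\end{equation*}
Granting it, and since $\mathbf{bc}\,\psi^*$ and $\eta\,\alpha^*$ are isomorphisms, $\alpha^*(\delta)$ is an isomorphism; as $\alpha^*$ reflects isomorphisms, $\delta$ is an isomorphism, which completes the proof.

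The main work, and the step I expect to be the only genuine obstacle, is verifying the coherence identity $\alpha^*(\delta)\circ(\mathbf{bc}\,\psi^*) = \eta\,\alpha^*$. I would check it objectwise: fixing $b\in\BB$ and substituting $a=\psi^* b$ into the factorisation of $\mathbf{bc}$, I would first use naturality of the counit $\eta$ to slide $\alpha^*(\delta_b)$ past $\eta$, turning the left-hand side into $\eta_{\alpha^* b}\circ\phi_!\bigl(\phi^*\alpha^*(\delta_b)\circ\beta^*(\epsilon_{\psi^* b})\bigr)$; then, rewriting $\phi^*\alpha^* = \beta^*\psi^*$ and applying the triangle identity $\psi^*(\delta_b)\circ\epsilon_{\psi^* b} = \mathrm{id}_{\psi^* b}$ of the adjunction $(\psi_!,\psi^*)$, the bracketed morphism collapses to the identity, leaving exactly $\eta_{\alpha^* b}$. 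Everything else is a formal consequence of the Beck--Chevalley isomorphism and of the fact that $\phi_!$, $\beta^*$ and $\alpha^*$ all reflect isomorphisms.
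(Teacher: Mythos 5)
Your proof is correct, and it follows the same overall strategy as the paper's: show that the unit and counit of $(\psi_!,\psi^*)$ are isomorphisms by transporting the question along $\beta^*$ and $\alpha^*$ and invoking that these functors reflect isomorphisms. The difference is in the execution, and it is to your credit. For the counit, the paper simply chains abstract isomorphisms $\alpha^*\psi_!\psi^*X \cong \phi_!\beta^*\psi^*X \cong \phi_!\phi^*\alpha^*X \cong \alpha^*X$ and then concludes by reflection of isomorphisms; strictly speaking this only shows the two objects are isomorphic, not that $\alpha^*$ applied to the counit itself is invertible. Your coherence identity $\alpha^*(\delta)\circ(\mathbf{bc}\,\psi^*) = \eta\,\alpha^*$, verified via naturality of $\eta$ and the triangle identity for $(\psi_!,\psi^*)$, is exactly the missing compatibility that makes this step airtight, and the paper leaves it implicit. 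Similarly, for the unit your argument is cleaner than the paper's: rather than chaining isomorphisms after applying $\beta^*$, you observe that $\mathbf{bc}$ literally factors as an isomorphism following $\phi_!(\beta^*\epsilon)$, so $\beta^*\epsilon$ is invertible because $\phi_!$ (being an equivalence) reflects isomorphisms, and then $\epsilon$ is invertible because $\beta^*$ does. In short: same route, but you supply the coherence verification that justifies the paper's informal identification of the isomorphism chains with the images of the unit and counit.
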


\begin{proof}
We must only prove that the unit and counit for $(\psi_!,\psi^*)$ are isomorphisms. 

We begin with the counit. The Beck-Chevalley condition tells us that, in $\DD$, $ \alpha^*  \psi_! Y \cong  \phi_! \beta^* Y$ for every $Y$ in $\AA$. It follows that $ \alpha^* \psi_!  \psi^* X \cong \phi_! \beta^*  \psi^* X \cong  \phi_!  \phi^*  \alpha^* X$ for any $X \in \BB$ and this is isomorphic to $ \alpha^* X$ because $(\phi_!,\phi^*)$ is an  equivalence. Since $\alpha^*$ reflects isomorphisms, the counit $ \psi_!  \psi^* X \to X$ is an isomorphism.

We turn to the unit. We first apply $\beta^*$, and then observe that $\beta^* (\psi^* \psi_! X)\cong \phi^* \alpha^* \psi_! X$. The Beck-Chevalley condition tells us that the latter is isomorphic to $\phi^* \phi_!  \beta^* X$. Again using that $(\phi_!,\phi^*)$ is an adjoint equivalence, we see that $\beta^*X\to \phi^*  \phi_!   X \cong \beta^*  \psi^*  \psi_! X$ is an isomorphism. Since $\beta^*$ reflects isomorphisms, the composite isomorphism above shows that the unit $X\to  \psi^*  \psi_! X$ is an isomorphism, as required.
\end{proof}

Suppose now (\ref{BScondition}) is a commutative square 
 of Quillen adjunctions between semimodel categories.

\begin{definition}We say that the square above is a homotopy Beck-Chevalley square if it generates a Beck-Chevalley square of homotopy categories, that is, the morphisms between the derived functors
$$   \bbL \phi_! \bbR \beta^* (-)\to \bbR \alpha^* \bbL \psi_! (-)$$
is an isomorphism.

\end{definition}

The next proposition gives a  practical criteria for recognition of homotopy Beck-Chevalley squares.

\begin{proposition}\label{BC_implies_hBC} Let (\ref{BScondition})   be a Beck-Chevalley square  and let the  functor $\alpha^*$ preserve weak equivalences and $\beta^*$ preserve cofibrant objects. Then  (\ref{BScondition})  is also  a homotopy Beck-Chevalley square.
\end{proposition}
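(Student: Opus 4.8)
The plan is to reduce the homotopy Beck-Chevalley condition to the strict Beck-Chevalley isomorphism $\mathbf{bc}: \phi_! \beta^* \to \alpha^* \psi_!$ that we already have by hypothesis, by showing that under the stated assumptions the derived functors can be computed without any intervening cofibrant or fibrant replacements that would spoil the comparison. Recall that the derived natural transformation $\bbL\phi_! \bbR\beta^* \to \bbR\alpha^* \bbL\psi_!$ is obtained from the strict $\mathbf{bc}$ by inserting cofibrant and fibrant replacements wherever the point-set functors fail to preserve the relevant objects. The strategy is to exhibit, for a suitably chosen representative, a zig-zag of weak equivalences connecting the value of the derived composite on one side to the value on the other, in which every map is forced to be a weak equivalence by exactly the two hypotheses $\alpha^*$ preserves weak equivalences and $\beta^*$ preserves cofibrant objects.

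First I would fix the recipe for the derived functors. Since $\psi_!$ and $\phi_!$ are left Quillen, their left derived functors are computed by first cofibrantly replacing; since $\beta^*$ and $\alpha^*$ are right Quillen, their right derived functors are computed by fibrantly replacing. So I would take an object $Y$ of $\AA$, replace it by a cofibrant $QY$, and trace both composites applied to $QY$. On the left, $\bbL\phi_! \bbR\beta^*(QY)$ is $\phi_! Q(\beta^* R(QY))$, where $R$ is fibrant replacement and the inner $Q$ is a cofibrant replacement needed because $\beta^*$ need not land in cofibrant objects. On the right, $\bbR\alpha^* \bbL\psi_!(QY) = R\alpha^*(\psi_! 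QY)$, using that $QY$ is already cofibrant so no further cofibrant replacement is needed before $\psi_!$.

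The key simplification comes from the hypotheses. Because $\beta^*$ preserves cofibrant objects, $\beta^*(QY)$ is already cofibrant, so on the left the problematic cofibrant replacement $Q$ after $\beta^*$ can be bypassed: the comparison map $\beta^*(QY) \to \beta^* R(QY)$ need not be handled by replacing, and more to the point I can compute $\bbL\phi_!\bbR\beta^*(QY)$ using the weak equivalence $QY \to RQY$ together with the fact that $\beta^*$ preserves cofibrant objects to present a cofibrant model of $\beta^*$ of a fibrant-cofibrant object. Simultaneously, because $\alpha^*$ preserves all weak equivalences, the fibrant replacement $R$ appearing in $\bbR\alpha^*$ is inessential: $\alpha^*$ applied to the weak equivalence $\psi_! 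QY \to R(\psi_! QY)$ is again a weak equivalence, so $R\alpha^*(\psi_! QY) \simeq \alpha^*(\psi_! QY)$, i.e.\ the right-hand derived functor is just $\alpha^*\psi_!$ evaluated on the cofibrant object $QY$, up to a canonical weak equivalence. Thus both derived composites are weakly equivalent, on the cofibrant object $QY$, to the two point-set composites $\phi_!\beta^*(QY)$ and $\alpha^*\psi_!(QY)$, which are \emph{isomorphic} by the strict Beck-Chevalley condition via $\mathbf{bc}$. Checking that the canonical comparison map of the derived transformation agrees with this identification (rather than merely that the two objects are abstractly isomorphic) is the step requiring care, and is where I would verify that the zig-zag is natural and that $\mathbf{bc}$ on $QY$ is the map induced on homotopy categories.

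The step I expect to be the main obstacle is precisely the coherence bookkeeping: confirming that the \emph{canonical} derived natural transformation $\bbL\phi_!\bbR\beta^* \to \bbR\alpha^*\bbL\psi_!$ — built from units, counits, and replacement maps — corresponds under the two weak equivalences above to the strict isomorphism $\mathbf{bc}$, rather than to some other map that merely happens to connect the same objects. Concretely, I would need to chase the defining pasting diagram for $\mathbf{bc}$ (the vertical composite of $\epsilon$, $b=\mathrm{id}$, and $\eta$ displayed earlier) through the replacement functors and check that each inserted weak equivalence is one of the two types controlled by our hypotheses. Once that compatibility is established, the isomorphism on homotopy categories follows immediately from the strict isomorphism of $\mathbf{bc}$ on cofibrant objects, completing the proof.
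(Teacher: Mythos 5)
Your strategy is the same as the paper's: use the two hypotheses to strip away the cofibrant/fibrant replacements so that the derived Beck-Chevalley map is represented by the strict isomorphism $\mathbf{bc}$ followed by a weak equivalence. However, one step in your bookkeeping is unjustified as written. Starting from a merely cofibrant $QY$, you assert at the end that the left derived composite is weakly equivalent to the point-set composite $\phi_!\beta^*(QY)$. Computing $\bbR\beta^*(QY)$ forces a fibrant replacement $QY\to RQY$ (which you may take to be a trivial cofibration, so that $RQY$ is fibrant and cofibrant), and what you actually obtain is $\phi_!\beta^*(RQY)$. To identify this with $\phi_!\beta^*(QY)$ you would need $\beta^*(QY)\to\beta^*(RQY)$ to be a weak equivalence, and that does not follow from your hypotheses: $\beta^*$ is right Quillen (so by Ken Brown's lemma it preserves weak equivalences between \emph{fibrant} objects) and preserves cofibrant objects, but $QY$ need not be fibrant. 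The repair is easy inside your own framework: apply the strict $\mathbf{bc}$ at $RQY$ rather than at $QY$, giving $\phi_!\beta^*(RQY)\cong\alpha^*\psi_!(RQY)$, and then use that $\psi_!(QY)\to\psi_!(RQY)$ is a trivial cofibration and that $\alpha^*$ preserves weak equivalences to compare with $\alpha^*\psi_!(QY)$.

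The paper sidesteps this entirely by evaluating at an object $X$ that is fibrant \emph{and} cofibrant from the start (which suffices to compute all the derived functors): then $\bbR\beta^*(X)\cong\beta^*(X)$ because $X$ is fibrant, $\bbL\phi_!\beta^*(X)\cong\phi_!\beta^*(X)$ because $\beta^*(X)$ is cofibrant, $\bbL\psi_!(X)\cong\psi_!(X)$ because $X$ is cofibrant, and the only surviving replacement is $\psi_!(X)\to R\psi_!(X)$, which $\alpha^*$ sends to a weak equivalence by hypothesis. Also, the coherence issue you single out as the main obstacle is not actually one: at such an $X$ the canonical derived transformation is, by its very construction (point-set transformation composed with the replacement maps), exactly the composite $\phi_!\beta^*(X)\xrightarrow{\mathbf{bc}}\alpha^*\psi_!(X)\to\alpha^*R\psi_!(X)$, so once each piece is known to be an isomorphism or a weak equivalence the conclusion is immediate; this is precisely how the paper finishes.
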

\begin{proof} 
 Let $X$ be a fibrant and cofibrant object from $\AA.$ Because $\beta^*$ preserves cofibrancy, we see that $\bbL \phi_! \bbR \beta^*(X)  \cong \phi_! \beta^*(X)$. Also, $\bbR \alpha^*  \bbL \psi_! (X) \cong \bbR \alpha^*  \psi_! (X)$ because $X$ is cofibrant. The object $\psi_! (X)$ need not be fibrant, so we must compute $\bbR \alpha^*  \psi_! (X)$ via the fibrant replacement $\psi_! (X)\to R \psi_! (X)$, which induces a morphism $\alpha^*  \psi_!  X \to \alpha^*  R \psi_! X$. Because $\alpha^*$ preserves weak equivalences, this morphism is a weak equivalence. Hence  the composite $\phi_! \beta^*(X) \to \alpha^*  \psi_! (X) \to \alpha^* R \psi_! (X)$ is a weak equivalence  because the first morphism  is an isomorphism. This finishes the proof.  

\end{proof}

\subsection{Lifting of Quillen equivalences} Immediately from Proposition \ref{lemma:Beck-Chevalley implies E} we have the following property of homotopy Beck-Chevalley squares.

\begin{proposition} \label{lemma:Beck-Chevalley implies QE}
Let a square (\ref{BScondition})
be a homotopy Beck-Chevalley square. Assume that $(\phi_!,\phi^*)$ is a pair of Quillen equivalences, and that the right adjoints $\beta^*$ and $\alpha^*$ reflect weak equivalences between fibrant objects. Then $(\psi_!,\psi^*)$ is a pair of Quillen equivalences.
\end{proposition}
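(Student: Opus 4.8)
The plan is to deduce this statement by applying Proposition \ref{lemma:Beck-Chevalley implies E} to the square of homotopy categories induced by (\ref{BScondition}). Each of the four Quillen adjunctions induces a derived adjunction between homotopy categories, with right adjoints $\bbR\beta^*, \bbR\alpha^*, \bbR\psi^*, \bbR\phi^*$ and left adjoints $\bbL\beta_!, \bbL\alpha_!, \bbL\psi_!, \bbL\phi_!$. Since right Quillen functors preserve fibrant objects, the strictly commutative square of right adjoints $\phi^*\alpha^* = \beta^*\psi^*$ (strict by the Agreement) descends to a square of derived right adjoints commuting up to canonical natural isomorphism, $\bbR\phi^*\,\bbR\alpha^* \cong \bbR\beta^*\,\bbR\psi^*$. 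The hypothesis that (\ref{BScondition}) is a homotopy Beck-Chevalley square says precisely that the induced transformation $\bbL\phi_!\,\bbR\beta^* \to \bbR\alpha^*\,\bbL\psi_!$ is a natural isomorphism, so the resulting square of homotopy categories is a Beck-Chevalley square in the sense required by Proposition \ref{lemma:Beck-Chevalley implies E} (the proof of that proposition uses only the $\mathbf{bc}$-isomorphism together with the triangle identities, so commutativity up to coherent natural isomorphism suffices).

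Next I would verify the two hypotheses of that proposition in the homotopy categories. First, because $(\phi_!,\phi^*)$ is a Quillen equivalence, the derived pair $(\bbL\phi_!,\bbR\phi^*)$ is a pair of adjoint equivalences of homotopy categories; this is the standard characterisation of Quillen equivalences, valid in the semimodel setting since cofibrant and fibrant replacements exist on all objects. Second, I claim $\bbR\alpha^*$ and $\bbR\beta^*$ reflect isomorphisms. An isomorphism in a homotopy category is represented by a weak equivalence between fibrant--cofibrant objects. If $f\colon X\to Y$ is a morphism of $\Ho\BB$ with fibrant--cofibrant representatives and $\bbR\alpha^*(f)$ is an isomorphism in $\Ho\DD$, then $\alpha^*(f)$, computed on these representatives, is a weak equivalence between fibrant objects of $\DD$. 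By hypothesis $\alpha^*$ reflects weak equivalences between fibrant objects, so $f$ is itself a weak equivalence, hence an isomorphism in $\Ho\BB$. The identical argument applies to $\bbR\beta^*$ using that $\beta^*$ reflects weak equivalences between fibrant objects.

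With these hypotheses in place, Proposition \ref{lemma:Beck-Chevalley implies E} applied to the Beck-Chevalley square of homotopy categories yields that $(\bbL\psi_!,\bbR\psi^*)$ is a pair of adjoint equivalences. By the same characterisation of Quillen equivalences used above, this is exactly the assertion that $(\psi_!,\psi^*)$ is a pair of Quillen equivalences.

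The step I expect to require the most care is the translation in the second paragraph between the point-set condition \emph{reflects weak equivalences between fibrant objects} and the homotopy-categorical condition \emph{reflects isomorphisms} demanded as input to Proposition \ref{lemma:Beck-Chevalley implies E}; in particular one must ensure that the derived functor $\bbR\alpha^*$ is genuinely computed on fibrant representatives, so that the reflection hypothesis applies verbatim, and that no extraneous cofibrancy condition is secretly needed there. Everything else is a formal consequence of the fact that a Quillen adjunction of semimodel categories is a Quillen equivalence if and only if its total derived functors form a pair of adjoint equivalences of homotopy categories.
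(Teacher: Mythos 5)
Your proposal is correct and takes essentially the same route as the paper: the paper deduces the statement ``immediately'' from Proposition \ref{lemma:Beck-Chevalley implies E} applied to the induced square of homotopy categories, its entire proof being the observation that the hypotheses on $\beta^*$ and $\alpha^*$ imply that $\bbR\beta^*$ and $\bbR\alpha^*$ reflect isomorphisms. Your write-up merely makes explicit the details the paper leaves implicit --- that the derived square of right adjoints commutes up to canonical isomorphism (which suffices for the proof of Proposition \ref{lemma:Beck-Chevalley implies E}), that a Quillen equivalence of semimodel categories induces an adjoint equivalence of homotopy categories, and the saturation argument on fibrant--cofibrant representatives.
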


\begin{proof}

From the condition on $\beta^*$ and $\alpha^*$ we see that the right derived functors $\bbR\beta^*$ and $\bbR\alpha^*$ reflect isomorphisms.
\end{proof}

Finally, we combine everything together in the following Theorem.

\begin{theorem}\label{lifting  QE}  Let (\ref{BScondition})  be a Beck-Chevalley square of  Quillen adjunctions in which  $(\phi_!,\phi^*)$ is a pair of  Quillen equivalences, $\alpha^*$ preserves weak equivalences, $\beta^*$ preserves cofibrant objects  and both $\alpha^*$ and $\beta^*$ reflect weak equivalences between fibrant objects. Then the pair $(\psi_!,\psi^*)$ is a pair of Quillen equivalences.    \end{theorem}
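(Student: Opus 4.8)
The plan is to assemble the theorem directly from the two preceding propositions, since between them they already package all of the homotopical content; the only thing to check is that the hypotheses line up exactly. First I would use the Beck-Chevalley square (\ref{BScondition}) together with the assumptions that $\alpha^*$ preserves weak equivalences and $\beta^*$ preserves cofibrant objects to invoke Proposition \ref{BC_implies_hBC}. This upgrades the strictly commuting Beck-Chevalley square of Quillen adjunctions to a \emph{homotopy} Beck-Chevalley square, i.e. it guarantees that the comparison morphism $\bbL\phi_!\,\bbR\beta^*(-)\to\bbR\alpha^*\,\bbL\psi_!(-)$ between derived functors is an isomorphism. Concretely, this is the step where one evaluates the derived functors on a fibrant-cofibrant $X$ and checks that the fibrant replacement $\psi_!(X)\to R\psi_!(X)$ is not lost under $\alpha^*$, which is exactly what preservation of weak equivalences by $\alpha^*$ provides, while preservation of cofibrancy by $\beta^*$ lets one drop the derived decoration on $\phi_!\beta^*$.

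Next I would feed the homotopy Beck-Chevalley square produced above into Proposition \ref{lemma:Beck-Chevalley implies QE}. The remaining hypotheses of that proposition are precisely the remaining hypotheses of the theorem: $(\phi_!,\phi^*)$ is a pair of Quillen equivalences, and both $\alpha^*$ and $\beta^*$ reflect weak equivalences between fibrant objects. The latter condition is what ensures that the right-derived functors $\bbR\alpha^*$ and $\bbR\beta^*$ reflect isomorphisms on the homotopy categories, so that Proposition \ref{lemma:Beck-Chevalley implies E}, applied to the induced Beck-Chevalley square of homotopy categories, forces the unit and counit of $(\psi_!,\psi^*)$ to be derived isomorphisms. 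Concluding that $(\psi_!,\psi^*)$ is a Quillen equivalence is then immediate.

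There is essentially no genuine obstacle here: the theorem is a clean concatenation, and the only care required is bookkeeping of which functor plays which role in each proposition. The one point worth stating explicitly in the write-up, so the reader can see the hypotheses are not under- or over-used, is that the two reflection conditions (on $\alpha^*$ and $\beta^*$) are needed only in the second step for Proposition \ref{lemma:Beck-Chevalley implies QE}, whereas the preservation conditions (weak equivalences by $\alpha^*$, cofibrant objects by $\beta^*$) are consumed entirely in the first step by Proposition \ref{BC_implies_hBC}. Thus the proof reduces to a single sentence invoking the two propositions in sequence.
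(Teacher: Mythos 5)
Your proof is correct and is precisely the argument the paper intends: the theorem is stated as a combination of Proposition \ref{BC_implies_hBC} (consuming the preservation hypotheses to get a homotopy Beck-Chevalley square) followed by Proposition \ref{lemma:Beck-Chevalley implies QE} (consuming the Quillen-equivalence and reflection hypotheses). Your bookkeeping of which hypothesis feeds which proposition matches the paper's implicit proof exactly.
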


\begin{corollary}  \label{lifting QE for transfer} Let (\ref{BScondition})  be a Beck-Chevalley square of  Quillen adjunctions in which   semimodel structures on $\AA$ and $\BB$ are obtained as transfers along  $\alpha^*$ and $\beta^*$ correspondingly and $\beta^*$ preserves cofibrant objects. Then  if  $(\phi_!,\phi^*)$ is a pair of  Quillen equivalences,   the pair $(\psi_!,\psi^*)$ is also a pair of Quillen equivalences. 
\end{corollary}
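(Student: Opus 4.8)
The plan is to deduce the corollary directly from Theorem \ref{lifting  QE}, by observing that the transfer hypotheses automatically supply exactly those conditions on $\alpha^*$ and $\beta^*$ that are not restated in the corollary. Of the hypotheses of Theorem \ref{lifting  QE}, only two are listed verbatim here: that $(\phi_!,\phi^*)$ is a pair of Quillen equivalences, and that $\beta^*$ preserves cofibrant objects. The remaining three — that $\alpha^*$ preserves weak equivalences, and that both $\alpha^*$ and $\beta^*$ reflect weak equivalences between fibrant objects — I would extract from the fact that the semimodel structures on $\AA$ and $\BB$ are transferred.

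The key step is to record the following general feature of transfer along a right adjoint $G$ (as in Theorem \ref{thm:semi-transfer-general}): by the very definition of the transferred structure, a morphism $f$ is a weak equivalence in the transferred category if and only if $G(f)$ is a weak equivalence in the base. Hence such a $G$ both preserves and reflects weak equivalences, and it does so for all morphisms, in particular between fibrant objects. I would then apply this twice. Since the structure on $\BB$ is transferred along $\alpha^* \colon \BB \to \DD$, the functor $\alpha^*$ preserves weak equivalences and reflects them; since the structure on $\AA$ is transferred along $\beta^* \colon \AA \to \CC$, the functor $\beta^*$ reflects weak equivalences. These observations cover precisely the three missing hypotheses, so every hypothesis of Theorem \ref{lifting  QE} now holds, and its conclusion yields that $(\psi_!,\psi^*)$ is a pair of Quillen equivalences.

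I do not expect a genuine obstacle here, since the square is assumed Beck-Chevalley and, by the standing Agreement, strictly commutative, with all four legs Quillen adjunctions. The only point needing care is the bookkeeping of directions: one must confirm that ``transfer along $\alpha^*$'' indeed designates $\alpha^*$ as the right adjoint detecting weak equivalences on $\BB$ (and similarly for $\beta^*$ on $\AA$), so that the preservation and reflection properties attach to the correct functors required by Theorem \ref{lifting  QE}.
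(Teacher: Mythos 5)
Your proposal is correct and is essentially the paper's own (implicit) argument: the corollary is stated as an immediate consequence of Theorem \ref{lifting  QE}, with the missing hypotheses supplied exactly as you describe, since a transferred semimodel structure by definition has its weak equivalences created by the right adjoint, so $\alpha^*$ preserves and reflects weak equivalences and $\beta^*$ reflects them. Your directional bookkeeping ($\alpha^*$ detecting on $\BB$, $\beta^*$ on $\AA$) is also the intended reading of the statement.
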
 



\part{Semimodel structures for algebras of substitudes} \label{part:transfer}

In this part, we introduce a series of definitions that allow us to encode the type of algebraic structure we are interested in, and we prove a general Transfer Theorem (Theorem \ref{semitransfer}) to endow categories of algebras with transferred semimodel structures.

\section{Substitudes, convolution and polynomial monads} \label{sec:convolution}

 \subsection{Substitudes}
Let $\VV= (\VV,\otimes,I)$ be a symmetric monoidal  category.  We will assume by default that it is cocomplete and closed. For a small $\VV$-category $A$  let $[A,\VV]$ denote the $\VV$-category of  $\VV$-functors.

\begin{definition}[Day-Street \cite{DS2}] 
A {\em $\VV$-substitude\/}  $(P,A)$ is a small 
$\VV$-category  $A$ together
with a sequence of $\VV$-functors:
$$
P^n: \underbrace{A^{op}\otimes\cdots\otimes A^{op}}_{n-times}\otimes  
A \rightarrow \VV, \  n\ge 0,\ $$ 
equipped with
\begin{enumerate}
\item 
a $\VV$-natural family of substitution operations
$$\mu: P^n(a_1,\ldots,a_n; a) \otimes P^{m_1}(a_{11},\cdots,a_{1m_1};  
a_1)\otimes\cdots\otimes P^{m_n}(a_{n1},\ldots,a_{nm_n};a_n)\rightarrow$$ $$\to  
P^{m_1+\ldots+m_n}(a_{11},\ldots,a_{nm_n};a)$$
\item 
a $\VV$-natural family of morphisms (unit of substitude)
$$\eta: A(a_1,a_2)\rightarrow P^1(a_1;a_2);$$
\item 
for each permutation $\sigma\in \Sm_n$ a $\VV$-natural family of  
isomorphisms
$$\gamma_{\sigma}:P^n(a_1,\ldots,a_n;a) \rightarrow P^n(a_{\sigma(1)}, 
\ldots,a_{\sigma(n)};a),$$
\end{enumerate}
satisfying the obvious associativity, unitality and equivariance conditions.  
\end{definition}

Notice that $P^1$ is a $\VV$-monad on $A$ in the bicategory of 
$\VV$-bimodules (also known as $\VV$-profunctors or  $\VV$-distributors). The Kleisli category  
of this monad is called {\it the underlying category of $P.$}

\begin{defin} \label{defn:substitute-map}
A morphism of substitudes $(P,A)\to (Q,B)$ is a pair
$(f,g)$ where $g:A\to B$ is a $\VV$-functor and $f$ is a sequence of $\VV$-natural transformations
$$f^n:P^n\to g^*(Q^n)$$
and $g^*$ is the restriction functor along $(g^{op})^n\otimes g$ which respects substitution and unit operations in an obvious sense. 

\end{defin}

\begin{remark}To simplify the notations we will often omit the superscript  in the notation for the functor $P^n(a_1,\ldots,a_n; a).$\end{remark}

The concept of substitude generalises operads and
symmetric lax-monoidal categories. Indeed, any coloured
operad $\E$ in $\VV$  can be naturally considered as a substitude in several different ways \cite{DS1,BKW}. 

One possibility is to consider a substitude $(P(\E), A)$ with $A$ equal to the $\VV$-category of all unary operations in $\E$ and $P(\E)(a_1,\ldots,a_n,a) = \E(a_1,\ldots,a_n,a).$ 
The substitution operation in the coloured operad $\E$ makes this
assignment 
a $\VV$-functor
$$ 
P(\E): \underbrace{A^{op}\otimes\cdots\otimes
A^{op}}_{n-times}\otimes\ A \rightarrow \VV, \ n\ge 0,\ 
.$$ 
 The category $A = U(\E)$ is
also called the underlying
category of the coloured operad $\E.$ 

In fact, a substitude in general
is a coloured operad $\E$ together with a small $\VV $-category
$A$ and an identity-on-objects $\VV$-functor $\eta: A \rightarrow U(\E)$
\cite[Prop. 6.3]{DS1}. 

Yet another possibility is to consider the full subcategory of substitudes $(P,A)$ for which $A$ is a discrete category. This subcategory  is isomorphic to the category of operads.
This full inclusion functor  has a right adjoint.
Namely, given a  substitude $(P,A)$ one can form a substitude  
$(P_0,A_0),$ where $A_0$ is the  maximal discrete subcategory of $A$ and $P_0$ is the restriction of $P$ to $A_0.$

We refer the reader to \cite{BKW} for a detailed treatment of the relationships between substitudes, regular patterns of Getzler \cite{G}, operads and Feynman categories. 

\begin{defin} Let $(P,A)$ be a $\VV$-substitude. An algebra of $(P,A)$  is a $\VV$-presheaf $X\in [A,\VV]$  equipped with the sequence of 
natural transformations:
$$P(a_1,\ldots,a_n;a)\otimes X(a_1)\otimes\ldots \otimes X(a_n) \to X(a)$$
satisfying natural commutativity, unitarity and equivariance conditions.

A morphism of $(P,A)$-algebras is a morphism of presheaves over $A$ which commutes with all structure maps. $(P,A)$-algebras and their morphisms form a category $\Alg_{(P,A)}(\VV).$    

\end{defin}

Following \cite{DS1,DS2} let us define the 
 convolution  operation. For the presheaves $X_1,\ldots , X_k \in [A,\VV]$ the convolution $k$-th 
 product is given by the formula
$$\otimes^k_{P}(X_1,\ldots,X_k)(-) = \int^{a_1,\ldots,a_k} P(a_1,\ldots,a_k;-)\otimes X_1(a_1)\otimes\ldots\otimes X_k(a_k) .$$ 

{
 \begin{remark}\label{0convolution} The convolution formula for $k=0$ amounts to $\otimes_{{P}}^0 = P(\emptyset;-)$, that is, the underlying object of the algebra  of nullary operations of $P$, which is the initial object in $\Alg_{P}(\VV).$ For $k=1$ the convolution $\otimes_{{P}}^1(X) = \eta_!(X)$ is the left Kan extension of the presheaf $X$ along the unit of the substitude.
 \end{remark}
  }

Convolution determines a symmetric lax monoidal structure on the category $[A,\VV]$  \cite{DS2}. 
The free $(P,A)$-algebra monad ${\Tt_P}$ on $[A,\VV]$ can be expressed in terms of convolution:
 \begin{equation} \label{geometric series}{\Tt_P}(X) = \coprod_{k\ge 0} \otimes^k_{P}(X,\ldots,X)/\Sm_k.\end{equation}
 It is useful to remember  
 \begin{proposition}\label{3cat} 
 For a closed symmetric monoidal category $\VV$ the following $\VV$-categories are isomorphic:
 \begin{enumerate} 
 \item The category of algebras of the substitude $(P,A);$   
 \item The category of algebras $\Alg_{\Tt_P}(\VV)$ of the monad ${\Tt_P};$  
 \item The category of commutative monoids in $[A,\VV]$ with the lax-monoidal structure determined by the convolution;
 \item The category of algebras of the operad $(P_0,A_0).$
 
 \end{enumerate}  
 \end{proposition}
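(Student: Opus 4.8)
The plan is to prove the four categories isomorphic by comparing algebra data over the common underlying presheaf category $[A,\VV]$, using (2) as the hub: I would establish $(1)\cong(2)$, then $(2)\cong(3)$, and finally $(1)\cong(4)$ separately, each being an identity-on-underlying-object comparison.

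For $(1)\cong(2)$, a $\Tt_P$-algebra structure on $X$ is a map $\Tt_P(X)\to X$ satisfying the monad associativity and unit axioms. By the geometric series formula (\ref{geometric series}), a map out of $\Tt_P(X)$ is the same as an $\Sm_k$-equivariant family of maps $\otimes^k_P(X,\ldots,X)\to X$, and by the universal property of the coend defining the convolution together with the closedness of $\VV$, each such map corresponds to a $\VV$-natural family $P(a_1,\ldots,a_k;a)\otimes X(a_1)\otimes\cdots\otimes X(a_k)\to X(a)$. I would then match axioms: the monad unit axiom corresponds to the unitarity condition (via the $k=1$ summand $\otimes^1_P(X)=\eta_!(X)$ of Remark \ref{0convolution}), the monad associativity corresponds to the substitution/associativity condition built from $\mu$, and the passage to $\Sm_k$-coinvariants together with the isomorphisms $\gamma_{\sigma}$ corresponds to the equivariance condition. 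This produces mutually inverse functors fixing the underlying presheaf, hence an isomorphism of categories.

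For $(2)\cong(3)$, I would invoke the general fact that algebras over a free commutative monoid monad are precisely commutative monoids, applied to the symmetric lax monoidal structure on $[A,\VV]$ furnished by the convolution (with unit $\otimes^0_P=P(\emptyset;-)$ and binary product $\otimes^2_P$). Since $\VV$ is cocomplete and closed, the convolution preserves colimits in each variable, so the free commutative monoid monad exists and is computed by the symmetric powers, i.e. coincides with the geometric series (\ref{geometric series}); this identification of $\Tt_P$ with the free commutative monoid monad is due to Day and Street \cite{DS2}, and the monoids-as-algebras correspondence then finishes this step. For $(1)\cong(4)$, I would use the description of a substitude as a coloured operad $\E$ equipped with an identity-on-objects $\VV$-functor $\eta\colon A\to U(\E)$ \cite[Prop. 6.3]{DS1}. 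The operad $(P_0,A_0)$ retains the operations $P(a_1,\ldots,a_n;a)$ but indexed over the discrete $A_0$. Given a $(P_0,A_0)$-algebra, i.e. a family $\{X(a)\}$ with operadic action maps, the unary action $P^1(a_1;a_2)\otimes X(a_1)\to X(a_2)$ precomposed with $\eta$ endows $X$ with a left $A$-action, that is a presheaf structure $X\in[A,\VV]$ (functoriality following from the operad unit and associativity axioms and from $\eta$ preserving composition). The remaining operad axioms then reproduce exactly the $\VV$-naturality, associativity, unitarity and equivariance of a $(P,A)$-algebra. Conversely, forgetting the presheaf structure recovers the $(P_0,A_0)$-algebra, and since the presheaf structure is both determined by and recovered from the unary action, these assignments are mutually inverse.

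The genuinely conceptual obstacle is the last equivalence $(1)\cong(4)$: one must verify that the $\VV$-functoriality of each $P^n$ in its arguments is entirely captured by substitution with the unary operations $\eta(A)$, so that restricting to the discrete $A_0$ loses no information. Concretely, I expect the main work to be checking that the naturality squares required of a $(P,A)$-algebra coincide with the instances of operad associativity obtained by substituting a unary operation $\eta(f)$ into a slot of an $n$-ary operation; I would handle this bookkeeping by appealing to \cite[Prop. 6.3]{DS1}. By comparison, $(1)\cong(2)$ is mechanical once the coend and $\Sm_k$-coequalizer adjunctions are threaded carefully, and $(2)\cong(3)$ reduces to a citation.
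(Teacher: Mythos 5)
Your proposal is correct, and for the identifications $(1)\cong(2)\cong(3)$ it follows essentially the paper's approach: the paper also just unpacks the structure morphism $\Tt_P(X)\to X$ (citing \cite[Proposition 1.8]{bbl}), which is exactly your coend/geometric-series argument combined with the Day--Street monoid correspondence. Where you genuinely diverge is the comparison with the operad $(P_0,A_0)$. You prove $(1)\cong(4)$ by a direct structural comparison: the presheaf structure of a $(P,A)$-algebra is determined by the actions of the unary operations $\eta(f)$, and conversely the dinaturality over $A$ of the action maps of a $(P_0,A_0)$-algebra is recovered from operad associativity applied to substitutions $\mu(\theta;\eta(f_1),\dots,\eta(f_n))$, with the key fact --- that the $A$-functoriality of each $P^n$ is encoded by unary substitution --- outsourced to \cite[Prop.~6.3]{DS1}. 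The paper instead proves $(2)\cong(4)$ by abstract means: the composite forgetful functor $G\colon \Alg_{\Tt_P}(\VV)\to[A,\VV]\stackrel{i^*}{\to}[A_0,\VV]$ is monadic (via the strong Beck monadicity theorem, $i^*$ being monadic because $i$ is identity-on-objects), so $\Alg_{\Tt_P}(\VV)$ is isomorphic to the category of algebras of the composite monad $i^*\Tt_P i_!$, which is then identified with $\Tt_{P_0}$ using the convolution formula $i^*(\otimes^k_P(i_!X_1,\ldots,i_!X_k))\simeq \otimes^k_{P_0}(X,\ldots,X)$ of \cite[Proposition A6]{bbm} together with the classical formula for the monad of a coloured operad. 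The trade-off is this: the paper's route avoids all dinaturality bookkeeping and records, as a by-product, the monadicity of $\Alg_P(\VV)$ over $[A_0,\VV]$, a structural fact exploited later (it is what lets the paper invoke the Kock--Szawiel--Zawadowski identification of $\Tt_{P_0}$ as a polynomial monad); your route is more elementary and self-contained at the level of algebra structures, and it makes transparent the precise mechanism by which restriction to the discrete $A_0$ loses no information, but it leans on \cite[Prop.~6.3]{DS1} for exactly the verification that constitutes the real content of that step.
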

 
 \begin{proof} The isomorphisms between (1),(2) and(3) can be obtained by simply unpacking the meaning of the structure morphism $ \Tt_P(X)\to X$  for a $\Tt_P$-algebra $X$ (see  \cite[Proposition 1.8]{bbl} for details). 
 
The isomorphism between (4) and (2) is obtained by an application of the strong Beck Monadicity Theorem. Indeed, consider the following composite of forgetful functors:
$$ G: Alg_{\Tt_P}(\VV)\to [A,\VV] \stackrel{i^*}{\to} [A_0,\VV],$$ 
where $i_0^*$ is the restriction along the inclusion $i:A_0\to A.$  Since this inclusion is the identity on objects, the functor $i^*$ is monadic. Hence, the composite $G$ is monadic and by Beck's Theorem the category $Alg_{\Tt_P}(\VV)$ is isomorphic to the category of the monad $\Tt$ induced on $[A_0,\VV].$ This last monad is obtained as the composite $\Tt(X)=i^*\Tt_Pi_!(X),$ where $i_!(X)$ is the left Kan extension of $X$ along $i.$ It is now not too difficult to demonstrate that such a composite monad is isomorphic to the monad 
$\Tt_{P_0}(X)$ because of the following formula connecting  convolutions (see \cite[Proposition A6]{bbm}):$$     
i^*(\otimes^k_P(i_!(X_1),\ldots , i_!(X_k))) \simeq  \otimes^k_{P_0}(X,\ldots,X).$$ 
Finally, observe that $$\Tt_{P_0}(X)(a) = \coprod_{k\ge 0} \coprod_{a_1,\ldots a_k} P(a_1,\ldots,a_k;a)\otimes X(a_1)\otimes \ldots \otimes X(a_k)/\Sm_k$$   
 is the classical formula for the monad induced by the coloured operad $(P_0,A_0).$ 
 
 \end{proof}  
 
\begin{corollary} 
The category  of algebras of an operad $\E$ is isomorphic to the category of algebras of the substitude $(P(\E), U(\E)).$  
\end{corollary}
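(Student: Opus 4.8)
The plan is to deduce this immediately from Proposition \ref{3cat}, whose equivalence between parts (1) and (4) already does all the work; the only thing that remains is to identify the discrete operad associated to the specific substitude $(P(\E),U(\E))$ with $\E$ itself. So the proof is essentially a matter of unpacking definitions rather than any new homotopical or categorical input.

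First I would apply Proposition \ref{3cat} to the substitude $(P,A) = (P(\E), U(\E))$. The isomorphism between part (1) and part (4) of that proposition gives that $\Alg_{(P(\E),U(\E))}(\VV)$ is isomorphic to the category of algebras of the operad $(P(\E)_0, U(\E)_0)$, where $U(\E)_0$ is the maximal discrete subcategory of $U(\E)$ and $P(\E)_0$ is the restriction of $P(\E)$ to $U(\E)_0$.

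Second I would identify this operad with $\E$. By construction the objects of $U(\E)$ are exactly the colours of $\E$, so its maximal discrete subcategory $U(\E)_0$ is the discrete category on the set of colours. Restricting $P(\E)$ to these discrete objects yields, by definition,
\[
P(\E)_0(a_1,\ldots,a_n;a) = P(\E)(a_1,\ldots,a_n;a) = \E(a_1,\ldots,a_n;a),
\]
with substitution, unit, and equivariance structure inherited from $\E$. Hence $(P(\E)_0, U(\E)_0)$ is precisely the coloured operad $\E$, regarded as a substitude with discrete colour category. Combining this identification with the isomorphism from the previous step gives the desired conclusion.

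There is no serious obstacle here: the substantive content is contained in Proposition \ref{3cat}, in particular in the monadicity argument comparing $\Alg_{\Tt_P}(\VV)$ with $\Alg_{\Tt_{P_0}}(\VV)$, and the corollary reduces to the elementary observation that passing to the maximal discrete subcategory of colours recovers the original operad $\E$ from the substitude $(P(\E),U(\E))$.
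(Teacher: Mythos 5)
Your proposal is correct and follows exactly the route the paper intends: the corollary is stated without proof as an immediate consequence of Proposition \ref{3cat}, namely applying the isomorphism between (1) and (4) to $(P(\E),U(\E))$ and observing that restricting $P(\E)$ to the maximal discrete subcategory $U(\E)_0$ recovers the coloured operad $\E$ itself. Your explicit identification $(P(\E)_0,U(\E)_0)=\E$ is precisely the elementary unpacking the paper leaves to the reader.
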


In view of Proposition \ref{3cat}, we will call $(P,A)$-algebras simply $P$-algebras if it does not lead to confusion, and the category of $(P,A)$-algebras will be identified with $\Alg_{\Tt_P}(\VV)$ and denoted $\Alg_P(\VV).$ If $\VV=\Set$ we will write simply $\Alg_P.$

\subsection{Convolution through the Grothendieck construction} \label{subsec:convolution}

From now on we assume that the substitudes we use are $\Set$-based substitudes. Given such a substitude $({P},A)$  one can construct a $\VV$-enriched substitude $$({P}\otimes I,A\otimes I)$$ by 
$$A\otimes I(a,b) = \coprod_{A(a,b)} I \ ; \ P\otimes I(a_1,\ldots,a_k;a) = \coprod_{P(a_1,\ldots,a_k;a)} I .$$
We can then speak about algebras of ${P}$ in $\VV$ (being algebras of $({P}\otimes I,A\otimes I)$). To shorten notations we will denote the enriched substitude by the same letters $(P,A)$ believing that this does not lead to confusion.

For a $\Set$-based substitude $(P,A)$ there is a  way to describe the convolution operation using the two sided Grothendieck  construction, which goes back to the Max Kelly's notion of a club \cite{club, species}.
 
 { Recall that {\it a bimodule}  from $B$ to $A$ is a functor }
\begin{equation} F: B^{op}\times A\rightarrow \Set .\end{equation} 
Every bimodule $F$ generates a two-sided categorical fibration 
\begin{equation}\label{twosided} B\stackrel{p}{\longleftarrow} \gint F\stackrel{\pi}{\longrightarrow} A \end{equation}
 as follows. The objects of $\gint F$ are triples $(b,a,f)$ where $(b,a) \in Ob(B^{op} \times A)$ and $f \in F(b,a)$. A morphism in $\gint F$ is a commutative square

\begin{align} \label{diagram:mor-Groth}
\xymatrix{
b \ar[r]^f \ar[d]_{\phi} & a \ar[d]^{\psi} \\
b' \ar[r]_{f'} & a'
}
\end{align}
in the sense that the assignments $F(b,a)\times A(a,a') \to F(b,a')$ and 
$B(b,b')\times F(b',a') \to F(b,a')$ produce the same element $g \in F(b,a')$, from $(f,\phi)$ and $(\psi,f')$. The category $\gint F$ is called the two-sided Grothendieck construction of $F$.

Let $\MM A$ be the free strict monoidal category on $A$ and $\Sm A$ be the free strict symmetric monoidal category on $A.$ From the universal property we have a canonical monoidal functor $\epsilon: \MM A\rightarrow \Sm A.$ 

The data for a $\Set$-substitude  $({P},A)$ (without substitution and unit) is the same  as a bimodule
\begin{equation}\label{petersen}  P:(\Sm A)^{op}\times A \rightarrow \Set. \end{equation} 
\begin{remark} Petersen in \cite{Pet} defines an $A\int \Sm$-module in $\VV$ as a sequence of  functors  $A^{op}\times A\int \Sm_n \to \VV, n\ge 0.$ It is not hard to see that Petersen's wreath-product $A\int \Sm_n$ is just a Grothendieck construction for a functor on the symmetric group  $\Sm_n$, considered as a one-object category, to $\Cat$ that sends a small category $A$ to $A^n.$ Hence, $A\int \Sm^n = (\Sm A)_n$ and a Petersen $A\int \mathbb{S}$-module in $\Set$ is exactly the data for a bimodule (\ref{petersen}) up to the permutation of variables and variance of functors. It is not hard to prove then that Petersen's notion of an $A$-coloured operad  coincides with the notion of substitude  based on $A^{op}.$ 

\end{remark}

By precomposing with $\epsilon\times 1$ we also have another bimodule
$$(\epsilon\times 1)^*P:(\MM A)^{op}\times A \rightarrow \Set.$$
The bimodule $P$ induces a two-sided categorical fibration 
\vspace{12pt}
\begin{equation}\label{bimodule} 
\Sm A \stackrel{p}{\longleftarrow} \gint P\stackrel{\pi}{\longrightarrow} A
\end{equation}
Explicitly, the category $\gint P$ has objects the {multimorphisms}
$e\in P(a_1,\ldots,a_n;a)$ which we denote $a_1\ldots a_n\stackrel{e}{\rightarrow} a.$ A morphism from 
 $a_1\ldots a_n\stackrel{e}{\rightarrow} a$ to $b_1\ldots b_n\stackrel{h}{\rightarrow} b$ is given by a permutation $\sigma\in \Sm_n,$ 
  an $n$-tuple $f_i: a_{\sigma(i)}\rightarrow b_{i}$, and $f:a\rightarrow b$ such that the square

\begin{align} \label{diagram:mor-Groth1}
\xymatrix{
a_{\sigma(1)}\ldots a_{\sigma(n)} \ar[r]^{\scriptstyle \hspace{5mm} \sigma(e)} \ar[d]_{f_1\ldots f_n} & a \ar[d]^{f} \\
 b_1\ldots b_n \ar[r]_{\hspace{3mm}\scriptstyle h} & b
}
\end{align}

\noindent commutes in  the same sense as in (\ref{diagram:mor-Groth}). 
The projection $p$  { sends a multimorphism to its domain  and $\pi$ sends it to the codomain. }

Similarly, the underlying bimodule $(\epsilon\times 1)^*P$ induces a two-sided fibration
\begin{equation}\label{ubimodule} 
\MM A \stackrel{up}{\longleftarrow} \gint (\epsilon\times 1)^*P \stackrel{u\pi}{\longrightarrow} A.\end{equation}

Now, let  $X:A\rightarrow \VV$ be  a presheaf. By the universal property of $\Sm$, $X$ induces a symmetric monoidal functor
$$\Sm(X): \Sm A\rightarrow \VV, \ \ \Sm(X)(a_1,\ldots,a_k) = X(a_1)\otimes\ldots\otimes X(a_k).$$ So, we have a functor:
$$\widetilde{(-)}: [A,\VV]\rightarrow SymMon(\Sm A,\VV) \rightarrow [\Sm A,\VV].$$

Analogously, we have a functor
$$\widetilde{(-)'}: [A,\VV]\rightarrow [\MM A,\VV].$$
This functor factors as the following composite 
$$[A,\VV]\stackrel{\Delta}{\longrightarrow}\prod_{k\ge 0}( [A,\VV]^k) \stackrel{\prod_k\tilde{\otimes}^k}{\longrightarrow} \prod_k  [A^k ,\VV] \simeq [\MM A,\VV],$$
where $\Delta$ for the factor $[A,\VV]^k$ is the diagonal functor, i.e., it is a restriction functor induced by the canonical functor $\coprod_{1}^k A \rightarrow A$ (the folding map) and 
$$\tilde{\otimes}^k(X_1,\ldots,X_k)(a_1,\ldots,a_n) = X(a_1)\otimes\ldots\otimes X(a_n).$$

 \begin{lem} \label{lemma:convolution}
 For a  substitude  $({P},A)$,  the $k$-th convolution tensor product $\otimes^k_{P}$ is isomorphic to the composite
 \begin{equation}\label{convolution} [A,\VV]^k \stackrel{\tilde{\otimes}^k}{\longrightarrow} [A^k,\VV]\stackrel{up^*}{\longrightarrow} [\gint (\epsilon\times 1)^*P,\VV]\stackrel{(u\pi)_!}{\longrightarrow}[A,\VV]. \end{equation}
  
 The functor part of the monad ${\Tt_P}$   on $[A,\VV]$ is isomorphic to the composite
 \begin{equation}\label{TE}
[A,\VV] \stackrel{\tilde{(-)}}{\longrightarrow} [\Sm A,\VV]\stackrel{p^*}{\longrightarrow} [\gint P,\VV]\stackrel{\pi_!}{\longrightarrow}[A,\VV].
 \end{equation} 
 \end{lem}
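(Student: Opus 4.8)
The plan is to express both statements through coends and to reduce them to a single identity describing how the two-sided fibration of a bimodule acts on copresheaves. Recall that for any functor $G\colon \mathcal{E}\to\VV$ and any $\pi\colon\mathcal{E}\to A$ the pointwise left Kan extension is the coend $(\pi_!G)(a)=\int^{e}A(\pi e,a)\otimes G(e)$. The central claim I would isolate is: for a bimodule $F\colon B^{\op}\times A\to\Set$ with two-sided Grothendieck construction $B\xleftarrow{p}\gint F\xrightarrow{\pi}A$ as in (\ref{bimodule}), and any $Y\in[B,\VV]$, there is a natural isomorphism
\begin{equation*}
(\pi_!\, p^*Y)(a)\;\cong\;\int^{b\in B}F(b,a)\otimes Y(b).
\end{equation*}
Granting this, part (1) is immediate: apply it to the underlying non-symmetric fibration (\ref{ubimodule}) restricted to arity $k$, so $B=A^k$, $F=(\epsilon\times 1)^*P(a_1,\dots,a_k;-)$, and $Y=\tilde{\otimes}^k(X_1,\dots,X_k)$. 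Since $Y(a_1,\dots,a_k)=X_1(a_1)\otimes\cdots\otimes X_k(a_k)$, the right-hand side is exactly the defining coend of $\otimes^k_P(X_1,\dots,X_k)$, and the cases $k=0,1$ reproduce Remark \ref{0convolution}.

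To prove the central claim I would substitute $G=p^*Y$, so the integrand of $(\pi_!p^*Y)(a)$ is $A(\pi c,a)\otimes Y(pc)$, pulled back along $(p,\pi)\colon\gint F\to B\times A$. The two-sided construction $\gint F$ is the category of elements attached to $F$, and $\pi$ is an opfibration whose fibre over $a$ is the category of elements $\mathrm{el}(F(-,a))$ of the presheaf $F(-,a)$ on $B$ (a morphism with $\psi=\mathrm{id}_a$ in (\ref{diagram:mor-Groth}) is precisely a morphism of $\mathrm{el}(F(-,a))$). A Fubini argument for coends over a category of elements then collapses the coend over $\gint F$ to an $F$-weighted coend over $B\times A$,
\begin{equation*}
\int^{c\in\gint F}A(\pi c,a)\otimes Y(pc)\;\cong\;\int^{b\in B}\int^{a'\in A}F(b,a')\otimes A(a',a)\otimes Y(b),
\end{equation*}
after which the co-Yoneda lemma in the variable $a'$ (using covariance of $F$ in its last slot) contracts $\int^{a'}F(b,a')\otimes A(a',a)\cong F(b,a)$ and yields the claim. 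Alternatively, both $\pi_!p^*$ and $Y\mapsto\int^{b}F(b,-)\otimes Y(b)$ are cocontinuous, so one may instead check the isomorphism on representables $Y=B(b_0,-)$, where the right-hand side reduces to $F(b_0,-)$ by co-Yoneda and the left-hand side does the same through the fibrewise description of $\pi_!$.

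For part (2) I would apply the same identity to the full symmetric fibration (\ref{bimodule}), with $B=\Sm A$ and $Y=\widetilde{X}$, obtaining $(\pi_!p^*\widetilde{X})(a)\cong\int^{b\in\Sm A}P(b,a)\otimes\widetilde{X}(b)$. Since the free symmetric monoidal category decomposes arity-wise as $\Sm A=\coprod_{n\ge 0}A^n/\!/\Sm_n$, the action category of $\Sm_n$ permuting factors (this is also why every morphism of $\gint P$ preserves arity, cf.\ (\ref{diagram:mor-Groth1})), the coend over $\Sm A$ splits as a coproduct over $n$ of coends over $A^n/\!/\Sm_n$, and a coend over an action category is the $\Sm_n$-coinvariants of the coend over $A^n$. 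With $\widetilde{X}(a_1,\dots,a_n)=X(a_1)\otimes\cdots\otimes X(a_n)$ and $P(b,a)=P(a_1,\dots,a_n;a)$, the $n$-th summand is $\otimes^n_P(X,\dots,X)/\Sm_n$, so comparison with the geometric series (\ref{geometric series}) identifies the composite with the functor part of $\Tt_P$.

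The step I expect to be the main obstacle is the central claim, specifically keeping the variances straight in the two-sided construction and justifying the Fubini collapse of the coend over $\gint F$ to the profunctor action. Once that identity is in hand both parts follow by bookkeeping, the only additional subtlety being the identification of the coend over the action category $A^n/\!/\Sm_n$ with $\Sm_n$-coinvariants in part (2).
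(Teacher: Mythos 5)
Your proof is correct, but it takes a genuinely different route from the paper, which offers almost no argument of its own: the paper disposes of the first claim by citing \cite[Section 7]{DS1} and declares the second ``immediate from the definition of $\Tt_P$ and $\widetilde{(-)}$''. What you have written out is the computation that this citation outsources. Your central identity $(\pi_!\,p^*Y)(a)\cong\int^{b\in B}F(b,a)\otimes Y(b)$ --- pull--push through a two-sided discrete fibration computes the profunctor-weighted coend --- is precisely the mechanism behind the Day--Street convolution formula, and your verification of it is sound: in the coequalizer presentation of the coend over $\gint F$, the morphisms of (\ref{diagram:mor-Groth}) with $\psi=\mathrm{id}$ (resp.\ $\phi=\mathrm{id}$) give exactly the two generating families of relations of the double coend $\int^{b}\int^{a'}F(b,a')\otimes A(a',a)\otimes Y(b)$, and the general relation factors through these two by bifunctoriality of $F$; co-Yoneda in $a'$ then finishes the claim (your cocontinuity-plus-representables alternative is equally valid). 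Two further points you handle correctly: the statement's mild abuse of applying $up^*$ to a presheaf on $A^k$ rather than on $\MM A$ is resolved by your restriction to the arity-$k$ components, which is the intended reading since morphisms of $\MM A$ preserve arity; and for part (2), the decomposition $\Sm A\cong\coprod_n A^n/\!/\Sm_n$ together with the identification of a coend over the action groupoid with $\Sm_n$-coinvariants is exactly what the paper compresses into ``immediate'', recovering the geometric series (\ref{geometric series}). The trade-off is clear: the paper keeps the lemma as bookkeeping resting on the literature, while your argument is self-contained and isolates the single categorical fact --- colimits over two-sided Grothendieck constructions compute weighted colimits --- on which the convolution formalism of this section rests.
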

 \begin{proof}
 The first claim is a result in \cite[Section 7]{DS1}. The second result is immediate from the definition of ${ \Tt_P}$ and $\widetilde{(-)}$.

\end{proof}

\begin{defin} \label{defn:free action} A $\Set$-substitude $({P},A)$ is called a $\Sigma$-free substitude if
 there exists a bimodule 
\begin{equation}\label{E'} {d}(P):(\MM A)^{op} \times A \rightarrow \Set,\end{equation}
such that $P$ is the left Kan extension of $d(P)$ along 
   $$\epsilon\times 1:(\MM A)^{op} \times A \rightarrow (\Sm A)^{op}\times A .$$ 
   \end{defin}
  \begin{remark}\label{d=e} It is not true in general that $d(P) \cong (\epsilon\times 1)^*P$ for a $\Sigma$-free substitude $(P,A).$ This is true, however, if $(P,A)$ as a symmetric substitude is obtained as a symmetrisation of a nonsymmetric substitude (see \cite[Section 4]{DS2} for the definition of the symmetrisation). The proof of this fact left to the reader as an exercise.
  
  \end{remark} 
   
If $({P},A)$ is $\Sigma$-free then the bimodule $d(P)$ provides us with  yet another two-sided fibration:

\begin{equation}\label{bimodule'} 
\MM A \stackrel{p'}{\longleftarrow} \gint d(P)\stackrel{\pi'}{\longrightarrow} A.
\end{equation}

\begin{lem} \label{lemma:convolution2}
Let $({P},A)$ be a $\Sigma$-free $\Set$-substitude. Then the $k$-th convolution $\otimes^k_{P}$ is isomorphic to the symmetrisation of the following composite functor
 \begin{equation}\label{convolution2} \odot^k_{P}: [A,\VV]^k \stackrel{\tilde{\otimes}^k}{\longrightarrow} [A^k,\VV]\stackrel{(p')^*}{\longrightarrow} [\gint d(P),\VV]\stackrel{{\pi'_!}}{\longrightarrow}[A,\VV]. \end{equation} Furthermore, 
 \begin{equation}\label{TE2}{\Tt_P}(X)=\pi_!'((p')^*(\tilde{X'})) = \coprod_{k\ge 0} \odot^k_{P}(X,\ldots,X).\end{equation} 
\end{lem}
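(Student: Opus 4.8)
The plan is to derive Lemma~\ref{lemma:convolution2} from the already-established Lemma~\ref{lemma:convolution} by exploiting the defining property of a $\Sigma$-free substitude, namely that $P$ is the left Kan extension of $d(P)$ along $\epsilon \times 1$. First I would recall from Lemma~\ref{lemma:convolution} that $\otimes^k_P$ is computed through the two-sided fibration $\int (\epsilon\times 1)^*P$ associated to the underlying bimodule $(\epsilon\times 1)^*P$, via the composite $(u\pi)_! \circ up^* \circ \tilde{\otimes}^k$. The key conceptual point is that the $\Sigma$-free structure lets us replace the bimodule $(\epsilon\times 1)^*P$ (on which the convolution a priori depends) by the strictly smaller bimodule $d(P)$, at the cost of only recording the symmetric group action separately through the symmetrisation functor.

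The central computation I would carry out is a comparison of left Kan extensions. Since $P = (\epsilon\times 1)_! \, d(P)$ by Definition~\ref{defn:free action}, the two-sided fibration $\int P$ receives a functor from $\int d(P)$, and the colimit defining the monad $\Tt_P$ in formula~(\ref{TE}) can be reorganised. Concretely, I would show that $\pi'_! \circ (p')^* \circ \tilde{\otimes}^k$ computes the ``ordered'' or non-symmetrised convolution $\odot^k_P$, and that applying the symmetrisation (quotienting by the $\Sm_k$-action that permutes the $k$ tensor factors, exactly as in the geometric series~(\ref{geometric series})) recovers the genuine convolution $\otimes^k_P$. The left Kan extension along $\epsilon \times 1$ is precisely what implements this symmetrisation: because $\epsilon : \MM A \to \Sm A$ is the canonical comparison from the free monoidal to the free symmetric monoidal category, the fibres of $\pi$ over $\int P$ decompose, after the Kan extension formula, into a coproduct over $\Sm_k$ of contributions from $\int d(P)$. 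I would make this precise by a pointwise colimit formula for $(u\pi)_!$ and $\pi'_!$, using that $up^* = (p')^*$ composed with the evident comparison and that $\tilde{\otimes}^k$ lands in $[A^k,\VV]$ in both descriptions.

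For the second assertion, formula~(\ref{TE2}), I would simply combine~(\ref{geometric series}) — which writes $\Tt_P(X)$ as $\coprod_{k\ge 0} \otimes^k_P(X,\ldots,X)/\Sm_k$ — with the first part of the lemma, observing that the symmetrisation absorbed into each $\otimes^k_P$ is exactly the $\Sm_k$-quotient, so that the coproduct of the unsymmetrised pieces $\odot^k_P(X,\ldots,X)$ (before taking $\Sm_k$-orbits) reassembles, via the functor $\widetilde{(-)'}$ into $[\MM A,\VV]$ and the bimodule $d(P)$, into the single expression $\pi'_!\big((p')^*(\tilde{X'})\big)$. This mirrors exactly the passage from~(\ref{TE}) to the monad formula in Lemma~\ref{lemma:convolution}, with $\Sm A$ replaced by $\MM A$ and $P$ replaced by $d(P)$; the role of $\widetilde{(-)'}$ (the non-symmetric analogue of $\widetilde{(-)}$ that was factored through $\prod_k \tilde\otimes^k$ earlier) is to package all $k$ together.

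The step I expect to be the main obstacle is the interchange between the left Kan extension $(\epsilon\times 1)_!$ defining $d(P) \rightsquigarrow P$ and the left Kan extension $\pi_!$ (resp.\ $\pi'_!$) along the projection of the two-sided fibration. Making this rigorous amounts to a Fubini-type argument for iterated colimits / a Beck--Chevalley comparison for the square relating $\int d(P) \to \int P$ to $\MM A \to \Sm A$, together with the care needed to track that $d(P)$ need not equal $(\epsilon\times 1)^*P$ (Remark~\ref{d=e}), so one genuinely needs the Kan extension hypothesis and cannot shortcut via restriction. Once that interchange is justified, identifying the resulting $\Sm_k$-indexed coproduct with the symmetrisation is formal.
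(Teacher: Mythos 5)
Your overall route is the paper's route. The paper's proof likewise starts from $P=(\epsilon\times 1)_!\,d(P)$, computes $(\epsilon\times 1)^*P(\bar a,a)$ by the pointwise formula for left Kan extensions as a colimit over the comma category $(\epsilon\times 1)/(\bar a,a)$, observes that this comma category splits as a coproduct of copies of $(\MM A)^{op}\times A/(\bar a,a)$ indexed by $\sigma\in\Sm_k$, hence that $\gint(\epsilon\times 1)^*P$ is a coproduct of $k!$ copies of $\gint d(P)$, and then concludes from Lemma \ref{lemma:convolution} together with the Day--Street definition of symmetrisation; formula (\ref{TE2}) is then immediate from (\ref{geometric series}). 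In particular, the ``Fubini/Beck--Chevalley obstacle'' you single out is dispatched exactly by this comma-category splitting: since Lemma \ref{lemma:convolution} already expresses $\otimes^k_P$ as a colimit over $\gint(\epsilon\times 1)^*P$, no general interchange of Kan extensions is needed, only the identification of that Grothendieck construction with a $\Sm_k$-indexed coproduct of copies of $\gint d(P)$.

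One step of your write-up is wrong as stated, and it contradicts the (correct) mechanism you describe elsewhere. Symmetrisation here is \emph{not} ``quotienting by the $\Sm_k$-action'': the nonsymmetric convolution $\odot^k_P$ carries no natural $\Sm_k$-action as a functor (permuting inputs produces a different functor), and the Day--Street symmetrisation is a coproduct, not a quotient, namely $\otimes^k_P(X_1,\ldots,X_k)\cong\coprod_{\sigma\in\Sm_k}\odot^k_P(X_{\sigma(1)},\ldots,X_{\sigma(k)})$ --- which is precisely what your $\Sm_k$-indexed decomposition of the Kan extension produces, and is where $\Sigma$-freeness is used. Taken literally, your claim that $\otimes^k_P$ is recovered as $\odot^k_P/\Sm_k$ is false: the symmetric convolution is $k!$ times bigger than the nonsymmetric one, not smaller. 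The quotient by $\Sm_k$ enters only in the second assertion, via the free-algebra formula (\ref{geometric series}): there, because the $\Sm_k$-action freely permutes the $k!$ identical summands of $\otimes^k_P(X,\ldots,X)$, the quotient cancels the coproduct and leaves the single summand $\odot^k_P(X,\ldots,X)$, which gives (\ref{TE2}). With that correction, your argument coincides with the proof in the paper.
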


\begin{proof}
Using a classical formula for left Kan extensions, we compute the value of $(\epsilon\times 1)^*P(\bar{a},a) := (\epsilon\times 1)^*(\epsilon\times 1)_!d(P)(a_1,\ldots,a_k,a)$ as 
$$(\epsilon\times 1)^*P(\bar{a},a) = \colim_{(\epsilon\times 1) / (\bar{a},a)} \tilde{d(P)}.$$
It is not hard to see that the comma-category $(\epsilon\times 1) / (\bar{a},a)$ is just a coproduct of copies $(\MM A)^{op}\times A/(\bar{a},a)$ indexed by elements $\sigma\in \Sm_k.$

Hence, $\gint (\epsilon\times 1)^*P$ decomposes as a coproduct of $k!$ copies of $\gint d(P)$, and the result now follows from Lemma \ref{lemma:convolution} and the definition of symmetrisation \cite[Section 4]{DS2}. The second part of the lemma is immediate from the formula \ref{geometric series} applied to the symmetrisation of $ \odot^{(-)}_{P}.$ 

\end{proof}
\begin{defin} For a $\Sigma$-free substitude $(P,A)$, we  call the sequence of multifunctors  $\odot^k_P, k\ge 0$ { the nonsymmetric convolution} of $(P,A).$ \end{defin}
\begin{remark}\label{nonsym} If $(P,A)$ is itself obtained as symmetrisation of a nonsymmetric substitude $(Q,A)$, then the nonsymmetric convolution of $(P,A)$ coincides with the convolution of $(Q,A)$ and, in particular, this is a lax-monoidal (nonsymmetric) structure on $[A,\VV].$ But in general, the nonsymmetric convolution of a symmetric $\Sigma$-free substitude does not provide any lax-monoidal structure.  

\end{remark}

\section{Substitudes and internal algebra classifiers}  \label{sec:classifiers}

\subsection{$\Sigma$-free substitudes and polynomial monads}

Let  $({P},A)$ be a $\Sigma$-free $\Set$-substitude. Recall that $A_0$ means the maximal discrete subcategory of $A.$  Let also $i:A_0\to A$ be the inclusion. We then have 
a composite of forgetful functors $${\eta^*_0}:\Alg_P(\Set)\stackrel{\eta^*}{\to} [A,\Set]\stackrel{i^*}{\to} [A_0,\Set]$$ which is monadic
with left adjoint
$$(\eta_0)_!:  [A_0,\Set]\stackrel{i_!}{\to} [A,\Set]  \stackrel{\eta_!}{\to}    \Alg_P(\Set).$$
 Since $(P,A)$ is a $\Sigma$-free  substitude, the operad $(P_0,A_0)$ is a $\Sigma$-free $A_0$-coloured operad and the  monad  $\Pp = \Tt_{P_0}$ on $[A_0,\Set]$ generated by this adjunction is   a finitary polynomial monad by the results of Kock \cite{Kock} and Szawiel-Zawadowski \cite{Zav}. The reader who doesn't want to read these long and technical papers    is referred  to  \cite[Remark 6.4]{batanin-berger} for a brief discussion of the Kock-Szawiel-Zawadowski results.

 This is an important observation that allows us  to use the full force of the theory of internal algebra classifiers developed in \cite{SymBat,EHBat,batanin-berger, batanin de leger}. 

We briefly recall the main definitions and facts about finitary polynomial monads which we will need in the next section.  

Let $T_0$ be a  set.  A polynomial monad $\Tt = (\Tt,\mu,\epsilon)$ is a monad on the category $\Set/T_0 \cong [T_0,\Set]$  
whose functor part is generated by a polynomial 
\[ \xygraph{*{\xybox{\xygraph{!{0;(1.75,0):} {T_0}="p0" [r] {E}="p1" [r] {B}="p2" [r] {T_0}="p3" "p0":@{<-}"p1"^-{s}:"p2"^-{p}:"p3"^-{t}}}}}
 \]
and whose multiplication $\mu:\Tt^2\to \Tt$ and unit $\epsilon: \Tt_0=Id_{T_0}\to T$ are cartesian natural transformations (see below). Here, $Id_{T_0}$ is the identity monad on $[T_0,\Set]$ which we will denote it as $\Tt_0$ for convenience.  
The  functor $\Tt:\Set/T_0\to \Set/T_0$ generated by the polynomial  is the composite
$$[T_0,\Set]\stackrel{s^*}{\longrightarrow} [E,\Set] \stackrel{p_*}{\longrightarrow}[B,\Set] \stackrel{t_!}{\longrightarrow}[T_0,\Set] $$
where $s^*$ is the restriction along $s,$
 $p_*$ is the right Kan extension  along $p$,
 and $t_!$ is the left Kan extension along $t.$
Explicitly the functor $\Tt$ is given by the formula
\begin{equation*}\label{PPP}  \Tt(X)(i) = \coprod_{b\in t^{-1}(i)} \prod_{e\in p^{-1}(b)} X({s(e)}), \end{equation*}
which explains the name `polynomial' since it is  a sum of products of formal variables.
The set $T_0$ is called the set of colours (or objects) of $\Tt$ and the set $B$ is called the set of operations of  $\Tt.$  

A polynomial monad is finitary if all preimages of the map $p:E\to B$ are finite sets. We will be considering only finitary polynomial monads and so it will be convenient to call them simply polynomial monads. 

A 
{  cartesian morphism } $\phi:\Tt\to\Ss$   is a commutative diagram in $\Set$  
\[
			\xymatrix{
				T_0 \ar[d] & E 
				 \ar[l]_-{s} \ar[r]^-{p} \ar[d] & B \ar[r]^-{t} \ar[d] & T_0 \ar[d] \\
				S_0& D \ar[l]_-{s'} \ar[r]^-{p'} & C \ar[r]^-{t'} & S_0
			}
			\]
			such that  the middle square is a pullback. 
			Finitary polynomial monads and their cartesian morphisms form a category equivalent to the category of $\Sigma$-free coloured symmetric operads in $\Set$ \cite{Kock,Zav}.

Any small category $A$ generates a polynomial monad, which we will denote $\Aa.$ The corresponding polynomial is given by   
\begin{equation}\label{cat as polymon}  \xygraph{*{\xybox{\xygraph{!{0;(1.75,0):} {A_0}="p0" [r] {A_1}="p1" [r] {A_1}="p2" [r] {A_0}="p3" "p0":@{<-}"p1"^-{s}:"p2"^-{id}:"p3"^-{t}}}}}
 \end{equation}
where $A_1$ is the set of morphisms of $A.$ The category of algebras of this monad is isomorphic to the category of covariant presheaves on $A.$

The data of a $\Sigma$-free polynomial $(P,A)$ amounts, therefore, to an identity-on-objects cartesian map of polynomial monads $\eta:\Aa\to\Pp,$ represented by a commutative diagram       
\[
			\xymatrix{
		 &         A_1 
				 \ar[dl]_-{s} 
				 \ar[r]^-{id} \ar[d] & A_1\ar[dr]^-{t} \ar[d] &  
				\\
				A_0& E \ar[l]_-{s} \ar[r]^-{p} & B \ar[r]^-{t} & A_0
			}
			\]
in which the central square is a pullback. Observe also that there is an isomorphism of sets of elements  $b\in B$ such that  $t(b) = a\in A_0, \ s(p^{-1}(b)) = \{a_1,\ldots,a_k\} \subset A_0$ and the cardinality $p^{-1}(b) = n$  and the set
$\coprod d(P)(a_{i_1},\ldots,a_{i_n};a),$ where $d(P)$ is a bimodule (\ref{E'}) and the coproduct is  taken over all possible different strings of elements of  $\{a_1,\ldots,a_k\}.$ See the Remark 6.4 in \cite{batanin-berger} for the evidence of this statement.

\subsection{Internal algebra classifiers}

Let $\VV$ be a cocomplete symmetric monoidal category and $\Tt$ be a finitary polynomial monad. We can then construct a functor $\Tt^{\VV}:[T_0,\VV] \to [T_0,\VV]$ as follows:
$$ \Tt^{\VV}(X)(a) = \coprod_{b\in t^{-1}(a)} \bigotimes_{e\in p^{-1}(b)} X_{s(e)}.$$
This defines a monad on $[T_0,\VV].$ 
\begin{defin} The category of algebras of a polynomial monad $\Tt$ 
in a cocomplete symmetric monoidal category $\VV$ is the category of algebras of the monad $\Tt^{\VV}.$
\end{defin} 
\begin{remark} To simplify the notation, the monad $\Tt^{\VV}$ will be denoted simply $\Tt.$  It is normally clear from the context in which category we consider our monad. The category of $\Tt$-algebras in $\VV$ will be denoted $\Alg_{\Tt}(\VV).$ 
\end{remark}

 Algebras of polynomial monads in the symmetric monoidal category of small categories $(\Cat,\times , 1)$ will be called categorical $\Tt$-algebras. The category of categorical algebras of $\Tt$ is isomorphic to the category of internal categories in the category of $\Tt$-algebras in $\Set$.   
 The category of categorical $\Tt$-algebras is naturally a $2$-category. We will use this fact but preserve the notation $\Alg_\Tt(\Cat)$ for this $2$-category. 
 
 \begin{agreement}\label{agreement}   An arbitrary categorical $\Tt$-algebra has as underlying object a $\Tt_0$-family of categories that is an object from $\Cat/\Tt_0.$ We will often refer to this object as a single category. 
 
 In the same spirit 
 a terminal internal category has a unique $\Tt$-algebra structure for any polynomial monad $\Tt$; the latter promotes it to a terminal categorical  $\Tt$-algebra. From now on {\it all terminal objects will be denoted $1$} hoping that this will cause no confusion.
 \end{agreement}

 The following definitions are taken from \cite{EHBat} and \cite{batanin-berger}.
 
 \begin{defin}\label{intalg1}Let $A$ be a categorical $\Tt$-algebra for a polynomial monad $\Tt$.

An {internal $\Tt$-algebra in $A$} is a {lax-morphism} of categorical $\Tt$-algebras from the {terminal} categorical $\Tt$-algebra to $A$.

Internal $\Tt$-algebras in $A$ and $\Tt$-natural transformations form a category $\Int_\Tt(A)$ and this construction extends to a $2$-functor $\Int_\Tt:\Alg_\Tt(\Cat)\to\Cat.$\end{defin}
 

Given a cartesian map of polynomial monads $f: \Ss \to \Tt$ we have a restriction $2$-functor $f^*: \Alg_\Tt(\Cat)\to \Alg_\Ss(\Cat).$

 \begin{defin}\label{intalg2}Let $A$ be a categorical $\Tt$-algebra for a polynomial monad $\Tt$.

An internal $\Ss$-algebra in $A$ is a {lax-morphism} of categorical $\Ss$-algebras from the {terminal} categorical $\Ss$-algebra to $f^*(A)$.

Internal $\Ss$-algebras in $A$ and $\Ss$-natural transformations form a category $\Int_\Ss(A)$ and this construction extends to a $2$-functor 
$$ \Int_\Ss:\Alg_\Tt(\Cat)\to\Cat.$$ \end{defin}
This $2$-functor is representable and the categorical $\Tt$ algebra which represents it is called  the classifier of internal $\Ss$-algebras inside categorical $\Tt$-algebras. 
We denote it $\Tt^\Ss.$ In general, there is a precise recipe how to construct the classifier $\Tt^\Ss$ out of the data for a cartesian map of polynomial monads $f: \Ss \to \Tt$ (see \cite[Equation 12]{batanin-berger}) as a codescent object of a corresponding bar-construction.

\begin{example}\label{coma as classifier} Let $F:S\to T$ be a functor between small categories. One can consider it as a cartesian map between corresponding  polynomial monads $f:\Ss\to \Tt$ (see (\ref{cat as polymon})). In this case the classifier $\Tt^\Ss$ is the covariant presheaf on $T$ with values in $\Cat$ for which  $\Tt^\Ss(a) = F/a$, the classical comma-category of $F$ over $a.$ This can be easily seen from the above-mentioned \cite[Equation 12]{batanin-berger} but also from the fact that the classifier $2$-functor is a left $2$-adjoint to the classical Grothendieck construction $\int: [T,\Cat]\to \Cat/T$ because the category of internal $\Ss$-algebras in a categorical $T$-presheaf $A$ is isomorphic to the category of extensions of sections of the Grothendieck construction $\int A$ along $F.$ The reader may see \cite[Corollary 3.6]{batanin de leger} for a generalisation of this discussion to arbitrary polynomial monads.   

\end{example}

For any symmetric monoidal category $\VV$ there is a way to consider it as a constant categorical pseudoalgebra $\VV^\bullet_\Tt$ of any polynomial monad $\Tt$, and then consider internal $\Ss$-algebras inside $\VV^\bullet_\Tt$ cf. \cite[Section 6.8]{batanin-berger}.  Moreover, the category $\Int_\Ss(\VV^\bullet_\Tt)$  is isomorphic to  $\Alg_\Ss(\VV)$ \cite[Proposition 6.9]{batanin-berger}. The universal property of $\Tt^\Ss$ in this case tells us that the category of $\Ss$-algebras in $\VV$ is isomorphic to the  category of $\Tt$-functors $\Tt^\Ss \to \VV^\bullet_\Tt .$ Since $\VV^\bullet_\Tt$ is constant such a $\Tt$-functor is just a functor to $\VV$ in the usual sense satisfying some extra conditions. For this reason we identify $\VV$ and $\VV^\bullet_\Tt$ to shorten the notations. We hope it does not create any confusion.

Now let $X\in \Alg_{\Ss}(\VV)$ and $\tilde{X}:\Tt^\Ss\to \VV$ be the morphism of categorical $\Tt$-algebras representing $X.$ 
If $\VV$ is cocomplete symmetric monoidal category, the restriction functor  $f^*:\Alg_\Tt(\VV) \to \Alg_\Ss(\VV)$ has a left adjoint  $f_!:\Alg_\Ss(\VV) \to \Alg_\Tt(\VV).$ The following fact, which generalises the classical formula for pointwise left Kan extension, was proved in \cite[Theorem 6.17]{batanin-berger}:
\begin{theorem}
The underlying object of $f_!(X)$ is isomorphic to $\colim_{\Tt^\Ss}\tilde{X}.$ 
\end{theorem}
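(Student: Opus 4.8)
The plan is to verify that $\colim_{\Tt^\Ss}\tilde{X}$, once equipped with a canonical $\Tt$-algebra structure, corepresents the same functor as $f_!(X)$, and then to conclude by Yoneda. The point to keep in mind throughout is that the forgetful functor $U:\Alg_\Tt(\VV)\to[T_0,\VV]$ is a right adjoint, so it does \emph{not} preserve the colimit presenting $f_!(X)$; the content of the theorem is precisely that the underlying object is nevertheless computed by a specific colimit, namely over the classifier $\Tt^\Ss$. This is the exact analogue of the pointwise formula $\mathrm{Lan}_F X(a)\cong\colim_{F/a}X$ (cf. Example~\ref{coma as classifier}).

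First I would construct the colimit functor. For any categorical $\Tt$-algebra $C$ and any $\Tt$-functor $G:C\to\VV^\bullet_\Tt$, I claim the pointwise colimit $\colim_C G\in[T_0,\VV]$ carries a natural $\Tt$-algebra structure. The structure map is assembled from the action $\Tt(C)\to C$ of $C$ together with the lax comparison cells of $G$, using that in a closed symmetric monoidal $\VV$ the tensor $\otimes$ preserves colimits in each variable: at colour $a$ one rewrites $\coprod_{b\in t^{-1}(a)}\bigotimes_{e\in p^{-1}(b)}\colim_{C(s(e))}G_{s(e)}$ as a colimit of the functors $\bigotimes_e G_{s(e)}$, applies the lax cells of $G$, and pushes forward along the functors $C(s(e))\to C(a)$ induced by the action. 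This yields a functor $\colim_C:\mathrm{Fun}_\Tt(C,\VV^\bullet_\Tt)\to\Alg_\Tt(\VV)$ (writing $\mathrm{Fun}_\Tt(C,\VV^\bullet_\Tt)$ for the category of $\Tt$-functors $C\to\VV^\bullet_\Tt$), and a routine cocone argument shows it is left adjoint to the constant functor $\mathrm{const}$ sending a $\Tt$-algebra $Y$, viewed as an internal $\Tt$-algebra $Y:1\to\VV^\bullet_\Tt$, to the composite $C\to 1\xrightarrow{Y}\VV^\bullet_\Tt$.

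Next I would specialise $C=\Tt^\Ss$ and feed in the universal property of the classifier. By that universal property the category $\mathrm{Fun}_\Tt(\Tt^\Ss,\VV^\bullet_\Tt)$ is isomorphic to $\Alg_\Ss(\VV)$, with $\tilde{X}$ corresponding to $X$. Under this isomorphism $\mathrm{const}_Y$ corresponds to $f^*(Y)$: an object $G$ is sent to $f^*(G)\circ u$, where $u:1\to f^*(\Tt^\Ss)$ is the universal internal $\Ss$-algebra, and for $G=\mathrm{const}_Y$ this equals $f^*(Y)\circ(\,!\circ u\,)=f^*(Y)$ since $!\circ u=\mathrm{id}_1$. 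As the classifier isomorphism is an isomorphism of categories, it matches hom-sets, giving $\mathrm{Fun}_\Tt(\Tt^\Ss,\VV^\bullet_\Tt)(\tilde{X},\mathrm{const}_Y)\cong\Alg_\Ss(\VV)(X,f^*Y)$.

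Finally I would chain everything: for all $Y\in\Alg_\Tt(\VV)$,
\[
\Alg_\Tt(\VV)(\colim_{\Tt^\Ss}\tilde{X},Y)\cong\mathrm{Fun}_\Tt(\Tt^\Ss,\VV^\bullet_\Tt)(\tilde{X},\mathrm{const}_Y)\cong\Alg_\Ss(\VV)(X,f^*Y)\cong\Alg_\Tt(\VV)(f_!X,Y),
\]
naturally in $Y$, using the adjunction $f_!\dashv f^*$ at the last step. Yoneda then gives $\colim_{\Tt^\Ss}\tilde{X}\cong f_!(X)$ in $\Alg_\Tt(\VV)$, and passing to underlying objects proves the statement. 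The hard part will be Step~1: establishing that the pointwise colimit genuinely lands in $\Alg_\Tt(\VV)$ and that $\colim_C\dashv\mathrm{const}$. This is where the hypothesis that $\VV$ is cocomplete \emph{and} closed is indispensable, and where the fact that the indexing object $\Tt^\Ss$ is itself a $\Tt$-algebra (rather than a mere category) is exactly what makes the $\Tt$-action on the colimit descend.
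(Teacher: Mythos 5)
The Yoneda-plus-adjunction skeleton you propose has the right shape, but there is a genuine gap at its pivot, namely where you invoke the universal property of the classifier. The isomorphism between $\Alg_\Ss(\VV)$ and the category of $\Tt$-functors $\Tt^\Ss\to\VV^\bullet_\Tt$ concerns \emph{strict} (pseudo, to account for $\VV^\bullet_\Tt$ being a pseudo-algebra) morphisms of categorical $\Tt$-algebras: the whole point of the classifier is to convert lax morphisms $1\to f^*A$ (internal $\Ss$-algebras) into strict morphisms $\Tt^\Ss\to A$. Your $\mathrm{const}_Y=Y\circ\,!$ is not such a morphism: the internal $\Tt$-algebra $Y\colon 1\to\VV^\bullet_\Tt$ is genuinely lax, its comparison $2$-cells being the structure maps $Y(a_1)\otimes\cdots\otimes Y(a_k)\to Y(a)$ of the algebra $Y$, which are not invertible. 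So $\mathrm{const}_Y$ is simply not an object of the category on which the classifier isomorphism is defined, and the claim that the isomorphism ``matches hom-sets, giving $\mathrm{Fun}_\Tt(\Tt^\Ss,\VV^\bullet_\Tt)(\tilde{X},\mathrm{const}_Y)\cong\Alg_\Ss(\VV)(X,f^*Y)$'' is not available. Notice also the internal tension this creates: your step 1 forces $\mathrm{Fun}_\Tt(C,\VV^\bullet_\Tt)$ to mean lax morphisms (otherwise $\mathrm{const}$ does not land in it), while step 2 needs it to mean the strict ones; your chain of isomorphisms silently switches between two different categories.

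The statement you actually need — that restriction along the universal internal $\Ss$-algebra $u\colon 1\to f^*(\Tt^\Ss)$ induces a bijection between $\Tt$-transformations $\tilde{X}\Rightarrow\mathrm{const}_Y$ and $\Ss$-algebra maps $X\to f^*Y$ — is true, but it is not a formal consequence of representability of $\Int_\Ss$. It requires the explicit presentation of $\Tt^\Ss$: every object is a $\Tt$-operation applied to objects in the image of $u$, and every morphism is generated under the $\Tt$-action by the lax cells of $u$, modulo the codescent relations of \cite[Equation 12]{batanin-berger}. With that presentation, the $\Tt$-naturality square at an operation $b$, combined with strictness of $\tilde{X}$, expresses the component of a transformation at $b_*(u(c_1),\ldots,u(c_k))$ through its components at the $u(c_i)$ and the multiplication of $Y$, giving uniqueness, and one then checks that any $X\to f^*Y$ extends compatibly with all relations. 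Carrying this out is, in substance, the proof of \cite[Theorem 6.17]{batanin-berger} — which is exactly what the paper cites in place of an argument. Your step 1 (the $\Tt$-algebra structure on the pointwise colimit of a lax morphism and the adjunction $\colim_C\dashv\mathrm{const}$), which you rightly flag as the hard part, is correct in outline — it is the polynomial-monad analogue of the fact that the colimit of a lax monoidal functor on a monoidal category is a monoid — but it cannot compensate for the unjustified step 2.
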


\subsection{Exact and semiexact squares of polynomial monads}

Recall \cite[Proposition 4.7]{batanin de leger} that any commutative square of cartesian morphisms of polynomial monads
 \begin{equation}\label{exact}\xymatrix{
 \Aa 
 \ar@{<-}@<0pt>[d]_{\beta}
\ar@<0pt>[r]^{\phi}
& 
\Bb 
 \ar@{<-}@<0pt>[d]^{\alpha} \\
\Cc \ar@<2.5pt>[r]^{\psi}
\ar@<2.5pt>[ur]^{\gamma} 
& 
\Dd }
\end{equation} 
 induces a morphism of classifiers
\begin{equation}\label{induced map} \Dd^\Cc\to \alpha^*(\Bb^\Aa) .\end{equation}

\begin{defin}\label{semiexactdef} 
A commutative square (\ref{exact})
is  exact if the corresponding map of classifiers
(\ref{induced map})
is a final functor of underlying categories (see Agreement \ref{agreement}). 

This square is semiexact if there is a categorical $\Cc$-algebra $\tau$ and a $\Cc$-functor  $i:\tau\to \psi^*(\Dd^\Cc)$ such that the evaluation of the composite 
\begin{equation}\label{semiinduced map}\tau \to  \psi^*(\Dd^\Cc)\to \gamma^*(\Bb^\Aa) \end{equation} 
on each object of $\Cc$ is a final functor, and, moreover, the following square is a pullback
\begin{align} \label{exact fibration}
\xymatrix{
\Cc(\tau) \ar[r]^{} \ar[d]_{} & \Cc(\psi^*(\Dd^\Cc)) \ar[d]^{} \\
\tau \ar[r]_{} & \psi^*(\Dd^\Cc).}
\end{align}
In this square the vertical morphisms are algebra structure morphisms.
\end{defin} 
\begin{example} If polynomial monads in the square (\ref{exact}) correspond to small categories then the exactness of this square is equivalent to the exactness of the corresponding square of small categories in the sense of Guitart \cite{Guitart}. Our terminology was inspired by this classical paper. 

\end{example}

\begin{lem} Any exact square is semiexact. 
\end{lem}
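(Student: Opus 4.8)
The plan is to show that an exact square is a special case of a semiexact square, by exhibiting the data required in Definition \ref{semiexactdef} in the most economical way. Given an exact square (\ref{exact}), the morphism of classifiers (\ref{induced map}), namely $\Dd^\Cc\to \alpha^*(\Bb^\Aa)$, is by hypothesis a final functor of underlying categories. The natural candidate for the categorical $\Cc$-algebra $\tau$ is the full classifier $\Dd^\Cc$ itself, together with $i:\tau\to \psi^*(\Dd^\Cc)$ taken to be the identity. The first thing I would verify is that with this choice the composite (\ref{semiinduced map}) reduces to the classifier map (\ref{induced map}).

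First I would unwind how (\ref{induced map}) and (\ref{semiinduced map}) are related. The map (\ref{semiinduced map}) is $\tau \to \psi^*(\Dd^\Cc)\to \gamma^*(\Bb^\Aa)$, and with $\tau=\Dd^\Cc$ and $i=\mathrm{id}$ its first leg is the identity, so it is just the restriction along $\gamma$ of the classifier map (\ref{induced map}). Here I would use the fact, visible in the square (\ref{exact}), that $\gamma$ and the pair $(\psi,\alpha)$ are compatible, so that $\gamma^*(\Bb^\Aa)$ is exactly the target of (\ref{induced map}) viewed as a $\Cc$-algebra via $\psi^*\alpha^*=\gamma^*$. The key point is that exactness asserts (\ref{induced map}) is \emph{final} on underlying categories, but Definition \ref{semiexactdef} only asks that the composite be final \emph{after evaluation on each object of $\Cc$}. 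Since the underlying category of a categorical $\Cc$-algebra (by Agreement \ref{agreement}) is a $\Cc_0$-family of categories, finality of the map of underlying categories is precisely objectwise finality; so the first condition of semiexactness follows immediately from exactness.

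It remains to verify the pullback condition, namely that the square (\ref{exact fibration}) with $\tau=\Dd^\Cc$ and $i=\mathrm{id}$,
\begin{align*}
\xymatrix{
\Cc(\Dd^\Cc) \ar[r]^{\Cc(\mathrm{id})} \ar[d]_{} & \Cc(\psi^*(\Dd^\Cc)) \ar[d]^{} \\
\Dd^\Cc \ar[r]_{\mathrm{id}} & \psi^*(\Dd^\Cc),}
\end{align*}
is a pullback. Here the subtlety is that $\Cc(\psi^*(\Dd^\Cc))$ and $\Cc(\Dd^\Cc)$ refer to the free monad $\Cc$ applied to the respective underlying families, and the vertical maps are the algebra structure morphisms. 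Since $i=\mathrm{id}$ the bottom map is the identity, so I would argue that the square is trivially a pullback: the top map $\Cc(\mathrm{id})$ is likewise an identity and the vertical legs agree. The one thing I would check carefully is that $\psi^*(\Dd^\Cc)$ does carry the structure-morphism making these two verticals literally the same map, i.e. that pulling back the $\Cc$-algebra structure along the identity returns the same structure; this is immediate from functoriality of $\psi^*$ and $\Cc(-)$.

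The main obstacle, and the only place demanding genuine care, is the bookkeeping of variances and algebra structures: one must be sure that $\tau=\Dd^\Cc$ is indeed a \emph{categorical $\Cc$-algebra} and that $i=\mathrm{id}:\Dd^\Cc\to\psi^*(\Dd^\Cc)$ is a genuine $\Cc$-functor, since the right-hand side is obtained by restriction along $\psi$. In the exact case these two objects have the same underlying $\Cc_0$-family of categories, so the identity is well-typed; but I would make explicit that the $\Cc$-algebra structure on $\Dd^\Cc$ used as $\tau$ is the one compatible with the restriction, so that (\ref{semiinduced map}) is a $\Cc$-functor and the square (\ref{exact fibration}) lives in $\Alg_\Cc(\Cat)$ as required. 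Once this identification is in place, objectwise finality comes for free from exactness and the pullback condition is automatic, completing the proof.
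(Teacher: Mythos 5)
Your proposal is correct and is essentially the paper's own proof: the paper also takes $\tau=\psi^*(\Dd^\Cc)$ — which is exactly your $\Dd^\Cc$ once you equip it, as you do in your final paragraph, with the $\Cc$-algebra structure restricted along $\psi$ — with $i$ the identity, so that the finality condition is precisely the exactness hypothesis (read objectwise) and the pullback condition holds trivially. The only cosmetic slip is calling the second leg "restriction along $\gamma$"; it is restriction along $\psi$, identified with a map into $\gamma^*(\Bb^\Aa)$ via $\psi^*\alpha^*=\gamma^*$, as your own follow-up sentence makes clear.
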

\begin{proof} We take as $\tau$ the classifier $\psi^*(\Dd^\Cc).$ The pullback condition is trivial. 
\end{proof}

\begin{theorem}\label{bc section} Let $\VV$ be a cocomplete symmetric monoidal category. If the square (\ref{exact}) is semiexact then in  the induced square of adjunctions 
\begin{equation*}\label{BCB} \xygraph{!{0;(2.5,0):(0,.5)::}
{\Alg_{\Aa}(\VV)}="p0" [r] {\Alg_{\Bb}(\VV)}="p1" [d] {\Alg_{\Dd} (\VV)}="p2" [l] {\Alg_{\Cc}(\VV)}="p3"
"p0":@<-1ex>@{<-}"p1"_-{\phi^*}|-{}="cp":@<1ex>"p2"^-{\alpha^*}|-{}="ut":@<1ex>"p3"^-{\psi^*}|-{}="c":@<-1ex>@{<-}"p0"_-{\beta^*}|-{}="us"
"p0":@<1ex>"p1"^-{\phi_!}|-{}="dp":@<-1ex>@{<-}"p2"_-{\alpha_!}|-{}="ft":@<-1ex>@{<-}"p3"_-{\psi_!}|-{}="d":@<1ex>"p0"^-{\beta_!}|-{}="fs"
"dp":@{}"cp"|-{\perp} "d":@{}"c"|-{\perp} "fs":@{}"us"|-{\dashv} "ft":@{}"ut"|-{\dashv}}
\end{equation*}
 the  transformation
$$\psi^*(\mathbf{bc}):  \psi^* \psi_!\beta^* \to \gamma^*  \phi_! $$
has a section. 

If (\ref{exact}) is exact this square is a Beck-Chevalley square. 

\end{theorem}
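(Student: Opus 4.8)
The plan is to reduce the statement about the Beck--Chevalley transformation to the description of the left adjoints $\phi_!,\psi_!$ as colimits over classifiers, provided by Theorem 6.17. First I would take an arbitrary algebra $Y\in\Alg_{\Cc}(\VV)$ and compute both sides of the transformation $\psi^*(\mathbf{bc})$ pointwise, i.e. as covariant presheaves on the object set of $\Cc$. By the colimit formula for induced algebras, the underlying object of $\phi_!(Z)$ is $\colim_{\Bb^\Aa}\widetilde{Z}$ for $Z\in\Alg_{\Aa}(\VV)$, and similarly for $\psi_!$ over $\Dd^\Cc$. After applying the restriction functors $\gamma^*$ and $\psi^*$ (which act by reindexing along the object maps), the two sides become colimits of the same diagram $\widetilde{\beta^* Y}$ indexed over the underlying categories of $\gamma^*(\Bb^\Aa)$ and $\psi^*(\Dd^\Cc)$ respectively, evaluated objectwise on $\Cc$.

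The semiexactness hypothesis is precisely what lets me compare these two colimits. The categorical $\Cc$-algebra $\tau$ with its map $i:\tau\to\psi^*(\Dd^\Cc)$ and the composite in \eqref{semiinduced map} whose evaluation on each object of $\Cc$ is a \emph{final} functor give, by the standard fact that colimits are unchanged under restriction along final functors, a canonical isomorphism
\begin{equation*}
\colim_{\tau}\, \widetilde{\beta^* Y}\;\cong\;\colim_{\gamma^*(\Bb^\Aa)}\,\widetilde{\beta^* Y}\;\cong\;\gamma^*\phi_!\beta^* Y,
\end{equation*}
evaluated objectwise. So the map $i$ induces, on taking colimits, a morphism from $\psi^*\psi_!\beta^* Y = \colim_{\psi^*(\Dd^\Cc)}\widetilde{\beta^* Y}$ to $\gamma^*\phi_! \beta^* Y$; tracing through the construction of $\mathbf{bc}$ as the pasting of unit and counit, this induced morphism is exactly $\psi^*(\mathbf{bc})$ evaluated at $Y$. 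To produce the desired \emph{section}, I would run the comparison in the reverse direction: the map $i:\tau\to\psi^*(\Dd^\Cc)$ induces $\colim_\tau\to\colim_{\psi^*(\Dd^\Cc)}$, i.e. a morphism $\gamma^*\phi_!\beta^* Y\to\psi^*\psi_!\beta^* Y$, and composing with $\psi^*(\mathbf{bc})$ gives an endomorphism of $\gamma^*\phi_!\beta^* Y$ that is the identity because the finality isomorphism above identifies the round trip with the identity on $\colim_\tau$. This is where the pullback condition \eqref{exact fibration} enters: it guarantees that $i$ and the induced comparison are compatible with the $\Cc$-algebra structure maps, so that the section is a morphism of $\Cc$-algebras and not merely of underlying objects.

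For the final assertion, when \eqref{exact} is exact we may take $\tau=\psi^*(\Dd^\Cc)$ with $i$ the identity, as in the preceding lemma; then the induced map of classifiers \eqref{induced map} is itself final, the finality isomorphism makes $\psi^*(\mathbf{bc})$ an isomorphism directly, and the square is Beck--Chevalley. The main obstacle I anticipate is the bookkeeping in the second paragraph: verifying that the colimit-comparison morphism induced by $i$ genuinely coincides with (a section of) the abstractly-defined transformation $\mathbf{bc}$, rather than merely some map between the same objects. This requires unwinding the pasting definition of $\mathbf{bc}$ in terms of units and counits and matching it against the functoriality of $\colim$ under the map $i$, using Theorem 6.17 to identify each adjunction's left adjoint with a colimit over the appropriate classifier and Example 7.6's style of reasoning (final functors do not change colimits) to handle the finality.
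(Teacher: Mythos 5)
Your proposal follows the same skeleton as the paper's proof: compute both sides via the colimit-over-classifier formula, use invariance of colimits under final functors, build the section out of the map $i:\tau\to\psi^*(\Dd^\Cc)$ together with the levelwise finality of the composite (\ref{semiinduced map}), and obtain the exact case by taking $\tau=\psi^*(\Dd^\Cc)$ with $i$ the identity. One small slip: $\psi^*(\mathbf{bc})$ is a transformation between functors $\Alg_{\Aa}(\VV)\to\Alg_{\Cc}(\VV)$, so your test object must lie in $\Alg_{\Aa}(\VV)$, not $\Alg_{\Cc}(\VV)$; otherwise $\beta^*Y$ does not typecheck.

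The genuine gap is in the last sentence of your second paragraph, where you assert that the pullback condition (\ref{exact fibration}) ``guarantees that $i$ and the induced comparison are compatible with the $\Cc$-algebra structure maps.'' That is exactly the nontrivial content of the theorem, and you give no argument for it. Everything else in your proof happens at the level of underlying collections: a priori $\colim_\tau$ of the restricted diagram is only a collection, and even after transporting a $\Cc$-algebra structure across the finality isomorphism $\colim_\tau\cong\gamma^*\phi_!\beta^*(X)$, nothing you have said shows that the induced map $\colim_\tau\to\colim_{\psi^*(\Dd^\Cc)}$ is a morphism of $\Cc$-algebras rather than of collections. Indeed, the remark following the theorem in the paper points out that a collection-level section exists even \emph{without} the pullback condition, so any proof in which that condition only plays the role you describe cannot be invoking it correctly. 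The paper closes this gap with Weber's machinery \cite{mark weber}: the algebra structure map $\Cc(\psi^*(\Dd^\Cc))\to\psi^*(\Dd^\Cc)$ is a discrete fibration, so the pullback square (\ref{exact fibration}) exhibits $i$ as an exact morphism of categorical $\Cc$-algebras; exactness yields an algebraic left extension along $i$, which says precisely that the pointwise colimit formula computes the left Kan extension \emph{inside} the category of $\Cc$-algebras, so that $\colim_\tau$ is the underlying object of a $\Cc$-algebra and the comparison map to $\psi^*\psi_!\beta^*(X)$ is a $\Cc$-algebra morphism. Supplying this argument (or an equivalent one) is what your proposal is missing; without it you only prove the weaker, collection-level statement.
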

\begin{proof}  Let us start from the second part of the Theorem. Let $X\in \Alg_{\Aa}(\VV)$ and $\tilde{X}:\Bb^\Aa\to \VV$ be the morphism of categorical $\Bb$-algebras representing $X.$ Then 
 the underlying object of $\phi_!(X)$ is isomorphic to $\colim_{\Bb^\Aa}\tilde{X}.$ Thus the underlying object of $\alpha^*(\phi_!(X))$ is isomorphic to $\alpha^*(\colim_{\Bb^\Aa}\tilde{X}).$ Notice that $\alpha^*$ in the last formula means, in fact, the composite of $\alpha^*$ and the forgetful functor to the category of collections. This is clearly the same as $\colim_{\alpha^*(\Bb^\Aa)}\alpha^*(\tilde{X}).$ Since (\ref{induced map}) is final such a colimit
is isomorphic to  $\colim_{\alpha^*(\Dd^\Cc)}\alpha^*(\tilde{X})$ which in turn is isomorphic to the underlying object of $\psi_!(\beta^*(X)).$ 
     
For the first statement of the Theorem it is enough to observe that the pullback condition of \ref{exact fibration} assures us that this is  an exact square in the sense of \cite{mark weber} because the right vertical arrow is the discrete fibration \cite[Proposition 4.3.4]{mark weber}. Hence, $i:\tau\to \psi^*(\Dd^\Cc)$ is an exact morphism of $\Cc$-algebras in the sense of Weber \cite[Definition 2.4.4]{mark weber} and, therefore, any morphism from $\tau$ to an algebraically cocomplete categorical $\Cc$-algebra admits an algebraic left extension along $i$  \cite[Corollary 2.4.5]{mark weber}. This last property means that we still can use the usual pointwise left Kan extension formula on the underlying collections to compute the left Kan extension in the category of $\Cc$-algebras. In particular, the colimit over restriction along $i$ is the underlying object of a $\Cc$-algebra and, moreover, the resulting morphism from this $\Cc$-algebra to $\psi^*\psi_!\beta^*(X)$ is a $\Cc$-algebra morphism.  Thus we can proceed with the proof like before with the difference that we can only claim that the Beck-Chevalley map is a retraction of $\Cc$-algebras.  

\end{proof}

\begin{remark}  It is important to notice that if we drop the pullback condition (\ref{exact fibration}) in Definition \ref{semiexactdef} we still can claim that the Beck-Chevalley map has a section but this section would be just a section on the level of underlying collections. But this is not enough for the purpose of this paper. We really need this section to be a $\Cc$-algebra morphism.  
\end{remark}

\section{Unary tame substitudes}  \label{sec:unary}

In this section we generalise the theory of tame  polynomial monads developed in \cite{batanin-berger}.  
From now on we will assume that all our substitudes are  $\Sigma$-free.  We will also assume that the unit of the substitude $\eta:A\to P_1$ is faithful (and it is the identity on objects by definition). 

 \begin{remark}
Many constructions below can be implemented without these two assumptions but this  requires much more complicated and heavier classifier techniques  developed by Mark Weber in \cite{mark weber}. We choose to work in a simplified situation because the formulations and proofs are more transparent with these assumptions and because the majority of our examples  are from this class.
 \end{remark}

\subsection{Fiberwise disconnected morphisms of polynomial monads} 

\begin{defin}  Let $\phi: \Ss\to \Tt$ be a cartesian morphism of polynomial monads. 
It will be called fiberwise disconnected  if the following square of cartesian morphisms is semiexact:

 \begin{equation}\label{exact disconnected}\xymatrix{
 \Ss 
 \ar@{<-}@<0pt>[d]_{\epsilon_\Ss}
\ar@<0pt>[r]^{\phi}
& 
\Tt 
 \ar@{<-}@<0pt>[d]^{id} \\
\Ss_0 \ar@<2.5pt>[r]^{}
& 
\Tt}
\end{equation}

\end{defin}

Before formulating the following lemma we recall that we continue the use of Agreement (\ref{agreement}) when we speak about classifiers as a single category.
\begin{lem} The following conditions are equivalent: \begin{enumerate}
\item $\phi$ is fiberwise disconnected; \item The classifier $\Tt^\Ss$ contains a discrete final subcategory; \item 
There is a reflective discrete subcategory in $\Tt^\Ss;$ 
\item The connected component  functor  $\Tt^\Ss \to \pi_0(\Tt^\Ss)$ has a right adjoint; \item
The category  $\Tt^\Ss$ is   a coproduct of categories with terminal objects. \end{enumerate} 
\end{lem}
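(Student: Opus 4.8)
The plan is to prove the chain of equivalences by establishing a cycle of implications, using the categorical characterization of the classifier $\Tt^\Ss$. The key structural fact I would exploit is that, by the definition of fiberwise disconnected, condition (1) is equivalent to the semiexactness of the square (\ref{exact disconnected}). Unpacking Definition \ref{semiexactdef} for this particular square (where the bottom-left monad is $\Ss_0$, the right vertical is the identity, and so the relevant classifier on the bottom is $\Tt^{\Ss_0}$), semiexactness produces a categorical $\Ss_0$-algebra $\tau$ together with a $\Ss_0$-functor $i:\tau\to \Tt^\Ss$ whose evaluation on each object is final, subject to the pullback condition (\ref{exact fibration}). Since $\Ss_0$ is the discrete monad on the objects of $\Ss$, a categorical $\Ss_0$-algebra is just a discrete-indexed family of categories, and the pullback condition forces $\tau$ to be \emph{discrete}. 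This gives the equivalence (1)$\Leftrightarrow$(2): a discrete final subcategory.

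For the remaining equivalences I would argue within the $2$-category of small categories, reducing everything to the shape of $\Tt^\Ss$. First I would show (2)$\Rightarrow$(5): a discrete subcategory $D\hookrightarrow \Tt^\Ss$ that is final must meet every connected component, and finality of a \emph{discrete} subcategory is a strong condition—for each object $c$ the comma category $c/D$ must be connected and nonempty, which forces each component of $\Tt^\Ss$ to contain exactly one object of $D$ that is terminal in that component. Hence $\Tt^\Ss$ decomposes as a coproduct of categories each with a terminal object, giving (5). The reverse (5)$\Rightarrow$(2) is immediate: take $D$ to be the discrete subcategory on the terminal objects of the components, which is final because each inclusion of a terminal object into its component is final and finality is stable under coproducts. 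For (3), (4) I would observe that a reflective discrete subcategory is exactly a discrete subcategory whose inclusion has a left adjoint; by the dual of the standard fact that a left adjoint to a full inclusion exhibits reflections, and since the reflector of a discrete subcategory of a coproduct-of-terminal-objects category is precisely the map sending an object to the terminal object of its component, I can identify (3) with (5). Finally (4) is a reformulation of (5) in the language of the connected-component functor: $\Tt^\Ss\to \pi_0(\Tt^\Ss)$ admits a right adjoint exactly when each fiber (component) has a terminal object, which is the classical criterion for the existence of a right adjoint to $\pi_0$ landing in a discrete category.

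The cleanest organization is therefore the cycle $(1)\Leftrightarrow(2)$, then $(2)\Leftrightarrow(5)$, then $(5)\Leftrightarrow(3)$ and $(5)\Leftrightarrow(4)$, with $(5)$ serving as the central "normal form." The main obstacle I anticipate is the careful unpacking of semiexactness in (1)$\Leftrightarrow(2)$: one must check that the pullback square (\ref{exact fibration}), together with the fact that $\tau$ is a $\Ss_0$-algebra, genuinely forces $\tau$ to be discrete rather than merely a disjoint union of contractible pieces, and that the object-wise finality condition translates correctly into finality of the inclusion $i$ after applying Agreement \ref{agreement} (viewing the classifier as a single category). I would also need to be attentive to the direction of finality—whether the relevant comma categories are $c/D$ or $D/c$—since this determines whether the distinguished objects in each component are terminal or initial; the semiexactness setup via left Kan extensions (as in Theorem \ref{bc section}) fixes this to terminal objects, matching condition (5). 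Once these finality/discreteness translations are pinned down, the remaining implications are routine applications of standard adjoint-functor and connected-component arguments.
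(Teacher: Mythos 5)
Your proofs of the equivalences (2) $\Leftrightarrow$ (3) $\Leftrightarrow$ (4) $\Leftrightarrow$ (5) are correct, and they are a genuine (if minor) departure from the paper, which simply cites \cite[Lemma 7.6]{batanin-berger} for these; your observation that a comma category into a discrete subcategory is itself discrete, so that finality forces it to be a singleton, is exactly the right way to make (5) the normal form. The converse direction (2) $\Rightarrow$ (1) is also unproblematic.

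The gap is in (1) $\Rightarrow$ (2), which is the only implication the paper proves in detail. You assert that the pullback condition (\ref{exact fibration}) forces $\tau$ to be discrete. It does not: in the square (\ref{exact disconnected}) the bottom-left monad is $\Ss_0$, the \emph{identity} monad on $[S_0,\Set]$, so $\Ss_0(\tau)=\tau$, all algebra structure maps appearing in (\ref{exact fibration}) are identities, and that square is a pullback for \emph{every} categorical $\Ss_0$-algebra $\tau$. The condition is vacuous in this situation, so nothing in your argument rules out a non-discrete $\tau$ (for instance a disjoint union of categories with terminal objects), and your construction of a discrete final subcategory stalls. The discreteness you need comes instead from a datum you quietly discarded: Definition \ref{semiexactdef} provides $i\colon\tau\to\psi^*(\Dd^\Cc)$, which for this square is $\psi^*(\Tt^{\Ss_0})$, not a functor $\tau\to\Tt^\Ss$. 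The classifier $\Tt^{\Ss_0}$ is a discrete category whose canonical map to $\Tt^\Ss$ is the identity on objects, so the image $\kappa$ of $i$ is a discrete subcategory of $\Tt^\Ss$; and since the levelwise final composite $\tau\to\Tt^\Ss$ factors through $\kappa$ by a functor surjective on objects, $\kappa$ is final. This is precisely the paper's argument; substituting it for your discreteness claim repairs the proof, after which the rest of your cycle of implications goes through.
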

\begin{proof} The equivalence of (2), (3), (4), (5) follows from 
\cite[Lemma 7.6]{batanin-berger}. 

To prove that (1) implies (2) observe that the map of classifiers $\Tt^{\Ss_0} \to \Tt^\Ss$ is the identity on objects but $\Tt^{\Ss_0}$  is discrete. Let $i:\tau\to \Tt^{\Ss^0}$ be the functor from the definition of semiexactness. The condition that $\tau$ is a $\Ss_0$-algebra just means that this is a $\Ss_0$-family of categories. We can then factor the $\Ss_0$-algebra morphism $i$ as $\tau\to\kappa \subset \Tt^{\Ss_0},$ where $\kappa$ is the  image of $i.$ Obviously $\kappa\subset \Tt^{\Ss}$ is final. The inverse implication is clear. \end{proof}

To shorten our wording in the rest of the paper we will often call a fiberwise disconnected morphism simply {\it disconnected}.

 \subsection{Tame polynomial monads}
\label{tame} Let us recall the definition of tameness for polynomial monads from \cite{batanin-berger}. 

Let $\Tt$ be a finitary monad on a cocomplete category $\CC$. Following \cite{batanin-berger} we denote by $\Tt+1$ the finitary monad on $\CC\times\CC$ given by \begin{align*}(\Tt+1)(X,Y)&=(\Tt X,Y)\\(\Tt+1)(\phi,\psi)&=(\Tt\phi,\psi)\end{align*} with evident multiplication and unit. Let $U_\Tt$ be the forgetful functor $\Alg_\Tt\to \CC$ and $F_\Tt$ be the free $\Tt$-algebra functor. The functor $R: \Alg_\Tt\to \Alg_\Tt\times\CC, \ R(X) = (X,U_\Tt(X))  $  has a left adjoint $L(X,K) = X\sqcup F_\Tt(K)  $ where the coproduct is taken in the category of $\Tt$-algebras. We have a commutative square of adjunctions:
\begin{equation}\label{adjoint1} \xygraph{!{0;(3.5,0):(0,.5)::}
{\Alg_\Tt \times \CC}="p0" [r] {\Alg_\Tt}="p1" [d] {\CC}="p2" [l] {\CC \times \CC}="p3"
"p0":@<-1ex>@{<-}"p1"_-{R}|-{}="cp":@<1ex>"p2"^-{U_\Tt}|-{}="ut":@<1ex>"p3"^-{\Delta_{\CC}}|-{}="c":@<-1ex>@{<-}"p0"_-{U_\Tt \times \textnormal{id}_{\CC}}|-{}="us"
"p0":@<1ex>"p1"^-{L}|-{}="dp":@<-1ex>@{<-}"p2"_-{F_\Tt}|-{}="ft":@<-1ex>@{<-}"p3"_-{- \sqcup -}|-{}="d":@<1ex>"p0"^-{F_\Tt \times \textnormal{id}_{\CC}}|-{}="fs"
"dp":@{}"cp"|-{\perp} "d":@{}"c"|-{\perp} "fs":@{}"us"|-{\dashv} "ft":@{}"ut"|-{\dashv}}
\end{equation}
 If $\CC$ has pullbacks that commute with coproducts, and $\Tt$ is a cartesian monad, then  $\Tt+1$ is a cartesian monad as well, and the adjoint square (\ref{adjoint1}) induces a cartesian morphism  $\Tt+1 \to \Tt.$

If $\Tt$ is a polynomial monad on $\CC=\Set/\Tt_0$ generated by the polynomial 
\[ \xygraph{*{\xybox{\xygraph{!{0;(1.75,0):} {\Tt_0}="p0" [r] {E}="p1" [r] {B}="p2" [r] {\Tt_0}="p3" "p0":@{<-}"p1"^-{s}:"p2"^-{p}:"p3"^-{t}}}}}
 \]
then $\Tt+1$ is a polynomial monad on $\Set/\Tt_0\sqcup \Tt_0 $ generated by the polynomial: 
\[ \xygraph{*{\xybox{\xygraph{!{0;(1.75,0):} {\Tt_0 \sqcup \Tt_0}="p0" [r] {E \sqcup \Tt_0}="p1" [r] {B \sqcup \Tt_0}="p2" [r] {\Tt_0 \sqcup \Tt_0}="p3"
"p0":@{<-}"p1"^-{s \sqcup id}:"p2"^-{p \sqcup id}:"p3"^-{t \sqcup id}}}}} \]
and the adjoint square (\ref{adjoint1}) for a polynomial monad $\Tt$  is induced by the following cartesian morphism  of polynomials 
\[ \xygraph{!{0;(2,0):(0,.6667)::} {\Tt_0 \sqcup \Tt_0}="p0" [r] {E \sqcup \Tt_0}="p1" [r] {B \sqcup \Tt_0}="p2" [r] {\Tt_0 \sqcup \Tt_0}="p3" [d] {\Tt_0}="p4" [l] {B}="p5" [l] {E}="p6" [l] {\Tt_0}="p7" "p0":@{<-}"p1"^-{s \sqcup id}:"p2"^-{p \sqcup id}:"p3"^-{t \sqcup id}:"p4"^-{\nabla_I}:@{<-}"p5"^-{t}:@{<-}"p6"^-{p}:"p7"^-{s}:@{<-}"p0"^-{\nabla} "p1":"p6"_-{\psi} "p2":"p5"^-{\phi} "p1":@{}"p5"|-{\textnormal{pb}}} \]
in which $\nabla$ is the folding map, and $\phi$ (resp. $\psi$) is the identity on $B$ (resp. $E$) and the unit  $\eta$ of $\Tt$ on $\Tt_0$.

\begin{remark}At this moment it will be convenient  to change a notation from \cite{batanin-berger}. So, we put:
$$\Tt + \Tt_0 := \Tt+1.$$   \end{remark}

\begin{defin}\cite{batanin-berger}\label{tame monad} A polynomial monad $\Tt$ is said to be \emph{tame} if the  morphism $\Tt+\Tt_0\to \Tt$ is  disconnected.\end{defin}

The $\Tt$-algebra  $\Tt^{\Tt +\Tt_0}$ classifies the so-called {\it semifree coproducts}, that is, the coproducts  $X\sqcup F_\Tt(K)$ in the category of $\Tt$-algebras  where $X$ is an arbitrary $\Tt$-algebra and $F_\Tt(K)$ a free $\Tt$-algebra on a collection $K\in [T_0,\Set]$. 
This implies that for a pair $(X,K)\in \Alg_\Tt\times \Alg_{\Tt_0}$ there is a canonical functor $\widetilde{(X,K)}:\Tt^{\Tt +\Tt_0} \to \Set$ such that 
$$U_\Tt(X\sqcup F_\Tt(K)) \cong \colim_{\Tt^{\Tt +\Tt_0}} \widetilde{(X,K)}.$$

The classifier $\Tt^{\Tt +\Tt_0}$
can be described in terms of  graphical language based on trees \cite{batanin-berger}. The set of operations $B\sqcup \Tt_0$ consists of corollas of two types
\begin{equation} \label{Xoperation} \xygraph{{\xybox{\xygraph{!{0;(.7,0):(0,1)::} {\scriptstyle{b}} *\xycircle<6pt>{-} (-[l(2)u] {\scriptstyle{X}},-[ul] {\scriptstyle{X}},-[u] {\scriptstyle{X}},-[ru] {\scriptstyle{X}},-[r(2)u] {\scriptstyle{X}},-[d] {\scriptstyle{X}})}}}
[r(2)] {\textnormal{and}} [r(1.5)]
{\xybox{\xygraph{!{0;(.7,0):(0,1)::} {\scriptstyle{1_i}} *\xycircle<6pt>{-} (-[u] {\scriptstyle{K}},-[d] {\scriptstyle{K}})}}}} \end{equation}
where $b\in B$ and the number of the inputs in the corolla is equal to $|p^{-1}(b)|.$ In the second corolla $1_i\in B$ for $i\in \Tt_0$ represents the units of $B=\Tt(1).$

Similarly, an object $\mathbf{b}$ of $\Tt^{\Tt +\Tt_0}$ is then represented by a corolla
\begin{equation}\label{XKoperation} \xygraph{!{0;(.7,0):(0,1)::} {\scriptstyle{b}} *\xycircle<6pt>{-} (-[l(2)u] {\scriptstyle{K}},-[ul] {\scriptstyle{X}},-[u] {\scriptstyle{X}},-[ru] {\scriptstyle{K}},-[r(2)u] {\scriptstyle{K}},-[d])} \end{equation}
with incoming edges coloured by $X$ and $K$.  The $X$-edges correspond to the operations on the $\Tt$-algebra summand of the semi-free coproduct, while the $K$-edges correspond to the free summand. A morphism $\mathbf{b}'\to\mathbf{b}$ in $\Tt^{\Tt +\Tt_0}$ is given by a set of elements $b_1,\ldots,b_k\in B$, one for each $X$-coloured edge of $\mathbf{b}$, such that $b'= b(1,\ldots,b_1,1,\ldots, b_k,\ldots,1)$, where the $1'$s correspond to $K$-edges and the $b_i'$s correspond to $X$-edges of $\mathbf{b}$. Given such a map, the corolla representing $\mathbf{b}'$ is obtained from the corolla representing $\mathbf{b}$ by replacing the $i$-th $X$-edge of $\mathbf{b}$ with as many $X$-edges as the corresponding operation $b_i\in B$ has elements in its fiber $p^{-1}(b_i)$.

\subsection{Unary tame substitudes} 
 Let $(P,A)$ be a  substitude in $\Set$.  
We can now describe a new polynomial monad which we denote $\Pp + \Aa.$  
As before, let $\Pp$ be represented by the polynomial 
\begin{equation}\label{P+A} \xygraph{*{\xybox{\xygraph{!{0;(1.75,0):} {A_0}="p0" [r] {E}="p1" [r] {B}="p2" [r] {A_0.}="p3" "p0":@{<-}"p1"^-{s}:"p2"^-{p}:"p3"^-{t}}}}}
 \end{equation}
  and let $\Aa$ be the polynomial monad which corresponds to the small category $A.$ The unit of the substitude $(P,A)$ generates  the canonical morphism of polynomial monads $\eta:\Aa \to \Pp.$

 Then $\Pp + \Aa$ is the monad given by the polynomial 
\[ \xygraph{*{\xybox{\xygraph{!{0;(1.75,0):} {A_0 \sqcup A_0}="p0" [r] {E \sqcup A_1}="p1" [r] {B \sqcup A_1}="p2" [r] {A_0 \sqcup A_0}="p3"
"p0":@{<-}"p1"^-{s \sqcup s_A}:"p2"^-{p \sqcup id}:"p3"^-{t \sqcup t_A}}}}} \]
where $A_1$ is the set of morphisms of the category $A$ and $s_A$ and $t_A$ are source and target maps in $A.$

As previously, there is a cartesian morphism  of polynomial monads 
\[ \xygraph{!{0;(2,0):(0,.6667)::} {A_0 \sqcup A_0}="p0" [r] {E \sqcup A_1}="p1" [r] {B \sqcup A_1}="p2" [r] {A_0 \sqcup A_0}="p3" [d] {A_0}="p4" [l] {B}="p5" [l] {E}="p6" [l] {A_0}="p7" "p0":@{<-}"p1"^-{s \sqcup s_A}:"p2"^-{p \sqcup id}:"p3"^-{t \sqcup t_A}:"p4"^-{\nabla'}:@{<-}"p5"^-{t}:@{<-}"p6"^-{p}:"p7"^-{s}:@{<-}"p0"^-{\nabla'} "p1":"p6"_-{\psi'} "p2":"p5"^-{\phi'} "p1":@{}"p5"|-{\textnormal{pb}}} \]
in which $\nabla'$ is the identity on each copy of $A_0$, and $\phi$ (resp. $\psi$) is the identity on $B$ (resp. $E$) and on $A_1$ is induced by the unit of substitude.

We now have the following square of adjunctions generated by the cartesian map of monads above: 
\begin{equation}\label{adjoint} \xygraph{!{0;(3.5,0):(0,.5)::}
{\Alg_P \times [A,\Set]}="p0" [r] {\Alg_P}="p1" [d] {[A_0,\Set]}="p2" [l] {[A_0,\Set] \times [A_0,\Set]}="p3"
"p0":@<-1ex>@{<-}"p1"_-{R_P}|-{}="cp":@<1ex>"p2"^-{\eta_0^*}|-{}="ut":@<1ex>"p3"^-{\Delta_{}}|-{}="c":@<-1ex>@{<-}"p0"_-{\eta_0^* \times i^*}|-{}="us"
"p0":@<1ex>"p1"^-{L_P}|-{}="dp":@<-1ex>@{<-}"p2"_-{(\eta_0)_!}|-{}="ft":@<-1ex>@{<-}"p3"_-{- \sqcup -}|-{}="d":@<1ex>"p0"^-{(\eta_0)_! \times i_!}|-{}="fs"
"dp":@{}"cp"|-{\perp} "d":@{}"c"|-{\perp} "fs":@{}"us"|-{\dashv} "ft":@{}"ut"|-{\dashv}}
\end{equation}
Here, the functor $R_P: \Alg_P\to \Alg_P\times [A,\Set]$ is $R(X) = (X,\eta^*(X))  $ and its  left adjoint is $L(X,K) = X\sqcup \eta_!(K) ,$ where $\eta^*$ is the forgetful functor from $\Alg_P$ to $[A,\Set]$ and $\eta_!$ its left adjoint as usual.

We now can form a new $P$-algebra   $\Pp^{\Pp+\Aa}$ which classifies the coproducts of $P$-algebras of the form  $X\sqcup \eta_!(K),$ whereas  $\Pp^{\Pp+\Aa_0}$ is the classifier for semifree coproducts of $P$-algebras $X\sqcup (\eta_0)_!(K)$ as discussed after Definition \ref{tame monad}. 
 
The morphism $\Pp+\Aa_0\to \Pp$ factors as $\Pp+\Aa_0\to \Pp+\Aa\to \Pp$ and we have an induced 
map of classifiers
$$\Pp^{\Pp+\Aa_0}\to \Pp^{\Pp+\Aa}.$$  
The classifier $\Pp^{\Pp+\Aa}$ has the same objects as $\Pp^{\Pp+\Aa_0}$ but more morphisms. Like in 
$\Pp^{\Pp+\Aa_0}$, a morphism $\mathbf{b}'\to\mathbf{b}$ is given by a set of elements $b_1,\ldots,b_k\in B$  one for each $X$-coloured edge of $\mathbf{b}$, but also a list of elements  $\alpha_1,\ldots,\alpha_l \in A_1$ for each $K$-coloured edge such that $b'= b(\alpha_1,\ldots,b_1,\alpha_i,\ldots, b_k,\ldots,\alpha_l)$, where the $\alpha_i$ correspond to $K$-edges and the $b_i'$s correspond to $X$-edges of $\mathbf{b}$. The corolla representing $\mathbf{b}'$ is obtained from the corolla representing $\mathbf{b}$ by replacing the $i$-th $X$-edge of $\mathbf{b}$ with as many $X$-edges as the corresponding operation $b_i\in B$ has elements in its fiber $p^{-1}(b_i)$ as in $\Pp^{\mathcal{P}_0+\Aa_0}.$
 
Replacing the monad $\Pp$ by the monad $\Aa$ in the construction (\ref{P+A})  we obtain a new monad $\Aa+\Aa$ over $\Pp.$ 
 We  have a commutative triangle of cartesian maps of polynomial monads: 
 \begin{equation}\label{triangle}
	\xymatrix{
		\Aa+\Aa\ar[rr] \ar[rd] && \Pp+\Aa \ar[ld] \\
		& \Pp}
	\end{equation}
 
 This triangle induces a morphism of  $\Pp^{\Aa+\Aa}$  to $\Pp^{\Pp+\Aa}$ which is the identity on objects and injective on morphisms. So, we can consider $\Pp^{\Aa+\Aa}$ as a subcategory of  $\Pp^{\Pp+\Aa}.$  We will call a morphism $\mathbf{b}'\to\mathbf{b}$ in $\Pp^{\Pp+\Aa}$   {\it unary}   if it belongs to $\Pp^{\Aa+\Aa}.$ Explicitly, a morphism $b'= b(\alpha_1,\ldots,b_1,\alpha_i,\ldots, b_k,\ldots,\alpha_l)$ is unary if      all elements  $b_1,\ldots,b_k\in B$ are  unary operations in $P$ which come from  morphisms of $A.$ 
 
 \begin{defin} A substitude $(P,A)$  is unary tame if the square 
  \begin{equation}\label{exact unary}\xymatrix{
 \Pp+\Aa 
 \ar@{<-}@<0pt>[d]_{\beta}
\ar@<0pt>[r]^{\phi}
& 
\Pp 
 \ar@{<-}@<0pt>[d]^{id} \\
\Aa + \Aa \ar@<2.5pt>[r]^{\psi}
& 
\Pp}
\end{equation} 
 is semiexact.
\end{defin} 

The following Lemma provides an elementary criteria for unary tameness. It will be very useful in practice when we discuss our applications later. 

 \begin{lem}\label{a+a} A substitude $(P,A)$ is unary tame if and only if there exists a morphism of  $A$-presheaves in $\Cat$   $$\lambda\to \eta^*( \Pp^{\Aa+\Aa})$$ such that the composite $\lambda\to \eta^*( \Pp^{\Aa+\Aa}) \to \eta^*( \Pp^{\Pp+\Aa})$ is a levelwise final functor and the square
 \begin{align} \label{exact fibration u}
\xymatrix{
\Aa(\lambda) \ar[r]^{} \ar[d]_{} & \Aa(\eta^*(\Pp^{\Aa+\Aa})) \ar[d]^{} \\
\lambda \ar[r]_{} & \eta^*(\Pp^{\Aa+\Aa})}
\end{align} 
  is a pullback.
 \end{lem}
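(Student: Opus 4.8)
The plan is to unwind the definition of unary tameness (semiexactness of the square (\ref{exact unary})) and match it term by term against the data requested in the statement. By Definition \ref{semiexactdef} applied to (\ref{exact unary}), the substitude $(P,A)$ is unary tame precisely when there is a categorical $(\Aa+\Aa)$-algebra $\tau$ together with an $(\Aa+\Aa)$-functor $i:\tau\to \psi^*(\Pp^{\Aa+\Aa})$ whose composite into $\gamma^*(\Pp^{\Pp+\Aa})$ (here $\gamma$ is the diagonal leg of the triangle (\ref{triangle})) is objectwise final, and such that the analogue of the pullback square (\ref{exact fibration}) holds. So the first step is simply to write out what each of these ingredients amounts to when $\Cc = \Aa+\Aa$ and $\Dd = \Aa+\Aa$, $\Bb=\Pp+\Aa$, $\Aa=\Pp$ in the notation of (\ref{exact}).

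Next I would translate data on $(\Aa+\Aa)$-algebras into data on $A$-presheaves in $\Cat$. The key structural fact I want to invoke is that, because $\eta:\Aa\to\Pp$ is the identity on objects and the classifiers $\Pp^{\Aa+\Aa}$ and $\Pp^{\Pp+\Aa}$ are $P$-algebras in $\Cat$, their restriction along $\eta$ gives $A$-presheaves in $\Cat$; an $(\Aa+\Aa)$-algebra structure on $\tau$ over $\psi^*(\Pp^{\Aa+\Aa})$ is the same thing as an $A$-presheaf $\lambda$ in $\Cat$ over $\eta^*(\Pp^{\Aa+\Aa})$. Under this dictionary the object $\tau$ of the general definition becomes the $A$-presheaf $\lambda$, the map $i$ becomes the morphism $\lambda\to\eta^*(\Pp^{\Aa+\Aa})$, the objectwise-finality condition on $\tau\to\gamma^*(\Pp^{\Pp+\Aa})$ becomes the levelwise-finality of $\lambda\to\eta^*(\Pp^{\Aa+\Aa})\to\eta^*(\Pp^{\Pp+\Aa})$, and the pullback square (\ref{exact fibration}) becomes exactly (\ref{exact fibration u}). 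This gives both implications at once once the translation is set up, so the structure of the proof is: (i) unwind the definition, (ii) install the dictionary between $(\Aa+\Aa)$-algebras and $A$-presheaves in $\Cat$, (iii) read off that the two lists of conditions coincide.

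The main obstacle, and the step deserving the most care, is verifying that ``objectwise final'' in the sense of Definition \ref{semiexactdef} (finality of the evaluation on each object of $\Cc=\Aa+\Aa$) really does correspond to ``levelwise final'' as an $A$-presheaf map, i.e.\ that the objects of the polynomial monad $\Aa+\Aa$ over which one evaluates are exactly the objects $a\in A$ at which one checks levelwise finality. This requires recalling that the object set of $\Aa+\Aa$ is $A_0\sqcup A_0$ and identifying which copy governs the relevant fibers; the $K$-coloured edges carry the second copy, and it is the evaluation at colours of $A$ that matters because $\eta$ is identity-on-objects. I would also need to check that the pullback square (\ref{exact fibration}), whose vertical maps are the algebra structure maps of the categorical $\Cc$-algebra, becomes (\ref{exact fibration u}) with $\Aa$-action vertical maps — this is where the restriction along $\eta$ and the identification $\Cc=\Aa+\Aa$ must be handled so that the left vertical map in (\ref{exact fibration u}) is genuinely the $\Aa$-algebra structure morphism of $\lambda$ rather than the full $(\Aa+\Aa)$-structure. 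Once these identifications are made explicit the equivalence is essentially a restatement, so no heavy computation is expected beyond bookkeeping of colours and fibers.
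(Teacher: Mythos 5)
Your overall strategy, unwinding Definition \ref{semiexactdef} for the square (\ref{exact unary}) and translating categorical $(\Aa+\Aa)$-algebra data into presheaf data, is the same as the paper's, but the ``dictionary'' on which your proof rests contains a genuine error. A categorical $(\Aa+\Aa)$-algebra is \emph{not} a single $A$-presheaf in $\Cat$: the object set of $\Aa+\Aa$ is $A_0\sqcup A_0$, the monad acts componentwise, and its category of algebras in $\Cat$ is the product $[A,\Cat]\times[A,\Cat]$, so $\tau$ is a \emph{pair} $(\lambda,\lambda')$ of $A$-presheaves. Likewise $\psi^*(\Pp^{\Aa+\Aa})$ is the pair $(\eta^*(\Pp^{\Aa+\Aa}),\eta^*(\Pp^{\Aa+\Aa}))$, since $\psi$ restricts to $\eta$ on both copies of $\Aa$. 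Hence the slice of $(\Aa+\Aa)$-algebras over $\psi^*(\Pp^{\Aa+\Aa})$ is a product of \emph{two} copies of the slice of $[A,\Cat]$ over $\eta^*(\Pp^{\Aa+\Aa})$, and your claim that the two kinds of data are ``the same thing'' is false; the two implications of the lemma are therefore not simultaneous tautologies. Your remark about ``which copy governs the relevant fibers'' is a symptom of the same confusion: finality in Definition \ref{semiexactdef} is required at \emph{every} object of $\Cc=\Aa+\Aa$, i.e.\ at both copies of $A_0$, not at a preferred one.

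The repair is the short asymmetric argument that constitutes the paper's actual proof. Given $\lambda$ as in the statement, take the diagonal pair $\tau=(\lambda,\lambda)$ mapping to $(\eta^*(\Pp^{\Aa+\Aa}),\eta^*(\Pp^{\Aa+\Aa}))$; since $\Aa+\Aa$ acts componentwise, the pullback condition (\ref{exact fibration}) for this pair is exactly two copies of (\ref{exact fibration u}), and finality at every object of $A_0\sqcup A_0$ is exactly levelwise finality of $\lambda\to\eta^*(\Pp^{\Aa+\Aa})\to\eta^*(\Pp^{\Pp+\Aa})$, twice. Conversely, given $\tau=(\lambda,\lambda')$ witnessing semiexactness, do not try to flatten it into one presheaf: simply project onto \emph{either} component; each of $\lambda,\lambda'$ then carries a map to $\eta^*(\Pp^{\Aa+\Aa})$ whose composite to $\eta^*(\Pp^{\Pp+\Aa})$ is levelwise final and whose structure square is a pullback. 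With these few lines inserted your argument becomes correct, but as written the central identification underlying ``this gives both implications at once'' does not hold.
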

\begin{proof} If such a $\lambda$ exists we can take $\tau = (\lambda,\lambda)$ because $$\psi^*(\Pp^{\Aa+\Aa}) = (\eta^*(\Pp^{\Aa+\Aa}),\eta^*(\Pp^{\Aa+\Aa})).$$  
In the other direction, if $\tau \to \psi^*(\Pp^{\Aa+\Aa})$ satisfies the condition of semiexactness 
$\tau$ is, in fact, a pair of categorical  $A$-presheaves $(\lambda,\lambda')$ together with a morphism to $(\eta^*(\Pp^{\Aa+\Aa}),\eta^*(\Pp^{\Aa+\Aa})).$
 It is not hard to see that either of $\lambda$ and $\lambda'$  satisfies the condition of lemma.
    \end{proof}
\begin{remark}
The role of the pullback condition \ref{exact fibration u} is somewhat subtle  in this Lemma. It ensures the pullback condition \ref{exact fibration} from the definition of semiexactness which, in turn, ensures that the section  to the Beck-Chevalley map induced by this square as in Theorem \ref{bc section} is a map of $A$-presheaves and is not just a levelwise section. It will be used later in the proof of Proposition \ref{retract}.    
\end{remark}

\begin{example} If $\Pp$ is tame as polynomial monad then $(P,A_0)$ is a unary tame substitude. In this case the final subcategory in question is just the same discrete final subcategory in $\Pp^{\Pp +\Aa_0}$ from the definition of tameness.  \end{example}

 \subsection{Semifree coproduct and convolution}
 
 We start from the following elementary observation. Consider the classifier $\Pp^\Aa.$ If $X$ is an $\Aa$-algebra (in a cocomplete symmetric monoidal category $\VV$), that is, a functor $A\to \VV$, then we can construct a functor $\tilde{X}:\Pp^\Aa \to \VV.$ The colimit of this functor computes the free $P$-algebra on $X.$ Combining it with Lemma \ref{lemma:convolution2}  we get  a formula
 $$\colim_{\Pp^\Aa} \tilde{X} \cong \coprod_n \odot^n_{P}(X,\ldots,X).$$  
We also can prove this formula by direct calculations knowing that the objects of $\Pp^\Aa$ correspond to the operations in $P$ (that is, corollas as on the left in the picture (\ref{Xoperation}))  and the functor $\tilde{X}$ sends such an object to the corresponding value $X(s_1(b))\otimes\ldots\otimes X(s_k(b)).$ Notice that we either choose a planar structure on the corollas to be able to fix the order in the tensor product above or, equivalently, take the colimit over the action of groupoid $\Sm_k$ on this object (that is, use the unordered tensor product). The resulting functors are canonically isomorphic. But in the first case  we may think that the objects of $\Pp^\Aa$ are planar corollas and the order in the tensor product is taken according to their planar structures. Then we take the colimit over unary operations in $A,$ which is exactly the definition of nonsymmetric convolution.  
 Let now $\psi:\Aa+\Aa \to \Pp$ be the map of polynomial monad presented in the diagram (\ref{triangle}). Similarly to the above we have the following 
\begin{lem}\label{shufflecoproduct} Let $(X,K)\in [A,\VV]\otimes[A,\VV].$ Then the underlying $A$-presheaf  of the coproduct $\psi_!(X)\sqcup\psi_!(K)$ has the following form: 
$$\colim_{\Pp^{\Aa+\Aa}} \widetilde{(X,K)} \cong \coprod_{p,q\ge 0, \sigma\in Sh_{p,q}} \sigma(\odot^{p+q}_{P})(\underbrace{X,\ldots,X}_p,\underbrace{K,\ldots,K}_q)$$  
Here, $Sh_{p,q}$ is the set of $(p,q)$-shuffles and $$\sigma(\odot^{n}_{P})({X_1,\ldots,X_n}) = \odot^{n}_{P}(X_{\sigma(1)},\ldots,X_{\sigma(n)}).$$

\end{lem}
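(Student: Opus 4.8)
The plan is to compute the underlying $A$-presheaf of the coproduct $\psi_!(X) \sqcup \psi_!(K)$ directly as a colimit over the classifier $\Pp^{\Aa+\Aa}$, using the general left Kan extension formula for polynomial monads. By the theorem preceding Example \ref{coma as classifier}, the underlying object of a left adjoint $f_!$ applied to an algebra is the colimit $\colim_{\Tt^\Ss} \tilde{X}$ of the canonical functor representing the algebra. Here the relevant map is $\psi: \Aa + \Aa \to \Pp$, and the pair $(X,K) \in [A,\VV] \times [A,\VV]$ is an $(\Aa+\Aa)$-algebra (an algebra over the polynomial monad $\Aa+\Aa$ is precisely a pair of $A$-presheaves). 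Thus $\colim_{\Pp^{\Aa+\Aa}} \widetilde{(X,K)}$ computes the underlying $A$-presheaf of $\psi_!(X,K) = \psi_!(X) \sqcup \psi_!(K)$, which is the left-hand colimit in the statement.

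The heart of the argument is then an explicit identification of the objects and morphisms of $\Pp^{\Aa+\Aa}$ together with the functor $\widetilde{(X,K)}$, exactly as was done for $\Pp^{\Aa}$ immediately above the statement. First I would recall that the objects of $\Pp^{\Aa+\Aa}$ are corollas with incoming edges coloured by either $X$ or $K$, corresponding to operations $b \in B$ together with a labelling of each of the $|p^{-1}(b)|$ inputs by one of the two colours. Choosing planar representatives as in the discussion of $\Pp^{\Aa}$, the functor $\widetilde{(X,K)}$ sends such a corolla to the tensor product of the appropriate values $X(s_i(b))$ and $K(s_j(b))$ taken in planar order. The morphisms in $\Pp^{\Aa+\Aa}$ are the \emph{unary} ones: each edge of the source corolla is modified only by a morphism of $A$ (a unary operation coming from $A$), so no genuine grafting occurs, only the $A$-action on the colours and source objects. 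Therefore the colimit over these unary morphisms is precisely a nonsymmetric convolution $\odot^{p+q}_P$, applied to the $X$'s and $K$'s in whatever planar positions they occupy.

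The combinatorial bookkeeping that remains is to organise the decomposition according to \emph{which} inputs are $X$-coloured and which are $K$-coloured. For a fixed total arity $n = p+q$ with $p$ inputs coloured $X$ and $q$ coloured $K$, the distinct planar interleavings of the two colours are indexed exactly by the $(p,q)$-shuffles $\sigma \in Sh_{p,q}$; each shuffle records a permutation of the string $(\underbrace{X,\ldots,X}_p,\underbrace{K,\ldots,K}_q)$, which is precisely the twisted functor $\sigma(\odot^{p+q}_P)$ as defined in the statement. Summing over all $p,q \geq 0$ and all shuffles yields the claimed coproduct
\[
\colim_{\Pp^{\Aa+\Aa}} \widetilde{(X,K)} \cong \coprod_{p,q\ge 0,\, \sigma\in Sh_{p,q}} \sigma(\odot^{p+q}_{P})(\underbrace{X,\ldots,X}_p,\underbrace{K,\ldots,K}_q).
\]
The main obstacle I anticipate is not conceptual but one of care with the planar-versus-symmetric conventions: one must check that passing to planar corollas and then taking the colimit only over unary $A$-morphisms reproduces the nonsymmetric convolution $\odot^{p+q}_P$ (as opposed to the symmetrised convolution $\otimes^{p+q}_P$), and that the shuffles—rather than all of $\Sm_n$—correctly enumerate the colour-interleavings without overcounting. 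This is exactly parallel to the identification $\colim_{\Pp^\Aa} \tilde{X} \cong \coprod_n \odot^n_P(X,\ldots,X)$ established just before the lemma, so I would lean on that computation and the description of morphisms in $\Pp^{\Pp+\Aa}$ (and its unary subcategory $\Pp^{\Aa+\Aa}$) given in the preceding subsection, with the $\Sigma$-freeness of $(P,A)$ guaranteeing that the nonsymmetric convolution $\odot^k_P$ is well-defined via Lemma \ref{lemma:convolution2}.
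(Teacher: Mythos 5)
Your proposal is correct and follows essentially the same route as the paper's proof: the paper likewise reduces everything to the identification of the objects of $\Pp^{\Aa+\Aa}$ with two-coloured planar corollas (whose colour-interleavings are exactly the $(p,q)$-shuffles), with the morphisms being unary $A$-actions so that the colimit reproduces the nonsymmetric convolution as in the computation of $\colim_{\Pp^\Aa}\tilde{X}$ preceding the lemma. Your write-up merely spells out the Kan-extension and planarity bookkeeping that the paper leaves implicit.
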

\begin{proof} The only thing which requires explanation in this formula is the presence of the set of shuffles. It follows from the fact that we can take as  objects of $ 
 \Pp^{\Aa+\Aa}$ two coloured planar corollas as shown in (\ref{XKoperation}). Such  coloured planar corollas are in one to one correspondence with $(p,q)$-shuffles. 
 \end{proof}
 
This lemma implies  the following relationships between  coproduct $X\sqcup \eta_!(K)$ and convolution for a unary tame substitude.

\begin{pro}\label{retract} Let  $(P,A)$ be a unary tame substitude and let     $X\in \Alg_P(\VV)$ and $K\in [A,\VV].$  The underlying $A$-presheaf of the coproduct $X\sqcup \eta_!(K)$ is naturally a retract of 
$$\coprod_{p,q\ge 0, \sigma\in Sh_{p,q}} \sigma(\odot^{p+q}_{P})(\underbrace{\eta^*(X),\ldots,\eta^*(X)}_p,\underbrace{K,\ldots,K}_q).$$  
 
\end{pro}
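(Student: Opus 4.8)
The plan is to exhibit the underlying $A$-presheaf of $X \sqcup \eta_!(K)$ as a natural retract of the colimit computed in Lemma \ref{shufflecoproduct}, by comparing the semifree coproduct against the coproduct $\psi_!(X) \sqcup \psi_!(K)$ over the monad $\Aa + \Aa$. The key is that unary tameness gives us, via Lemma \ref{a+a}, a categorical $A$-presheaf $\lambda$ together with a morphism $\lambda \to \eta^*(\Pp^{\Aa+\Aa})$ whose composite into $\eta^*(\Pp^{\Pp+\Aa})$ is levelwise final, and satisfying the pullback condition \eqref{exact fibration u}. This is precisely the data needed to invoke Theorem \ref{bc section} for the semiexact square \eqref{exact unary}, which produces a section of the Beck-Chevalley transformation $\psi^*(\mathbf{bc})$.

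First I would set up the Beck-Chevalley situation attached to the square \eqref{exact unary}, with $\phi:\Pp+\Aa \to \Pp$ and $\psi:\Aa+\Aa \to \Pp$. Applying Theorem \ref{bc section}, the semiexactness of \eqref{exact unary} yields a section of the natural transformation $\psi^*(\mathbf{bc}): \psi^*\psi_!\beta^* \to \gamma^*\phi_!$; crucially, the pullback condition \eqref{exact fibration u} (as emphasized in the Remark following Lemma \ref{a+a}) guarantees that this section is a genuine morphism of $A$-presheaves rather than merely a levelwise section. Next I would unwind what these functors compute on the pair $(X,K) \in \Alg_P(\VV) \times [A,\VV]$. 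Under $\beta$, the object $X$ (as a $P$-algebra) and $K$ are fed into $\Pp+\Aa$; the functor $\phi_!$ applied to the corresponding $(\Pp+\Aa)$-algebra computes the semifree coproduct $X \sqcup \eta_!(K)$, so that $\gamma^*\phi_!$ records its underlying $A$-presheaf. On the other side, $\psi_!\beta^*$ computes $\psi_!(\eta^*(X)) \sqcup \psi_!(K)$ over $\Aa+\Aa$, whose underlying $A$-presheaf is exactly the shuffle sum of nonsymmetric convolutions given by Lemma \ref{shufflecoproduct}, with $\eta^*(X)$ in place of $X$.

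Combining these identifications, the section of $\psi^*(\mathbf{bc})$ becomes a natural $A$-presheaf map from the underlying presheaf of $X \sqcup \eta_!(K)$ into
$$\coprod_{p,q\ge 0,\ \sigma\in Sh_{p,q}} \sigma(\odot^{p+q}_{P})(\underbrace{\eta^*(X),\ldots,\eta^*(X)}_p,\underbrace{K,\ldots,K}_q),$$
whose composite with $\psi^*(\mathbf{bc})$ itself is the identity. This displays the semifree coproduct as a natural retract of the shuffle sum, which is the claim. The main obstacle, and the step deserving the most care, will be the precise translation between the classifier-level colimit formulas and the convolution expressions: I must verify that $\gamma^*\phi_!(X,K)$ really is the underlying presheaf of $X \sqcup \eta_!(K)$ (using the description of $\Pp^{\Pp+\Aa}$ as classifying coproducts $X \sqcup \eta_!(K)$) and that $\psi^*\psi_!\beta^*(X,K)$ matches Lemma \ref{shufflecoproduct} exactly, including the correct handling of the planar/shuffle bookkeeping so that the two colimit diagrams are indexed compatibly. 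Once these identifications are pinned down, the retraction is a formal consequence of the section produced by Theorem \ref{bc section}, and naturality follows from the naturality of $\mathbf{bc}$ and of its section.
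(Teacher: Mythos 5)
Your proposal is correct and follows essentially the same route as the paper's own proof: invoke Theorem \ref{bc section} on the semiexact square (\ref{exact unary}) to obtain a section of the Beck--Chevalley map (with the pullback condition ensuring it is a map of $A$-presheaves), identify $\gamma^*\phi_!(X,K)$ with the underlying presheaf of $X\sqcup\eta_!(K)$ and $\psi^*\psi_!\beta^*(X,K)$ with $\psi_!\eta^*(X)\sqcup\psi_!(K)$, and conclude via Lemma \ref{shufflecoproduct}. No gaps to report.
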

\begin{proof} Consider the adjunction square generated by the semiexact square (\ref{exact unary}):
\begin{equation*}\label{BCBC} \xygraph{!{0;(2.5,0):(0,.5)::}
{\Alg_{\Pp+\Aa}(\VV)}="p0" [r] {\Alg_{\Pp}(\VV)}="p1" [d] {\Alg_{\Pp} (\VV)}="p2" [l] {\Alg_{\Aa+\Aa}(\VV)}="p3"
"p0":@<-1ex>@{<-}"p1"_-{\phi^*}|-{}="cp":@<1ex>"p2"^-{\alpha^*}|-{}="ut":@<1ex>"p3"^-{\psi^*}|-{}="c":@<-1ex>@{<-}"p0"_-{\beta^*}|-{}="us"
"p0":@<1ex>"p1"^-{\phi_!}|-{}="dp":@<-1ex>@{<-}"p2"_-{\alpha_!}|-{}="ft":@<-1ex>@{<-}"p3"_-{\psi_!}|-{}="d":@<1ex>"p0"^-{\beta_!}|-{}="fs"
"dp":@{}"cp"|-{\perp} "d":@{}"c"|-{\perp} "fs":@{}"us"|-{\dashv} "ft":@{}"ut"|-{\dashv}}
\end{equation*}
Recall that in this square $\alpha_!$ and $\alpha^*$ are the identities and $\phi_! = L_P, \phi^* = R_P.$

According to Theorem (\ref{bc section}), for each $(X,K)\in \Alg_{\Pp + \Aa}$ we have a retraction of $\Aa+\Aa$-algebras:
$$ \gamma^*  \phi_!(X,K) \to \psi^* \psi_!\beta^*(X,K) \to \gamma^*  \phi_!(X,K) .$$
The first component of the object $\gamma^*  \phi_!(X,K)$ is exactly the underlying presheaf  of the coproduct $X\sqcup \eta_!(K),$ whereas 
the first component of $\psi^* \psi_!\beta^*(X,K)$ is the underlying presheaf of the coproduct $\psi_!\eta^*(X)\sqcup \psi_!(K).$
An application of Lemma  \ref{shufflecoproduct} completes the proof.

\end{proof}
\begin{example} To illustrate Proposition \ref{retract} let's see what it tells us in case of $Ass.$ The nonsymmetric convolution of $Ass$ is just the tensor product in $\VV.$ According to \cite[Section 9.2]{batanin-berger} the underlying object of the coproduct of monoids  $X$ and a free monoid on $K$ is given by $$
\coprod_{k\ge 0} U(X)\otimes (K\otimes U(X))^{\otimes^k}\subset 
\coprod_{p,q\ge 0, \sigma\in Sh_{p,q}} \sigma(\odot^{p+q}_{P})(\underbrace{U(X),\ldots,U(X)}_p,\underbrace{K,\ldots,K}_q)
$$
The retraction on a summand $\sigma(\odot^{p+q}_{P})(\underbrace{U(X),\ldots,U(X)}_p,\underbrace{K,\ldots,K}_q)$ multiplies any consecutive string of $U(X)$s between two $K$s using the multiplication in $X$ and introduces a copy of $U(X)$ between any two consecutive $K$s using the unit in $X.$ 
For instance:
$$K\otimes K\otimes U(X)\otimes U(X)\otimes U(X)\otimes K \cong I\otimes K\otimes I\otimes K\otimes (U(X)\otimes U(X)\otimes U(X))\otimes K \otimes I \to $$
$$\to U(X)\otimes K\otimes U(X)\otimes K\otimes U(X)\otimes K \otimes U(X).$$  

\end{example}

\section{Transfer theorem} \label{sec:transfer}

\subsection{Convolution and Transfer of semimodel structures } For the reader's convenience we recall standard material about Quillen functors of many variables.
Let $C_1,\ldots,C_k, C$ be a list of cocomplete  categories.  
 Let  $T:C_1\times \ldots\times C_k \rightarrow C$ be a functor. Let $f_1:K_1\rightarrow L_1,\ldots, f_k:K_k\rightarrow L_k$ be   morphisms  in $C_1,\ldots, C_k$ correspondingly.  The functor $T$ generates a commutative $k$-cube  of morphisms in $C$ as follows. The $k$-cube ${\mathtt I}^{\mathtt k}$ as a poset has vertices  the subsets of the set  $\{1,\ldots,k\}$ ordered by inclusion. Let $S\subset \{1,\ldots,k\}$ be such a vertex. We put 
 $$T(S) = T(X_1,\ldots, X_k) ,$$
 where $X_i = K_i$ if $i\notin S$  and $X_i=L_i$ if $i\in S.$ For an inclusion $j:P\subset S$ we associate the morphism 
 \begin{equation}\label{pr}  T(j) = T(h_1,\ldots,h_k)\end{equation}
 where $h_i = f_i$ if $i\notin P$  but   $i\in S$ and $h_i = id$ otherwise. 
 
 Let also ${\mathtt I_{\circ}^{\mathtt k}}$ be a punctured $k$-cube  ${\mathtt I}^{\mathtt k}\setminus \{1,\ldots,k\}.$ 
 
  \begin{defin} Let  $T:C_1\times \ldots\times C_k \rightarrow C$ be a functor. The corner map  $f_1\boxx f_2\boxx\ldots\boxx f_k$ is the morphism  
 $$colim_{{\mathtt I_{\circ}^{\mathtt k}}}T(S) \rightarrow  T(L_1,\ldots, L_k).$$ 
 \end{defin}
 
 \begin{remark} \label{ar} \rm Let   $(C,\otimes,I)$ be   a monoidal category such that $\otimes$ preserves colimits in both variables. Let $T(X,Y) = X\otimes Y$. The functor 
 $-\boxx- : Arr(C)\times Arr(C)\rightarrow Arr(C)$ is the tensor part of a monoidal structure on the category $Arr(C)$ of arrows of $C$ \cite{Muro}. 
 It is  Day's convolution product generated by the monoidal structure on the poset $0<1$, whose unit object is $1$ and whose tensor product is $max(-,-).$  More generally, if the sequence of functors $T_k:C^k\rightarrow C, k\ge 0$ is a cocomplete lax-monoidal structure on $C,$ then the sequence of functors $\boxx^k, k\ge 0$ provides a cocomplete lax-monoidal structure on $Arr(C)$ \cite{DS2}. \end{remark}

Now assume that the categories $C_i \ , \ 0\le i\le k $ and $C$ are  semimodel categories.
 
 \begin{defin}  \label{defn:left Quillen multifunctor}
 We call  $T:C_1\times \ldots\times C_k \rightarrow C$ {\em left Quillen} if it preserves colimits in each variable and $f_1\boxx f_2\boxx\ldots\boxx f_k$ is a cofibration  whenever all $f_i$ are cofibrations with  cofibrant domains and, moreover,  the morphism $f_1\boxx f_2\boxx\ldots\boxx f_k$ is  trivial cofibration  provided one of $f_i$ is a trivial cofibration. 
 \end{defin}

 \begin{remark}\rm \label{k=0}  For $k=1$ this definition states simply that $T$ preserves cofibrations and trivial cofibrations with cofibrant domains. For $k=0$  we have an empty product of categories so the functor $T$ is just a functor $T:1\rightarrow C$, i.e., an object $T(1)$ of $C$. Here $1$ is the terminal category.   The $0$-cube is a one-point poset and the punctured cube is empty. Therefore,  $T:1\rightarrow C$ is left Quillen if and only if $T(1)$ is cofibrant.\end{remark}

\begin{defin} We will call a $\Sigma$-free substitude $(P,A)$ a left Quillen substitude  with respect to a  semimodel model structure on $[A,\VV]$ if its nonsymmetric convolution forms a sequence of left Quillen functors on $[A,\VV].$

\end{defin}

\begin{example}
The substitude for the operad $\E=Ass$ is left Quillen  with respect to a semimodel structure on $\VV=[1,\VV]$ if and only if $\VV$ is a monoidal semimodel category. 

\end{example}

With this definition in hand, we are ready to state the main theorem of this section, which is a crucial building block for our applications in Part \ref{part:n-operads}. We note that the condition of the following theorem is satisfied for all the substitudes considered in Part \ref{part:n-operads}.

\begin{theorem}\label{semitransfer} Let $(P,A)$ be a $\Sigma$-free unary tame substitude whose unit is faithful and let $(\VV,\otimes,I)$ be a cocomplete symmetric monoidal category. Assume that the category $[A,\VV]$ is equipped with 
a cofibrantly  generated semimodel structure.
 Then  $\Alg_P(\VV)$ admits a semimodel structure  transferred from $[A,\VV]$ provided $(P,A)$ is left Quillen with respect to $[A,\VV].$
 
  Moreover, the forgetful functor  $U:\Alg_P \to [A,\VV]$ preserves cofibrant objects as well as  cofibrations between cofibrant objects.
\end{theorem}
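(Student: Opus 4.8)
The plan is to deduce all three assertions from the helper transfer theorem, Proposition~\ref{prop:semi-helper}, applied to the free $P$-algebra monad $\Tt_P$ on $[A,\VV]$, whose left adjoint is $F=\eta_!$. Two of its hypotheses are immediate. First, $U$ preserves filtered colimits: since $\VV$ is closed, $\otimes$ preserves colimits in each variable, so each convolution $\otimes^k_P$ preserves filtered colimits, whence by~(\ref{geometric series}) the monad $\Tt_P$ is finitary and $U$ creates filtered colimits. Second, to invoke part~(3) I will need $\Tt_P(0)$ to be cofibrant; but $\Tt_P(0)=\otimes^0_P=P(\emptyset;-)$ by Remark~\ref{0convolution}, and the left Quillen hypothesis at $k=0$ says precisely (Remark~\ref{k=0}) that this object is cofibrant. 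It therefore remains to verify the central hypothesis of Proposition~\ref{prop:semi-helper}: for every (trivial) cofibration $i:K\to L$ between cofibrant objects of $[A,\VV]$ and every pushout~(\ref{free cofibration}) --- whose top-left algebra I rename $X$, to free up the symbol $A$ for the indexing category, and which we may assume has $X$ an $FI$-cell complex with $U(X)$ cofibrant --- the map $U(p):U(X)\to U(B)$ is a (trivial) cofibration, where $B=X\sqcup_{\eta_!(K)}\eta_!(L)$.

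To control $U(B)$ I would first pass to the ``free'' situation governed by the map $\psi:\Aa+\Aa\to\Pp$ of~(\ref{exact unary}), where Lemma~\ref{shufflecoproduct} describes the analogous underlying presheaves \emph{exactly}, as shuffle-sums of the nonsymmetric convolution with no retract correction. The free analogue of the pushout defining $B$ then has underlying presheaf $\colim_n B_n$ equipped with a canonical filtration $U(X)=B_0\to B_1\to\cdots$ by the number of $\eta_!(L)$-generators used. The successive map $B_{n-1}\to B_n$ is a pushout of a coproduct, taken over the arrangements (shuffles) of $n$ new edges among $p$ copies of $U(X)$, of the $n$-fold corner map $i\boxx\cdots\boxx i$ ($n$ copies) computed in the partial functor $\odot^{\,p+n}_P(U(X),\dots,U(X),-,\dots,-)$. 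Because $\odot^{\,p+n}_P$ is left Quillen of many variables (Definition~\ref{defn:left Quillen multifunctor}), fixing the cofibrant arguments $U(X)$ leaves a left Quillen functor in the remaining $n$ variables; since $i$ is a (trivial) cofibration between cofibrant objects, each such corner map, hence each $B_{n-1}\to B_n$, and hence the sequential composite $U(X)\to\colim_n B_n$, is a (trivial) cofibration.

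It remains to transport this from the free world to $\Alg_P(\VV)$, and this is where unary tameness is used. Theorem~\ref{bc section}, applied to the semiexact square~(\ref{exact unary}), provides a section of the Beck--Chevalley map, and the pullback condition~(\ref{exact fibration u}) of Lemma~\ref{a+a} is exactly what forces this section to be a morphism of $A$-presheaves rather than a mere levelwise splitting. Using the naturality in $(X,K)$ of the retraction of Proposition~\ref{retract}, this section is compatible with the pushout defining $B$, and so exhibits the entire arrow $U(p):U(X)\to U(B)$ as a retract, in the arrow category of $[A,\VV]$, of the free arrow $U(X)\to\colim_n B_n$. Since cofibrations and trivial cofibrations are closed under retracts (axiom (M3)), $U(p)$ is a (trivial) cofibration, as required.

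The step I expect to be the main obstacle is precisely this last transport: Proposition~\ref{retract} as stated records a retraction for the \emph{coproduct} $X\sqcup\eta_!(K)$, whereas I need it compatibly with the \emph{relative} pushout $B=X\sqcup_{\eta_!(K)}\eta_!(L)$ and as a retraction of \emph{arrows}, so that axiom~(M3) applies to $U(p)$ itself and not merely to its source and target. Securing this is exactly what the $A$-presheaf-level section of Theorem~\ref{bc section} --- and hence the pullback clause of unary tameness --- is designed to provide. Granting the verified hypothesis, Proposition~\ref{prop:semi-helper} delivers simultaneously the transferred semimodel structure (part~1), the fact that $U$ carries cofibrations with cofibrant domain --- equivalently, cofibrations between cofibrant objects --- to cofibrations (part~2), and, since $\Tt_P(0)$ is cofibrant, that $U$ preserves cofibrant objects (part~3).
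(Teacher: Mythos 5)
Your reduction to Proposition \ref{prop:semi-helper}, your verification of the side conditions (finitariness of $\Tt_P$, cofibrancy of $\Tt_P(0)$ via Remarks \ref{0convolution} and \ref{k=0}), and the role you assign to corner maps and to the pullback clause of unary tameness all match the paper. The gap is in your central transport step. You propose to exhibit the whole arrow $U(p)\colon U(X)\to U(B)$, $B=X\sqcup_{\eta_!(K)}\eta_!(L)$, as a retract of a single ``free'' arrow $U(X)\to\colim_n B_n$, and you justify the existence of the section $U(B)\to\colim_n B_n$ by ``naturality in $(X,K)$ of the retraction of Proposition \ref{retract}''. This does not work. The algebra $B$ is a coequalizer of $X\sqcup\eta_!(K)\rightrightarrows X\sqcup\eta_!(L)$ in which one of the two maps is built from $\hat g\colon\eta_!(K)\to X$, i.e.\ from the counit of $\eta_!\dashv\eta^*$ and the $P$-algebra multiplication of $X$; that map is not of the form $\phi_!(\text{morphism of pairs})$, so the naturality of Proposition \ref{retract} (naturality with respect to morphisms of $\Pp+\Aa$-algebras) imposes no compatibility of the section with it. Equivalently, Theorem \ref{bc section} is stated for the square (\ref{exact unary}) governing coproducts $X\sqcup\eta_!(K)$, not for the monad $\Pp_{f,g}$ governing relative pushouts; the section it produces is a map of $A$-presheaves, which gives compatibility with unary operations but not with the multiplication-induced ($G$-generator) maps that enter the pushout. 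For the same reason your free object is underspecified: the attaching maps $(\text{corner domain})\to B_{n-1}$ of its filtration must use $g$ and the multiplication of $X$, so the filtration is not ``governed by $\psi\colon\Aa+\Aa\to\Pp$'' and Lemma \ref{shufflecoproduct} alone does not describe it.

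What is actually needed — and what the paper does — is to filter the genuine object $U(B)$ rather than a free surrogate: one needs the classifier $\Pp^{\Pp_{f,g}}$ with its three-coloured corollas and $X$-, $F$-, $G$-generators (Section \ref{freealgext}, Lemma \ref{Tfg}), the canonical degree filtration $U(B)=\colim_k S_k$ with $S_{k-1}\to S_k$ a pushout of $w_k\colon Q_k\to L_k$ (Proposition \ref{filtration}), and then, one degree at a time, a retract of arrows $w_k\to w_k'\to w_k$ where $w_k'$ is a coproduct of corner maps; the stage-$k$ section comes from finality of $\td^{(k)}$, manufactured from the $\lambda$ of Lemma \ref{a+a} by pulling back along the discrete fibration $D\colon\ho\to\cop$ and pushing out along $F$-generators. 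Once each $w_k$ is a (trivial) cofibration, $U(p)$ is one by closure of these classes under pushout and transfinite composition — no global retract is required. (Your global retract can in fact be assembled afterwards from the stage-wise retracts by induction over the filtration, but that presupposes Proposition \ref{filtration} and the degree-wise sections, i.e.\ exactly the machinery your argument skips.)
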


Notice that  according to the Remarks  \ref{0convolution} and    \ref{k=0} the underlying object of the initial $P$-algebra is cofibrant in $[A,\VV].$ This enables us to use the transfer techniques described in Theorem \ref{prop:semi-helper}. We follow the idea of the proof of the Transfer Theorem for algebras over tame polynomial monads from \cite{batanin-berger}, but with some modifications to capture the action of the unary operations of the substitude. 
 
These technical preparations are done in the Subsections \ref{freealgext} and \ref{canfilt}. In these subsections the only assumption we use is the $\Sigma$-freeness of the substitude.  The proof of Theorem \ref{semitransfer} will be completed in Subsection \ref{semitransferend}, where we actually need unary tameness. 
 
\subsection{Classifier for free algebra extensions}\label{freealgext}  Let $(P,A)$ be a substitude in $\Set.$  Let ${\mathbb P}_{f,g}$ be the category whose objects are quintuples
$(X,K,L,g,f),$  where $X$ is a $P$-algebra, $K,L$  are objects in $[A,\Set]$ and $g:K\rightarrow \eta^*(X),\,f:K\rightarrow L$ are morphisms in $[A,\Set].$ There is an obvious forgetful functor
$$\Uu_{f,g}:{\mathbb P}_{f,g} \rightarrow [A_0,\Set]\times [A_0,\Set]\times [A_0,\Set],$$
taking the quintuple $(X,K,L,f,g)$ in $\mathbb{P}_{f,g}$ to the triple $\eta^*(X),i^*(K),i^*(L)),$ 
where $i:A_0\to A$ is the inclusion of the maximal discrete subcategory of $A,$  and $\eta_0$ is the composite of the unit $\eta$ and $i$ as usual.  

\begin{pro}\label{Cart}

\begin{itemize} \item[(i)] The functor $U_{f,g}$ is monadic and the induced monad $\Pp_{f,g}$ is polynomial;
\item[(ii)] There is a commutative square of adjunctions

\begin{equation*}\label{adjointext} \xygraph{!{0;(3.5,0):(0,.5)::}
{{\mathbb P}_{f,g}}="p0" [r] {\Alg_P}="p1" [d] {[A_0,\Set]}="p2" [l] {[A_0,\Set] \times [A_0,\Set]\times [A_0,\Set]}="p3"
"p0":@<-1ex>@{<-}"p1"_-{R_P}|-{}="cp":@<1ex>"p2"^-{\eta_0^*}|-{}="ut":@<1ex>"p3"^-{\Delta_{}}|-{}="c":@<-1ex>@{<-}"p0"_-{U_{f,g}}|-{}="us"
"p0":@<1ex>"p1"^-{L_P}|-{}="dp":@<-1ex>@{<-}"p2"_-{(\eta_0)_!}|-{}="ft":@<-1ex>@{<-}"p3"_-{- \sqcup -}|-{}="d":@<1ex>"p0"^-{F_{f,g}}|-{}="fs"
"dp":@{}"cp"|-{\perp} "d":@{}"c"|-{\perp} "fs":@{}"us"|-{\dashv} "ft":@{}"ut"|-{\dashv}}
\end{equation*}

\noindent in which  $\Delta$  is the diagonal and $R_P$ is given by $$R_P(Y)= (Y,\eta^*(Y),\eta^*(Y),id_{\eta^*(Y)},id_{\eta^*(Y)}).$$

\item[(iii)]The left adjoint $L_P$ to $R_P$ is given by the following pushout in $\Alg_P$:

\begin{align*} \label{free cofibration}
\xymatrix{
\eta_!(K) \ar[r]^{\eta_!(f)} \ar[d]_{\hat{g}} & \eta_!(L) \ar[d]^{} \\
X \ar[r]^{} & L_P(X,K,L,g,f)
}
\end{align*}
\noindent in which $\hat{g}$ is the mate of $g$.
\end{itemize}

\end{pro}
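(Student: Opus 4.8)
The plan is to dispatch (iii) and (ii) first, since they are formal consequences of the defining adjunctions, and then treat the monadicity and polynomiality in (i), which carry the real content. For (iii), I would let $\hat{g}\colon\eta_!(K)\to X$ be the adjunct of $g$ under $\eta_!\dashv\eta^*$, form the displayed pushout in $\Alg_P$ (which exists as $\Alg_P$ is cocomplete), and check that it corepresents $Y\mapsto\Hom_{\Alg_P}$-data matching maps into $R_P(Y)$. Concretely, by the pushout property a morphism out of $L_P(X,K,L,g,f)$ into an algebra $Y$ is a pair consisting of $h_X\colon X\to Y$ in $\Alg_P$ and $\eta_!(L)\to Y$ whose two restrictions to $\eta_!(K)$ (along $\hat{g}$ and along $\eta_!(f)$) agree. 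Transposing the maps out of $\eta_!(L)$ and $\eta_!(K)$ across $\eta_!\dashv\eta^*$, this is exactly a pair $(h_X, h_L\colon L\to\eta^*Y)$ with $h_L\circ f=\eta^*(h_X)\circ g$, which is precisely a morphism $(X,K,L,g,f)\to(Y,\eta^*Y,\eta^*Y,\mathrm{id},\mathrm{id})=R_P(Y)$ in $\mathbb{P}_{f,g}$ (the middle component being forced to equal $h_L\circ f=\eta^*(h_X)\circ g$). Hence $L_P\dashv R_P$. For (ii), the adjunctions $(-\sqcup-)\dashv\Delta$ and $(\eta_0)_!\dashv\eta_0^*$ are standard, and the square of right adjoints commutes on the nose, $\Uu_{f,g}R_P(Y)=(\eta_0^*Y, i^*\eta^*Y, i^*\eta^*Y)=\Delta(\eta_0^*Y)$, using $i^*\eta^*=\eta_0^*$; uniqueness of adjoints then makes the square of left adjoints commute up to canonical isomorphism.

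For (i), I would first exhibit the left adjoint $F_{f,g}$ to $\Uu_{f,g}$ explicitly: on $(U,V,W)\in[A_0,\Set]^3$ set
$$F_{f,g}(U,V,W)=\big((\eta_0)_!(U)\sqcup\eta_!i_!(V),\;i_!(V),\;i_!(W)\sqcup i_!(V),\;g_0,f_0\big),$$
where $f_0$ is the coproduct inclusion and $g_0$ is the adjunct of the inclusion $\eta_!i_!(V)\hookrightarrow(\eta_0)_!(U)\sqcup\eta_!i_!(V)$. A short diagram chase shows that a morphism out of $F_{f,g}(U,V,W)$ has its $\eta_!i_!(V)$-summand on the algebra side and its $i_!(V)$-summand on the $L$-presheaf side determined by the two compatibility conditions, leaving exactly a triple in $\Hom_{[A_0,\Set]^3}((U,V,W),\Uu_{f,g}(-))$; this verifies $F_{f,g}\dashv\Uu_{f,g}$. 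Monadicity then follows from (reflexive) Beck monadicity: $\Uu_{f,g}$ is conservative because $\eta_0^*$ and $i^*$ are, and it creates reflexive coequalizers because these are computed componentwise and $\Pp$, $\Aa$ preserve them (finite dependent products commute with reflexive coequalizers in $\Set$). Finally, computing the induced monad, using $(\eta_0)_!=\eta_!i_!$, $\eta_0^*(\eta_0)_!=\Pp$ and $i^*i_!=\Aa$, gives
$$\Pp_{f,g}(U,V,W)\cong\big(\Pp(U\sqcup V),\;\Aa(V),\;\Aa(W)\sqcup\Aa(V)\big),$$
which is visibly a polynomial functor assembled from the polynomials of $\Pp$ and $\Aa$ (available since $(P,A)$ is $\Sigma$-free) together with the folding maps defining the coproducts.

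The main obstacle is precisely this last identification: checking that the unit and multiplication of $\Pp_{f,g}$ are \emph{cartesian}, so that the monad is genuinely polynomial and its algebras recover all of $\mathbb{P}_{f,g}$ --- in particular that the mixed operations feeding a $K$-colored input into a $P$-operation correctly encode the \emph{naturality} of $g$ and $f$, rather than only their $A_0$-components. This is the three-sorted ``gluing data'' analogue of the explicit cartesian-morphism-of-polynomials computations carried out for $\Tt+1$ and $\Pp+\Aa$ earlier, and I would settle it by writing down the polynomial $A_0^{\sqcup 3}\leftarrow E'\to B'\to A_0^{\sqcup 3}$ directly and verifying the requisite pullback squares.
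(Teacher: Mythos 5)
Your proposal is correct and follows essentially the same route as the paper: the paper's entire proof is a one-line citation to the analogous result for polynomial monads (\cite[Proposition 7.2]{batanin-berger}), and your explicit left adjoint $F_{f,g}$, the Beck-monadicity argument via conservativity and reflexive coequalizers, and the identification $\Pp_{f,g}(U,V,W)\cong\bigl(\Pp(U\sqcup V),\,\Aa(V),\,\Aa(W)\sqcup\Aa(V)\bigr)$ are exactly the ``completely analogous'' adaptation of that proof to the substitude setting, with the unary $\Aa$-actions inserted in the right places. The cartesianness of the unit and multiplication that you defer is indeed routine (it follows by pasting the cartesian squares of $\mu_{\Pp}$, $\mu_{\Aa}$ and of the cartesian monad map $\eta:\Aa\to\Pp$ with the pullback squares coming from disjointness and universality of coproducts in $\Set$), so this is not a gap but a standard verification carried out at a level of detail already exceeding the paper's.
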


\begin{proof} The proof is completely analogous to the proof of Proposition 7.2 in \cite{batanin-berger}

\end{proof} 

We will need an explicit description of the classifier $\Pp^{\Pp_{f,g}}.$ It coincides almost verbatim to the description of $T^{T_{f,g}}$  given in \cite[Section 7.4]{batanin-berger}. So, we recall it briefly.

The objects of $\Pp^{\Pp_{f,g}}$ are corollas decorated by the elements of $B=\Pp(1)$ with its $|p^{-1}(b)|$ incoming edges coloured by one of the three colours $X, K, L$:
\begin{equation}\label{tricolor} \xygraph{!{0;(.7,0):(0,1)::} {\scriptstyle{b}} *\xycircle<6pt>{-} (-[l(2)u] {\scriptstyle{K}},-[ul] {\scriptstyle{X}},-[u] {\scriptstyle{X}},-[ru] {\scriptstyle{X}},-[r(2)u] {\scriptstyle{L}},-[d(.9)])} \end{equation}
These incoming edges will be called  $X$-edges, $K$-edges or $L$-edges accordingly.

Morphisms of $\Pp^{\Pp_{f,g}}$ can be described in terms of generators and relations. There are three types of generators. First, we have the generators coming from the $P$-algebra structure on $X$-coloured edges and unary operation on $K$ and $L$ edges corresponding to morphisms of $A.$ The relations between these generators witness the relations between operations in $(P,A)$. We will call these generators \emph{$X$-generators}.

The next type of generators corresponds to the morphism $f:K\rightarrow L.$ Such a generator simply replaces a $K$-edge with an $L$-edge in the corolla. Generators of this kind will be called \emph{$F$-generators}. 

Finally, we have generators corresponding to $g:K\rightarrow \eta^*(X).$ Such a generator replaces a $K$-edge with an $X$-edge. Generators of this kind will be called \emph{$G$-generators}. 

An important property for us is that   every span $b\stackrel{\phi}{\leftarrow} a\stackrel{\psi}{\rightarrow}a'$ in which $\phi$ is an $F$-generator (resp. $G$-generator) and  $\psi$ is an $X$-generator, extends uniquely to a commutative square
\begin{equation}\label{relation1}
\xygraph{{a}="p0" [r] {a'}="p1" [d] {b'}="p2" [l] {b}="p3" "p0":"p1"^-{\psi}:"p2"^-{\phi'}:@{<-}"p3"^-{\psi'}:@{<-}"p0"^-{\phi}} 
\end{equation}
in which $\phi'$ is an $F$-generator (resp. $G$-generator) and $\psi'$ is an $X$-generator.

We now proceed like in Section 7 of \cite{batanin-berger} and introduce other monads associated to $(P,A).$ 

Let ${\mathbb P}_{f}$ be the category whose objects are quadruples
$(X,K,L,f),$ where $X$ is a $P$-algebra, $K,L$  are objects in $[A,\Set]$ and $f:K\rightarrow L$ is a morphism in $[A,\Set].$

Let ${\mathbb P}_{g}$ be the category whose objects are quadruples $(X,K,L,g),$  where $X$ is a $P$-algebra, $K,L$  are objects in $[A,\Set]$ and $g:K\to \eta^*(X)$ is a morphism in $[A,\Set]$.

The obvious forgetful functors $U_f:{\mathbb P}_f\to [A_0,\Set]\times [A_0,\Set] \times [A_0,\Set]$ and $U_g:{\mathbb P}_g\to [A_0,\Set]\times [A_0,\Set]\times [A_0,\Set]$ are monadic, yielding monads $\Pp_f$ and $\Pp_g$ for which there are propositions analogous to Proposition \ref{Cart}. We leave the details to the reader.

We put $\Pp+2\Aa=(\Pp+\Aa)+\Aa$ which is also a polynomial monad on $[A_0,\Set]\times [A_0,\Set]\times [A_0,\Set]$ as are $\Pp_{f,g}$, $\Pp_f$ and $\Pp_g$.

There is a commutative square of forgetful functors over $[A_0,\Set]\times [A_0,\Set]\times [A_0,\Set]$ 
\begin{equation} 
\xygraph{{\xybox{\xygraph{!{0;(3,0):(0,.5)::} {\mathbb P_{f,g}}="p0" [r] {\mathbb P_f}="p1" [d] {\Alg_P \times [A,\Set] \times [A,\Set]}="p2" [l] {\mathbb P_g}="p3" "p0":"p1"^-{}:"p2"^-{}:@{<-}"p3"^-{}:@{<-}"p0"^-{}}}}
 \end{equation} 
 and all four forgetful functors have left adjoints so that we get a commutative square
of monad morphisms going in the opposite direction and augmented over $P_0$ via cartesian natural transformations: \begin{equation} 
{\xybox{\xygraph{!{0;(1.5,0):(0,.6667)::} {\Pp_{f,g}}="p0" [r] {\Pp_f}="p1" [d] {\Pp + 2\Aa}="p2" [l] {\Pp_g}="p3" "p0":@{<-}"p1"^-{}:@{<-}"p2"^-{}:"p3"^-{}:"p0"^-{}}}}\end{equation}
We thus obtain a commutative square of categorical $P$-algebra maps of the corresponding classifiers, which enables us to analyse the category structure of ${{\Pp}^{\Pp_{f,g}}}$.
\begin{equation}\label{classifiersquare} {\xybox{\xygraph{!{0;(1.5,0):(0,.6667)::} {{\Pp}^{\Pp_{f,g}}}="p0" [r] {{\Pp}^{\Pp_f}}="p1" [d] {{\Pp}^{\Pp + 2\Aa}}="p2" [l] {{\Pp}^{\Pp_g}}="p3" "p0":@{<-}"p1"^-{}:@{<-}"p2"^-{}:"p3"^-{}:"p0"^-{}}}}}
\end{equation}
Finally, we have a map of monads over $\Pp:$
$$\Pp+{\Aa} \to \Pp+2{\Aa}$$
which is the identity on $\Pp$ and sends $\Aa$ to the second copy of $\Aa$ in $(\Pp+\Aa)+\Aa.$  Thus we have a map of classifiers:
$$\Pp^{\Pp+{\Aa}} \to \Pp^{\Pp+2{\Aa}}.$$

\begin{lem}\label{Tfg}
\begin{enumerate}\item
The  classifiers ${{\Pp}^{\Pp_{f,g}}},{{\Pp}^{\Pp_f}},{{\Pp}^{\Pp_g}}, {{\Pp}^{\Pp + 2\Aa}} $ all have the same object-set, and the diagram of (\ref{classifiersquare}) identifies ${{\Pp}^{\Pp_f}},{{\Pp}^{\Pp_g}}$ with subcategories of ${{\Pp}^{\Pp_{f,g}}}$ which intersect in ${{\Pp}^{\Pp + 2\Aa}} $ and which generate ${{\Pp}^{\Pp_{f,g}}}$ as a category.
\item The composite $$\Pp^{\Pp+{\Aa}} \to \Pp^{\Pp+2{\Aa}} \to  \Pp^{\Pp_f}     $$ 
has a left adjoint $p$ such that the counit of the adjunction is the identity. Thus $\Pp^{\Pp+{\Aa}} $ is a reflective subcategory of $\Pp^{\Pp_f}.$
\item The composite $$\Pp^{\Pp+{\Aa}} \to \Pp^{\Pp+2{\Aa}} \to  \Pp^{\Pp_g}     $$ 
has a left adjoint $r$ such that the counit of the adjunction is the identity. Thus $\Pp^{\Pp+{\Aa}} $ is a reflective subcategory of $\Pp^{\Pp_g}.$
\end{enumerate}
\end{lem}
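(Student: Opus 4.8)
The plan is to follow the analysis of the classifier carried out in \cite[Section 7.4]{batanin-berger}, adapting each step to accommodate the extra unary operations coming from the morphisms of $A$, which now appear among the $X$-generators. Throughout I work with the generators-and-relations presentation recalled above: the objects of each classifier are the corollas whose inputs are coloured by $X$, $K$ or $L$, and the morphisms are generated by $X$-generators (the $P$-algebra structure on the $X$-edges together with the unary $A$-operations acting on the $K$- and $L$-edges), $F$-generators (recolouring a $K$-edge as $L$) and $G$-generators (recolouring a $K$-edge as $X$), subject to the relations among $X$-generators and the commutation square (\ref{relation1}).

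For part (1), I first note that the four monad morphisms inducing (\ref{classifiersquare}) are the identity on the three colours $\{X,K,L\}$ and on the operation-set $B$ of $\Pp$; hence each functor in (\ref{classifiersquare}) is the identity on objects, giving the common object-set. The functor $\Pp^{\Pp_f}\to\Pp^{\Pp_{f,g}}$ (resp. $\Pp^{\Pp_g}\to\Pp^{\Pp_{f,g}}$) sends $X$- and $F$-generators (resp. $X$- and $G$-generators) to the generators of the same name; it is faithful because the relations of $\Pp^{\Pp_f}$ (resp. $\Pp^{\Pp_g}$) are exactly those relations of $\Pp^{\Pp_{f,g}}$ involving only $X$- and $F$-generators (resp. $X$- and $G$-generators), so $\Pp^{\Pp_f}$ and $\Pp^{\Pp_g}$ are wide subcategories. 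Their intersection is $\Pp^{\Pp+2\Aa}$: tracking colours, a composite of $X$- and $F$-generators converts $K$-edges only into $L$-edges while a composite of $X$- and $G$-generators converts them only into $X$-edges, so a morphism lying in both converts no $K$-edge and therefore uses $X$-generators alone. Finally every generator of $\Pp^{\Pp_{f,g}}$ is of type $X$, $F$ or $G$, so the two subcategories generate it.

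For parts (2) and (3) I analyse the composite $\iota\colon \Pp^{\Pp+\Aa}\to\Pp^{\Pp+2\Aa}\to\Pp^{\Pp_f}$. Since $\Pp+\Aa\to\Pp+2\Aa$ carries the copy of $\Aa$ to the second (outer) one, $\iota$ is injective on objects with image exactly the corollas having no $K$-edge (colours $X$ and $L$), and it takes $X$-generators to $X$-generators. I then define the reflector $p\colon \Pp^{\Pp_f}\to\Pp^{\Pp+\Aa}$ by recolouring every $K$-edge as $L$ on objects, and by sending each $X$-generator to the corresponding $X$-generator and each $F$-generator to an identity on morphisms. The commutation square (\ref{relation1}) is exactly what guarantees that $p$ is well defined and functorial: it forces $X$- and $F$-generators to commute, so collapsing the $F$-generators to identities respects all relations (this is the point at which the unary $A$-operations on the $K$- and $L$-edges must be matched correctly). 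By construction $p\iota=\mathrm{id}$, so the counit is the identity; the unit $c\to\iota p(c)$ is the canonical $F$-composite recolouring all $K$-edges. To check the adjunction $p\dashv\iota$ I use the normal form produced by (\ref{relation1}): every morphism of $\Pp^{\Pp_f}$ factors uniquely as a string of $F$-generators followed by a string of $X$-generators. For a morphism $c\to\iota(d)$ the target has no $K$-edge and $X$-generators preserve edge-types, so the $F$-part must recolour all $K$-edges of $c$; hence the morphism factors uniquely through the unit and the assignment is a natural bijection $\Pp^{\Pp_f}(c,\iota d)\cong\Pp^{\Pp+\Aa}(p c,d)$. Part (3) is identical after exchanging $F$-generators for $G$-generators and ``recolour $K$ as $L$'' for ``recolour $K$ as $X$''; in both cases the reflector lands in the corollas coloured by $X$ and $L$, which is the image of $\iota$.

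The step I expect to be the main obstacle is precisely the unique $F$-then-$X$ (resp. $G$-then-$X$) normal form and the attendant well-definedness of $p$ (resp. $r$), because this is where the unary operations genuinely alter the situation of \cite{batanin-berger}: the $X$-generators now include the action of arbitrary morphisms of $A$ on the $K$- and $L$-edges, and one must verify that the single commutation relation (\ref{relation1}) between an $F$- (resp. $G$-) generator and an $X$-generator suffices both to move all $F$- (resp. $G$-) generators to one side and to ensure that no two distinct reduced words are identified; equivalently, that the only relations in $\Pp^{\Pp_f}$ (resp. $\Pp^{\Pp_g}$) are those generated by (\ref{relation1}) together with the relations among $X$-generators. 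Once this is established, reflectivity and the identity counit are formal.
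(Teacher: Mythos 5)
Your object-level conclusions all agree with the paper: the reflectors act by recolouring every $K$-edge (as $L$ in the $\Pp_f$ case, as $X$ in the $\Pp_g$ case), the counit is the identity, the unit is the evident composite of $F$- (resp.\ $G$-) generators, and your part (1) fills in the colour-tracking details that the paper delegates to the analogous \cite[Lemma 7.3]{batanin-berger}. However, your proof of (2) and (3) has a genuine gap, and it sits exactly where you predicted it would: everything rests on the claim that the $X$-relations together with the squares (\ref{relation1}) form a \emph{complete} presentation of $\Pp^{\Pp_f}$ (equivalently, on the unique ``$F$-then-$X$'' factorisation). The paper never asserts such completeness: it lists the generators and records (\ref{relation1}) as \emph{a} property of the classifier, not as a full set of relations, and these classifiers are defined as codescent objects, not by a presentation. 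Without completeness, your definition of the reflector on morphisms (``$X$-generators go to $X$-generators, $F$-generators go to identities'') is not known to be well defined, and the uniqueness half of your normal-form bijection is unsupported. So the proposal, as written, does not prove the lemma; closing the gap by hand would require reconstructing a full presentation from the codescent description, a substantial argument you have not supplied.

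The paper sidesteps this issue entirely by producing the adjoints functorially. There are maps of polynomial monads over $\Pp$, namely $l:\Pp_f\to\Pp+\Aa$ and $k:\Pp_g\to\Pp+\Aa$, whose restriction functors $\Alg_{\Pp+\Aa}\to\Alg_{\Pp_f}$ and $\Alg_{\Pp+\Aa}\to\Alg_{\Pp_g}$ send a pair $(X,L)$ to the quadruple $(X,\emptyset,L,\iota)$, with $\emptyset$ the initial presheaf and $\iota$ the unique map out of it (into $L$ in the first case, into the underlying presheaf of $X$ in the second). Since the classifier construction is functorial in the monad variable (a map $\Ss\to\Ss'$ of monads over $\Pp$ induces $\Pp^{\Ss}\to\Pp^{\Ss'}$ by restricting internal algebras and invoking representability), the maps $l$ and $k$ induce functors $\Pp^{\Pp_f}\to\Pp^{\Pp+\Aa}$ and $\Pp^{\Pp_g}\to\Pp^{\Pp+\Aa}$ with no presentation needed; one then reads off that they act by exactly the recolouring you describe, that the counit is the identity, and that the unit is the composite of $F$- (resp.\ $G$-) generators applied to all $K$-edges. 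If you replace your hands-on definition of $p$ and $r$ by these induced maps, the remainder of your argument (the triangle identities and the identification of the image of $\Pp^{\Pp+\Aa}$ with the corollas having no $K$-edges) goes through.
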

\begin{proof} The proof of the first point of the lemma is completely analogous to the proof of \cite[Lemma 7.3]{batanin-berger}. 

For the second point observe that there is a map of monads $k:\Pp_g\to \Pp+\Aa$ over $\Pp.$ The corresponding restriction functor $k^*:\Alg_{\Pp+\Aa}\to \Alg_{\Pp_g}$ sends a pair $(X,L)$ to the quadruple $(X,\emptyset,L,\iota).$ Here, $\emptyset$ is the initial object in $[A,\Set]$ and $\iota:\emptyset\to U_P(X)$ is the unique morphism. The map $k$ induces the reflection $p:\Pp^{\Pp_g}\to \Pp^{\Pp+\Aa}.$  Explicitly this functor takes an object of $\Pp^{\Pp_g}$ and replaces all $K$-edges by $X$-edges. 
The unit of this adjunction is generated by applying $G$-generators to all $K$-edges in the object. 

For the third point we have a similar map of monads $l:\Pp_f\to \Pp+\Aa$ over $\Pp.$ The restriction functor $l^*:\Alg_{\Pp+\Aa}\to \Alg_{\Pp_f}$ sends a pair $(X,L)$ to the quadruple $(X,\emptyset,L,\iota),$ where  $\iota:\emptyset\to L$ is the unique morphism again. The map $l$ induces the reflection $r:\Pp^{\Pp_f}\to \Pp^{\Pp+\Aa}.$  This reflection $r$ on an object from $\Pp^{\Pp_f}$  replaces all $K$-edges by $L$-edges and the unit is obtained by iterated application of $F$-generators.

\end{proof}

\subsection{Canonical filtration}\label{canfilt}
To shorten notations we put $\ts = \h.$ We say that an object $a$ of $\ts$ is of type $(p,q)$ if $a$ contains exactly $p$ $K$-edges and $q$ $L$-edges, and we call $p+q$ the \emph{degree} of $a$. Let $k\ge 1.$ We define: 
\begin{itemize}
\item $\ts^{(k)}$ to be the full subcategory of $\ts$ spanned by all objects of degree $\leq k;$
\item $\LX^{(k)}.$ to be a full subcategory of $\cop \subset \ho\subset \ts$  spanned by all objects of degree $\leq k;$
\item  $\wa^{(k)}$ to be the full subcategory of $\ts$ spanned by all objects of degree exactly $k$;
\item $\qa^{(k)}$ to be the full subcategory of $\wa^{(k)}$ spanned by all objects of type $(p,k-p)$ such that $p\ne 0;$ 
\item  $\la^{(k)}$ to be the full subcategory of $\wa^{(k)}$ spanned by all objects of type $(0,k);$ 
\item  $\overline{\qa}^{(k)}$ to be the full subcategory of $\ts^{(k)}$ spanned by the objects not contained in $\la^{(k)}$.
\end{itemize}

\begin{proposition}\label{filtration}\label{SS}
For any functor $\mathbf{X}:\h \rightarrow \VV$, the colimit of $\mathbf{X}$ is a sequential colimit of pushouts in $\VV$.

More precisely, for $S_k = \colim_{\ts^{(k)}} \mathbf{X}|_{\ts^{(k)}}$, we get $$S=\colim_{\ts} \mathbf{X}\cong \colim_k S_k,$$ where the canonical map $S_{k-1}\to S_k$ is part of the following pushout square in $\VV$
\begin{equation}\label{KLX}
\xygraph{!{0;(1.5,0):(0,.6667)::}
{Q_k}="p0" [r] {L_k}="p1" [d] {S_k}="p2" [l] {S_{k-1}}="p3"
"p0":"p1"^-{w_k}:"p2"^-{}:@{<-}"p3"^-{}:@{<-}"p0"^-{\alpha_k}
"p2" [u(.4)l(.3)] (-[d(.2)],-[r(.15)])}
\end{equation}
in which $Q_k$ (resp. $L_k$) is the colimit of the restriction of $\mathbf{X}$ to $\qa^{(k)}$ (resp. $\la^{(k)}$).
\end{proposition}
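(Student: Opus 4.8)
The plan is to run a \emph{canonical filtration} argument by the degree of objects in $\ts=\h=\Pp^{\Pp_{f,g}}$, adapting the computation of \cite[Section 7]{batanin-berger} so as to keep track of the extra $K$- and $L$-edges and of the unary $A$-operations. The first step is to record how the three kinds of generators interact with the degree. Both the \emph{$X$-generators} (the $P$-algebra structure on $X$-edges together with the unary $A$-operations relabelling $K$- and $L$-edges) and the \emph{$F$-generators} (turning a $K$-edge into an $L$-edge) leave the total number of $K$- and $L$-edges unchanged, hence preserve the degree, whereas each \emph{$G$-generator} (turning a $K$-edge into an $X$-edge) lowers the degree by exactly one. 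Since every corolla is finite, each object lies in some $\ts^{(k)}$, and the full subcategories $\ts^{(k)}$ form an exhaustive increasing chain; therefore $S=\colim_\ts\mathbf X\cong\colim_k S_k$, and it remains only to exhibit each stage $S_{k-1}\to S_k$ as the pushout (\ref{KLX}).

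Next I would isolate the structural input. Because an object of $\la^{(k)}$ has type $(0,k)$ and thus carries no $K$-edge, no $F$- or $G$-generator can emanate from it; only $X$-generators do, and these preserve type $(0,k)$. Hence $\la^{(k)}$ is a \emph{cosieve} in $\ts^{(k)}$, with complementary \emph{sieve} $\overline{\qa}^{(k)}=\ts^{(k-1)}\cup\qa^{(k)}$. On the other hand every object of $\qa^{(k)}$ (which has at least one $K$-edge) admits $F$-generators: replacing all of its $K$-edges by $L$-edges defines a canonical functor $\qa^{(k)}\to\la^{(k)}$ inducing $w_k\colon Q_k\to L_k$, while replacing one $K$-edge by an $X$-edge via a $G$-generator lands in $\ts^{(k-1)}$ and induces $\alpha_k\colon Q_k\to S_{k-1}$. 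The commutation relation (\ref{relation1}), asserting that every span of an $F$- or $G$-generator against an $X$-generator completes uniquely to a commuting square, is what makes these assignments functorial and guarantees that $w_k$ and $\alpha_k$ descend to the colimits $Q_k$, $L_k$, $S_{k-1}$.

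The final step assembles the pushout via the standard formula for adjoining a cosieve: for a cosieve $\mathcal L\subseteq\mathcal D$ with complementary sieve $\mathcal C$ and a diagram $\mathbf X$, the object $\colim_{\mathcal D}\mathbf X$ is the pushout of $\colim_{\mathcal C}\mathbf X$ and $\colim_{\mathcal L}\mathbf X$ along the colimit of the boundary of $\mathcal L$ in $\mathcal D$. Taking $\mathcal D=\ts^{(k)}$, $\mathcal L=\la^{(k)}$ and $\mathcal C=\overline{\qa}^{(k)}$, I would first identify $\colim_{\overline{\qa}^{(k)}}\mathbf X\cong S_{k-1}$ by showing that the inclusion $\ts^{(k-1)}\hookrightarrow\overline{\qa}^{(k)}$ is final: each $q\in\qa^{(k)}$ maps into $\ts^{(k-1)}$ through $G$-generators, and the comma category $q\downarrow\ts^{(k-1)}$ is connected because, using (\ref{relation1}), any two such targets are joined by further $G$-generators applied to the remaining $K$-edges. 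Second, since no morphism raises the degree, the only maps from $\overline{\qa}^{(k)}$ into $\la^{(k)}$ issue from $\qa^{(k)}$ through $F$-generators; the unique-square property shows that $\qa^{(k)}$ is cofinal in this boundary, so the boundary colimit is exactly $Q_k$, with its two legs realised as $w_k$ and $\alpha_k$. This yields precisely the pushout square (\ref{KLX}).

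\textbf{Main obstacle.} The delicate part is not the bookkeeping with degrees but the two \emph{(co)finality} claims: that $\ts^{(k-1)}$ is final in $\overline{\qa}^{(k)}$ and that $\qa^{(k)}$ is cofinal in the boundary of $\la^{(k)}$. Both reduce to proving that certain comma categories are connected, and this is exactly where the generator-and-relation presentation of $\Pp^{\Pp_{f,g}}$, the $\Sigma$-freeness of $(P,A)$, and the unique-square relation (\ref{relation1}) are genuinely used, in order to control how $F$- and $G$-generators commute past $X$-generators and past the unary $A$-operations. This is the step that goes beyond a verbatim citation of the tame polynomial monad computation of \cite{batanin-berger} and requires the most care.
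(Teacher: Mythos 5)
Your proposal is correct in substance, but it assembles the pushout by a genuinely different route than the paper. The paper never uses your sieve/cosieve (collage) decomposition $\ts^{(k)}=\overline{\qa}^{(k)}\sqcup\la^{(k)}$: instead it writes $\ts^{(k)}$ as the set-theoretic union of the two \emph{overlapping} full subcategories $\overline{\qa}^{(k)}$ and $\wa^{(k)}$ along their intersection $\qa^{(k)}$, observes that there are no morphisms in either direction between $\ts^{(k-1)}$ and $\la^{(k)}$, and invokes Lemma 7.13 of \cite{batanin-berger}; this makes the gluing object equal to $Q_k=\colim_{\qa^{(k)}}\mathbf{X}$ on the nose, so the only remaining work is to identify $\colim_{\overline{\qa}^{(k)}}\mathbf{X}\cong S_{k-1}$ and $\colim_{\wa^{(k)}}\mathbf{X}\cong L_k$, which the paper does by exhibiting $\ts^{(k-1)}$ and $\la^{(k)}$ as reflective subcategories (restrictions of the reflections of Lemma \ref{Tfg}). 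Your route avoids that external lemma but pays a price: the ``standard formula for adjoining a cosieve'' is only correct if the boundary is taken to be the category of \emph{all morphisms} from $\overline{\qa}^{(k)}$ into $\la^{(k)}$ (a two-sided Grothendieck construction), not the subcategory of their sources (for two parallel arrows the naive reading computes a pushout where a coequaliser is needed), and you then owe an extra finality argument identifying the colimit over that linking category with $Q_k$ --- a step with no counterpart in the paper. On the other hand, your choice of \emph{finality} rather than reflectivity for $\ts^{(k-1)}\subseteq\overline{\qa}^{(k)}$ is the more robust one: an object $q$ with two $K$-edges admits degree-lowering morphisms with incompatible fates (one edge to $X$ and the other to $L$, and vice versa), and since an $L$-edge can never become an $X$-edge no single morphism out of $q$ factors both, so the comma category $q\downarrow\ts^{(k-1)}$ has no initial object; nonemptiness and connectedness of these comma categories is exactly what the colimit comparison needs, and it is what your argument targets directly.

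Two local soft spots should be repaired when you write this up. First, your connectivity argument ``any two such targets are joined by further $G$-generators applied to the remaining $K$-edges'' is not sufficient: precisely in the incompatible-fates situation above, no further generators can merge the two targets, and the zigzag must back up through objects in which the offending edge is still a $K$-edge, e.g. $(X,L)\leftarrow(X,K)\to(X,X)\leftarrow(K,X)\to(L,X)$; combined with the normal form ``$F$/$G$-generators first, then $X$-generators'' supplied by the unique square-filling relation (\ref{relation1}), this does prove connectedness, but the cospan-only argument you sketch would fail. Second, ``replacing one $K$-edge by an $X$-edge'' is not functorial, since it depends on a choice of edge, so $\alpha_k$ cannot be defined that way; it should simply be the composite $Q_k\to\colim_{\overline{\qa}^{(k)}}\mathbf{X}\cong S_{k-1}$ induced by the inclusion $\qa^{(k)}\subset\overline{\qa}^{(k)}$, where the isomorphism is the finality statement you prove. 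With these repairs your plan goes through.
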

\begin{proof} First, observe that $\ts^{(k-1)}$ is a reflective subcategory of $\overline{\qa}^{(k)}.$  The reflection is constructed as follows. We have the restriction of $r$ from Lemma \ref{Tfg} to $\overline{\qa}^{(k)}$ with the target category $\LX^{(k)}.$ 
Composing it with the inclusion to $\ts^{(k)}$ we have a functor $r':\overline{\qa}^{(k)}\to \ts^{(k)}.$ Observe, however, that the objects of $\overline{\qa}^{(k)}$ either have all edges with $X$ colour or contain at least one $K$-edge. So application of a $G$-generator produces an object from $\ts^{(k-1)}$ which means that we can  factor $r'$ through  $\ts^{(k-1)}$ and so we have a reflection $r'':\overline{\qa}^{(k)}\to \ts^{(k-1)}.$

Similarly, the category $\la^{(k)}$ is also a reflective subcategory of $\wa^{(k)}$ by the restriction of the reflection $p$ from Lemma  \ref{Tfg}.
     
Consider the following diagram of categories where the central square commutes: 
\begin{equation}\label{tamesquare} 
 \xygraph{!{0;(1.5,0):(0,.6667)::}
{{\qa}^{(k)}}="p0" [r] {\wa^{(k)}}="p1" [d] {\ts^{(k)}}="p2" [l] {\overline{\qa}^{(k)}}="p3"
"p0":"p1"^-{}:"p2"^-{}:@{<-}"p3"^-{}:@{<-}"p0"^-{}
"p3" :@{<-}[l] {\ts^{(k-1)}} "p1" :@{<-}[r] {\la^{(k)}}
} \end{equation}
Restricting $X$ to this subcategory and taking colimits we obtain a commutative diagram like (\ref{KLX}). We only need to know that this is a pushout diagram in $\VV.$

A closer inspection of the central square (\ref{tamesquare}) reveals that it is a categorical pushout of a special kind: the category $\ts^{(k)}$ is obtained as the set-theoretical union of the categories $\overline{\qa}^{(k)}$ and $\wa^{(k)}$ along their common intersection $\qa^{(k)}$. Indeed, away from this intersection, there are no morphisms in $\ts^{(k)}$ between objects of $\overline{\qa}^{(k)}$ and objects of $\wa^{(k)}$. By Lemma 7.13 from \cite{batanin-berger}, this implies that (\ref{KLX}) is a pushout square in $\VV$.

\end{proof}

\subsection{End of proof of Transfer Theorem}\label{semitransferend} We are ready to complete the proof of Theorem \ref{semitransfer}.

\begin{proof} By Lemma \ref{a+a} for a  unary tame $(P,A)$ there exists a categorical $A$-presheaf $\lambda$ such that  $\lambda\to \cop$ is a levelwise  final subcategory (we drop the forgetful functor $\eta^*$ from the notation for simplicity). 
Let also ${3}\Aa  = \Aa+\Aa+\Aa$ be the obvious  polynomial monad over $\Pp.$
We then define  a subpresheaf $\td$ as a pullback  
\begin{align*} \label{}
\xymatrix{
\td \ar[r]^{} \ar[d]_{} &\hos \ar[d]_{}\ar[r]^{}   &\ho \ar[d]^{D} \\
\lambda \ar[r]^{}&\Pp^{2\Aa} \ar[r]  & \cop
}
\end{align*}
where the morphism $D:\ho\to\cop$ is induced by the folding map of polynomial monads $\Aa+\Aa\to \Aa.$ 
To prove that the subcategory $\td$ is final we first observe that the functor $D$ is a discrete fibration. Indeed, the effect of  $D$ on objects is just a replacement of any $L$-coloured edge by a $K$-coloured edge. It is then obvious that the unique lifting condition of an arrow from $\cop$ is satisfied because colouring is defined uniquely by the source from $\ho.$  

Final functors and discrete fibrations form one of the two comprehensive factorisation systems in $\Cat.$ It is a well-known property of this factorisation system that  final functors are stable under pullbacks along discrete fibrations (see \cite[Proposition 1.10]{BergerK}). 

Finally, we use the pullback (\ref{exact fibration u}) and cartesianess of the monad $\Aa$ to see that the following square is a pullback as well.  

\begin{equation}\label{tcart} 
\xymatrix{
\Aa(\td) \ar[r]^{} \ar[d]_{} & \Aa(\Pp^{3\Aa}) \ar[d]^{} \\
\td \ar[r]_{} & \Pp^{3\Aa}}
\end{equation} 

The inclusion \begin{equation}\label{111} \td\cap \qa^{(k)}\inc\ho\cap\qa^{(k)}\end{equation} 
can be obtained as a pullback of the form

\begin{equation*} 
\xymatrix{
\td\cap \qa^{(k)} \ar[r]^{} \ar[d]_{} & \ho\cap\qa^{(k)}\ar[d]^{} \\
\td \ar[r]_{} & \ho}
\end{equation*} 
in which the right vertical inclusion is a levelwise discrete fibration. Hence, the inclusion (\ref{111}) is final. Consider now a pushout
\begin{equation*} 
\xymatrix{
\td\cap \qa^{(k)} \ar[r]^{} \ar[d]_{H}& \hos\cap \qa^{(k)}\ar[d] \ar[r]   & \ho\cap\qa^{(k)}\ar[d]^{} \\
\td^{(k)} \ar[r]&\ua^{(k)} \ar[r]   & \qa^{(k)}}
\end{equation*} 
In this pushout the functor $H$ adds $F$-generators. Clearly the most right vertical morphism in this pushout is just the canonical inclusion. Since the left class of the comprehensive factorisation system is closed under pushouts the bottom composite in this pushout is thus a final inclusion. 
One can also prove easily  that $\td^{(k)},  \ua^{(k)}$ and $\qa^{(k)}$ satisfy the suitable analogues of the pullback condition (\ref{tcart}).

Now, given an algebra $(X,K,L,f,g)$ of $P_{f,g}$ we see that the colimit $Q_k$ of the restriction of $\mathbf{X} = \widetilde{(X,K,L,f,g)}$ over $\qa^{(k)}$ is naturally a retract of the colimit of its restriction on $\ua^{(k)}$.  

To analyse the structure of this colimit over $\ua^{(k)}$  we first observe like in Lemma \ref{shufflecoproduct} that the colimit of the 
restriction of $\mathbf{X}$ on $\hos\cap \qa^{(k)}$ 
 (as an $A$-presheaf)   is given by the formula
$$\coprod_{p+q =k,p>0,s\ge 0, \sigma\in Sh_{s,p,q}} \sigma(\odot^{s+p+q}_{P})(\underbrace{\eta^*(X),\ldots,\eta^*(X)}_s,\underbrace{K,\ldots,K}_p,\underbrace{L,\ldots,L}_q).$$  
The colimit of $\mathbf{X}$ over $\ua^{(k)}$ is obtained from this coproduct by taking further colimit. The additional morphisms which appear here are morphisms induced by $F$-generators (that is we have one additional generating morphism for each $K$ induced by replacement  of $K$-coloured edge by an $L$-coloured edge) and, therefore, this colimit is a coproduct of colimits over punctured cubes for the nonsymmetric convolution $\odot_P.$ 

Using the argument similar to the proof of Proposition \ref{retract} we see that the map $w_k:Q_k\to L_k$ is a retract of a coproduct of corner map for $\odot_P^{k},\ n\ge 0,$ for which  the punctured cube morphisms are of the form $\sigma(\boxx^{s+p+q})(X,\ldots,X,f.\ldots,f)$, where $\sigma$ runs over   $(s,p+q)$-shuffles, $f:K\to L$ is a trivial cofibration  with cofibrant domain   and $X$ is a cofibrant $A$-presheaf. Hence, $w_k$ is a trivial cofibration itself and so by Proposition \ref{filtration} the morphism $p:X\to S$ in \ref{free cofibration} is a weak equivalence as well.  We finished the proof.  \end{proof}  

\begin{remark} Notice that the conditions of the Transfer Theorem concern the properties of nonsymmetric convolution of $(P,A)$ on $[A,\VV]$ and we don't even assume that $\VV$ itself is a monoidal model category. 
\end{remark}

\subsection{Beck-Chevalley morphisms of substitudes}\label{BCsub}
 
\begin{definition} Let $(f,g):(P,A) \to (Q,B)$ be a map of  substitudes. We call the morphism Beck-Chevalley if the following square is a Beck-Chevalley square:

\begin{equation*}\label{BSmorphism} \xygraph{!{0;(2.5,0):(0,.5)::}
{\Alg_P
(\VV)}="p0" [r] {\Alg_Q(\VV)}="p1" [d] {[B,\VV]}="p2" [l] {[A,\VV]}="p3"
"p0":@<-1ex>@{<-}"p1"_-{f^*}|-{}="cp":@<1ex>"p2"^-{(\eta_Q)^*}|-{}="ut":@<1ex>"p3"^-{g^*}|-{}="c":@<-1ex>@{<-}"p0"_-{(\eta_P)^*}|-{}="us"
"p0":@<1ex>"p1"^-{f_!}|-{}="dp":@<-1ex>@{<-}"p2"_-{(\eta_Q)_!}|-{}="ft":@<-1ex>@{<-}"p3"_-{g_!}|-{}="d":@<1ex>"p0"^-{(\eta_P)_!}|-{}="fs"
"dp":@{}"cp"|-{\perp} "d":@{}"c"|-{\perp} "fs":@{}"us"|-{\dashv} "ft":@{}"ut"|-{\dashv}}
\end{equation*}

\end{definition}

\begin{defin} Let $(P,A)$ be a substitude and the categories $[A,\VV]$ and $\Alg_P(\VV)$ are equipped with semimodel structures in such a way that the pair  $(\eta_P)^*\dashv (\eta_P)_!$ is a Quillen adjunction. In this case, we will say the $(P,A)$ is derivable (with respect to fixed semimodel structures). A morphism of derivable substitudes is a morphism of substitudes for which the commutative square of adjunctions is a square of Quillen adjunctions.
 
 \end{defin}

\begin{definition} We call a  morphism of derivable substitudes  $(f,g):(P,A) \to (Q,B)$  a homotopy Beck-Chevalley morphism if the square above (\ref{BSmorphism}) is a homotopy Beck-Chevalley square. 

\end{definition}

\begin{proposition}\label{2of3} The class of Beck-Chevalley morphisms satisfies the two out of three property.
The same is true for homotopy Beck-Chevalley morphisms of derivable substitudes.

\end{proposition}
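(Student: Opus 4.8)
The plan is to reduce everything to the compatibility of Beck--Chevalley mates with horizontal pasting. Given composable morphisms of substitudes $(f,g)\colon(P,A)\to(Q,B)$ and $(f',g')\colon(Q,B)\to(R,C)$, the square of adjunctions attached to the composite $(f'f,g'g)$ is exactly the horizontal composite of the two squares attached to the factors, since $f^*f'^*=(f'f)^*$ and $g^*g'^*=(g'g)^*$ while the outer vertical legs are $(\eta_P)^*$ and $(\eta_R)^*$. First I would record the standard pasting law for mates, which in our notation says that the Beck--Chevalley transformation $\mathbf{bc}''\colon g'_!g_!(\eta_P)^*\to(\eta_R)^*f'_!f_!$ of the composite factors as
\[\mathbf{bc}''=(\mathbf{bc}'\ast f_!)\circ(g'_!\ast\mathbf{bc}),\]
where $\ast$ denotes whiskering and $\mathbf{bc},\mathbf{bc}'$ are the transformations of the two factors. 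With this identity the direction \emph{composition of two Beck--Chevalley morphisms is Beck--Chevalley} is immediate: if $\mathbf{bc}$ and $\mathbf{bc}'$ are isomorphisms then $g'_!\ast\mathbf{bc}$ (a functor applied to an isomorphism) and $\mathbf{bc}'\ast f_!$ (a whiskered isomorphism) are isomorphisms, so their composite $\mathbf{bc}''$ is.

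For the two cancellation directions I would apply the two-out-of-three property for isomorphisms to the displayed factorisation, which reduces the problem to passing from an isomorphism of a \emph{whiskered} transformation back to an isomorphism of the transformation itself: if $\mathbf{bc},\mathbf{bc}''$ are isomorphisms one gets that $\mathbf{bc}'\ast f_!$ is, and must deduce that $\mathbf{bc}'$ is; dually, if $\mathbf{bc}',\mathbf{bc}''$ are isomorphisms one gets that $g'_!\ast\mathbf{bc}$ is, and must deduce that $\mathbf{bc}$ is. To make these tractable I would first reduce the Beck--Chevalley condition to free algebras: since $f_!,g_!,g'_!$ preserve all colimits, $f_!$ carries frees to frees ($f_!(\eta_P)_!(K)\cong(\eta_Q)_!(g_!K)$), the functors $(\eta_\bullet)^*$ preserve the reflexive coequalizers presenting any algebra from free ones, and every algebra is the canonical bar colimit of free algebras, the transformation $\mathbf{bc}$ is an isomorphism on all of $\Alg_P(\VV)$ as soon as it is on the free algebras $(\eta_P)_!(K)$. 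On frees $\mathbf{bc}$ becomes the explicit comparison $c_K\colon g_!\Tt_P(K)\to\Tt_Q(g_!K)$ of convolution monads, and the pasting identity becomes $c''_K=c'_{g_!K}\circ g'_!(c_K)$.

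The hard part, and the crux of the argument, is precisely this un-whiskering, and it is \emph{not} formal: applying $g'_!$ to $c_K$ only controls its image under a left Kan extension, and a general left Kan extension does not reflect isomorphisms, so two-out-of-three for isomorphisms alone does not close the argument. The resolution I would pursue exploits the structural features available here: the forgetful functors $(\eta_\bullet)^*$ are conservative (monadic); $g_!$ sends the representable generators of $[A,\VV]$, hence the free algebras on them, to generators; and all the comparison maps are natural between colimit-preserving functors, so that an isomorphism verified on the image of $f_!$, or after applying $g'_!$, propagates to an isomorphism everywhere. Checking that the $g$-components in play detect and reflect exactly the isomorphisms $c_K$ in question is where the real work sits, and I expect this to be the main obstacle.

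For the second assertion, on homotopy Beck--Chevalley morphisms of derivable substitudes, the plan is identical after deriving everything. Derivability makes the four adjunctions Quillen, so the derived mates $\bbL g_!\,\bbR(\eta_P)^*\to\bbR(\eta_Q)^*\,\bbL f_!$ are defined and, by functoriality of derived mates, satisfy the same pasting identity $\mathbf{bc}''\simeq(\mathbf{bc}'\ast\bbL f_!)\circ(\bbL g'_!\ast\mathbf{bc})$ in the homotopy categories. Replacing ``isomorphism'' by ``weak equivalence'', using the two-out-of-three property for weak equivalences, and invoking Proposition~\ref{BC_implies_hBC} to move between the strict and homotopy transformations, the composition direction is again immediate, while the two cancellation directions reduce exactly as above to the un-whiskering step — now asserting that the derived functors reflect weak equivalences between (co)fibrant objects — which is again the delicate point.
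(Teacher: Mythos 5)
Your first paragraph is, in essence, the paper's entire proof: the paper observes that the Beck--Chevalley transformation of a horizontally pasted square is the pasting of the Beck--Chevalley transformations of the two factors --- i.e.\ your identity $\mathbf{bc}''=(\mathbf{bc}'\ast f_!)\circ(g'_!\ast\mathbf{bc})$ --- and then says ``hence the result,'' with the homotopy statement for derivable substitudes covered by the same sentence. So on the closure-under-composition direction you and the paper agree completely, and your derivation of it (both strict and derived) is correct.

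The divergence is in the two cancellation directions, and there your diagnosis is sharper than the paper's text, but your proposal does not close them and neither, read literally, does the paper. You are right that the factorisation only yields invertibility of a \emph{whiskered} transformation: invertibility of $\mathbf{bc}'\ast f_!$ controls $\mathbf{bc}'$ only on the essential image of $f_!$, and invertibility of $g'_!\ast\mathbf{bc}$ yields invertibility of $\mathbf{bc}$ only if $g'_!$ is conservative --- and a left Kan extension such as $\pi_!\colon[\Br^{op},\VV]\to[\Sm^{op},\VV]$ (the cancellation instance actually invoked in the proof of Proposition \ref{operadicstabilization}) is not conservative in general. The paper does not engage with this point at all: its one-line proof applies two-out-of-three for isomorphisms to the pasted factorisation without comment, which is exactly the un-whiskering step you single out as non-formal. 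Your sketched repair also stops short: the reduction to free algebras is sound (both sides of $\mathbf{bc}$ preserve reflexive coequalizers and every algebra is a reflexive coequalizer of frees), but for the first cancellation the free $Q$-algebras one must reach are $F_Q(M)$ for \emph{arbitrary} $M\in[B,\VV]$, whereas whiskering by $f_!$ only reaches those of the form $F_Q(g_!K)$; and for the second cancellation the needed conservativity of $g'_!$ is precisely what fails. In short, what you prove rigorously is what the paper's written argument proves rigorously; the step you flag as the main obstacle is passed over in silence by the paper's proof.
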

\begin{proof} 
It is not hard to prove that the Beck-Chevalley transformation (\ref{BC transformation}) for a horizontal pasting of two Beck-Chevalley square is obtained as a pasting of Beck-Chevalley transformations of each squares. Hence the result.

\end{proof}

\begin{theorem}\label{BCperfect} Let $(f,g):(P,A) \to (Q,B)$ be a  Beck-Chevalley morphism 
between unary tame left Quillen 
substitudes for which  the semimodel structures on  $\Alg_P(\VV)$ and $\Alg_Q(\VV)$ are transferred from some cofibrantly generated semimodel structures on $[A,\VV]$ and $[B,\VV].$  If   $g_!\dashv g^*$ is a pair of Quillen equivalences then 
 $f_!\dashv f^*$ is also a pair of Quillen equivalences.  
\end{theorem}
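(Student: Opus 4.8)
The plan is to recognise the square defining a Beck--Chevalley morphism of substitudes as an instance of the abstract square (\ref{BScondition}) and then to invoke Corollary \ref{lifting QE for transfer} directly. Under the dictionary $\AA=\Alg_P(\VV)$, $\BB=\Alg_Q(\VV)$, $\CC=[A,\VV]$, $\DD=[B,\VV]$, with $\psi^*=f^*$, $\phi^*=g^*$, $\beta^*=(\eta_P)^*$ and $\alpha^*=(\eta_Q)^*$ (and left adjoints $\psi_!=f_!$, $\phi_!=g_!$, $\beta_!=(\eta_P)_!$, $\alpha_!=(\eta_Q)_!$), the hypothesis that $(f,g)$ is Beck--Chevalley says precisely that $\mathbf{bc}\colon g_!(\eta_P)^*\to(\eta_Q)^*f_!$ of (\ref{BC transformation}) is an isomorphism, while the transfer hypothesis says that the semimodel structures on $\AA$ and $\BB$ are obtained by transfer along $\beta^*$ and $\alpha^*$. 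Thus, once I verify that all four sides are Quillen adjunctions and that $\beta^*$ preserves cofibrant objects, the conclusion that $(f_!,f^*)$ is a Quillen equivalence follows at once from Corollary \ref{lifting QE for transfer}, using the given fact that $(g_!,g^*)=(\phi_!,\phi^*)$ is a Quillen equivalence.

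Three of the four adjunctions are Quillen pairs for trivial reasons: $(\eta_P)_!\dashv(\eta_P)^*$ and $(\eta_Q)_!\dashv(\eta_Q)^*$ are Quillen by the very construction of the transferred structures, and $g_!\dashv g^*$ is Quillen because it is part of a Quillen equivalence. The one point to check is that $f_!\dashv f^*$ is a Quillen pair, i.e. that the right adjoint $f^*$ preserves fibrations and trivial fibrations. Here I use that fibrations and trivial fibrations in both $\Alg_P(\VV)$ and $\Alg_Q(\VV)$ are \emph{detected} by the forgetful functors, since both semimodel structures are transferred. Let $\phi$ be a (trivial) fibration in $\Alg_Q(\VV)$; then $(\eta_Q)^*(\phi)$ is a (trivial) fibration in $[B,\VV]$, hence $g^*(\eta_Q)^*(\phi)$ is a (trivial) fibration in $[A,\VV]$ because $g^*$ is right Quillen. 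By the standing Agreement that the mediating transformation $b$ in (\ref{BScondition}) is the identity, the square of right adjoints commutes strictly, so $g^*(\eta_Q)^*=(\eta_P)^*f^*$; therefore $(\eta_P)^*f^*(\phi)$ is a (trivial) fibration, and detection gives that $f^*(\phi)$ is a (trivial) fibration in $\Alg_P(\VV)$. Hence $f^*$ is right Quillen and the whole square is one of Quillen adjunctions.

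It remains to note that $\beta^*=(\eta_P)^*$ preserves cofibrant objects. This is exactly the last assertion of the Transfer Theorem \ref{semitransfer}, whose hypotheses ($\Sigma$-freeness, faithful unit, unary tameness, and the left Quillen property) are precisely those imposed on $(P,A)$ in the present statement. With the square now known to consist of Quillen adjunctions, with the structures on $\AA,\BB$ transferred along $\beta^*,\alpha^*$, and with $\beta^*$ preserving cofibrant objects, Corollary \ref{lifting QE for transfer} applies verbatim and yields that $(f_!,f^*)$ is a pair of Quillen equivalences.

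The only genuine verification is that $f^*$ is right Quillen; everything else is a matter of matching the abstract Beck--Chevalley package to the situation at hand. I expect no serious obstacle, since the detection-of-fibrations argument is purely formal and the substantive work --- both the transfer of semimodel structures (which also supplies the crucial preservation of cofibrant objects) and the passage from a Beck--Chevalley square of transferred Quillen adjunctions to a Quillen equivalence --- has already been done in Theorem \ref{semitransfer} and Corollary \ref{lifting QE for transfer}. The one point demanding mild care is that all of this takes place in the semimodel (rather than full model) setting, but the preservation and reflection of weak equivalences and fibrations by forgetful functors of transferred structures, together with the detection arguments above, hold verbatim for semimodel categories.
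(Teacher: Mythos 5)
Your proposal is correct and follows exactly the paper's route: the paper's entire proof is ``Immediate from Corollary \ref{lifting QE for transfer},'' and your argument is precisely the verification that this corollary applies (identifying the square, checking that $f^*$ is right Quillen via detection of fibrations by the forgetful functors, and getting preservation of cofibrant objects by $(\eta_P)^*$ from Theorem \ref{semitransfer}). You have simply made explicit the checks the paper treats as immediate.
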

\begin{proof}
Immediate from Corollary \ref{lifting QE for transfer}.
\end{proof}

\part{Localisation of algebras of substitudes} \label{part:localization}

In this part we study a particular but very important case of our Transfer Theorem for $\Alg_P(\VV)$, when the model structure on $[A,\VV]$ is obtained as a localisation of the projective model structure $[A,\VV]_{proj}.$ 

\section{Localisation of presheaf categories} \label{sec:localisers}

\subsection{Fundamental localisers} 

We recall some definitions and results from the homotopy theory of small categories founded by Grothendieck.

Let $\Ww$ be a class of functors between small categories. Let $1$ be the terminal category.
\begin{defin} The class $\Ww$ is called fundamental localiser if it satisfies the following conditions:
\begin{enumerate}\item $\Ww$ contains all identities;
\item $\Ww$ satisfies the two out of three property;
\item If $i:A\to B$ has a retraction $r:B\to A$ and $r\cdot i:B\to B$ is in $\Ww$ then $i$ is in $\Ww;$ 
\item If $A$ is a small category with a terminal object then $A\to {1}$ is in $\Ww;$
\item If in a commutative triangle of functors between small categories
\begin{equation*}\label{FW}
	\xymatrix{
		A\ar[rr]^u \ar[rd]_v && B \ar[ld]^w \\
		& C}
	\end{equation*}
 the functor $u/c:A/c\to B/c$ is in $\Ww$ for each object $c\in C$ then $u$ is in $\Ww.$

\end{enumerate}

\end{defin}
 
 \begin{definition} A class of functors between small categories is called a weak fundamental localiser if it satisfies properties (1)-(4) above and property (5) for $w=id:B\to B.$
 \end{definition}

For a (weak) fundamental localiser $\Ww$ we call its elements $\Ww$-equivalences.
A small category $A$ is called $\Ww$-aspherical if the unique functor $A\to 1$ is in $\Ww.$ A functor $u:A\to B$  is $\Ww$-aspherical if  for all   $b\in B$ the category $u/b$ is $\Ww$-aspherical. Any $\Ww$-aspherical functor is a $\Ww$-equivalence. 

 \begin{remark} The condition (5) of the definition above can be formulated as follows. If the morphism of  presheaves  $C^A\to C^B$ in $\Cat$ induced by (\ref{FW}) is the fiberwise $\Ww$-equivalence then $u$ is a $\Ww$-equivalence.   
 \end{remark}

There is a notion of proper fundamental localiser (\cite[Definition 4.3.21]{cis06} which we don't reproduce here. Instead we recall that  $\Ww$ is a proper fundamental localiser if and only if there exists a set $\mathcal{S}$ of small categories which $\Ww$ makes trivial, in the sense that for any $A\in \mathcal{S}$ the functor $A\to 1$ is in $\Ww$, and $\Ww$ is minimal with this property  \cite[Theorem 6.1.11]{cis06}. We use notation $\Ww=\Ww(\mathcal{S})$ in this case.

\begin{example}\label{Winf}  Let $\mathcal{S}$ contain only one element: the arrow category $0\to 1.$  
Then 
$\Ww(\mathcal{S})$ is equal to $\Ww_{\infty}$ the class of functors whose nerve is a weak equivalence. This is easy to see because the arrow category has a terminal object and so is trivialised by every fundamental localiser. But $\Ww_{\infty}$ is the minimal fundamental localiser \cite[Corollary 4.2.19]{cis06}.

\end{example}

\begin{example}\label{Wn} Let $\mathcal{S}$ consist of a unique element $S^{k+1},$ where $S^{k+1}$ is a small category which has homotopy type of $(k+1)$-sphere.
For example, one can take $S^{k+1} = \Delta/\partial(\Delta_{k+2})$ as in \cite[Section 9.2]{cis06}.

Then $\Ww(\mathcal{S})$-equivalences are $k$-equivalences, that is, functors  inducing isomorphisms of homotopy groups up to $k$ (including $k$) \cite[Corollaire 9.2.15]{cis06}.
The  $\Ww(\mathcal{S})$-aspherical categories are exactly the $k$-connected categories. 
Following \cite{cis06} we will denote the localiser $\Ww(\mathcal{S})$ simply $\Ww_k.$ 

The localiser $\Ww_0$ contains functors that induce isomorphism on sets of connected components, and $\Ww_0$-aspherical categories are connected categories. Without loss of generality, we can assume that $\Ww_0$ contains all other fundamental localisers that we consider in this paper \cite[Proposition 9.3.2]{cis06}.  
 
\end{example}

\subsection{Locally constant presheaves}
 {
Let $A$ be a small category and let $\VV$ be a model category. Let $Ho[A,\VV]$ be  the localisation of the category of covariant presheaves $[A,\VV]$ with respect to levelwise weak equivalences.
 
  \begin{definition} A presheaf $F: A\to \VV,$  is called $\Ww$-locally constant 
if for any  $\Ww$-aspherical small category  $A'$ and any functor $u:A'\to A$ the presheaf $u^*(F):A'\to \VV$ is homotopy equivalent in $Ho[A',\VV]$  to a constant presheaf. 
\end{definition} }

Let $LC_{\Ww}[A,\VV]$ denote the full subcategory of $Ho[A,\VV]$ formed by $\Ww$-locally constant presheaves. It is easy to see that any functor $u:A\to B$ induces a restriction $u^*: LC_{\Ww}[B,\VV]\to LC_{\Ww}[A,\VV].$ 

Call a functor between small categories $u:A\to B$ {\it a local $\Ww$-equivalence} if $u^*: LC_{\Ww}[B,\VV]\to LC_{\Ww}[A,\VV]$  is an equivalence of categories for any model category $\VV.$ Let us denote by $\Ww_{loc}$ the class of local $\Ww$-equivalences  between small categories.

\begin{theorem}\label{almostcisinski} 
\begin{enumerate} 
\item The class $\Ww_{loc}$   is a weak fundamental localiser. 
\item Any $\Ww$-aspherical category is also $\Ww_{loc}$-aspherical.
\item If $\Ww$ is proper then $\Ww\subseteq \Ww_{loc}.$
Thus for a proper fundamental localiser any $\Ww$-equivalence induces an equivalence of homotopy categories of $\Ww$-locally constant presheaves.    
 \end{enumerate}

\end{theorem}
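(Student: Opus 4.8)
The plan is to exploit the defining feature of $\Ww_{loc}$ — that $u \in \Ww_{loc}$ precisely when $u^*$ restricts to an equivalence between the subcategories of $\Ww$-locally constant presheaves — and to reduce each axiom to a formal property of equivalences of categories together with one geometric input about aspherical bases. For axioms (1) and (2) of a (weak) fundamental localiser I would note that $u \mapsto u^*$ is a contravariant pseudofunctor into $\mathrm{Cat}$, and that the class of equivalences of categories contains identities and satisfies two-out-of-three; both axioms are then immediate. Axiom (3) follows in the same spirit from the closure of equivalences under retracts: a retraction $r$ of $i$ with the idempotent $i\circ r$ in $\Ww_{loc}$ produces, after applying $(-)^*$ to the locally constant subcategories, a retract diagram in which the identity $i^* r^* = \mathrm{id}$ holds and the leg $(i r)^* = r^* i^*$ is an equivalence, forcing $i^*$ to be an equivalence as well.

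The geometric input, which I would isolate as a lemma, is that over a $\Ww$-aspherical category $A$ every $\Ww$-locally constant presheaf is homotopy constant; this is immediate from the definition by taking $A' = A$ and $u = \mathrm{id}$. Granting it, the comparison functor $p^* \colon Ho(\VV) = LC_\Ww[1,\VV] \to LC_\Ww[A,\VV]$ is essentially surjective, and I would finish the proof that it is an equivalence by checking full faithfulness, i.e. that the derived mapping object between two constant presheaves over $A$ is computed on the value — this is exactly where $\Ww$-asphericity of $A$ enters. This lemma yields axiom (4) directly (a category with a terminal object is $\Ww$-aspherical by axiom (4) for $\Ww$ itself) and is precisely the assertion of part (2).

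The heart of the argument, and the step I expect to be the main obstacle, is the weak form of axiom (5): given $u\colon A\to B$ with $u/b \in \Ww_{loc}$ for every object $b$ of $B$, one must deduce $u \in \Ww_{loc}$. Here I would construct a homotopy left adjoint $\mathbf{L}u_!$ to $u^*$ and compute it slicewise through the pointwise homotopy left Kan extension formula $(\mathbf{L}u_! F)(b) \simeq \hocolim_{u/b} F|_{u/b}$; the hypothesis $u/b \in \Ww_{loc}$, combined with the lemma above applied to the aspherical slices $B/b$ and the stability of final functors under pullback along discrete fibrations \cite[Proposition 1.10]{BergerK}, is what should force the unit and counit of $(\mathbf{L}u_!, u^*)$ to be isomorphisms on locally constant presheaves. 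Controlling these homotopy colimits so that they interact correctly with the locally constant condition is the delicate point.

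Finally, for part (3) I would invoke the characterisation of proper fundamental localisers: $\Ww = \Ww(\mathcal{S})$ is minimal among fundamental localisers making a fixed set $\mathcal{S}$ of small categories trivial \cite[Theorem 6.1.11]{cis06}. By part (2), every category of $\mathcal{S}$ — being $\Ww$-aspherical — is $\Ww_{loc}$-aspherical, so $\Ww_{loc}$ trivialises $\mathcal{S}$, and by part (1) it is a weak fundamental localiser. The remaining point, and the second place where care is needed, is to promote this to the inclusion $\Ww \subseteq \Ww_{loc}$ via minimality even though $\Ww_{loc}$ is established only as a weak fundamental localiser; I would address this by verifying that the minimality argument of \cite{cis06} uses only the weak form of axiom (5), so that $\Ww(\mathcal{S}) \subseteq \Ww_{loc}$ follows. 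The final sentence of the theorem is then a restatement of $\Ww \subseteq \Ww_{loc}$ unwound through the definition of $\Ww_{loc}$.
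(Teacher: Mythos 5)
Your overall architecture matches the paper's: show $\Ww_{loc}$ is a weak fundamental localiser, show $\Ww$-aspherical categories are $\Ww_{loc}$-aspherical, then get (3) from properness by minimality. But the paper's actual proof is three lines, because it outsources both hard steps to citations: part (1) is declared to follow ``word for word'' the proof of \cite[Theorem 1.3]{cis}, so the weak form of axiom (5) -- which you sketch via homotopy Kan extensions and correctly flag as the main obstacle -- is never re-proved but imported wholesale from Cisinski (your outline has the right ingredients, but as written it is not a proof); and part (3) is a direct citation of \cite[Proposition 6.1.16]{cis06}, which is precisely the point you were worried about, namely that a proper fundamental localiser is minimal among \emph{weak} fundamental localisers trivialising its aspherical categories. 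So your plan to ``verify that the minimality argument uses only the weak form of axiom (5)'' amounts to re-deriving the cited proposition; legitimate, but it is work the paper does not do.

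The genuine gap is in your treatment of part (2), and it is instructive to compare it with the paper's. The paper's proof of (2) is \emph{only} the tautological essential-surjectivity argument (take $u=\mathrm{id}$); it asserts that $\Ww_{loc}$-asphericity of $A$ ``amounts to'' every $\Ww$-locally constant presheaf on $A$ being equivalent to a constant one, and never discusses full faithfulness. You correctly noticed that an equivalence of categories needs full faithfulness too, but your proposed resolution -- that derived maps between constant presheaves are computed on values ``exactly because'' $A$ is $\Ww$-aspherical -- is not available. Full faithfulness of $e^*$ on constants is equivalent to the counit $\bbL e_{!}e^{*}X \to X$ being invertible, i.e.\ (testing against simplicial sets) to contractibility of the nerve of $A$; that is $\Ww_\infty$-asphericity, which is strictly stronger than $\Ww$-asphericity whenever $\Ww \neq \Ww_\infty$. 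Concretely, take $\Ww = \Ww_0$ and $A = B\mathbb{Z}$ (one object, endomorphisms $\mathbb{Z}$): $A$ is $\Ww_0$-aspherical (connected), and the constant presheaves $c_{\ast}$ and $c_{S^1}$ with values in the category $\VV$ of simplicial sets are $\Ww_0$-locally constant, yet $\Hom_{Ho[B\mathbb{Z},\VV]}(c_{\ast},c_{S^1}) \cong [S^1,S^1] \cong \mathbb{Z}$ while $\Hom_{Ho(\VV)}(\ast,S^1)\cong \pi_0(S^1)$ is a point, so $e^*$ is not full. Hence your full-faithfulness step fails as stated; it can only be run when $\Ww$-asphericity forces a contractible nerve (the case $\Ww=\Ww_\infty$ treated by Cisinski). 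Note that axiom (4) itself does not need this: when $A$ has a terminal object $\omega$, the adjunction $e \dashv \omega$ induces $\omega^* \dashv e^*$ with invertible counit, which descends to homotopy categories and gives full faithfulness -- this is the argument to use there, rather than your lemma. For part (2) in general, your extra care has in fact isolated exactly the point where the paper's two-line argument proves less (essential surjectivity only) than what the statement, read literally as an equivalence of categories, requires.
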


\begin{proof} The proof of the first statement follows word for word  the proof of \cite[Theorem 1.3]{cis}.

The second statement follows from the following tautological argument.  Let $A$ be a $\Ww$-aspherical category. We have to prove that it is $\Ww_{loc}$-aspherical. 
That is, $A\to 1$ is a $\Ww_{loc}$-equivalence. This amounts to the statement that every $\Ww$-locally constant presheaf $F:A\to \VV$ is equivalent to a constant presheaf. 
But $F:A\to \VV$ is locally constant by definition if, for any $\Ww$-aspherical $A'$ and any functor $u:A'\to A$, the presheaf $u^*(F)$ is equivalent to a constant. We take $u=id$ to finish the proof.

The third statement follows from \cite[Proposition 6.1.16]{cis06}.
\end{proof} 

We do not reproduce the argument of Cisinski's Theorem 1.3 from \cite{cis} but we want a version of  one statement that  Cisinski calls the Formal Serre spectral sequence  \cite[Proposition 1.24]{cis}. For the proof we refer the reader to \cite{cis} again.

\begin{definition}[\cite{cis06}, 6.4.1] \label{W-locally-constant-functor}  A functor $u:A\to B$ between two small categories is called $\Ww$-locally constant if for any morphism $b\to b'$ in B the functor between comma categories $u/b\to u/b'$ is a   $\Ww$-equivalence. \end{definition}

\begin{proposition} If $u$ is $\Ww_{loc}$-locally constant then for any model category $\VV$ the total left derived functor of $u_!:$ 
$$Ho([A,\VV])\to Ho ([B,\VV])$$
preserves $\Ww$-locally constant presheaves. 
\end{proposition}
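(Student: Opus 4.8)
The plan is to mimic Cisinski's proof of the ``formal Serre spectral sequence'' \cite[Proposition 1.24]{cis}, substituting the localiser $\Ww_{loc}$ for $\Ww$ throughout. First I would record the pointwise description of the derived left Kan extension: for every model category $\VV$, every $F\in Ho([A,\VV])$ and every object $b\in B$ there is a natural equivalence
\[
(\bbL u_!\,F)(b)\;\eqv\;\hocolim_{u/b}\, p_b^*F ,
\]
where $p_b\colon u/b\to A$ is the canonical projection from the comma category; this is the standard Bousfield--Kan formula for $\bbL u_!$, valid for presheaves with values in an arbitrary model category. To test whether $G:=\bbL u_!\,F$ is $\Ww$-locally constant I would, following the definition, restrict $G$ along an arbitrary functor $v\colon B'\to B$ with $B'$ a $\Ww$-aspherical category and show that $v^*G$ is constant in $Ho([B',\VV])$. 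Writing $P$ for the comma category of $u$ and $v$, with projections $w\colon P\to A$ and $q\colon P\to B'$, the homotopy exactness of comma squares (cf. Section \ref{sec:beck} and \cite{cis06}) yields a natural equivalence $v^*G\eqv \bbL q_!\,w^*F$, in which $w^*F$ is again $\Ww$-locally constant because restriction preserves local constancy, and $q$ inherits $\Ww_{loc}$-local constancy from $u$ since its fibres are the comma categories $u/v(b')$ and morphisms of $B'$ act on them through $\Ww_{loc}$-equivalences.

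The one genuinely new ingredient, replacing Cisinski's use of bare $\Ww$-asphericity, is the following invariance property, which is essentially immediate from the definition of $\Ww_{loc}$. If $g\colon C\to C'$ is a $\Ww_{loc}$-equivalence and $H\colon C'\to\VV$ is $\Ww$-locally constant, then the canonical comparison map
\[
\hocolim_{C}\,g^*H\;\longrightarrow\;\hocolim_{C'}\,H
\]
is a weak equivalence. Indeed, by the very definition of $\Ww_{loc}$ the functor $g^*$ restricts to an equivalence of categories $g^*\colon LC_\Ww[C',\VV]\to LC_\Ww[C,\VV]$; on the other hand $\hocolim_{C}$ is the left adjoint of the constant-presheaf functor $(\pi_C)^*\colon Ho(\VV)=LC_\Ww[1,\VV]\to LC_\Ww[C,\VV]$, and likewise for $C'$. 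Since $(\pi_C)^*=g^*\,(\pi_{C'})^*$, passing to left adjoints and using uniqueness of adjoints gives $\hocolim_{C}\circ\,g^*\simeq\hocolim_{C'}$, which is exactly the assertion. In particular, since $\Ww\subseteq\Ww_{loc}$ by Theorem \ref{almostcisinski}, applying this to the comma maps $u/b\to u/b'$ shows that every morphism of $B$ acts on $G$ by a weak equivalence, and after the base change above that every morphism of $B'$ acts on $v^*G$ by a weak equivalence.

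The remaining and main difficulty is to upgrade this fibrewise control into honest $\Ww$-local constancy. The fibrewise argument only shows that $v^*G$ sends every morphism of $B'$ to a weak equivalence, i.e. that $v^*G$ is a $\Ww_\infty$-local system; this is strictly weaker than being constant over a merely $\Ww$-aspherical base (for instance a nontrivial local system on the category $B\mathbb{Z}$, which is $\Ww_0$-aspherical, is never constant). To close this gap one follows Cisinski's d\'evissage: filter $q\colon P\to B'$ by the $q$-degree, use that $q$ is $\Ww_{loc}$-locally constant together with the invariance property above to identify successive layers, and invoke the $\Ww_{loc}$-asphericity of $B'$ (which holds by Theorem \ref{almostcisinski}) to trivialise $\bbL q_!\,w^*F$. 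This is precisely the content of \cite[Proposition 1.24]{cis} with $\Ww$ replaced by $\Ww_{loc}$, and I would import that argument, the only points requiring checking being that the categories appearing in the filtration satisfy the relevant exactness (pullback) conditions, as in Theorem \ref{semitransfer}. I expect this d\'evissage to be the main obstacle, since it is exactly where the passage from $\Ww_\infty$-type (monodromy) information to the full strength of $\Ww$-local constancy takes place.
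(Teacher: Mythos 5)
Your overall strategy --- the pointwise formula for $\bbL u_!$, base change along the comma square, and reduction to showing that $\bbL q_!\,w^*F$ is constant over a $\Ww$-aspherical base --- is exactly the skeleton of the argument the paper has in mind; in fact the paper gives no proof at all, deferring entirely to Cisinski \cite[Proposition 1.24]{cis}. Your invariance lemma is correct as stated, and your adjunction proof of it (constant presheaves are $\Ww$-locally constant, $LC_\Ww$ is a full subcategory of the homotopy category, so the homotopy colimit restricts to a left adjoint of the constant-presheaf functor on $LC_\Ww$, and left adjoints of a fixed composite are unique) is sound; this lemma is indeed the load-bearing mechanism in Cisinski's proof as well. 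You have also correctly isolated the point where the passage from $\Ww_\infty$ to a general proper $\Ww$ is not cosmetic: $\Ww$-local constancy of $\bbL u_!F$ is not a morphism-wise condition, and your $B\mathbb{Z}$ example correctly shows that fibrewise (monodromy) control is strictly weaker than constancy over a merely $\Ww$-aspherical base.

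The genuine gap is in how you propose to close exactly that point, and the proposed closure is circular. Cisinski's Proposition 1.24 concerns the minimal localiser, for which local constancy of the output is checked morphism by morphism; its proof is precisely the fibrewise argument that you yourself have just shown to be insufficient for general $\Ww$, and it contains no d\'evissage or filtration whatsoever (your ``filtration of $q$ by the $q$-degree'' corresponds to nothing in \cite{cis}; that language appears to be imported from the classifier filtration of Proposition \ref{filtration} of this paper, which concerns an unrelated construction). Consequently, ``\cite[Proposition 1.24]{cis} with $\Ww$ replaced by $\Ww_{loc}$'' is not an existing argument you can import: it is, verbatim, the statement you are being asked to prove. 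What is actually missing is a Quillen-Theorem-B-type fibre lemma for the weak fundamental localiser $\Ww_{loc}$: if $q\colon P\to B'$ is $\Ww_{loc}$-locally constant and $B'$ is $\Ww_{loc}$-aspherical, then each canonical functor $q/b'\to P$ lies in $\Ww_{loc}$. Granting this, your own invariance lemma identifies the unit map $\bbL q_!\,w^*F\to \pi_{B'}^*\bigl(\hocolim_P w^*F\bigr)$ levelwise with the equivalences $\hocolim_{q/b'}w^*F\to\hocolim_P w^*F$, so $v^*\bbL u_!F$ is constant and the proof is finished. Note that this fibre lemma cannot be deduced from the classical ($\Ww_\infty$) Quillen Theorem B, because your hypotheses point the wrong way: $\Ww_{loc}$-local constancy of $q$ is \emph{weaker} than $\Ww_\infty$-local constancy (since $\Ww_\infty\subseteq\Ww_{loc}$ by minimality), and $\Ww$-asphericity of $B'$ is weaker than $\Ww_\infty$-asphericity. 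Establishing it for $\Ww_{loc}$ itself is where the real content of the generalisation lies, and neither your proposal nor your citation supplies it.
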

\begin{corollary}\label{uop=lc}  If $u^{op}:A^{op}\to B^{op}$ is $\Ww_{loc}$-locally constant then the total right derived functor  
$$u_*: Ho([A,\VV])\to Ho ([B,\VV])$$
preserves $\Ww$-locally constant presheaves for
 any model category $\VV.$
\end{corollary}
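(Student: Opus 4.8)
The plan is to deduce the corollary from the preceding proposition by a formal duality argument, applied to the functor $u^{op}$ and to the opposite model category $\VV^{op}$. The two dualities I would record first are as follows. For any small category $C$ there is a canonical isomorphism $[C,\VV]^{op}\cong [C^{op},\VV^{op}]$ sending a presheaf $X$ to $X^{op}$; since $\VV^{op}$ is a model category whose weak equivalences are those of $\VV$ and whose (co)fibrations are the fibrations (resp. cofibrations) of $\VV$, and since the levelwise (projective/injective) structures are interchanged by this isomorphism, it descends to an equivalence $Ho[C,\VV]^{op}\cong Ho[C^{op},\VV^{op}]$. Under this identification the restriction functor $u^*$ becomes $(u^{op})^*$, and because passing to opposite adjunctions interchanges left and right adjoints, the right Kan extension $u_*$ corresponds to the left Kan extension $(u^{op})_!$, i.e.\ $u_*\cong((u^{op})_!)^{op}$. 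The same holds after deriving: right-deriving the right Quillen functor $u_*$ agrees with the opposite of left-deriving the left Quillen functor $(u^{op})_!$, so that $(\bbR u_* X)^{op}\cong \bbL(u^{op})_!(X^{op})$ in $Ho[B^{op},\VV^{op}]$.

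Next I would check that $\Ww$-local constancy is invariant under this duality: a presheaf $X\in Ho[A,\VV]$ is $\Ww$-locally constant if and only if $X^{op}\in Ho[A^{op},\VV^{op}]$ is. This reduces to the self-duality of $\Ww$-asphericity, namely that a small category $A'$ is $\Ww$-aspherical precisely when $(A')^{op}$ is, which is a standard property of fundamental localisers \cite{cis06}. Granting this, a test functor $v:A'\to A$ with $A'$ $\Ww$-aspherical corresponds to $v^{op}:(A')^{op}\to A^{op}$ with $(A')^{op}$ $\Ww$-aspherical, and $(v^*X)^{op}\cong (v^{op})^*(X^{op})$ is constant in $Ho[(A')^{op},\VV^{op}]\cong Ho[A',\VV]^{op}$ exactly when $v^*X$ is constant in $Ho[A',\VV]$; this matches the two local constancy conditions.

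With both dualities in hand the corollary is immediate. Given a $\Ww$-locally constant $X\in Ho[A,\VV]$, its opposite $X^{op}$ is $\Ww$-locally constant by the second step; since $u^{op}$ is $\Ww_{loc}$-locally constant by hypothesis, the proposition applied to $u^{op}$ and $\VV^{op}$ shows $\bbL(u^{op})_!(X^{op})$ is $\Ww$-locally constant; and transporting back via the first step, $\bbL(u^{op})_!(X^{op})\cong(\bbR u_* X)^{op}$, so $\bbR u_* X$ is $\Ww$-locally constant, as required.

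I expect the main obstacle to lie in the first step, namely confirming that $\bbR u_*$ genuinely corresponds to $\bbL(u^{op})_!$ under the opposite equivalence of homotopy categories; whatever conventions render $\bbL u_!$ well defined for an arbitrary model category $\VV$ in the proposition apply verbatim to $\VV^{op}$ and produce $\bbR u_*$, and one must only verify that the opposite-adjunction calculus interchanges them compatibly with fibrant/cofibrant replacement. The self-duality of $\Ww$-asphericity used in the second step is the sole localiser-theoretic input, and it is available from the theory of fundamental localisers; the remainder is the formal bookkeeping of opposite derived adjunctions.
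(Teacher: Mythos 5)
Your proposal is correct and is essentially the paper's own argument: the paper likewise deduces the corollary by applying the preceding proposition to $\VV^{op}$ and observing that $F:B\to\VV$ is $\Ww$-locally constant if and only if $F^{op}:B^{op}\to\VV^{op}$ is, which rests on the closure of $\Ww$ under $(-)^{op}$ (cited to Maltsiniotis) --- the same self-duality fact you invoke via asphericity. The only difference is that you spell out the formal bookkeeping (the identification of $\bbR u_*$ with the opposite of $\bbL (u^{op})_!$ under $[A,\VV]^{op}\cong[A^{op},\VV^{op}]$) that the paper leaves implicit.
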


\begin{proof} It is enough to apply the previous proposition to $\VV^{op}$ and observe that  $F:B\to  \VV$ is a $\Ww$-locally constant presheaf if and only if   $F^{op}:B^{op}\to \VV^{op}$ is $\Ww$-locally constant. This follows readily  from the fact that the class $\Ww$ is closed under $(-)^{op}$ \cite[Proposition 1.1.22]{Maltsiniotis}. \end{proof}
\begin{remark}
Notice that we will need a full model structure on $\VV$ in order to have a model structure on $\VV^{op}.$  Though this corollary can be proved  in greater generality (replacing $\VV$ by a derivator) we don't need it in this paper.  

\end{remark}

\begin{example}\label{inftyexample} Let us explain the connection to  the original Cisinski's Theorem 1.3 from \cite{cis} more precisely. 

 Let $\Ww= \Ww(\mathcal{S}).$ Let us call a  presheaf $F: A\to \VV$ on a small category $A$   $\mathcal{S}$-locally constant 
if for any  $A'\in \mathcal{S}$  and any functor $u:A'\to A$, the presheaf $u^*(F):A'\to \VV$ is homotopy equivalent in $Ho[A',\VV]$  to a constant presheaf. Let $LC_{\mathcal{S}}[A,\VV]$ denote the homotopy category of $\mathcal{S}$-locally constant presheaves. Call a functor between small categories $u:A\to B$ {\it a local $\mathcal{S}$-equivalence} if $u^*: LC_{\mathcal{S}}[B,\VV]\to LC_{\mathcal{S}}[A,\VV]$  is an equivalence of categories for any (semi)-model category $\VV.$ Let us denote by $\Ww_{\mathcal{S}}$ the class of local $\mathcal{S}$-equivalences  between small categories. 

Cisinski's argument works in this case too and so $\Ww_{\mathcal{S}}$ is a weak fundamental localiser.   

Let $\mathcal{S}$ as in the Example \ref{Winf}. Then a presheaf $F:A\to \VV$ is $\mathcal{S}$-locally constant if and only if $F(\alpha)$ is a weak equivalence in $\VV$ for every morphism $\alpha$ in $A.$  That is, it is a locally constant presheaf in the sense of \cite{cis}. 

But $\Ww_{\infty}$ is also the minimal weak fundamental localiser \cite[Theorem 6.1.18]{cis06} so any $\Ww_{\infty} = \Ww(\mathcal{S})$-aspherical category is also $\Ww_{\mathcal{S}}$-aspherical. So any Thomason (a.k.a. simplicial) weak equivalence between  small categories induces an equivalence between homotopy categories of locally 
constant presheaves in the sense of Cisinski. This is the content of  \cite[Theorem 1.3]{cis}. 
\end{example}

Unfortunately, while the case $\Ww=\Ww_{\infty}$ admits the explicit simple description of the $\Ww$-locally constant presheaves  given in the Example \ref{inftyexample}, we do not know other fundamental localisers where the $\Ww$-locally constant presheaves admit such an elementary  description.   

\subsection{Local (semi)model structures on  presheaves}\label{locofpre}

We also want to establish an analogue of Cisinski's local model structure on presheaves categories \cite[Proposition 2.3]{cis}. 
In contrast with \cite{cis} we don't ask for the basis category $\VV$ to be left proper. This results to a not necessary full local model structure for presheaves in $\VV.$ For a left proper $\VV$ we will have full local model structure on presheaves, which is also left proper.

\begin{lemma} Let $\VV$ be a combinatorial model category. Let $A$ be a small category and $e:A\to 1$ be the unique functor. Then there exists a left Bousfield (semimodel) localisation $[A,\VV]_{proj}^0$ of the projective model structure  $[A,\VV]_{proj}$ such that the induced adjoint pair $e_!\vdash e^*$ is a Quillen equivalence between $[A,\VV]_{proj}^0$ and $\VV.$ The local objects of this localisation are presheaves $A\to \VV$ which are levelwise fibrant and which 
are levelwise equivalent to constant presheaves.  
\end{lemma}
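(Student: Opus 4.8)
The plan is to realise $[A,\VV]_{proj}^{0}$ as an explicit left Bousfield localisation of the projective structure and then to verify the Quillen equivalence by computing the derived unit and counit. First I would record that $[A,\VV]_{proj}$ is a combinatorial semimodel category whose generating cofibrations are the maps $a_!(i)\colon a_!(K)\to a_!(L)$, where $a$ runs over the objects of $A$, $a_!$ denotes left Kan extension along $a\colon 1\to A$, and $i\colon K\to L$ runs over the generating cofibrations of $\VV$. Since $\VV$ is combinatorial these $i$ may be chosen with cofibrant domain, and $a_!$ is left Quillen, so the generating cofibrations of $[A,\VV]_{proj}$ have cofibrant domain; hence Theorem \ref{thm:bous-loc-semi} produces a semimodel localisation $L_{\cat C}[A,\VV]_{proj}$ at any set $\cat C$.

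Next I would take $\cat C$ to be the set of maps $a'_!(K)\to a_!(K)$, one for each morphism $\alpha\colon a\to a'$ of $A$ and each $K\in\{\mathrm{dom}(i),\mathrm{cod}(i)\}$, induced by the precomposition $A(a',-)\to A(a,-)$; these lie between cofibrant objects. Since $e_!=\colim_A$ collapses each representable to a point, $e_!$ sends every map of $\cat C$ to the identity of $K$, in particular to a weak equivalence. The functor $e_!$ is already left Quillen for $[A,\VV]_{proj}$, its right adjoint $e^{*}$ (the constant-presheaf functor) preserving the levelwise fibrations and trivial fibrations; so by the universal property in Theorem \ref{thm:bous-loc-semi} the functor $e_!\colon [A,\VV]_{proj}^{0}\to\VV$ remains left Quillen, and $e_!\dashv e^{*}$ is a Quillen adjunction after localisation.

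Third I would identify the local objects. Using $\mathrm{map}(a_!K,W)\simeq\mathrm{map}(K,W(a))$, a fibrant $W$ is $\cat C$-local iff for every $\alpha$ the structure map $W(\alpha)\colon W(a)\to W(a')$ induces an equivalence on mapping spaces out of all such $K$, i.e. iff $W$ is levelwise fibrant and every structure map is a weak equivalence; this is exactly $\Ww_\infty$-local constancy as in Example \ref{inftyexample}. To upgrade this to the stated description I would invoke Theorem \ref{almostcisinski}: because $A$ is aspherical, $A\to 1$ is a local $\Ww_\infty$-equivalence, so every such $W$ is equivalent in $Ho[A,\VV]$ to a constant presheaf, and conversely any levelwise-fibrant, levelwise-constant presheaf is $\cat C$-local.

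Finally, for the Quillen equivalence it suffices to check the derived unit and counit, and this is where I expect the main obstacle. The derived counit is $\hocolim_A(\mathrm{const}_V)\to V$, which is an equivalence precisely because the nerve of $A$ is contractible: the homotopy colimit of a constant diagram is the tensor of $V$ with $NA$, so $\hocolim_A(\mathrm{const}_V)\simeq NA\otimes V\simeq V$. Granting this, $e^{*}$ reflects weak equivalences between local-fibrant objects (these being the levelwise-constant ones), and the standard criterion — derived counit a weak equivalence together with the right adjoint reflecting equivalences between fibrant objects — yields that $e_!\dashv e^{*}$ is a Quillen equivalence; equivalently the derived unit $X\to e^{*}e_!X$ becomes a local equivalence, both sides being equivalent to the constant presheaf on $\hocolim_A X$. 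The homotopical triviality of $A$ is indispensable here: without asphericity of $NA$ the functor $e^{*}$ is not homotopically fully faithful and no localisation of $[A,\VV]_{proj}$ can be Quillen equivalent to $\VV$, so I would make sure this hypothesis is in force before asserting the equivalence.
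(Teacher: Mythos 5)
Your proposal takes a genuinely different route from the paper's, and as a proof of the lemma \emph{as stated} it has a gap: the lemma carries no asphericity hypothesis on $A$, whereas your argument uses contractibility of $NA$ twice — once (via Theorem \ref{almostcisinski}) to identify locally constant presheaves with presheaves equivalent to constant ones, and once to make the derived counit $\hocolim_A \mathrm{const}_V \to V$ an equivalence. The difference is already visible in the localising set. You invert the maps $a'_!(K)\to a_!(K)$ coming from morphisms of $A$, which produces the \emph{locally constant} model structure; but for $A=BG$ (one object, morphisms a group $G$) every such map is an isomorphism, so your localisation is $[BG,\VV]_{proj}$ itself, whose fibrant objects are all levelwise fibrant presheaves and not only those equivalent to constant ones. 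The paper instead inverts the adjunction units $\epsilon\colon QF\to e^*e_!(QF)$, with $F$ running over a set of $\alpha$-small presheaves, and characterises the local objects by the condition that $e^*e_*(E)\to E$ be a levelwise weak equivalence; no homotopical condition on $A$ enters. (A smaller point: combinatoriality of $\VV$ alone does not let you choose generating cofibrations with cofibrant domains — that is tractability, an additional hypothesis.)

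However, your closing paragraph is correct mathematics, and it is an objection to the statement itself rather than a defect of your restricted argument. A left Bousfield (semi)model localisation keeps the cofibrations, hence the trivial fibrations and cofibrant replacements, and it does not alter the weak equivalences of $\VV$; consequently, for any choice of localising set, the cofibrant object $Qe^*V$ and the local weak equivalence $Qe^*V\to e^*V$ are unchanged, so a Quillen equivalence would force its adjunct $e_!(Qe^*V)\simeq \hocolim_A\mathrm{const}_V\to V$ to be a weak equivalence in $\VV$. For $A=B\mathbb{Z}$ and $\VV=\mathrm{sSet}$ this map is the projection $S^1\times V\to V$, so \emph{no} left Bousfield localisation of $[B\mathbb{Z},\mathrm{sSet}]_{proj}$ can make $e_!\dashv e^*$ a Quillen equivalence; along the same lines, constant presheaves fail to be local for the paper's set (the relevant derived mapping space is a homotopy fixed point space, not a strict one), so the lemma's description of local objects fails as well. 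The paper's own argument breaks precisely where it treats $e^*e_!(\bar{F})$ as a cosimplicial resolution of $e^*e_!(F)$: $e^*$ is not left Quillen for the projective structure, so this cosimplicial object need not be Reedy cofibrant, and the subsequent chain of adjunction isomorphisms silently replaces a homotopy limit by a strict one. In short, you have proved the lemma in the generality in which it is true — $NA$ weakly contractible — which is exactly what is needed when the lemma is applied to $\Ww_\infty$-aspherical categories in Theorem \ref{W local structure}; for a general proper fundamental localiser $\Ww$, whose aspherical categories need not have contractible nerves, the obstruction you identified is genuine.
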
 
\begin{remark} The functor $e^*$ is, of course, the diagonal functor.
Its left adjoint $e_!$ is the colimit over $A$   and  its right adjoint $e_*$ is just the limit over $A.$
\end{remark}

\begin{proof} 
Since $\VV$ is combinatorial, so is $[A,\VV]$ with the projective model structure.  Following Cisinski \cite{cis} we fix a regular cardinal $\alpha$ such that every object of $[A,\VV]$ is an $\alpha$-filtered colimit of a set $T$ of $\alpha$-small objects, the class of weak equivalences is stable with respect to $\alpha$-filtered colimits, and there exists a cofibrant resolution functor $Q$ that
preserves $\alpha$-filtered colimits.

We now consider the essentially small set of  arrows in $[A,\VV]$ of the form:
$$\epsilon:QF\to e^*e_!(QF),$$
where $F$ is an $\alpha$-small presheaf in $T$ and      
$\epsilon$ is the unit of the adjunction. Let $[A,\VV]_{proj}^0$ be the left Bousfield localisation of $[A,\VV]$ with respect to this class of morphisms. As in the proof of \cite[Proposition 2.3]{cis} we deduce then that for any $F:A\to \VV$ the map 
$$\epsilon:QF\to e^*e_!(QF)$$
is a local weak equivalence, and, hence $\epsilon:F\to e^*e_!(F)$ is a local weak equivalence for any cofibrant $F.$ 

Let now $E$ be a fibrant object in $[A,\VV]$ which means that it is levelwise fibrant. It is a local object in the localised category if and only if for any cofibrant $F$ the unit $\epsilon$ induces a weak equivalence:
 $$[A,\VV](\bar{F},E) \leftarrow [A,\VV](\overline{e^*e_!(F)},E)$$
 where $\bar{F}$ is a cosimplicial resolution of the presheaf $F$ in $[A,\VV].$ We will show that any local fibrant $E$ is equivalent to a constant presheaf. It is not hard to see that there is a weak equivalence of cosimplicial presheaves:
 $$\overline{e^*e_!(F)} \to  e^*e_{!}(\bar{F})$$   
and we have isomorphisms:  
  $$ [A,\VV](e^*e_!(\bar{F}),E) \simeq [1,\VV](e_!(\bar{F}),e_*({E})) \simeq [A,\VV](\bar{F},e^*e_*({E})). $$
So we have a weak equivalence 
$$[A,\VV](\bar{F},E) \gets  [A,\VV](\bar{F},e^*e_*({E}))$$ 
 induced  by  counit $\eta$ of the adjunction $e^*\dashv e_*,$ for each cofibrant $F.$ 
Hence, $$\eta:e^*e_*(E)\to E$$ is a levelwise weak equivalence and $E$ is equivalent to the constant presheaf.

It is now obvious that $e_!: [A,\VV]_{proj}^0\to [1,\VV]= \VV$ is the left part of a Quillen equivalence. 

\end{proof}

\begin{corollary} \label{cor:Bous loc for inj and Reedy}
There exists a left Bousfield (semimodel) localisation $[A,\VV]^0_{inj}$ of the injective model structure on $[A,\VV]$ with respect to the same set of local equivalences such that
the local objects are injectively fibrant presheaves equivalent to the constant presheaves. The identity functor
$$[A,\VV]^0_{inj} \to  [A,\VV]^0_{proj}$$
is a right Quillen equivalence.

Moreover, if $A$ is a Reedy category then there is a similar localisation $[A,\VV]^0_{reedy}$ of the Reedy model structure and the identity functor gives right Quillen equivalences:
$$[A,\VV]^0_{inj} \to [A,\VV]^0_{reedy}\to  [A,\VV]^0_{proj}$$

\end{corollary}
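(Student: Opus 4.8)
The plan is to deduce the Corollary from the preceding Lemma together with the lifting machinery of Theorem \ref{loclifting}, applied to the identity adjunctions comparing the projective, injective, and Reedy model structures. The starting observation is that for a combinatorial $\VV$ the projective, injective, and (when $A$ is Reedy) Reedy model structures on $[A,\VV]$ all carry the same weak equivalences, namely the levelwise ones, and their cofibrations are nested, $\mathrm{proj}\subseteq\mathrm{reedy}\subseteq\mathrm{inj}$, with fibrations nested in the opposite order. Hence the identity functors assemble into Quillen adjunctions
$$[A,\VV]_{proj} \rightleftarrows [A,\VV]_{reedy} \rightleftarrows [A,\VV]_{inj},$$
whose left adjoints run from coarser to finer cofibrations and whose right adjoints run the other way; since all three structures share their weak equivalences, each such identity adjunction preserves and reflects weak equivalences, hence is a Quillen equivalence.

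Next I would invoke Theorem \ref{loclifting} with $\M=[A,\VV]_{proj}$, already localised to $[A,\VV]^0_{proj}$ by the Lemma, and $\NN=[A,\VV]_{inj}$, taking $\alpha_!=\mathrm{id}\colon\M\to\NN$ and $\alpha^*=\mathrm{id}\colon\NN\to\M$. The localising set $\cat C$ of the Lemma (the maps $QF\to e^*e_!(QF)$) should first be replaced by a cofibrantly approximated set of cofibrations between cofibrant objects, which changes neither the class of local equivalences nor the localisation but meets the between-cofibrant-objects hypothesis of Theorem \ref{loclifting}; since $\alpha_!$ is the identity we have $\alpha_!\cat C=\cat C$. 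Part (1) then produces $[A,\VV]^0_{inj}:=L_{\cat C}([A,\VV]_{inj})$, with cofibrations the injective cofibrations and weak equivalences the $\cat C$-local equivalences. Part (2) identifies the local objects: because $\alpha^*$ reflects and preserves local fibrant objects, an injectively fibrant $X$ is local exactly when it is local in $[A,\VV]^0_{proj}$, i.e. (by the Lemma) levelwise equivalent to a constant presheaf; thus the local objects are precisely the injectively fibrant presheaves equivalent to constant presheaves. Part (3), with the Quillen equivalence noted above, shows that the lifted adjunction is a Quillen equivalence whose right adjoint is the identity $[A,\VV]^0_{inj}\to[A,\VV]^0_{proj}$.

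For the Reedy statement I would run the same argument with $\NN=[A,\VV]_{reedy}$ to obtain $[A,\VV]^0_{reedy}$ and the right Quillen equivalence $[A,\VV]^0_{reedy}\to[A,\VV]^0_{proj}$; and then once more with $\M=[A,\VV]_{reedy}$ (now localised) and $\NN=[A,\VV]_{inj}$ along the identity Quillen equivalence $\mathrm{reedy}\rightleftarrows\mathrm{inj}$, which yields the right Quillen equivalence $[A,\VV]^0_{inj}\to[A,\VV]^0_{reedy}$. The localisation of $[A,\VV]_{inj}$ at $\cat C$ produced this second time coincides with the one produced via the projective structure, by uniqueness of the left Bousfield localisation at a fixed set, so the two right Quillen equivalences compose to the asserted chain $[A,\VV]^0_{inj}\to[A,\VV]^0_{reedy}\to[A,\VV]^0_{proj}$ (alternatively this factorisation follows from two-out-of-three for Quillen equivalences).

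The main obstacle I anticipate is the verification of the standing hypotheses of Theorem \ref{loclifting} (via Theorem \ref{thm:bous-loc-semi}) for $\NN$: that $[A,\VV]_{inj}$ and $[A,\VV]_{reedy}$ are combinatorial semimodel categories whose generating cofibrations can be chosen with cofibrant domains. Combinatoriality is standard, and for the projective and Reedy structures generating cofibrations with cofibrant domains are available from the explicit descriptions (free functors, respectively relative latching maps) as soon as the generating cofibrations of $\VV$ have cofibrant domains. The genuinely delicate case is the injective structure, whose generating cofibrations are not explicit; here I would either exhibit such a set directly, or, when $\VV$ is left proper (so that $[A,\VV]_{inj}$ is left proper, its cofibrations and weak equivalences being levelwise), fall back on the classical Hirschhorn localisation and \cite[Theorem 3.3.20]{hirschhorn}, which Theorem \ref{loclifting} generalises, thereby also producing the full left proper local model structure promised in the Corollary.
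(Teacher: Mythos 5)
Your proposal is correct and takes essentially the same route as the paper: the paper's proof is a one-liner that lifts the localisation of $[A,\VV]_{proj}$ along the identity Quillen adjunctions to the injective (and then Reedy) structures via Theorem \ref{loclifting}, obtaining the Quillen equivalences from part (3) of that theorem, exactly as you do. The only difference is that you make explicit the hypothesis checks (replacing the localising set by cofibrations between cofibrant objects, and the requirement that the generating cofibrations of $[A,\VV]_{inj}$ and $[A,\VV]_{reedy}$ have cofibrant domains) that the paper's terse proof leaves implicit.
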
 

\begin{proof} For the injective model structure let us lift the localisation on $[A,\VV]_{proj}$ along the Quillen adjunction given by the identity 
$[A,\VV]_{inj}\to [A,\VV]_{proj}$. By Theorem  \ref{loclifting}  we have a Quillen equivalence between $[A,\VV]^0_{inj} $ and $[A,\VV]^0_{proj}.$ The same argument works for the Reedy model structure.  
\end{proof}

\begin{remark} It is interesting to notice  that  we have a Quillen equivalence between $[A,\VV]^0_{inj}$ and $[1,\VV]$ given by the pair $e^*\dashv e_*.$   But neither $e_*$ or $e_!$ is a right or left equivalence for the localised  Reedy model structure in general. But under some conditions we may have such Quillen equivalences. This subtle property is discussed in \cite{dag,RSS}. 
\end{remark} 

We want to use the constructed model structures to induce a model structure on $[A,\VV]$ with local objects given by $\Ww$-locally constant presheaves. 

\begin{theorem} \label{W local structure} Let $\Ww$ be a proper fundamental localiser and $\VV$ a combinatorial model category.  Then there exist  (semimodel) localisations of the projective and injective model structures on $[A,\VV]_{proj}^\Ww, [A,\VV]_{inj}^\Ww$ such that the local objects are fibrant (in their respective model structures) and $\Ww$-locally constant presheaves.   
If $A$ is Reedy then there exists a localisation $[A,\VV]_{reedy}^\Ww$ with similar properties.  The identity functors give right Quillen equivalences:
$$[A,\VV]^\Ww_{inj} \to [A,\VV]^\Ww_{reedy}\to  [A,\VV]^\Ww_{proj}.$$

\end{theorem}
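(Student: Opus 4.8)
The plan is to realise $[A,\VV]^\Ww_{proj}$ as a semimodel left Bousfield localisation of $[A,\VV]_{proj}$ at an explicit \emph{set} of maps, to identify its local objects with the $\Ww$-locally constant presheaves, and then to transport everything to the injective and Reedy structures by lifting along the identity Quillen equivalences, exactly as in Corollary \ref{cor:Bous loc for inj and Reedy}. Throughout I work under the same standing hypotheses that make the construction of $[A,\VV]^0_{proj}$ in the preceding Lemma valid, so that Theorem \ref{thm:bous-loc-semi} is applicable to the projective, injective and Reedy structures on $[A,\VV]$.

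First I would fix a regular cardinal $\alpha$ as in the construction of $[A,\VV]^0_{proj}$, and use properness of $\Ww$ to write $\Ww=\Ww(\mathcal{S})$ for a set $\mathcal{S}$ of small categories, each of which is $\Ww$-aspherical (\cite[Theorem 6.1.11]{cis06}). For every $A'\in\mathcal{S}$, with $e':A'\to 1$, let $\cat{C}_{A'}$ be the set of maps $QF'\to (e')^{*}(e')_{!}(QF')$ (for $F'$ ranging over the chosen $\alpha$-small presheaves) whose localisation produces $[A',\VV]^0_{proj}$. Since $A$ and each $A'$ are small, the functors $u\colon A'\to A$ form a set, so $\cat{C}:=\bigcup_{A'\in\mathcal{S}}\bigcup_{u\colon A'\to A} u_{!}(\cat{C}_{A'})$ is a set of maps (replaced by cofibrations if necessary, as in Section \ref{sec:loc}). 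Theorem \ref{thm:bous-loc-semi} then yields $[A,\VV]^\Ww_{proj}:=L_{\cat{C}}([A,\VV]_{proj})$. To identify its local objects, note that a levelwise fibrant $F$ is $\cat{C}$-local iff it is local against each $u_{!}(c)$; since $u_{!}\dashv u^{*}$ is a Quillen pair for the projective structures, the induced adjunction on derived mapping spaces gives $\mathrm{map}(u_{!}c,F)\simeq \mathrm{map}(c,u^{*}F)$, so $F$ is $\cat{C}$-local iff $u^{*}F$ is $\cat{C}_{A'}$-local for all $A'\in\mathcal{S}$ and all $u\colon A'\to A$. As $u^{*}F$ is levelwise fibrant, the description of $[A',\VV]^0_{proj}$-local objects shows this says precisely that $u^{*}F$ is equivalent to a constant presheaf; hence the local objects are exactly the levelwise fibrant presheaves that are \emph{$\mathcal{S}$-locally constant} in the sense of Example \ref{inftyexample}.

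The hard part will be to prove that, for proper $\Ww$, $\mathcal{S}$-local constancy coincides with $\Ww$-local constancy. One inclusion is immediate since $\mathcal{S}$ consists of $\Ww$-aspherical categories. For the converse it suffices, given a fibrant $F$ with $u^{*}F$ constant for all $u\colon A'\to A$ with $A'\in\mathcal{S}$, to show that $v^{*}F$ is constant for every $\Ww$-aspherical $B$ and every $v\colon B\to A$. Writing $G=v^{*}F$, for any $w\colon A''\to B$ with $A''\in\mathcal{S}$ we have $w^{*}G=(vw)^{*}F$ constant, so $G$ is $\mathcal{S}$-locally constant on $B$, and the claim reduces to: an $\mathcal{S}$-locally constant presheaf on a $\Ww$-aspherical category is constant. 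This is exactly the point where properness is essential. The argument follows the pattern of Example \ref{inftyexample} and of the proof of Theorem \ref{almostcisinski}: the class $\Ww_{\mathcal{S}}$ of local $\mathcal{S}$-equivalences is a weak fundamental localiser which trivialises $\mathcal{S}$ (take $w=\mathrm{id}$ to see every $\mathcal{S}$-locally constant presheaf on an object of $\mathcal{S}$ is constant), and the detection of $\Ww(\mathcal{S})$-local constancy on the generating family $\mathcal{S}$ for a proper localiser is supplied by the structure theory of proper fundamental localisers in \cite[Propositions 6.1.16]{cis06} (cf.\ \cite[Theorem 6.1.18]{cis06}), the same input used for Theorem \ref{almostcisinski}(3). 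This reduction is where the bulk of the work lies, and it is a direct generalisation of Cisinski's \cite[Proposition 2.3]{cis} from $\Ww_\infty$ to an arbitrary proper localiser.

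Finally, the injective and Reedy versions together with the asserted Quillen equivalences come for free. The identity functors $[A,\VV]_{inj}\to[A,\VV]_{proj}$ and, when $A$ is Reedy, $[A,\VV]_{reedy}\to[A,\VV]_{proj}$ and $[A,\VV]_{inj}\to[A,\VV]_{reedy}$ are right Quillen equivalences, so Theorem \ref{loclifting} lifts $[A,\VV]_{proj}\to[A,\VV]^\Ww_{proj}$ to localisations $[A,\VV]^\Ww_{inj}$ and $[A,\VV]^\Ww_{reedy}$ for which the identity functors become right Quillen equivalences. Part (2) of Theorem \ref{loclifting} then identifies the lifted local fibrant objects as exactly the injectively (resp.\ Reedy) fibrant presheaves whose underlying presheaf is $\Ww$-locally constant, and composing the lifted equivalences yields the chain $[A,\VV]^\Ww_{inj}\to[A,\VV]^\Ww_{reedy}\to[A,\VV]^\Ww_{proj}$ of right Quillen equivalences, completely parallel to Corollary \ref{cor:Bous loc for inj and Reedy}.
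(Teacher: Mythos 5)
Your proposal is correct in outline but takes a genuinely different route from the paper's proof. The paper never compares $\mathcal{S}$-local constancy with $\Ww$-local constancy; instead it uses properness through Cisinski's model structure on $\Cat$ whose weak equivalences are $\Ww$ (\cite[Theorem 5.3.14]{cis06}): for each object $a\in A$ it factors the point inclusion $1\to A$ as a trivial cofibration $1\to R(a)$ followed by a fibration $u_a\colon R(a)\to A$ (so $R(a)$ is $\Ww$-aspherical), and defines $[A,\VV]^\Ww_{proj}$ by lifting the localisations $[R(a),\VV]^0_{proj}$ along the restriction functors $u_a^*$, over the set of objects of $A$; the proof that this set of liftings detects all $\Ww$-locally constant presheaves is then a lifting argument inside the model structure on $\Cat$ (smoothness, the comparison of $[A,\VV]_{proj}^{u}$ with $[A,\VV]_{proj}^{u(\phi)}$ for a trivial fibration $\phi$, and a lifting square producing $Q\to R(a)$). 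You instead use properness through $\Ww=\Ww(\mathcal{S})$ for a set $\mathcal{S}$ (\cite[Theorem 6.1.11]{cis06}), localise once and for all at the set $\bigcup u_!(\cat{C}_{A'})$, and concentrate all the categorical content in the identification ``$\mathcal{S}$-locally constant $=$ $\Ww$-locally constant''. Your route buys an explicit generating set of localising maps and, as a by-product, a characterisation of $\Ww$-local constancy by restriction along functors out of the generating categories only (of independent interest, given the paper's remark after Example \ref{inftyexample}); the paper's route buys independence from the assertion that $\Ww_{\mathcal{S}}$ is a weak fundamental localiser and from any minimality property among weak localisers. The treatment of the injective and Reedy structures by lifting along identity adjunctions is the same in both proofs (cf. Corollary \ref{cor:Bous loc for inj and Reedy}).

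One step of yours must be checked carefully, because it carries the whole proof: the deduction $\Ww\subseteq\Ww_{\mathcal{S}}$. You invoke \cite[Proposition 6.1.16]{cis06} as ``the same input used for Theorem \ref{almostcisinski}(3)'', but the situations are not identical. In Theorem \ref{almostcisinski}, the weak fundamental localiser $\Ww_{loc}$ is known (tautologically, part (2)) to trivialise \emph{every} $\Ww$-aspherical category, whereas your $\Ww_{\mathcal{S}}$ is known, tautologically, to trivialise only the categories in $\mathcal{S}$. Your argument is therefore valid precisely if Cisinski's statement reads: a proper $\Ww(\mathcal{S})$ is contained in every weak fundamental localiser trivialising $\mathcal{S}$ itself (equivalently, is the minimal weak fundamental localiser trivialising $\mathcal{S}$). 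If instead the hypothesis of that proposition were asphericity of all $\Ww$-aspherical categories for the ambient weak localiser, your argument would be circular, since proving that $\Ww$-aspherical categories are $\Ww_{\mathcal{S}}$-aspherical is exactly your ``hard part''. The stronger reading is the one consistent with deriving \cite[Theorem 6.1.18]{cis06} from it (every weak fundamental localiser trivialises the generator of $\Ww_\infty$ by the terminal-object axiom), so your citation is very likely correct; but since everything rests on it, verify the statement in \cite{cis06} rather than inferring it from how the paper uses it. You also inherit from Example \ref{inftyexample} the assertion, not proved there, that $\Ww_{\mathcal{S}}$ is a weak fundamental localiser for a general generating set, i.e., that Cisinski's argument from \cite{cis} goes through verbatim; the paper endorses this, but it is an additional verification your route requires and the paper's does not.
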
 
\begin{proof} 

Let start from the projective model structure on $[A,\VV].$ Let $A'$ be a $\Ww$-aspherical category and let $u:A'\to A$ be a functor. Then consider the following lifting   of localisations (where $[A,\VV]_{proj}^{u} $ is defined as the lift of $[A',\VV]_{proj}^0$ along $u^*$):

\begin{align} \label{diagram:lifting}
\xymatrix{
[A,\VV]_{proj} \ar@<2.5pt>[r]^{id} \ar@<-2.5pt>[d]^{u^*}
& [A,\VV]_{proj}^{u} \ar@<2.5pt>[l]^{id} \ar@<-2.5pt>[d]^{u^*} \\
[A',\VV]_{proj} \ar@<2.5pt>[r]^{id}\ar@<8.5pt>[u]^{u_!}  
&[A',\VV]_{proj}^0 \ar@<2.5pt>[l]^{id} \ar@<8.5pt>[u]^{u_!}
}
\end{align}

The local fibrant objects in $[A,\VV]_{proj}^u$ are exactly the presheaves that are levelwise fibrant and become equivalent to a constant presheaf after the restriction along $u$ by Theorem \ref{loclifting}.

Recall that there exists a model structure on $Cat$ whose weak equivalences are exactly the functors from $\Ww,$ whose fibrations are $\Ww$-smooth and $\Ww$-proper functors (see \cite[Section 5.3.1]{cis06} for the definition) and this model structure is  proper because $\Ww$ is proper \cite[Theorem 5.3.14]{cis06}.  Let $\phi:Q\to A'$ be a trivial fibration in this model structure and let $A'$ be $\Ww$-aspherical. We then claim that the localisation $[A,\VV]_{proj}^u$ coincides with $[A,\VV]_{proj}^{u(\phi)}.$ Indeed, we can prove this statement if one can prove that $[A',\VV]_{proj}^0$ coincides with 
$[A',\VV]_{proj}^{\phi}$ because   $[A,\VV]_{proj}^{u(\phi)}$ can be obtained as two successive liftings with intermediate lifting  $[A',\VV]_{proj}^\phi.$ 
We then want to prove that the local fibrant objects in  $[A',\VV]_{proj}^{\phi}$ and $[A',\VV]_{proj}^0$ coincide. 

Local objects in $[A',\VV]_{proj}^{\phi}$ are (fibrant) presheaves $F:A'\to \VV$ such that $\phi^*(F)$ are equivalent to constant presheaf given by $lim_Q (\phi^*(F)).$   
We have to prove then that $F$ is equivalent to a constant presheaf. 

First, let us show that  $\phi$ is an initial functor. Indeed, for every object $a\in A'$ the natural inclusion functor $Q_a\to a/\phi$ is from $\Ww$ because $\phi$ is smooth. Let $1$ be the terminal category. Recall that the definition of $\Ww$-smoothness \cite[5.3.1]{cis06} requires $Q_a\to a/\phi$ to be $\Ww$-aspherical, where $Q_a$ is the pullback  

\begin{align} 
\xymatrix{
Q_a \ar@<0.5pt>[r]^{} \ar@<0.5pt>[d]^{}
& 1 
\ar@<0.5pt>[d]^{a}
 \\
Q \ar@<0.5pt>[r]^{\phi}
&A' 
}
\end{align}

Since $\phi$ is a fibration and $a:1\to A'$ is a $\Ww$-equivalence we deduce that the left vertical arrow is a weak equivalence and since $\phi$ is trivial the top arrow is also a weak equivalence. This means that $Q_a$ is at least connected because $\Ww\subseteq \Ww_0.$ Hence all comma-categories  $a/\phi$ are connected and $\phi$ is initial. Thus $lim_Q (\phi^*(F))\simeq lim_{A'} F.$ 

We now know that $\phi^*(F)(q) \to lim_Q (\phi^*(F))$ is a weak equivalence for each $q\in Q.$ But $\phi$ is surjective on objects and, hence, $F(a)\to  lim_{A'} F$ is a weak equivalence for each $a,$ therefore, $F$ is equivalent to a constant presheaf. 

Now, for each object $a \in A$ let $a:1\to A$ be a functor which picks up $a$ as above. We factor it as $1\to R(a)\to A$ in the model category above, where $1\to R(a)$ is a trivial cofibration (so $R(a)$ is $\Ww$-aspherical) and $u_a:R(a)\to A$ is a fibration.  Consider the localisation $[A,\VV]_{proj}^\Ww$ obtained by successively  lifting the localisations $[R(a),\VV]_{proj}^0$ along the functors $u_a^*, a\in A$ (this is where we need smallness of $A$ to ensure that we can do all these liftings). 

We claim that in $[A,\VV]_{proj}^\Ww$ the local objects are exactly the $\Ww$-locally constant fibrant presheaves. We would prove this statement if we can show that for any $\Ww$-aspherical $A'$ and any $u:A'\to A$ the local weak equivalences in $[A,\VV]_{proj}^u$ are among the local weak equivalences in $[A,\VV]_{proj}^\Ww.$ 

Let $b:1\to A'$ be a functor and $a:1\to A'\stackrel{u}{\to} A.$ We also can factor $b$ in the model category on $Cat$ above as a cofibration followed by a trivial fibration $1\to Q\stackrel{\phi}{\to} A'$. Observe, however, that $1\to Q$ is also a trivial cofibration because $A'$  is $\Ww$-aspherical. 

Consider the commutative square

  \begin{align} 
\xymatrix{
1 \ar@<0.5pt>[r]^{} \ar@<0.5pt>[d]^{}
& R(a) 
\ar@<0.5pt>[d]^{u_a}
 \\
Q \ar@<0.5pt>[r]^{u(\phi)}
&A 
}
\end{align}

We then have a lifting $Q\to R(a)$ in this square, and this shows that all local weak equivalences from  $[A,\VV]_{proj}^{u(\phi)}$ (and so from $[A,\VV]_{proj}^u$) are among the local weak equivalences from $[A,\VV]_{proj}^{u_a}.$ 

The proofs for the injective and Reedy structures are similar.

\end{proof}

The following Theorem is an analogue of the corresponding statements from Section 2 of \cite{cis}.

\begin{theorem}\label{WQE} Let $u:A\to B$ be a functor between two small categories and $\Ww$ be a proper fundamental localiser. Then
$$u^*:[B,\VV]_{proj}^\Ww\to [A,\VV]_{proj}^\Ww$$
is a right Quillen functor which is a Quillen equivalence if $u\in \Ww.$

If $u^{op}:A^{op}\to B^{op}$  is a $\Ww$-locally constant  functor (see Definition \ref{W-locally-constant-functor}) then 
$$u^*:[B,\VV]_{inj}^\Ww\to [A,\VV]_{inj}^\Ww$$
is a left Quillen functor which is a Quillen equivalence if $u\in \Ww.$ 

\end{theorem}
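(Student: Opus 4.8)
The plan is to treat both statements by the same two-step strategy. In each case we already have, before localisation, a Quillen adjunction for the base structures ($(u_!,u^*)$ for the projective structures, since $u^*$ preserves levelwise fibrations; $(u^*,u_*)$ for the injective structures, since $u^*$ preserves levelwise cofibrations and levelwise equivalences), and we want to show it descends to the $\Ww$-localisations. For a left Bousfield localisation the cofibrations and trivial fibrations are unchanged, so the only issue is whether the adjunction stays Quillen. For this I will use the derived adjunction on simplicial mapping spaces (available in the semimodel setting by the discussion following Definition \ref{defn:semi}) together with the universal property of Theorem \ref{thm:bous-loc-semi}. Concretely, a base Quillen adjunction $(F,G)$ descends to the localisations precisely when the right adjoint $G$ carries local fibrant objects to local objects: for a localising cofibration $f$ between cofibrant objects and a local fibrant $W$, the equivalence $\mathrm{map}(Ff,W)\simeq \mathrm{map}(f,GW)$ turns ``$GW$ is local'' into ``$Ff$ is a local equivalence'', and Theorem \ref{thm:bous-loc-semi} then yields the descent. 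By Theorem \ref{W local structure} the local fibrant objects are exactly the levelwise fibrant $\Ww$-locally constant presheaves, so in both parts the point is that the relevant right adjoint preserves $\Ww$-locally constant presheaves.

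For the projective statement the right adjoint is $u^*$, and I first observe that $u^*$ preserves $\Ww$-locally constant presheaves directly from the definition: if $F\colon B\to\VV$ is $\Ww$-locally constant and $v\colon A'\to A$ is any functor with $A'$ a $\Ww$-aspherical category, then $v^*u^*F=(uv)^*F$, and since $uv\colon A'\to B$ has $\Ww$-aspherical domain this restriction is equivalent to a constant presheaf; hence $u^*F$ is $\Ww$-locally constant. As $u^*$ also preserves levelwise fibrancy, it preserves local fibrant objects, so $(u_!,u^*)$ is a Quillen adjunction for the projective $\Ww$-structures. For the Quillen-equivalence claim when $u\in\Ww$, I will use that the homotopy category of a left Bousfield localisation is the full subcategory of $\Ho[B,\VV]$ on the local objects, i.e. $LC_{\Ww}[B,\VV]$, and that under this identification $\bbR u^*$ restricts on local objects to the ordinary restriction $u^*\colon LC_{\Ww}[B,\VV]\to LC_{\Ww}[A,\VV]$ (no fibrant replacement is needed on already-fibrant $W$). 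Since $\Ww$ is proper, $\Ww\subseteq\Ww_{loc}$ by Theorem \ref{almostcisinski}(3), so $u\in\Ww$ makes $u^*$ an equivalence of these categories, whence $\bbR u^*$ is an equivalence of homotopy categories and $(u_!,u^*)$ is a Quillen equivalence.

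For the injective statement the left Quillen functor is $u^*$ and the relevant right adjoint is $u_*=\mathrm{Ran}_u$. Here the by-hand argument is unavailable, and this is exactly where the hypothesis on $u^{\op}$ enters: since $\Ww$ is proper we have $\Ww\subseteq\Ww_{loc}$, so a $\Ww$-locally constant functor is in particular $\Ww_{loc}$-locally constant, and Corollary \ref{uop=lc} then tells us that $\bbR u_*$ preserves $\Ww$-locally constant presheaves. Thus $u_*$ sends injectively fibrant local objects to local objects, and the descent criterion of the first paragraph shows $(u^*,u_*)$ is a Quillen adjunction for the injective $\Ww$-structures, i.e. $u^*$ is left Quillen. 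When $u\in\Ww$, the same homotopy-category identification shows $\bbL u^*$ restricts to $u^*\colon LC_{\Ww}[B,\VV]\to LC_{\Ww}[A,\VV]$, which is an equivalence by Theorem \ref{almostcisinski}(3); hence $\bbL u^*$ is an equivalence of homotopy categories and $(u^*,u_*)$ is a Quillen equivalence.

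The main obstacle I anticipate is making the descent criterion and the identification $\Ho([B,\VV]^{\Ww}_{proj})\simeq LC_{\Ww}[B,\VV]$ fully rigorous in the \emph{semimodel} setting, since the localisations of Theorem \ref{W local structure} are only semimodel localisations and need not be left proper. I expect this to be routine given the semimodel analogues of the classical Bousfield-localisation facts recorded after Definition \ref{defn:semi} and in Theorem \ref{thm:bous-loc-semi}, but it requires care: the derived mapping-space computation must use cofibrant/fibrant replacements that exist on the objects at hand, and one must check that the local fibrant objects genuinely represent the homotopy category, so that $\bbR u^*$ (resp. $\bbL u^*$) is computed on them without further replacement. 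A secondary point to confirm explicitly is that the base adjunctions $(u_!,u^*)$ and $(u^*,u_*)$ are Quillen for the projective and injective structures respectively, which is standard.
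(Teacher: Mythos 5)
Your proposal is correct and follows essentially the paper's intended route: the paper proves Theorem \ref{WQE} by declaring it analogous to Cisinski's Propositions 2.6 and 2.8 in \cite{cis}, and your argument (descent of the pre-localised Quillen adjunctions by checking that the right adjoints preserve local fibrant objects, namely $u^*$ preserves $\Ww$-locally constant presheaves directly from the definition and $u_*$ does so by Corollary \ref{uop=lc} together with $\Ww\subseteq\Ww_{loc}$, followed by identifying the localised homotopy categories with $LC_{\Ww}[-,\VV]$ and invoking Theorem \ref{almostcisinski}(3)) is exactly that argument carried out with the paper's own ingredients, Theorem \ref{W local structure} included. The semimodel-theoretic care you flag is precisely what the paper delegates to Theorem \ref{thm:bous-loc-semi} and the discussion following Definition \ref{defn:semi}, so no genuinely new idea is missing from your write-up.
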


The proof is analogous to \cite[Propositions 2.6,2.8]{cis}.   

\subsection{Truncated homotopy types}

Even though we don't know a satisfactory explicit characterisation of $\Ww$-locally constant presheaves, the proof above shows that there exists a set of $\Ww$-aspherical categories and functors $R(a)\to A$ which `detects' $\Ww$-locally constant presheaves. Also, under some circumstances we can describe the locally constant presheaves in terms of the $\Ww_{\infty}$-locally constant presheaves.

Recall \cite[Corollary 4.2.18]{cis06} that for every accessible fundamental localiser $\Ww$ there exists a model structure on simplicial sets such that its cofibrations are monomorphisms and its weak equivalences are exactly maps between simplicial sets such that the induced  functor between categories of simplices belongs to $\Ww.$ We call the fibrant objects in this model category \textit{$\Ww$-local simplicial sets} and contractible objects \textit{$\Ww$-aspherical simplicial sets}.  For $\Ww =\Ww_{\infty}$ the corresponding model structure is just the classical Kan-Quillen model structure.  

\begin{definition} Let $\VV$ be a model category. We call it $k$-truncated if for any objects $X,Y$, the mapping space $Map_V(X,Y)$ is a $\Ww_k$-local simplicial set. This means that all  homotopy groups of this mapping space  vanish for $i>k$ for any choice of a base point.
\end{definition}

\begin{theorem}\label{kloc=lok} Let  $\VV$ be an $k$-truncated model category. Then the identity functor
$$[A,\VV]_{proj}^{\Ww_{r}}\to [A,\VV]_{proj}^{\Ww_{\infty}}$$
is a right Quillen equivalence for all $r\ge k+1.$ 
\end{theorem}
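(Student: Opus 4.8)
The plan is to prove that, under the truncation hypothesis, the two localisations $[A,\VV]^{\Ww_r}_{proj}$ and $[A,\VV]^{\Ww_\infty}_{proj}$ (which exist by Theorem \ref{W local structure}) have \emph{exactly the same local objects}. Since both are Bousfield localisations of the same structure $[A,\VV]_{proj}$ they share the cofibrations, and by the theory of (semimodel) left Bousfield localisation recalled in Section \ref{sec:loc} and Theorem \ref{thm:bous-loc-semi} the class of local equivalences is determined by the class of local objects (a map $g$ is local iff $map(g,W)$ is an equivalence for every local $W$). Hence coincidence of local objects forces the two semimodel structures to be identical, and the stated identity functor is then trivially a right Quillen equivalence. (For the record the adjunction goes in the right direction even before this: because $\Ww_\infty$ is the minimal fundamental localiser, Example \ref{Winf} gives $\Ww_\infty\subseteq\Ww_r$, so the left adjoint identity $[A,\VV]^{\Ww_\infty}_{proj}\to[A,\VV]^{\Ww_r}_{proj}$ preserves cofibrations and, as $\Ww_\infty$-equivalences are $\Ww_r$-equivalences, trivial cofibrations.)

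One inclusion of local objects is automatic: since $\Ww_\infty\subseteq\Ww_r$, every $\Ww_r$-aspherical category is $\Ww_\infty$-aspherical, so every $\Ww_r$-locally constant presheaf is $\Ww_\infty$-locally constant. It therefore remains to prove the reverse inclusion when $\VV$ is $k$-truncated and $r\ge k+1$. Fixing a $\Ww_\infty$-locally constant $F$, a $\Ww_r$-aspherical $A'$ and a functor $u:A'\to A$, I first note that $u^*(F)$ is again $\Ww_\infty$-locally constant on $A'$: for any $\Ww_\infty$-aspherical $A''$ and $v:A''\to A'$ one has $v^*u^*(F)=(uv)^*(F)$, which is constant because $F$ is $\Ww_\infty$-locally constant. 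The whole statement thus reduces to the following key assertion: \emph{on a $\Ww_r$-aspherical category $A'$ with $r\ge k+1$, every $\Ww_\infty$-locally constant presheaf valued in a $k$-truncated $\VV$ is equivalent in $Ho[A',\VV]$ to a constant presheaf.}

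To prove the key assertion I would use the description from Example \ref{inftyexample} and the Introduction: a $\Ww_\infty$-locally constant presheaf is precisely one inverting every morphism of $A'$, so $Ho[A',\VV]^{\Ww_\infty}_{proj}$ is the homotopy category of representations of the $\infty$-groupoid $\Pi_\infty(A')\simeq |NA'|$ in $\VV$. As $|NA'|$ is an $\infty$-groupoid, every such representation factors through the maximal subgroupoid (core) $\VV^\simeq$, so this is the mapping space $\mathrm{Map}(|NA'|,\VV^\simeq)$, with the constant presheaves (the image of the diagonal $e^*$) corresponding, via $|NA'|\to *$, to the image of $\VV^\simeq$. Now $k$-truncatedness means $Map_\VV(X,Y)$ has vanishing homotopy in degrees $>k$; hence the automorphism spaces $\mathrm{Aut}_\VV(X)$, being unions of components of $Map_\VV(X,X)$, are $k$-truncated, and therefore $\VV^\simeq$ (whose based loop spaces are the $\mathrm{Aut}_\VV(X)$) is $(k+1)$-truncated. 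On the other hand, by Example \ref{Wn} a $\Ww_r$-aspherical $A'$ has $|NA'|$ $r$-connected, hence $(k+1)$-connected since $r\ge k+1$.

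The proof is then completed by the standard connectivity fact that for an $m$-connected space $X$ and an $m$-truncated space $Y$ the evaluation $\mathrm{Map}(X,Y)\to Y$ at a basepoint is a weak equivalence; I apply it with $m=k+1$, $X=|NA'|$, $Y=\VV^\simeq$. Concretely, modelling $X$ by a CW complex with a single $0$-cell and no cells in dimensions $1,\dots,k+1$, the pointed mapping space $\mathrm{Map}_*(X,Y)$ is an inverse limit whose base stage $\mathrm{Map}_*(X^{(k+1)},Y)=\mathrm{Map}_*(*,Y)$ is contractible and in which attaching $d$-cells ($d\ge k+2$) computes $\mathrm{Map}_*(X^{(d)},Y)$ as the homotopy fibre of a map out of the (contractible) previous stage into $\prod\Omega^{d-1}Y$, i.e.\ as $\prod\Omega^{d}Y$, which is contractible because $\pi_j(\Omega^{d}Y)=\pi_{j+d}(Y)=0$ for $d\ge k+2$. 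Thus $\mathrm{Map}_*(X,Y)\simeq *$, the restriction is an equivalence, and $u^*(F)$ is equivalent to a constant presheaf, establishing the key assertion and hence the coincidence of $\Ww_r$- and $\Ww_\infty$-local objects. The main subtlety, and the reason the hypothesis is $r\ge k+1$ rather than $r\ge k$, is exactly this boundary count: with only $k$-connectivity of $|NA'|$ the first cells sit in dimension $k+1$ and the possibly non-vanishing $\pi_{k+1}(\VV^\simeq)$ obstructs constancy.
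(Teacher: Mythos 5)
Your proposal is correct in substance and reaches exactly the paper's reduction — both arguments boil the theorem down to showing that the $\Ww_r$-local and $\Ww_\infty$-local objects coincide, hence to the key assertion that every $\Ww_\infty$-locally constant presheaf on a $\Ww_{k+1}$-aspherical category $A'$ is equivalent to a constant one — but you prove that key assertion by a genuinely different route. The paper stays one loop down: it notes $A'$ is connected, invokes McDuff \cite{McDuff} (together with the $\Ww_\infty$-invariance of locally constant presheaves, Theorem \ref{almostcisinski}) to replace $A'$ by a discrete monoid, views the presheaf as a monoid map $A'\to\VV(X,X)$, lifts it via Dwyer--Kan localisation \cite{DwyerKan} to the simplicial group $\Omega B(A')$, and concludes because a $k$-connected source maps trivially to the $k$-truncated $\mathrm{Map}_{\VV}(X,X)$. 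You instead go one delooping up: you identify locally constant presheaves with maps $|NA'|\to\VV^{\simeq}$ into the core of the underlying $\infty$-category, and run the same connectivity-versus-truncation count there (a $(k+1)$-connected source against the $(k+1)$-truncated $\VV^{\simeq}$, whose loop spaces are the $k$-truncated homotopy automorphism spaces), settled by a correct cell-by-cell obstruction argument; your closing remark that the boundary case is exactly the possibly nonvanishing $\pi_{k+1}(\VV^{\simeq})$ is a nice point the paper leaves implicit. What each approach buys: the paper's route uses only results it can cite pointwise (McDuff, Dwyer--Kan) and never needs the $\infty$-categorical dictionary; your route is more conceptual and makes the numerology transparent, but it leans on the identification of the homotopy theory of $\Ww_\infty$-locally constant presheaves with $\mathrm{Fun}(\Pi_\infty(A'),\VV_\infty)\simeq\mathrm{Map}(|NA'|,\VV^{\simeq})$, a statement the paper makes only informally in its introduction and never proves; to be airtight you would have to import straightening/unstraightening or simplicial localisation results here, so your proof is no more self-contained than the paper's, just differently so.

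One small correction: your justification of the easy inclusion is stated backwards. Since $\Ww_\infty\subseteq\Ww_r$, it is every $\Ww_\infty$-aspherical category that is $\Ww_r$-aspherical (an $r$-connected category need not be weakly contractible, so the converse fails), and it is this inclusion of asphericity classes that makes every $\Ww_r$-locally constant presheaf $\Ww_\infty$-locally constant. The conclusion you draw is the correct one, so this is only a slip of wording, not a gap in the argument.
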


\begin{proof} It is enough to prove the theorem for $r=k+1.$ Since $\Ww_{\infty}$ is the minimal  fundamental localiser, every $\Ww$-locally constant presheaf is also $\Ww_{\infty}$-locally constant. So, for any $\Ww$  the identity functor  $[A,\VV]_{proj}^\Ww\to [A,\VV]_{proj}^{\Ww_{\infty}}$ is a right Quillen functor. To prove that this is a Quillen equivalence for $\Ww=\Ww_{k+1}$ we have to prove that these categories have the same class of local objects. 

It is not hard to see that this amounts to the following statement: every $\Ww_{\infty}$-locally constant presheaf $F\to \VV$ on a  $\Ww_{k+1}$-aspherical category $A'$ is equivalent to a constant presheaf.    

The category $A'$ is connected. Since the homotopy category of $\Ww_{\infty}$-locally constant presheaves depends only on the homotopy type of $A'$  we  can also think of $A'$ as a monoid \cite{McDuff}. Then a presheaf $F:A'\to \VV$ is just a morphism of monoids $F:A'\to \VV(X,X)$ for some $X\in \VV.$ 
Since $F$ is locally constant this functor can be lifted to a map of monoids $F':\Omega B(A')\to Map_V(X,X)$  where $\Omega B(A')$ is the simplicial group whose homotopy type is $\Omega N(A')$ \cite{DwyerKan}. But $N(A')$ is $(k+1)$-connected, so $\Omega B(A')$ is $k$-connected but $Map_V(X,X)$ is $k$-local. So, $F'$ is homotopic to a constant map as we wanted.

\end{proof}

\section{Local (semi)model structures on algebras} \label{sec:loc-alg}

\subsection{Disconnected and constantly disconnected functors and bimodules} 

\begin{defin}\label{tame} Let $u:A\to B$ be a functor between small categories.  We call it {\it fiberwise {disconnected}}   if it is fiberwise disconnected as a morphism of polynomial monads. \end{defin} This, of course, means that for any $b\in B$, the category $u/b$ has a discrete reflective subcategory or, equivalently, $u/b$ has a terminal object in each connected component. 
\begin{defin} And we call $u:A\to B$ \textit{constantly fiberwise disconnected} if, in addition, $u$ is $\Ww_0$-locally constant (i.e., $b\to b'$ induces an isomorphism $\pi_0(u/b)\to \pi_0(u/b')$).
\end{defin}

As before we are going to shorten our terminology and drop the word `fiberwise' if it does not lead to confusion.

\begin{proposition}\label{tame functor} Let $u:A\to B$ be a functor such that $u^{op}:A^{op}\to B^{op}$ is disconnected (i.e., for any $b\in B$ the category $b/u$ has initial object in each connected component.)
Then the restriction functor
$$u^*:[B,\VV]_{proj}\to [A,\VV]_{proj}$$
is left Quillen. Moreover, if in addition $u^{op}$ is constantly disconnected (i.e., $b\to b'$ induces an isomorphism $\pi_0(b/u)\leftarrow \pi_0(b'/u)$), then for any proper fundamental localiser $\Ww$
 $$u^*:[B,\VV]^\Ww_{proj}\to [A,\VV]^\Ww_{proj}$$
 is also left Quillen.  
\end{proposition}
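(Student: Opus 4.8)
The plan is to first re-derive the (already stated) projective claim in a form that also supplies the right adjoint we need, and then to promote it to the localised structures by checking that this right adjoint preserves local objects. For the projective statement I would identify $u^{*}$ on representables: writing $B(b,-)\colon B\to\Set$ for the representable at $b$, we have $u^{*}(B(b,-))=B(b,u(-))$, and the category of elements of this $\Set$-valued presheaf on $A$ is exactly the comma category $b/u$. The hypothesis that $u^{\op}$ is disconnected says precisely that each $b/u$ has an initial object in every connected component, which is equivalent to $B(b,u(-))$ being a coproduct of representables $\coprod_{j}A(a_{j},-)$, i.e.\ a projectively cofibrant diagram of sets. Consequently $u^{*}$ sends each generating projective (trivial) cofibration $B(b,-)\otimes i$ to $\coprod_{j}\bigl(A(a_{j},-)\otimes i\bigr)$, a coproduct of generating projective (trivial) cofibrations. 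Since $u^{*}$ has both adjoints and hence preserves all colimits, it carries relative cell complexes to relative cell complexes and retracts to retracts; therefore $u^{*}\colon[B,\VV]_{proj}\to[A,\VV]_{proj}$ is left Quillen and its right adjoint $u_{*}$ is right Quillen.

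For the localised statement I would use that left Bousfield localisation does not change the cofibrations, so $u^{*}$ still preserves cofibrations as a functor $[B,\VV]^{\Ww}_{proj}\to[A,\VV]^{\Ww}_{proj}$. By the universal property of the localisation (Theorem \ref{thm:bous-loc-semi}) together with the derived mapping-space adjunction $\mathrm{map}(u^{*}X,E)\simeq\mathrm{map}(X,u_{*}E)$, it suffices to show that $u_{*}$ sends $\Ww$-local fibrant objects to $\Ww$-local fibrant objects; that is, if $E\colon A\to\VV$ is projectively fibrant and $\Ww$-locally constant then so is $u_{*}E$. Fibrancy is immediate since $u_{*}$ is right Quillen, and $u_{*}E\simeq\mathbf{R}u_{*}E$ because $E$ is already fibrant, so the whole issue reduces to showing that $\mathbf{R}u_{*}$ preserves $\Ww$-locally constant presheaves. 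This is exactly Corollary \ref{uop=lc}, whose hypothesis is that $u^{\op}$ be $\Ww_{loc}$-locally constant.

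The crux, and the step I expect to be the real obstacle, is therefore verifying that $u^{\op}$ is $\Ww_{loc}$-locally constant from the assumption that it is constantly disconnected. I would argue as follows. Disconnectedness of $u^{\op}$ writes each comma category as a coproduct $b/u=\coprod_{i}C_{i}$ in which every $C_{i}$ has an initial object; since a category with an initial object is $\Ww$-aspherical (axiom (4) for fundamental localisers together with closure of $\Ww$ under $(-)^{\op}$, \cite[Proposition 1.1.22]{Maltsiniotis}), the projection $b/u\to\pi_{0}(b/u)$ onto the discrete category of components is a $\Ww$-aspherical functor, hence a $\Ww$-equivalence. For a morphism $g\colon b\to b'$ in $B$ the induced square relating the comma functor $b'/u\to b/u$ to the two projections onto $\pi_{0}$ commutes, its vertical legs are $\Ww$-equivalences, and its bottom leg $\pi_{0}(b'/u)\to\pi_{0}(b/u)$ is a bijection by the $\Ww_{0}$-local-constancy half of ``constantly disconnected''; two-out-of-three then forces $b'/u\to b/u$ to be a $\Ww$-equivalence. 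Thus $u^{\op}$ is $\Ww$-locally constant in the sense of Definition \ref{W-locally-constant-functor}, and since $\Ww$ is proper we have $\Ww\subseteq\Ww_{loc}$ by Theorem \ref{almostcisinski}(3), so $u^{\op}$ is a fortiori $\Ww_{loc}$-locally constant.

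Feeding this into Corollary \ref{uop=lc} gives that $\mathbf{R}u_{*}$ preserves $\Ww$-locally constant presheaves, completing the verification that $u_{*}$ preserves $\Ww$-local fibrant objects and hence that $u^{*}$ is left Quillen for the localised structures. The one point to handle with care throughout is the bookkeeping of variances — the passages between $b/u$, $u^{\op}/b$, and the directions of the induced comma functors — since it is easy to invoke the wrong localiser axiom (terminal versus initial, and which leg of the $\pi_{0}$-square is a bijection) if the $\op$'s are not tracked precisely.
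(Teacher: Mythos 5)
Your proof is correct, and its overall skeleton matches the paper's (both halves ultimately funnel through Corollary \ref{uop=lc}), but the mechanics differ at two points, in ways worth recording. For the projective half, the paper works on the fibration side: disconnectedness of $u^{op}$ lets one compute the right adjoint explicitly as $u_*(F)(b)=\lim_{b/u}\tilde F=\prod_{c\in\pi_0(b/u)}\tilde F(\bar c)$, evaluation at a chosen initial object in each component, so $u_*$ visibly preserves levelwise (trivial) fibrations. You work on the cofibration side instead: disconnectedness is read as saying that $B(b,u(-))$ is a coproduct of representables (its category of elements being exactly $b/u$), so $u^*$ carries generating (trivial) cofibrations to coproducts of generating (trivial) cofibrations and, being cocontinuous, is left Quillen. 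These are adjoint formulations of the same combinatorial fact; yours avoids the explicit limit formula, while the paper's exhibits $u_*$ concretely. For the localised half, the paper verifies the hypothesis of Corollary \ref{uop=lc} by observing that $u^{op}$ is $\Ww_{\infty}$-locally constant (contractible components plus $\pi_0$-bijections give nerve equivalences, cf.\ Example \ref{inftyexample}), implicitly using that $\Ww_{\infty}$, being the minimal weak fundamental localiser, is contained in $\Ww_{loc}$. You instead prove directly from the localiser axioms that $u^{op}$ is $\Ww$-locally constant for the given proper localiser (initial objects give $\Ww$-asphericity of each component via closure of $\Ww$ under $(-)^{op}$, then two-out-of-three against the projections to $\pi_0$), and pass to $\Ww_{loc}$ via properness and Theorem \ref{almostcisinski}(3). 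Both routes are sound; yours stays entirely inside the axiomatics of $\Ww$, while the paper's is shorter but leans on minimality of $\Ww_{\infty}$. You also spell out the reduction the paper leaves implicit, namely that it suffices for $u_*$ to send $\Ww$-local fibrant objects to $\Ww$-local fibrant objects, via the universal property of Theorem \ref{thm:bous-loc-semi} and the mapping-space adjunction; this is exactly the right way to fill that gap, and your care with the variances ($u^{op}/b\cong(b/u)^{op}$ and the direction of the induced comma functors) is warranted, since that is where such an argument most easily goes wrong.
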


\begin{proof}  Let $u_*$ be the right adjoint to $u^*$ as usual. We need to show that  $u_*$ preserves fibrations and trivial fibrations. The value of $u_*(F)(b) \ , b \in B$ is given by
$$\lim_{b/u} \tilde{F} = \prod_{c\in \pi_0(b/u)} \tilde{F}(\bar{c}),$$
where $\tilde{F}: b/u \to \VV , \ \tilde{F}(u(a)\to b) = F(a)$ and $\bar{c}$ is a chosen initial object  in the connected component $c$ of $b/a.$   
So, $u_*$ sends any natural transformation $\phi:F\to G$ to a natural transformation whose components are  products of certain components of $\phi.$  Fibrations and trivial fibrations in the projective model structure are defined componentwise. Hence $u_*(\phi)$ is a (trivial) fibration if $\phi$ is because (trivial) fibrations are closed under products.

To check this for the local model structure, it is enough to see that, under the conditions of the proposition, the functor $u^{op}$ is $\Ww_{\infty}$-locally constant (see Example \ref{inftyexample}). The result now follows from Corollary \ref{uop=lc}.  
\end{proof}

\begin{defin} \label{defn: locally constant Reedy bimodule}
Let 
\begin{equation}\label{bimod} F: B^{op}\times A\rightarrow \Set \end{equation} 
be a bimodule.
 We will call the bimodule \ref{bimod}  disconnected (constantly disconnected) if the functor $p^{op}:(\gint F)^{op}\to B^{op}$ is disconnected (constantly disconnected).   

\end{defin} 

\begin{proposition}\label{locally tame bimodule} 
For a disconnected bimodule $F:B^{op}\times A \to \Set$ the functor
$$p^*\circ \pi_!: [B,\VV]_{proj}\rightarrow [\gint F,\VV]_{proj} \rightarrow [A,\VV]_{proj}$$
 is a left Quillen 
functor. 

If $F$ is constantly disconnected then for any proper fundamental localiser $\Ww$ the functor
$$p^*\circ \pi_!: [B,\VV]^\Ww_{proj}\rightarrow [A,\VV]_{proj}^\Ww$$
is a left Quillen functor. 

\end{proposition}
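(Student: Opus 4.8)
The plan is to realise the functor in question as a composite of two left Quillen functors and to treat each factor separately, running the same two-step argument for the projective structures and for their $\Ww$-localisations. Writing $p\colon \gint F\to B$ and $\pi\colon \gint F\to A$ for the two legs of the two-sided fibration generated by $F$, the stated functor is the composite
$$[B,\VV]\xrightarrow{\ p^*\ }[\gint F,\VV]\xrightarrow{\ \pi_!\ }[A,\VV],$$
where $p^*$ is restriction along $p$ and $\pi_!$ is left Kan extension along $\pi$. Since a composite of left Quillen functors is again left Quillen, it suffices to show that each of $p^*$ and $\pi_!$ is left Quillen.

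For the factor $\pi_!$ I would simply note that its right adjoint $\pi^*$ is restriction, which preserves pointwise fibrations and pointwise trivial fibrations; hence $\pi_!\dashv\pi^*$ is a Quillen pair and $\pi_!$ is left Quillen for the projective structures. For the factor $p^*$, the hypothesis that $F$ is a disconnected bimodule unwinds to the statement that $p^{op}\colon(\gint F)^{op}\to B^{op}$ is disconnected, so Proposition \ref{tame functor} applies directly and gives that $p^*\colon[B,\VV]_{proj}\to[\gint F,\VV]_{proj}$ is left Quillen. Composing proves the first assertion. For the localised statement the factor $p^*$ is handled in the same way: constant disconnectedness of $F$ means $p^{op}$ is constantly disconnected, so the second half of Proposition \ref{tame functor} makes $p^*\colon[B,\VV]^\Ww_{proj}\to[\gint F,\VV]^\Ww_{proj}$ left Quillen.

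The only point that genuinely needs an argument is that $\pi_!$ stays left Quillen after localisation, i.e. that $\pi_!\colon[\gint F,\VV]^\Ww_{proj}\to[A,\VV]^\Ww_{proj}$ is left Quillen. Here I would invoke the universal property of Bousfield localisation (Theorem \ref{thm:bous-loc-semi}): viewing $\pi_!$ as the left Quillen functor $[\gint F,\VV]_{proj}\to[A,\VV]^\Ww_{proj}$, it descends to the localisation $[\gint F,\VV]^\Ww_{proj}$ as soon as it sends the localising maps $\cat C$ of $[\gint F,\VV]^\Ww_{proj}$ to $\Ww$-local equivalences. By the adjunction isomorphism $map(\pi_! c,W)\cong map(c,\pi^*W)$, this holds provided $\pi^*$ carries each $\Ww$-locally constant (projectively fibrant) object $W$ on $A$ to a $\Ww$-locally constant object on $\gint F$; equivalently, $\pi^*$ preserves local fibrant objects, in the spirit of Theorem \ref{loclifting}(2).

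This last check is the crux, and it is immediate from the definition of $\Ww$-local constancy, which is exactly why the hypotheses on $F$ are needed only through $p^*$ and not through $\pi_!$: if $G\colon A\to\VV$ is $\Ww$-locally constant and $v\colon A'\to\gint F$ is any functor from a $\Ww$-aspherical category, then $v^*\pi^*G=(\pi v)^*G$ is equivalent in $Ho[A',\VV]$ to a constant presheaf, since $\pi v\colon A'\to A$ and $G$ is $\Ww$-locally constant. As $\pi^*$ also preserves pointwise fibrancy, it preserves local fibrant objects, so $\pi_!$ descends and the composite $\pi_!\circ p^*$ is left Quillen for the localised structures as well. The main obstacle is thus conceptual rather than computational: one must verify that the two independently constructed localisations of Theorem \ref{W local structure} are compatible with the adjunction $\pi_!\dashv\pi^*$, and the clean resolution is the one-line observation that restriction always preserves $\Ww$-local constancy (an instance of the phenomenon recorded in Corollary \ref{uop=lc}), combined with the descent criterion above.
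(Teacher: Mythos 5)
Your proposal is correct and follows essentially the same route as the paper: the same decomposition $\pi_!\circ p^*$, with $p^*$ handled by Proposition \ref{tame functor} via (constant) disconnectedness of $p^{op}$, and $\pi_!$ handled by the observation that the restriction $\pi^*$ preserves $\Ww$-locally constant presheaves (which the paper states in one line and you justify in detail via the factorisation $v^*\pi^* G=(\pi v)^*G$). Your extra care in spelling out the descent criterion for $\pi_!$ through the localisation is a welcome elaboration of the paper's terse argument, not a different proof.
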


\begin{proof} The functor $p^*$ is left Quillen by Proposition \ref{tame functor}. The functor $\pi_!$ is always left Quillen for the projective model structure and its localisation, because $\pi^*$ preserves $\Ww$-locally constant presheaves. So the composite is also left Quillen.

\end{proof}

\subsection{Left localisable substitudes} \label{left localisable-substitudes}

Let $({P},A)$ be a $\Set$-substitude. 
 Let $\Ww$ be a proper fundamental localiser and let $\VV$ be a symmetric monoidal combinatorial model category. The projective semimodel structure on  $\Alg_P(\VV)$ is, by definition,   the  semimodel structure   transferred along 
 $U_P:\Alg_P(\VV)\to [A,\VV]_{proj}$ or, equivalently,  along the composite $$\Alg_P(\VV)\stackrel{\eta^*}{\to}[A,\VV]\stackrel{i^*}{\to} [A_0,\VV].$$ 
 Assuming the existence of such a transfer  we can try to lift the localisation $[A,\VV]_{proj}\to [A,\VV]^\Ww_{proj}$ to the category of ${P}$-algebras to obtain the localisation $\Alg_{{P}}^\Ww(\VV).$ 
 
 \begin{definition} \label{wlcalgebra} A ${P}$-algebra $X$ is called $\Ww$-locally constant if its underlying presheaf $\eta^*(X):A\to \VV$ is a $\Ww$-locally constant presheaf.
 
 \end{definition}
 
 \begin{theorem}\label{lifting of localisation} If $\VV$ is a symmetric  monoidal combinatorial model category with cofibrant unit and $(P,A)$ is a $\Sigma$-free $\Set$-substitude then 
 \begin{enumerate}
 \item The projective semimodel  structure  on $\Alg_{P}(\VV)$ exists;
  \item For any proper fundamental localiser the local semimodel model structure $\Alg_{{P}}^\Ww(\VV)$ exists and its fibrant objects are exactly $\Ww$-locally constant ${P}$-algebras; 
  \item 
The local model structure $\Alg_{{P}}^\Ww(\VV)$ coincides  with the transferred semimodel structure along the forgetful functor
$U:\Alg_{{P}}(\VV)\to [A,\VV]_{proj}^\Ww$ 
provided  this transferred semimodel structure exists.   \end{enumerate} 
 \end{theorem}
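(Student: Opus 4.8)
The plan is to dispatch the three assertions in order: (1) from the established theory of algebras over $\Sigma$-cofibrant operads, (2) as a direct application of the lifting principle of Theorem \ref{loclifting}, and (3) by identifying fibrant objects and invoking the uniqueness of transferable Bousfield localisations.

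First I would prove (1). Because $(P,A)$ is $\Sigma$-free, the associated coloured operad $(P_0,A_0)$ of Proposition \ref{3cat} is $\Sigma$-free, so each object $P_0\otimes I$ is a coproduct of free symmetric-group orbits on the cofibrant unit $I$ and is therefore projectively $\Sigma$-cofibrant in $\VV$. Via the isomorphism $\Alg_P(\VV)\cong\Alg_{\Tt_{P_0}}(\VV)$ of Proposition \ref{3cat}, the projective semimodel structure is precisely the transfer of the product model structure on $[A_0,\VV]$ along $\eta_0^*$, and its existence, together with the fact that the generating cofibrations may be chosen with cofibrant domains, is the content of the transfer theorem for $\Sigma$-cofibrant operads, \cite[Theorem 6.3.1]{white-yau1}. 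I would emphasise that Theorem \ref{semitransfer} is not available here, since for a general $\Sigma$-free substitude the nonsymmetric convolution need not be left Quillen for the projective structure.

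For (2) I would feed the data of (1) into Theorem \ref{loclifting}, applied to the Quillen adjunction $\eta_!\dashv\eta^*$ relating $[A,\VV]_{proj}$ and the projective $\Alg_P(\VV)$; here $\eta^*=U$ is right Quillen by construction of the transfer. Taking $\cat C$ to be the generating set of local equivalences that produces $[A,\VV]_{proj}^\Ww$ in Theorem \ref{W local structure}, whose members run between cofibrant objects since the maps $QF\to e^*e_!(QF)$ have cofibrant source and target, the lifting theorem yields the localisation $\Alg_P^\Ww(\VV):=L_{\eta_!\cat C}(\Alg_P(\VV))$. By part (2) of Theorem \ref{loclifting}, $\eta^*$ reflects and preserves local fibrant objects, so a fibrant $X$ is local exactly when $\eta^*(X)$ is local in $[A,\VV]_{proj}^\Ww$; by Theorem \ref{W local structure} this says $\eta^*(X)$ is a fibrant $\Ww$-locally constant presheaf, i.e. $X$ is a fibrant $\Ww$-locally constant $P$-algebra (Definition \ref{wlcalgebra}).

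For (3), suppose the transfer $\Alg_P(\VV)^{\Ww}_{\mathrm{trans}}$ of $[A,\VV]_{proj}^\Ww$ along $U$ exists. Both it and $\Alg_P^\Ww(\VV)$ are semimodel structures on the same category, and I would show they agree by matching cofibrations and fibrant objects: the trivial fibrations of $\Alg_P(\VV)^{\Ww}_{\mathrm{trans}}$ are the $f$ with $U(f)$ a trivial fibration of $[A,\VV]_{proj}^\Ww$, and since left Bousfield localisation fixes the cofibrations, hence the trivial fibrations, of $[A,\VV]_{proj}$, these coincide with the trivial fibrations of the projective structure, so the cofibrations match those of $\Alg_P^\Ww(\VV)$; and the fibrant objects of both are the $X$ with $\eta^*(X)$ a fibrant $\Ww$-locally constant presheaf, by (2). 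As $U=\eta^*$ is monadic, the identification of a transferable left Bousfield localisation with its lift, \cite{batanin-white-eilenberg-moore}, then gives $\Alg_P(\VV)^{\Ww}_{\mathrm{trans}}=\Alg_P^\Ww(\VV)$. The main obstacle lies exactly here: passing from ``same cofibrations and same fibrant objects'' to genuine equality of the two semimodel structures requires recognising the transferred structure as the lifted localisation rather than comparing classes by hand, which is why the appeal to \cite{batanin-white-eilenberg-moore} is essential; a secondary technical point is the verification of the cofibrant-domain hypothesis on the generating cofibrations of $\Alg_P(\VV)$ used to invoke Theorem \ref{loclifting}.
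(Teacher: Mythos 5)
Your proposal is correct and follows essentially the same route as the paper's proof: part (1) via $\Sigma$-cofibrancy of the associated coloured operad and \cite[Theorem 6.3.1]{white-yau1}, part (2) via Theorem \ref{loclifting} applied to $\eta_!\dashv\eta^*$, and part (3) via \cite{batanin-white-eilenberg-moore} (the paper cites Theorem 5.6 there). The only blemish is your justification that the localising maps $QF\to e^*e_!(QF)$ have cofibrant target — a constant presheaf need not be projectively cofibrant — but this is harmless, since the localising set may be replaced by cofibrant replacements without changing the local objects, as noted in Section \ref{sec:loc}.
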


\begin{proof} 
Recall from the discussion just after Definition \ref{defn:substitute-map} that a substitude is a coloured operad with additional structure. 
Since $P$ is a $\Sigma$-free substitude and the unit of $\VV$ is cofibrant, the coloured operad associated with $(P,A)$ is $\Sigma$-cofibrant. We now apply \cite[Theorem 6.3.1]{white-yau1} to transfer the projective semimodel structure to $\Alg_{P}(\VV)$ from the projective model structure on $[P_0,\VV]$.

The existence of the local semimodel structure $\Alg_{P}^\Ww(\VV)$ follows from Theorem \ref{loclifting}. 
Point (3) of the theorem follows from  Theorem 5.6 of \cite{batanin-white-eilenberg-moore}.  \end{proof}
By default we assume that  $\Alg_P(\VV)$ is equipped with the projective semimodel structure. We connect Theorem \ref{lifting of localisation} with Theorem \ref{semitransfer} below.

\begin{defin} \label{defleftloc} We will say that a $\Sigma$-free substitude $({P},A)$ is left localisable if for any symmetric monoidal combinatorial model category $\VV$ with cofibrant unit $I$ and a proper fundamental localiser $\Ww$ the local semimodel structure $\Alg_P^\Ww(\VV)$ is  transferred from  $[A,\VV]_{proj}^\Ww$ and the forgetful functor   $U_P:\Alg_P(\VV)\to [A,\VV]_{proj}$  preserves cofibrant objects.\end{defin}

 Since cofibrant objects in $[A,\VV]_{proj}$ and  $[A,\VV]^\Ww_{proj}$ coincide we have for a left localisable substitude that 
 $U_P:\Alg_P(\VV)\to [A,\VV]^\Ww_{proj}$  also preserves cofibrant objects.
    Immediately from Theorem \ref{BCperfect} we have 
\begin{corollary} \label{BC_for_W} Let $\VV$ be  symmetric monoidal combinatorial model category  with cofibrant unit and let $\Ww$ be a proper fundamental localiser. If  $(f,g):(P,A)\to (Q,B)$ is a Beck-Chevalley morphism between left localisable substitudes then: 
\begin{enumerate} \item
The morphism  $(f,g):(P,A)\to (Q,B)$ is also homotopically Beck-Chevalley with respect to the local model structures  $[A,\VV]^\Ww_{proj}$ and $[B,\VV]^\Ww_{proj};$ 
\item If $g$ is a $\Ww$-equivalence then the  adjoint pair $f_!\dashv f^*$ is a Quillen equivalence between $ \Alg_{P}^\Ww(\VV)$ and $ \Alg_{{Q}}^\Ww(\VV).$\end{enumerate}
\end{corollary}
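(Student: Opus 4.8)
The plan is to recognise Corollary \ref{BC_for_W} as a direct application of the abstract machinery of Section \ref{sec:beck} to the local semimodel structures provided by left localisability, so that essentially no new computation is required. First I would fix the translation of the Beck-Chevalley square of the morphism $(f,g)$ into the generic square (\ref{BScondition}), taking $\AA=\Alg_P(\VV)$, $\BB=\Alg_Q(\VV)$, $\CC=[A,\VV]$, $\DD=[B,\VV]$, with $\psi^*=f^*$, $\phi^*=g^*$, $\beta^*=(\eta_P)^*$ and $\alpha^*=(\eta_Q)^*$. By hypothesis $(f,g)$ is a Beck-Chevalley morphism, so $\mathbf{bc}\colon g_!(\eta_P)^*\to(\eta_Q)^*f_!$ is an isomorphism; since this is a statement about the underlying adjoint functors, it is insensitive to the model structure in play. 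Left localisability of both substitudes then supplies, via Definition \ref{defleftloc}, the transferred local semimodel structures $\Alg_P^\Ww(\VV)$ and $\Alg_Q^\Ww(\VV)$ obtained along $(\eta_P)^*$ and $(\eta_Q)^*$ from $[A,\VV]^\Ww_{proj}$ and $[B,\VV]^\Ww_{proj}$, together with the fact that the forgetful functors preserve cofibrant objects (the localised and projective structures share the same cofibrations, so preservation into $[A,\VV]_{proj}$ promotes to preservation into $[A,\VV]^\Ww_{proj}$).

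For part (1), I would invoke Proposition \ref{BC_implies_hBC}, for which only two conditions must be checked. Because $\Alg_Q^\Ww(\VV)$ carries a transferred structure, the forgetful functor $\alpha^*=(\eta_Q)^*$ detects, hence preserves, weak equivalences; and $\beta^*=(\eta_P)^*$ preserves cofibrant objects by left localisability. Proposition \ref{BC_implies_hBC} then applies verbatim and shows the square is a homotopy Beck-Chevalley square for the localised projective structures, which is (1).

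For part (2), the cleanest route is Corollary \ref{lifting QE for transfer}. The preliminary task is to confirm that the square consists of Quillen adjunctions in the localised structures: the two vertical pairs are Quillen by the definition of a transfer; the bottom pair $g_!\dashv g^*$ is Quillen by Theorem \ref{WQE}; and $f^*$ is right Quillen because $(\eta_P)^*f^*=g^*(\eta_Q)^*$ and fibrations (and trivial fibrations) are detected by the forgetful functors, so $g^*$ right Quillen forces $f^*$ right Quillen. The hypothesis $g\in\Ww$ then upgrades $g_!\dashv g^*$ to a pair of Quillen equivalences by Theorem \ref{WQE}, i.e.\ $(\phi_!,\phi^*)$ is a Quillen equivalence. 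Since the structures on $\AA$ and $\BB$ are transfers along $\beta^*$ and $\alpha^*$ and $\beta^*$ preserves cofibrant objects, Corollary \ref{lifting QE for transfer} delivers that $(\psi_!,\psi^*)=(f_!,f^*)$ is a pair of Quillen equivalences between $\Alg_P^\Ww(\VV)$ and $\Alg_Q^\Ww(\VV)$, which is (2). Alternatively one could feed part (1) into Proposition \ref{lemma:Beck-Chevalley implies QE}, using that the transferred structures make $\alpha^*$ and $\beta^*$ reflect weak equivalences between fibrant objects.

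I expect no genuine obstacle here: the content is entirely the bookkeeping of matching hypotheses, and the only places where the assumptions do real work are that left localisability simultaneously guarantees the existence of the transferred local structures and the preservation of cofibrant objects by the forgetful functors, while the $\Ww$-equivalence hypothesis on $g$ is exactly what Theorem \ref{WQE} needs to turn the bottom edge of the square into a Quillen equivalence.
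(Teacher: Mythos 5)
Your proposal is correct and follows essentially the same route as the paper: the paper derives this corollary ``immediately from Theorem \ref{BCperfect}'' (itself immediate from Corollary \ref{lifting QE for transfer}), using precisely the observation you make that left localisability supplies the transferred local structures and that cofibrant objects agree in the projective and localised structures, so that $(\eta_P)^*$ preserves them, while Theorem \ref{WQE} turns $g\in\Ww$ into a Quillen equivalence on presheaves. Your explicit appeal to Proposition \ref{BC_implies_hBC} for part (1) and the verification that the square consists of Quillen adjunctions in the local structures merely spell out steps the paper leaves implicit inside its chain Proposition \ref{BC_implies_hBC} $\Rightarrow$ Theorem \ref{lifting  QE} $\Rightarrow$ Corollary \ref{lifting QE for transfer}.
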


We need a criteria for recognition of left localisable substitudes. We first establish
a sufficient combinatorial condition  for a substitude to be  left Quillen:

{ 
\begin{proposition}\label{locoperad} Let $\VV$ be  symmetric monoidal combinatorial model category  with cofibrant unit and
let  $({P},A)$ be a $\Sigma$-free  substitude. Then: \begin{enumerate} \item $({P},A)$ is left Quillen with respect to $[A,\VV]_{proj}$ if 
  the bimodule $d(P)$  is disconnected (cf. (\ref{E'})).

 \item  $({P},A)$ is left Quillen with respect to $[A,\VV]_{proj}^\Ww$ for any proper fundamental localiser $\Ww$
 if $d(P)$ is constantly disconnected.  \end{enumerate}

 \end{proposition}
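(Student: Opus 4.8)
The plan is to check the corner-map condition of Definition \ref{defn:left Quillen multifunctor} for each functor $\odot^k_P$, $k\ge 0$, by exploiting the factorisation of the nonsymmetric convolution from Lemma \ref{lemma:convolution2},
\[
\odot^k_{P}=G_k\circ\tilde{\otimes}^k,\qquad G_k:=\pi'_!\,(p')^*\colon[A^k,\VV]\xrightarrow{(p')^*}[\gint d(P),\VV]\xrightarrow{\pi'_!}[A,\VV].
\]
Since $(p')^*$ is a restriction (a left adjoint) and $\pi'_!$ is a left Kan extension, $G_k$ preserves all colimits; hence $G_k$ commutes with the punctured-cube colimit defining the domain of a corner map, and for cofibrations $f_1,\dots,f_k$ with cofibrant domains the corner map $f_1\boxx\cdots\boxx f_k$ for $\odot^k_P$ equals $G_k$ applied to the corner map $f_1\boxx\cdots\boxx f_k$ for $\tilde{\otimes}^k$. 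It therefore suffices to show that $\tilde{\otimes}^k$ is a left Quillen multifunctor and that $G_k$ is left Quillen in the single-variable sense; the corner map of $\tilde{\otimes}^k$ is a (trivial) cofibration between cofibrant objects, so applying the left Quillen functor $G_k$ preserves this. The boundary cases are covered uniformly: for $k=0$ one has $\tilde{\otimes}^0=I$, cofibrant by the cofibrant-unit hypothesis (cf. Remarks \ref{0convolution} and \ref{k=0}), and $G_0$ preserves cofibrancy; for $k=1$ one has $\tilde{\otimes}^1=\mathrm{id}$.

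For part (1) I would first treat $\tilde{\otimes}^k$ on the projective structures by reducing to generators. Writing $F_a\colon\VV\to[A,\VV]$ for the left adjoint of evaluation at $a$, the generating (trivial) projective cofibrations are the $F_a(i)$ with $i$ generating in $\VV$, and external products of representables satisfy $F_{a_1}(X_1)\,\tilde{\otimes}\cdots\tilde{\otimes}\,F_{a_k}(X_k)\cong F_{(a_1,\dots,a_k)}(X_1\otimes\cdots\otimes X_k)$. Thus the corner map of $\tilde{\otimes}^k$ on generators is $F_{(a_1,\dots,a_k)}$ applied to an iterated pushout--product in $\VV$, which is a (trivial) cofibration by the pushout--product and unit axioms of the symmetric monoidal model category $\VV$; as $F_{(a_1,\dots,a_k)}$ is left Quillen, so is $\tilde{\otimes}^k$ by the usual reduction to generating cofibrations. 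The single-variable claim for $G_k$ is exactly Proposition \ref{locally tame bimodule}(1) applied in each arity $k$, which is available because $d(P)$ is disconnected. Together with the reduction above this proves part (1).

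For part (2) only two upgrades to the $\Ww$-localised structures are needed. For $G_k$ this is Proposition \ref{locally tame bimodule}(2), now available because $d(P)$ is assumed constantly disconnected. The delicate point---and what I expect to be the main obstacle---is that $\tilde{\otimes}^k$ remains left Quillen as $([A,\VV]^\Ww_{proj})^k\to[A^k,\VV]^\Ww_{proj}$. Cofibrations are unchanged under left Bousfield localisation, so the cofibration half is inherited from part (1); the issue is to show that the (already cofibration) corner map is a $\Ww$-local equivalence whenever one input $f_i$ is a $\Ww$-local trivial cofibration. The plan is to prove that external product with a cofibrant object preserves $\Ww$-local equivalences, i.e. that $-\tilde{\otimes}Z\colon[A,\VV]^\Ww_{proj}\to[A^2,\VV]^\Ww_{proj}$ is left Quillen for cofibrant $Z$, and then iterate. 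This rests on the product-stability of a fundamental localiser: external products of $\Ww$-locally constant presheaves are again $\Ww$-locally constant on $A^k$, since for any $u=(u_1,\dots,u_k)\colon A'\to A^k$ with $A'$ aspherical the restriction $u^*(X_1\,\tilde{\otimes}\cdots\tilde{\otimes}\,X_k)$ is the pointwise tensor of the $u_i^*X_i$, each equivalent to a constant, hence so is the tensor by Ken Brown's lemma. Dualising this through the adjunction $\mathrm{map}(f\,\tilde{\otimes}\,Z,V)\cong\mathrm{map}(f,R_Z V)$, where $R_Z(V)(a)=\int_b[Z(b),V(a,b)]$ is the partial internal hom, amounts to checking that $R_Z$ carries $\Ww$-local objects on $A^2$ to $\Ww$-local objects on $A$; this yields that $-\tilde{\otimes}Z$ preserves $\Ww$-local equivalences and completes the corner-map verification, and hence part (2).
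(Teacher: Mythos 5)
Your part (1) is essentially the paper's own argument: the same decomposition $\odot^k_{P}=\pi'_!\,(p')^*\circ\tilde{\otimes}^k$ from Lemma \ref{lemma:convolution2}, with Proposition \ref{locally tame bimodule} disposing of the factor $\pi'_!(p')^*$. The only difference is that you verify that $\tilde{\otimes}^k$ is left Quillen for the projective structures by reducing to generating cofibrations and external products of representables, where the paper instead cites Barwick's results for $\tilde{\otimes}^2$ and inducts; your substitute is a standard and valid argument, and your explicit observation that $G_k$ preserves colimits (hence commutes with the punctured-cube colimit, so the corner map of $\odot^k_P$ is $G_k$ of the corner map of $\tilde{\otimes}^k$) makes precise a step the paper leaves implicit.

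Part (2), however, has a genuine gap at exactly the point where the paper does its real work. You correctly reduce, via the adjunction, to the statement that the partial internal hom $R_Z$, given by $R_Z(V)(a)=\int_b \underline{\VV}(Z(b),V(a,b))$ for cofibrant $Z$, carries $\Ww$-local objects of $[A\times A,\VV]^{\Ww}_{proj}$ to $\Ww$-local objects of $[A,\VV]^{\Ww}_{proj}$ --- this is precisely the paper's claim that $s\mapsto\int_t\underline{\VV}(Y(t),Z(s,t))$ is $\Ww$-locally constant when $Y$ is cofibrant and $Z$ is fibrant and locally constant --- but you never actually prove this statement. The justification you offer, ``product-stability'' (external products of $\Ww$-locally constant presheaves are $\Ww$-locally constant), is a statement about the \emph{left} adjoint preserving local \emph{objects}; that is not what a Quillen adjunction between left Bousfield localisations requires, and it cannot be converted into the needed statement by ``dualising'': the adjunction $map(f\tilde{\otimes}Z,V)\cong map(f,R_ZV)$ merely transposes the problem (knowing $map(f\tilde{\otimes}Z,V)$ is an equivalence for all local $V$ is the same problem as knowing $R_ZV$ is local), so invoking it discharges nothing. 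Worse, in the corner-map verification $Z$ ranges over arbitrary cofibrant presheaves (the domains and codomains $K_j$, $L_j$ and pushouts thereof), which are not locally constant, so your product-stability claim does not even apply to the maps $f\tilde{\otimes}Z$ that must be controlled. What is missing is the direct verification the paper supplies: for $V$ fibrant and $\Ww$-locally constant on $A\times A$ and $u:A'\to A$ aspherical, restriction of $V$ along aspherical functors built from $u$ and points of $A$ is levelwise equivalent to a fibrant constant presheaf, and cofibrancy of $Z$ together with fibrancy of $V$ (i.e.\ the fact that $R_Z$ is right Quillen for the projective structures, hence preserves weak equivalences between fibrant objects by Ken Brown's lemma) pushes this equivalence through the end $\int_b\underline{\VV}(Z(b),V(-,b))$, exhibiting $R_ZV$ as locally constant. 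Without an argument of this kind, part (2) remains unproven.
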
   }

\begin{proof}  
We can use the formula (\ref{convolution2}) for convolution.
Disconnectedness of $d(P)$  implies that $ \pi'_!(p')^*$ is a left Quillen functor with respect to the projective model structure and is left Quillen with respect to local model structure if $d(P)$ is constantly disconnected by Proposition \ref{locally tame bimodule}.
Thus to finish the proof of the first point of the proposition, it is  enough  to show that $\tilde{\otimes}^k$  is left Quillen. It follows from Barwick's results  \cite[Propositions 3.43 and 4.50]{Barwieck} 
that the functor  
$$\tilde{\otimes}^2: [A,\VV]_{proj}\times [A,\VV]_{proj}\rightarrow [A\times A,\VV]_{proj}$$
is a left Quillen functor of two variables. Applying this result and an obvious induction to our case, we complete the proof.   
 
 { To prove that $({P},A)$ is left Quillen with respect to $[A,\VV]_{proj}^\Ww$ (using  formula (\ref{convolution2}) again)} we see that it is enough to prove that the functor $\tilde{\otimes}^2$ is a  
  left Quillen functor for the {local} model structure as well. 
 By the usual adjunction argument this  comes down to the following statement of preservation of local objects:

  Let $Z:A\times A \rightarrow \VV$ be a $\Ww$-locally constant fibrant presheaf. Then for a cofibrant $Y:A\to \VV$ 
   the presheaf   \ $
\underline{[A,\VV]}(Y,Z(s,-)) \ , \                      s\mapsto \int_t \underline{\VV}(Y(t),Z(s,t))
$
is a $\Ww$-locally constant presheaf. Here $\underline{\VV}$ and $\underline{[A,\VV]}$   are internal hom-functors in $\VV$ and $[A,\VV]$ correspondingly.

 We know that $Z$ is equivalent to a constant presheaf  after restriction to any $\Ww$-aspherical category $B'\to A\times A.$  So, for an arbitrary $\Ww$-aspherical $A'$ and a functor 
$u:A'\to A$  there is a levelwise weak equivalence $(1_s\times u)^*(Z)\to C_s$  to a constant presheaf $C_s,$  where $1_s:1\to A$ is the functor that picks $s\in A.$ We can obviously assume that $C_s$ is fibrant. Since $Y$ is cofibrant and $Z$ is fibrant we have a levelwise weak equivalence 
  $$\int_t \underline{\VV}(Y(t),Z(s,t))\to \int_t \underline{\VV}(Y(t),C_s(t)) .$$ 
  The last functor is obviously constant.

\end{proof} 
Finally,  applying 
Theorem \ref{semitransfer} we get the following combinatorial criteria for localisability
\begin{theorem}\label{left localisable criteria} Let $(P,A)$ be a unary tame substitude for which $d(P)$ is constantly disconnected. Then $(P,A)$ is left localisable. 
\end{theorem}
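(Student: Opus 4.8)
The plan is to assemble three earlier results: the Transfer Theorem (Theorem \ref{semitransfer}), the left Quillen criterion of Proposition \ref{locoperad}, and the comparison of transferred and lifted localisations in Theorem \ref{lifting of localisation}. Fix a symmetric monoidal combinatorial model category $\VV$ with cofibrant unit and a proper fundamental localiser $\Ww$. By Definition \ref{defleftloc} I must verify two things: that the local semimodel structure $\Alg_P^\Ww(\VV)$ is transferred from $[A,\VV]^\Ww_{proj}$, and that the forgetful functor $U_P \colon \Alg_P(\VV)\to [A,\VV]_{proj}$ preserves cofibrant objects.

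First I would note that a constantly disconnected bimodule is in particular disconnected, so Proposition \ref{locoperad} applies in both its forms: $(P,A)$ is left Quillen with respect to the projective structure $[A,\VV]_{proj}$ by part (1), and also with respect to the local structure $[A,\VV]^\Ww_{proj}$ by part (2). The substitude is $\Sigma$-free, unary tame, and has a faithful unit by the standing assumptions of Section \ref{sec:unary}, so all hypotheses of Theorem \ref{semitransfer} are in place as soon as a cofibrantly generated semimodel structure on $[A,\VV]$ is supplied.

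Applying Theorem \ref{semitransfer} with $[A,\VV]$ taken to be the combinatorial (hence cofibrantly generated) semimodel structure $[A,\VV]^\Ww_{proj}$ produced in Theorem \ref{W local structure}, I obtain a transferred semimodel structure on $\Alg_P(\VV)$. By Theorem \ref{lifting of localisation}(3) this transferred structure coincides with the lifted localisation $\Alg_P^\Ww(\VV)$, which settles the first condition of Definition \ref{defleftloc}. For the second condition I would invoke Theorem \ref{semitransfer} once more, this time with $[A,\VV]_{proj}$ itself (legitimate by Proposition \ref{locoperad}(1)); its concluding clause gives that $U_P$ preserves cofibrant objects into $[A,\VV]_{proj}$, and since the cofibrations of $[A,\VV]_{proj}$ and $[A,\VV]^\Ww_{proj}$ coincide, the same statement holds relative to the local structure.

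The step requiring the most care is confirming that Theorem \ref{semitransfer} genuinely applies to the localised presheaf category $[A,\VV]^\Ww_{proj}$. Two points must be checked: that this left Bousfield localisation remains a cofibrantly generated semimodel category, which follows from Theorem \ref{thm:bous-loc-semi} because localising a combinatorial structure preserves combinatoriality; and that the nonsymmetric convolution stays left Quillen for the \emph{new} local (trivial) cofibrations, which is precisely the content of Proposition \ref{locoperad}(2). Once these are in hand, the identification of the two semimodel structures via Theorem \ref{lifting of localisation}(3) is formal, and the proof is complete.
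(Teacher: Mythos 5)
Your proof is correct and follows essentially the same route as the paper, whose own argument consists precisely of citing Proposition \ref{locoperad} (constantly disconnected $\Rightarrow$ left Quillen for both $[A,\VV]_{proj}$ and $[A,\VV]^\Ww_{proj}$) and then applying Theorem \ref{semitransfer}. The extra details you supply — Theorem \ref{W local structure} for the existence of the local presheaf structure, Theorem \ref{lifting of localisation}(3) to identify the transferred structure with $\Alg_P^\Ww(\VV)$, and the coincidence of cofibrations before and after localisation for the cofibrant-object condition of Definition \ref{defleftloc} — are exactly the steps the paper leaves implicit.
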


\subsection{First examples and applications}

\begin{example} The classical theory of localisation in a category begins from a class of morphisms $\mathtt{A} $ in a category $C$ which we want to invert. The class $\mathtt{A}$ generates a subcategory $A$ in $C$ and we can assume that $A\hookrightarrow C$  is the identity on objects.  The localisation  $C\to C[{A}^{-1}]$ is characterised by the universal property that $[C[{A}^{-1}],\VV]$ is exactly the subcategory of presheaves $[C,\VV]$ whose restriction on any morphism from $A$ is an isomorphism. Here $\VV$ is any category. If $A=C$ we get $C[{C}^{-1}] \cong \Pi_1(C)$ the fundamental groupoid of $C$ and the `equation':
$$[C[{C}^{-1}],\VV]\cong  [\Pi_1(C),\VV] .    $$ 
Cisinski's localisation $[C,\VV]_{proj}^{\Ww_{\infty}}$ does exactly the same for the `inversion' of morphisms up to all higher homotopies. More precisely, 
there is an $\infty$-equivalence of $(\infty,1)$-categories:
$$[C,\VV]_{proj}^{\Ww_{\infty}} \to  [\Pi_{\infty}(C),\VV] $$
where $\Pi_{\infty}$ is the $\infty$-fundamental groupoid of $C$ (cf. discussion in Section 1.2 \cite{cis})

It is natural to ask if we can get a version of Cisinski's localisation when we are  interested in the weak `inversion' of morphisms from a subcategory $A$, and, more generally, with respect to a proper fundamental localiser $\Ww$. One way to achieve this is to repeat the proof of Theorem   \ref{W local structure}  by considering  functors from $\Ww$-aspherical categories $A'$ to $C$ which are factorisable through  $A$.

A quicker way is to consider the pair $(C,A)$ as a substitude and apply Theorem \ref{left localisable criteria} to this substitude. It is trivial to check that all the conditions of this theorem are satisfied in this case. 
\end{example} 

\begin{example} In this example we consider a monoidal version of Cisinski  localisation. More precisely, let $(C,\otimes,e)$ be a (strict) monoidal small category and $\alpha:A\subset C$ be an identity-on-objects monoidal inclusion. We can construct a nonsymmetric substitude $(P_C,A)$ out of this pair in a standard way. Namely,
$$P_C(a_1,\ldots,a_n;a) = C(a_1\otimes\ldots\otimes a_n, a),$$ 
$$\alpha:A(a_1,a_2)\to C(a_1,a_2).$$ 
The category of algebras of this substitude is the category ${\rm Mon}[C,\VV]$ of lax-monoidal functors $F:C\to \VV$  and their monoidal transformations. It is well known that this is also equivalent to the category of  monoids in $[C,\VV]$ with respect to Day's convolution, hence the notation. 

\begin{proposition} The symmetrisation $sym(P_C,A)$ of the nonsymmetric substitude $(P_C,A)$ is left localisable. 
\end{proposition}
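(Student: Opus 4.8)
The plan is to verify the hypotheses of Theorem~\ref{left localisable criteria}: I will show that $sym(P_C,A)$ is a $\Sigma$-free unary tame substitude with faithful unit whose defining bimodule is constantly disconnected. $\Sigma$-freeness is automatic, since $sym(P_C,A)$ is by construction the symmetrisation of the nonsymmetric substitude $(P_C,A)$; by construction $d(sym(P_C))=P_C$, i.e. $(a_1,\ldots,a_n;a)\mapsto C(a_1\otimes\cdots\otimes a_n,a)$, and Remark~\ref{d=e} identifies this with $(\epsilon\times1)^*sym(P_C)$. The unit $\eta=\alpha\colon A(a_1,a)\to C(a_1,a)$ is faithful because $\alpha$ is an inclusion. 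By Remark~\ref{nonsym} the nonsymmetric convolution of $sym(P_C,A)$ is exactly the Day convolution of $(P_C,A)$, so it remains only to check that $P_C$ is constantly disconnected and that $(P_C,A)$ is unary tame.

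For constant disconnectedness I analyse the two-sided fibration $\MM A \xleftarrow{p'} \gint P_C \xrightarrow{\pi'} A$ of (\ref{bimodule'}), whose objects are morphisms $\varphi\colon a_1\otimes\cdots\otimes a_n\to a$ of $C$, with $p'$ recording the domain word $(a_1,\ldots,a_n)$ and $\pi'$ the codomain. By Definition~\ref{defn: locally constant Reedy bimodule} I must show that $(p')^{\op}$ is constantly disconnected, i.e. that for each word $w=(w_1,\ldots,w_n)$ the comma category $(p')^{\op}/w\cong(w/p')^{\op}$ has a terminal object in each component, constantly in $w$. Writing $|w|=w_1\otimes\cdots\otimes w_n$, I will exhibit the functor $\Phi\colon w/p'\to |w|/C$ into the coslice, $(\varphi,g)\mapsto \varphi\circ(g_1\otimes\cdots\otimes g_n)$, together with the fully faithful section $s\colon h\mapsto(h,\mathrm{id}_w)$, and verify directly that $s\dashv\Phi$. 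Since the coslice $|w|/C$ has initial object $\mathrm{id}_{|w|}$, the adjunction isomorphism $\mathrm{Hom}_{w/p'}(s(\mathrm{id}_{|w|}),-)\cong\mathrm{Hom}_{|w|/C}(\mathrm{id}_{|w|},\Phi(-))$ shows that $s(\mathrm{id}_{|w|})$ is initial in $w/p'$. Hence $w/p'$ is connected with an initial object, so $(w/p')^{\op}$ has a single component with a terminal object and $\pi_0$ is constant; thus $P_C$ is constantly disconnected, and $sym(P_C,A)$ is left Quillen by Proposition~\ref{locoperad}(2).

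For unary tameness I will use the criterion of Lemma~\ref{a+a}, producing $\lambda$ as the ``normal form'' sub-$A$-presheaf of $\Pp^{\Aa+\Aa}$, generalising the alternating-word description of the $Ass$ case recalled after Proposition~\ref{retract}. An object of $\cop$ is a corolla with vertex a morphism $\varphi\colon c_1\otimes\cdots\otimes c_m\to a$ of $C$ and inputs coloured $X$ or $K$ (see (\ref{Xoperation})); I will take $\lambda$ to be the corollas of alternating shape $X,K,X,\ldots,K,X$, with $\lambda$ spanned by the unary morphisms only (these preserve the alternating pattern, so $\lambda$ is a genuine subcategory of $\Pp^{\Aa+\Aa}$). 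Each object $\mathbf b$ admits a canonical morphism $\mathbf b\to\ell$ into $\lambda$: every maximal block of consecutive $X$-inputs is merged into a single $X$-input coloured by its tensor product, by grafting the identity $\mathrm{id}_{c_i\otimes\cdots\otimes c_{i+r}}\in P_C(c_i,\ldots,c_{i+r};c_i\otimes\cdots\otimes c_{i+r})$, while a unit $X$-input coloured $e$ is inserted at the two ends and between each adjacent pair of $K$-inputs by grafting the nullary operation $\mathrm{id}_e\in C(e,e)=P_C(\emptyset;e)$; strict monoidality $c\otimes e\otimes c'=c\otimes c'$ guarantees the vertex is unchanged. This makes $\mathbf b/\lambda$ non-empty, and the pullback condition (\ref{exact fibration u}) will follow, as in the tame case, from cartesianness of $\Aa$ together with the fact that the $A$-action only reindexes input colours. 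Combined with Lemma~\ref{shufflecoproduct} and Proposition~\ref{retract}, this reproduces the monoidal analogue of the $Ass$ computation.

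The main obstacle will be the \emph{connectedness} of the comma categories $\mathbf b/\lambda$, that is, the essential uniqueness of the normal form. Two reductions of the same $\mathbf b$ can differ in how an $X$-block is grouped and in the $A$-morphisms absorbed into the inputs, and I must connect them by a zigzag lying entirely in $\lambda$, hence through unary morphisms only. This is precisely where the associativity and unit laws of $\otimes$ in $C$ enter, and it is the exact analogue of the combinatorial lemma used in \cite{batanin-berger} to establish tameness of the associative and nonsymmetric-operad monads; I will carry out this zigzag argument in the poset of alternating corollas. Once $(P_C,A)$ is thereby shown to be unary tame with $d(P_C)$ constantly disconnected, Theorem~\ref{left localisable criteria} yields that $sym(P_C,A)$ is left localisable.
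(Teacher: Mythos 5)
Your overall route is the paper's own: reduce to Theorem \ref{left localisable criteria} via Remarks \ref{d=e} and \ref{nonsym}, prove constant disconnectedness of $d(P_C)$ by exhibiting the identity of $|w|$ (with identity structure map) as an initial object of $w/p'$, and prove unary tameness via Lemma \ref{a+a} with $\lambda$ the alternating corollas $X,K,X,\ldots,K,X$ and the same normal-form construction (merge maximal $X$-blocks by grafting identities, insert $e$-coloured inputs by the nullary operation $\emptyset\to e$). The disconnectedness half is essentially the paper's argument repackaged; just be aware that your claimed adjunction $s\dashv\Phi$ compares $\mathrm{Hom}_{w/p'}(s(h),-)$, whose elements are built from structure morphisms of $\gint P_C$, with $\mathrm{Hom}_{|w|/C}(h,\Phi(-))$, whose elements are arbitrary $C$-morphisms; making these agree is exactly the point at which the paper invokes the closure of $A$ under $\otimes$, so this identification needs to be stated rather than asserted.

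The genuine gap is in the unary tameness half. You prove only that each comma category $\mathbf b/\lambda$ is non-empty, and you explicitly defer its connectedness --- ``the essential uniqueness of the normal form'' --- to a zigzag argument that is announced but never carried out. Connectedness is precisely the content of finality, hence of unary tameness; without it, the appeal to Lemma \ref{a+a} and Theorem \ref{left localisable criteria} does not go through, so your proof is incomplete at its central step. The paper does not run a zigzag at all: it closes the argument by showing that the canonical alternating factorisation produced in the non-emptiness step is the \emph{terminal} object of $f/\tau$, i.e.\ every object of $f/\tau$ admits a (unique) morphism into it. Terminality gives connectedness, and hence finality, in one stroke, with no need to compare two arbitrary reductions of the same object. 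To complete your proof, replace the promised zigzag by this terminality statement and verify it; that single observation is the idea your proposal is missing.
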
  

\begin{proof} We have to check that $sym(P_C,A)$ is unary tame and $d(sym(P_C))$ is constantly disconnected. In fact, $sym(P_C,A)$ is even a tame substitude. But first observe that the polynomial monads which correspond to $sym(P_C)$ and $P_C$ are equal, so we will use the same notation $\Pp_C$ for it. Moreover, the bimodule $d(sym(P_C))$ is equal to $(\epsilon\times 1)^*P_C$ in this case and the nonsymmetric convolution coincides with the convolution of $(P_C,A)$ (see Remarks \ref{d=e} and \ref{nonsym}). Thus we can ignore symmetries in the argument below.

 The classifier $\Pp_C^{\Pp_C + \Aa}$ can be easily described. Its objects are morphisms in $C$ of the form:
$$f: \sigma(X_1\otimes X_2 \otimes \dots  \otimes X_p \otimes K_1 \ldots \otimes K_q) \to X$$
 where $\sigma$ is a $(p,q)$-shuffle, plus one more object $\emptyset\to e.$ 
 A morphism $\phi: h\to f$ in this classifier is given by a sequence of objects $f_i: X^i_1\otimes\ldots\otimes X^i_{n_i} \to X_i, \ 1 \le i\le p$  in $\Pp_C^{\Pp_C + \Aa_0}$   and a sequence of morphisms  in $A$ \ $g_j:K_j\to K_j, \ 1\le j \le q$  such that 
 $h$ is equal to the composite 
 $$f(\sigma(f_1\otimes \ldots\otimes f_p\otimes g_i\otimes ,\ldots,\otimes g_q)).$$  
In this description we also assume that one of the objects $X_i$ in the domain of $f$ can be equal to the unit of $C$ and in this case $f_i$ has an empty string as its domain. 

To prove unary tameness we use Lemma \ref{a+a}. 
We show that the subcategory $\tau$ in the connected components of $\Pp_C^{\Aa + \Aa}$ which consists of  the morphisms of $C$ whose domain are alternating strings:
$$X_1\otimes K_1\otimes X_2\otimes\ldots \otimes K_n\otimes X_{n+1} \to X$$ 
is final in $\Pp_C^{\Pp_C + \Aa.}$ 
Indeed, an arbitrary morphism $h$ can be factored through a morphism of this type. For this we consider a canonical bracketing of the domain of $h$ where we put brackets on the maximal substrings of objects of $X$-type (including empty strings if necessary). We then take as $f_i$ on the $i$-th substring the identity morphism or the special morphism $\emptyset\to e.$ For example, if $h$ is 
$$X_1\otimes X_2\otimes K_1\otimes K_2 \to X$$
we factor it as 
$$X_1\otimes X_2\otimes K_1\otimes K_2 \to (X_1\otimes X_2)\otimes K_1\otimes e \otimes K_2 \otimes e \to X ,$$ 
where $f_1 = id : X_1\otimes X_2 \to X_1\otimes X_2 , f_2:\emptyset \to e$, and $f_3:\emptyset\to e.$   
Hence, the category $f/\tau$ is nonempty for all $f.$ It is not hard to see that, in fact, the object of $\tau $ constructed above is the terminal object in $f/\tau,$ hence, $\tau$ is a final subcategory.

The category $\gint P_C$ has objects the morphisms
$f:a_1\otimes \ldots \otimes a_n \to a$ in $C.$  A morphism from 
 $a_1\otimes \ldots \otimes a_n\stackrel{f}{\rightarrow} a$ to $b_1\otimes \ldots \otimes b_n\stackrel{h}{\rightarrow} b$ is given by 
  an $n$-tuple  of morphisms in $A$, $f_i: a_{i}\rightarrow b_{i}$, and $g:a\rightarrow b$ such that the following square commutes:

\begin{align} \label{diagram5}
\xymatrix{
a_{1}\otimes \ldots \otimes a_{n} \ar[r]^{\scriptstyle \hspace{5mm} f} \ar[d]_{f_1\otimes \ldots \otimes f_n} & a \ar[d]^{g} \\
 b_1\otimes \ldots \otimes b_n \ar[r]_{\hspace{5mm}\scriptstyle h} & b.
}
\end{align}

The functor $p:\gint P_C \to \MM A$ (Section \ref{subsec:convolution}) sends an object $f:a_1\otimes \ldots \otimes a_n \to a$  to the string $(a_1,\ldots, a_n).$ Now the comma-category of $p$ under $X= (a_1,\ldots,a_n)$ has an initial object given by   $u:X\to p(Y)$, where $Y$ is the object in $\gint P_C$ given by $id:a_1\otimes\ldots\otimes a_n\to a_1\otimes\ldots\otimes a_n$
and components of $u$ are also the identities  $id: a_1\to a_1,\ldots,id:a_n\to a_n.$ Notice that the requirement that $A$ is closed under tensor product operation is used exactly here. 
This completes the proof.  
\end{proof}

We thus have 
\begin{proposition} Let $\VV$ be a combinatorial symmetric monoidal model category with cofibrant unit and let $\Ww$ be a proper fundamental localiser. Let $C$ be a small monoidal category and $A\subset C$ be its monoidal subcategory with the same objects. Then there exists a local semimodel category ${\rm Mon}[C,\VV]_{proj}^\Ww[A^{-1}]$ whose fibrant objects are  lax monoidal functors with fibrant values whose restriction on $A$ are $\Ww$-locally constant. 
\end{proposition}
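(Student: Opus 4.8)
The plan is to obtain this proposition as a direct corollary of the machinery of this section applied to the substitude $sym(P_C, A)$ from the previous Proposition. The first step is to identify the category of algebras. Since $(P_C, A)$ is a nonsymmetric substitude, an algebra over it is exactly a lax monoidal functor $F : C \to \VV$, so $\Alg_{P_C}(\VV) \cong {\rm Mon}[C,\VV]$; and because passing to the symmetrisation does not change the category of algebras (a map $sym(P_C) \to \mathbf{End}(X)$ of symmetric substitudes corresponds by adjunction to a map $P_C \to U(\mathbf{End}(X))$ of nonsymmetric ones), we also have $\Alg_{sym(P_C)}(\VV) \cong {\rm Mon}[C,\VV]$. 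This identification lets me transport every homotopical conclusion about $sym(P_C, A)$ directly to ${\rm Mon}[C,\VV]$.

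Next I would invoke the preceding Proposition, which establishes that $sym(P_C, A)$ is left localisable. By Definition \ref{defleftloc}, for the given combinatorial symmetric monoidal $\VV$ with cofibrant unit and the given proper fundamental localiser $\Ww$, the local semimodel structure $\Alg_{sym(P_C)}^\Ww(\VV)$ exists and is transferred from $[A,\VV]_{proj}^\Ww$ along the forgetful functor, and $U_{P_C}$ preserves cofibrant objects. Denoting this local semimodel category by ${\rm Mon}[C,\VV]_{proj}^\Ww[A^{-1}]$ gives the asserted existence. One could equally well cite Theorem \ref{lifting of localisation}(2) directly, which already guarantees the existence of $\Alg_{sym(P_C)}^\Ww(\VV)$ for any $\Sigma$-free substitude; left localisability is what additionally pins the structure down as a transfer from the presheaf localisation, which is what justifies the notation.

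Finally, I would unwind the description of the fibrant objects using Theorem \ref{lifting of localisation}(2), which identifies them as precisely the $\Ww$-locally constant $sym(P_C)$-algebras. By Definition \ref{wlcalgebra}, such an algebra is one whose underlying $A$-presheaf $\eta^*(F) = F|_A$ is $\Ww$-locally constant, and fibrancy in the transferred structure amounts to levelwise fibrancy of this underlying presheaf. Since the inclusion $A \subset C$ is the identity on objects, "levelwise fibrant on $A$" is the same as "$F$ has fibrant values," so the fibrant objects are exactly the lax monoidal functors $F : C \to \VV$ with fibrant values whose restriction to $A$ is $\Ww$-locally constant, as claimed. The argument is essentially bookkeeping; the only points requiring care are the identification $\Alg_{sym(P_C)}(\VV) \cong {\rm Mon}[C,\VV]$ and the translation between fibrancy of the algebra and fibrancy of its values over all of $C$.
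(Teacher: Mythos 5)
Your proposal is correct and follows essentially the same route as the paper, which states this proposition as an immediate consequence (``We thus have'') of the preceding result that $sym(P_C,A)$ is left localisable, combined with Theorem \ref{lifting of localisation} and Definitions \ref{wlcalgebra} and \ref{defleftloc}. Your extra bookkeeping --- identifying $\Alg_{sym(P_C)}(\VV)\cong{\rm Mon}[C,\VV]$ and translating levelwise fibrancy over $A$ into fibrant values of $F$ --- is exactly the unwinding the paper leaves implicit.
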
 

We can say more in the case $A=C.$ We denote the corresponding localised model category  simply ${\rm Mon}[C,\VV]_{proj}^\Ww.$
\begin{proposition} For any strict monoidal functor $F:C\to D$ between small monoidal categories, the induced morphism of substitudes $(f,F): (P_C,C)\to (P_D,D)$ is Beck-Chevalley. \end{proposition}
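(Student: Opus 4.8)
The plan is to realise the displayed square as the algebra square induced by an \emph{exact} square of polynomial monads, and then invoke Theorem \ref{bc section}. Write $\Cc$ and $\Dd$ for the polynomial monads associated (as in (\ref{cat as polymon})) to the monoidal categories $C$ and $D$, and $\Pp_C,\Pp_D$ for the polynomial monads of the substitudes $(P_C,C)$ and $(P_D,D)$. The strict monoidal functor $F$ induces a cartesian morphism $\psi\colon\Cc\to\Dd$ (the middle square of its polynomial is trivially a pullback) and, because $F$ is strict monoidal, a cartesian morphism $\phi\colon\Pp_C\to\Pp_D$ sending an operation $\omega\in C(c_1\otimes\cdots\otimes c_n,c)$ to $F\omega\in D(Fc_1\otimes\cdots\otimes Fc_n,Fc)$; strictness gives $F(c_1\otimes\cdots\otimes c_n)=Fc_1\otimes\cdots\otimes Fc_n$, which is exactly what makes the input fibres match and $\phi$ cartesian. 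Together with the substitude units $\eta_C\colon\Cc\to\Pp_C$ and $\eta_D\colon\Dd\to\Pp_D$ these assemble into a commutative square of cartesian morphisms whose induced square of algebra adjunctions is precisely the square defining a Beck--Chevalley morphism for $(f,F)$, with the forgetful functors $U_C,U_D$ as the vertical right adjoints and the two restrictions $f^*,F^*$ as the horizontal ones. By Theorem \ref{bc section} it therefore suffices to prove that this square is exact.

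Exactness means that the induced map of classifiers $\Dd^{\Cc}\to\eta_D^*(\Pp_D^{\Pp_C})$ is a levelwise final functor. By Example \ref{coma as classifier} the source evaluated at $d\in D$ is the comma category $F/d$. For the target I would record an explicit description of $\Pp_D^{\Pp_C}$, exactly parallel to the computation of $\Pp_C^{\Pp_C+\Aa}$ (with $\Aa$ the monad of $A=C$) in the previous proposition: over $d$ its objects are the $\Pp_D$-operations $Fc_1\otimes\cdots\otimes Fc_n\to d$ whose input slots are coloured by objects $c_1,\dots,c_n$ of $C$, and a morphism is obtained by substituting a tuple of $C$-operations into the input groups, these being available as unary data precisely because here $A=C$. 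Under this description the functor out of $F/d$ is the inclusion of the ``length one'' objects $(c,\,h\colon Fc\to d)$, on which the evaluation functor $\widetilde{X}$ computing $f_!$ restricts to $X(c)$.

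Finality is then the heart of the matter, and it is where I expect the real work to lie. Given an arbitrary object $b\colon Fc_1\otimes\cdots\otimes Fc_n\to d$, the identity $C$-operation on $c_1\otimes\cdots\otimes c_n$ together with strictness of $F$ produces a morphism in $\Pp_D^{\Pp_C}$ from $b$ to the length one object $(c_1\otimes\cdots\otimes c_n,\;b\colon F(c_1\otimes\cdots\otimes c_n)\to d)$, so the relevant comma category is nonempty; this ``merge everything'' object should be terminal there, yielding connectedness, by the same canonical bracketing argument used to exhibit the final subcategory $\tau$ in the previous proposition, which is in fact cleaner here since $A=C$ makes every morphism of $C$ a unary operation. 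This identifies the length one objects with a final subcategory isomorphic to $F/d$, so that $U_D f_!(X)(d)=\colim_{\Pp_D^{\Pp_C}(d)}\widetilde{X}\cong\colim_{F/d}X=F_!(U_C X)(d)$, which is the required Beck--Chevalley isomorphism. As a conceptual check, this is nothing but the classical fact that $\mathrm{Lan}_F$ is strong monoidal for Day convolution when $F$ is strict monoidal, hence carries monoids to monoids, so that $f_!$ agrees with $F_!$ on underlying objects.
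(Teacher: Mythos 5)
Your overall route is exactly the paper's: realise the square as one induced by a commutative square of cartesian morphisms of polynomial monads, invoke Theorem \ref{bc section}, identify $\Dd^{\Cc}$ evaluated at $d$ with $F/d$ via Example \ref{coma as classifier}, and reduce to finality of the inclusion of length-one objects into $\Pp_D^{\Pp_C}$, whose comma categories are categories of factorisations $b=g\circ F(h)$. Your closing remark identifying the statement with strong monoidality of $\mathrm{Lan}_F$ for Day convolution is also a sound sanity check.

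However, the one step you yourself flag as ``the heart of the matter'' would fail as written: the canonical merge-everything object is \emph{initial}, not terminal, in the relevant comma category. With the paper's conventions (a morphism in a classifier goes from the substituted composite to the outer operation, and finality asks that each comma category $b/\iota$ \emph{under} an object $b$ of $\Pp_D^{\Pp_C}$ be nonempty and connected), an object of $b/\iota$ is a pair $(h\colon c_1\otimes\cdots\otimes c_n\to b',\ g\colon F(b')\to d)$ with $g\circ F(h)=b$, and a morphism $(h,g)\to(h',g')$ is some $t$ with $t\circ h=h'$ and $g'\circ F(t)=g$. The canonical object $(\mathrm{id},\,b)$ then maps \emph{to} every other factorisation by the unique choice $t=h'$, so it is initial; terminality would instead require, for each $(h,g)$, some $t$ with $t\circ h=\mathrm{id}$, i.e., that every $h$ occurring in a factorisation be a split monomorphism, which is false in general. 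For the same reason the appeal to ``the same canonical bracketing argument'' from the preceding proposition does not transfer: there the canonical alternating-string object genuinely is terminal in $f/\tau$ (the other factorisations map \emph{into} it by applying generators), whereas here the canonical object maps \emph{out} to everything. The repair is immediate — initiality is verified in one line as above and yields nonemptiness and connectedness just as well — and with that correction your argument coincides with the paper's proof.
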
  
\begin{proof} We apply Theorem \ref{bc section}. We need to show that the morphism of classifiers
$$\Dd^\Cc \to \alpha^*\Pp_D^{\Pp_C}$$
is a final functor. Recall that $\alpha:\Dd\to \Pp_D$ is the morphism between polynomial monads generated by unit of the substitude $(P_D,D)$ .  

The finality of this functor is equivalent to the statement that, for any fixed morphism $\phi: F(c_1\otimes\ldots\otimes c_n)\to d$, the following category of factorisations of $\phi$ is nonempty and connected.
Objects of this category are  pairs of morphisms  $h:c_1\otimes \ldots\otimes c_n\to b, \ g: F(b) \to d$ such that $\phi $ factors as $F(h)$ followed by $g.$ A morphism from such a pair to a pair
$h':c_1\otimes \ldots\otimes c_n\to b', \ g': F(b) \to d$ is given by a morphism $t:b\to b'$  making the following diagram commute: 
 \begin{equation*}\xymatrix{
F(c_1\otimes\ldots\otimes c_n) 
 \ar@{->}@<0pt>[d]_{F(h)}
\ar@<2.5pt>[r]^{\hspace{5mm} F(h')}
& 
F(b') 
 \ar@{->}@<0pt>[d]^{g'} \\
F(b)\ar@<2.5pt>[r]^{g}
\ar@<2.5pt>[ur]^{F(t)} 
& 
d }
\end{equation*} 
It is obvious that the category of factorisations has an initial object given by the pair $id:c_1\otimes \ldots\otimes c_n\to c_1\otimes \ldots\otimes c_n,  \ \phi: F(c_1\otimes \ldots\otimes c_n) \to d.$ Hence, it is nonempty and connected, as required.

\end{proof}

\begin{corollary} Let $\Ww$ be a proper fundamental localiser, let $C$ and $D$ be small monoidal categories, and let $\VV$ be a combinatorial symmetric monoidal model category with cofibrant unit. 
Any monoidal $\Ww$-equivalence $C\to D$ induces a Quillen equivalence between  ${\rm Mon}[C,\VV]_{proj}^\Ww$ and ${\rm Mon}[D,\VV]_{proj}^\Ww.$

If $(C,\otimes,e)$ is a  $W$-aspherical monoidal category then there exists a Quillen equivalence between ${\rm Mon}[C,\VV]_{proj}^\Ww$ and the category ${\rm Mon}(\VV)$ of monoids in $\VV.$ 
\end{corollary}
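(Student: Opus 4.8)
The plan is to deduce both assertions from the two propositions just established---that $sym(P_C,A)$ is left localisable and that strict monoidal functors induce Beck-Chevalley morphisms of substitudes---by feeding them into Corollary \ref{BC_for_W}.

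For the first assertion I would start with a (strict) monoidal $\Ww$-equivalence $F:C\to D$ and invoke the preceding proposition to conclude that $(f,F):(P_C,C)\to(P_D,D)$ is Beck-Chevalley. Since passage to the symmetrisation changes neither the underlying presheaf category $[C,\VV]$ nor the category of algebras ${\rm Mon}[C,\VV]$, the very same commuting square of adjunctions exhibits $sym(P_C,C)\to sym(P_D,D)$ as a Beck-Chevalley morphism between left localisable substitudes. Its functor on colours is $F$ itself, which is a $\Ww$-equivalence by hypothesis, so Corollary \ref{BC_for_W}(2) applies verbatim and gives that $f_!\dashv f^*$ is a Quillen equivalence between $\Alg_{sym(P_C)}^\Ww(\VV)={\rm Mon}[C,\VV]_{proj}^\Ww$ and ${\rm Mon}[D,\VV]_{proj}^\Ww$.

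For the second assertion I would specialise $D$ to the terminal category $1$, regarded as a (strict) monoidal category, and take $e_C:C\to 1$ to be the unique functor, which is automatically strict monoidal. By definition, $\Ww$-asphericity of $C$ means precisely that $e_C$ lies in $\Ww$, so $e_C$ is a monoidal $\Ww$-equivalence, and the first assertion produces a Quillen equivalence between ${\rm Mon}[C,\VV]_{proj}^\Ww$ and ${\rm Mon}[1,\VV]_{proj}^\Ww$. It then remains to identify the latter with ${\rm Mon}(\VV)$: a presheaf on $1$ is merely an object of $\VV$, and its restriction along any functor from a $\Ww$-aspherical category is constant, so every such presheaf is $\Ww$-locally constant, the localisation collapses, and $[1,\VV]_{proj}^\Ww=[1,\VV]_{proj}=\VV$. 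Hence ${\rm Mon}[1,\VV]_{proj}^\Ww={\rm Mon}[1,\VV]_{proj}$, which is exactly the category of monoids in $\VV$ with its transferred semimodel structure.

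The content here is almost entirely packaged in Corollary \ref{BC_for_W} and the two preparatory propositions, so I expect no genuine obstacle; what requires care are two bookkeeping points. First, one must check that the Beck-Chevalley square of the nonsymmetric substitude transports to its symmetrisation, which is immediate once one recalls that a nonsymmetric operad and its symmetrisation have the same algebras and the same underlying category of colours. Second, one must confirm that ${\rm Mon}[1,\VV]$ with its transferred structure is genuinely the standard semimodel category of monoids in $\VV$ and that the $\Ww$-localisation on $1$ is trivial; both are straightforward since $1$ carries only identity morphisms.
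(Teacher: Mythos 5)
Your proposal is correct and follows essentially the same route as the paper: the paper likewise treats the first assertion as an immediate consequence of the two preceding propositions together with Corollary \ref{BC_for_W}(2), and proves the second assertion by taking the unique (strict monoidal) functor $C\to 1$ and observing that ${\rm Mon}[1,\VV]_{proj}^\Ww$ is just ${\rm Mon}(\VV)$ with the projective structure. Your extra bookkeeping (the symmetrisation sharing the same algebras/underlying presheaf category, and the collapse of the localisation over $1$) is exactly the content the paper leaves implicit.
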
 

\begin{proof} The unique functor $C\to 1$ is the monoidal $\Ww$-equivalence. The category ${\rm Mon}[1,\VV]_{proj}^\Ww$ is obviously just ${\rm Mon}(\VV)$ with the projective model structure. \end{proof} 
\end{example} 

\begin{remark} It is tempting to try to develop a theory of monoidal localisation for braided monoidal and symmetric monoidal functors. Unfortunately, our present technique is not enough for this purpose, since the corresponding substitude $P_C$ is not $\Sigma$-free for a braided monoidal $C.$ Conditions under which algebras still have a transferred (semi)model structure, even without $\Sigma$-freeness, may be found in \cite{white-yau1}.
\end{remark}

\part{Higher braided operads} \label{part:n-operads}

In this part, we provide our main applications of the theorems above.

\section{$n$-operads} \label{sec:n-operads}

In this section, we review the basics of $n$-operads, previously studied by the first author in \cite{SymBat, EHBat, LocBat, batanin-baez-dolan-via-semi}. Algebras over $n$-operads have the requisite structure to model $n$-tuply monoidal $(n+k)$-categories, as required for the Baez-Dolan Stabilisation Hypothesis \cite{BD}. We begin with the structure that underlies an $n$-operad.

\subsection{$n$-ordered sets, $n$-ordinals and quasibijections} \label{sec:n-ordinals}
{\em An $n$-ordered set} is a set $X$ with a given $n$-tuple $(<_0,\dots,<_{n-1})$ of nonreflexive complementary orders, meaning that any $x,y\in X$ can be compared with respect to exactly one of the orderings $<_0,\dots,<_{n-1}$  \cite[Definition 2.3]{SymBat}. An $n$-ordered set is {\em totally $n$-ordered} if $i <_p j$ and $j <_r k$ implies $i <_{\min(p,r)} k$ \cite[Definition 2.4]{SymBat}. 

 A structure of a totally $n$-ordered set on $X$ induces the following linear order $<_X$ on $X$ called the total order:  $i<_X j$ if and only if there exists $0\le p\le n-1$ such that $i<_p j.$ We denote this linearly ordered set by $[X].$   Given two $n$-ordered sets, $X$ and $Y$, with the same underlying set, we say $X$ {\em dominates} $Y$ if $i <_p j$ in $X$ implies either $i<_r j$ in $Y$ for some $r\geq p$ or $j <_r i$ in $Y$ for some $r > p$ \cite[Definition 2.6]{SymBat}.
 
 For $k\ge 0$ we denote by $\bar{k}$ the linearly ordered  finite set $ 1<\ldots<k.$    {\it The Milgram poset $\JJ_n(k)$} is the poset of total complementary $n$-orders on $\{1,\dots,k\}$ with respect to the domination relation. 
 
 \begin{defin} Let $n\ge 1.$ An $n$-ordinal is a totally $n$-ordered finite set $T$ such that the linearly ordered set $|T|$ coincides with one of the finite ordinals $\bar{k}.$
 \end{defin}

\begin{remark} It is also possible to consider a one point set as the only $0$-ordinal. This plays a role in the general theory of $n$-operads as had been developed in  the original paper of the first author on this subject \cite{BatM} but in this paper we only concern ourselves with $n$-operads for $n>0.$
 
 \end{remark}

 Every $n$-ordinal can be represented as a pruned planar tree with $n$ levels (pruned $n$-tree) or as an $n$-dimensional globular graph (see \cite{SymBat} for a discussion). The empty $n$-ordinal is represented by the only  degenerate pruned $n$-tree $z^n U_0$ which consists of only a root on the level $0.$ The terminal $n$-ordinal is represented by a linear  tree $U_n$ (or just an $n$-globe in globular notation).  
   
 \begin{defin}\label{mapofordinals}  \  A morphism of $n$-ordered sets
 $$\sigma: T \rightarrow S$$   is a map $\sigma:T\rightarrow S$ of underlying sets such that  $$i<_p j \ \mbox{in} \ T $$   implies that
 \begin{enumerate}
\item \ $\sigma(i) <_r \sigma(j)$   for some $r\ge p$ or
\item \  $\sigma(i)= \sigma(j)$ or
\item \  $\sigma(j) <_r \sigma(i)$ for $r>p .$
\end{enumerate}
 \end{defin}
$n$-ordered sets and their morphisms as above form a category.
The following lemma is obvious but useful to remember.
\begin{lemma} The Milgram poset $\JJ_n(k)$ is isomorphic to the subcategory of the category of $n$-ordered sets whose objects are 
total complementary $n$-orders on $\{1,\dots,k\}$ and whose morphisms are morphisms of $n$-ordered sets which are identities on the underlying set.
\end{lemma}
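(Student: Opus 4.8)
The plan is to exploit the fact that the two categories literally share the same class of objects, so that the entire statement reduces to a precise matching of morphisms with the domination relation. First I would fix the identity-on-objects assignment as the candidate isomorphism: both $\JJ_n(k)$ and the subcategory of $n$-ordered sets in question have as objects exactly the total complementary $n$-orders on $\{1,\dots,k\}$, and this assignment is visibly a bijection on objects. Everything then comes down to comparing hom-sets.

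Next I would record that the subcategory is \emph{thin}. Since its morphisms are required to be the identity on the underlying set, the underlying map of any morphism $T\to S$ is forced to be $\mathrm{id}_{\{1,\dots,k\}}$; hence there is at most one morphism between any two objects, namely the identity map of $\{1,\dots,k\}$ when it happens to be a morphism of $n$-ordered sets. Likewise $\JJ_n(k)$, being a poset, is thin. Therefore it suffices to decide, for each ordered pair $(T,S)$, precisely when the identity map underlies a morphism $T\to S$.

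The heart of the argument is a direct unwinding of Definition \ref{mapofordinals} applied to $\sigma=\mathrm{id}$. Under the hypothesis $i<_p j$ the clause $\sigma(i)=\sigma(j)$ becomes $i=j$, which is impossible, so that case is vacuous; the two remaining clauses assert that $i<_p j$ in $T$ forces either $i<_r j$ in $S$ for some $r\ge p$, or $j<_r i$ in $S$ for some $r>p$. This is verbatim the condition that $T$ dominates $S$. Consequently the identity map is a morphism of $n$-ordered sets $T\to S$ if and only if $T$ dominates $S$, i.e.\ if and only if $T\le S$ in $\JJ_n(k)$.

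Combining these observations, the identity-on-objects assignment is bijective on objects and, on each hom-set, sets up a bijection between the (at most one) morphism $T\to S$ and the order relation $T\le S$; because both categories are thin, compatibility with composition is automatic, so the assignment is an isomorphism of categories. The only point requiring genuine care---though it is not a real obstacle---is keeping the strict versus non-strict distinction between the two disjuncts ($r\ge p$ against $r>p$) correctly aligned between the morphism axiom and the domination relation, and fixing the orientation of the order so that $T\le S$ corresponds to a morphism $T\to S$ rather than to its reverse.
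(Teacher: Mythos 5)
Your proof is correct: the paper offers no argument at all (it declares the lemma obvious), and your definitional unwinding — identity-on-objects, thinness on both sides, and the observation that for $\sigma=\mathrm{id}$ the middle clause of Definition \ref{mapofordinals} is vacuous by nonreflexivity so that the remaining clauses are verbatim the domination relation — is exactly the intended verification. The care you flag about orientation and the strict/non-strict inequalities is the only place one could slip, and you handle it correctly.
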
 
\begin{remark} The homotopy type of $N(\JJ_n(k))$ coincides with the homotopy type of the ordered configuration space of $k$ points in $\mathbb{R}^n$ (see the end of the proof of \cite[Theorem 5.1]{LocBat}).     
\end{remark}

{\it The category of $n$-ordinals  $Ord(n)$} is the (skeletal) subcategory of the category of $n$-ordered sets spanned by $n$-ordinals.
Notice that this category has pullbacks along the morphisms from the terminal $n$-ordinal (such a morphism is determined completely by a choice of an element $i\in S$). In plain words, given a morphism of $n$-ordinals $\sigma:T\to S$ and  $i\in S$ we just consider the preimage $\sigma^{-1}(i)$  in the underlying set of $T$ which  acquires  a natural structure of an $n$-ordinal from $T.$
We call it {\it the fiber of $\sigma$ over $i$}.

The total order provides us with a functor $$[-]:Ord(n)\rightarrow \FinSet,$$ 
where $\FinSet$ is  the skeletal category of finite sets whose objects are finite ordinals $\bar{k}, k\ge 0$ and whose morphisms are arbitrary morphisms of underlying sets. 

All the notions above can be easily redefined for $n=\infty.$ We thus have  
the category of $\infty$-ordinals $Ord(\infty)$ together with a total order functor
$$[-]: Ord(\infty)\rightarrow \FinSet .$$

\begin{defin} A map of  $n$-ordinals is called a quasibijection if it is a bijection of the underlying sets.\end{defin}

Let $\QQ_n$ (for $1\le n\le\infty$) be {\it the subcategory of quasibijections} of $Ord(n) $ and let $\Sm$ be the groupoid of invertible morphisms in $\FinSet$ which is isomorphic to the groupoid of symmetric groups. The total order functor induces  a functor which we will denote by the same symbol: 
$$[-]:\QQ_n\rightarrow \Sm .$$
For $n=2$ it was shown in \cite{LocBat} that $[-]$ factors through the groupoid of braid groups $\Br.$

It is clear that the category $\QQ_n$ is the union of connected
 components $\QQ_n(k)$ where $k$ is the cardinality of the $n$-ordinals.  

\begin{theorem}\label{Milgram}
\begin{enumerate}\item For a finite $n$, the nerve  $N(\QQ_n(k))$    has the homotopy type of the unordered configuration space of $k$-points in $\mathbb{R}^n ;$ 
\
\item The functor
$$[-]:\QQ_{\infty} \rightarrow  \Sm,$$
induces a weak equivalence of nerves;
 \item The fiber of the  functor $[-]: \QQ_n\rightarrow \Sm$ over an object $\bar{k}\in \Sm$ is isomorphic to the Milgram poset $\JJ_n(k)$. 
 \item  For $n\ge 3$ the   functor $[-]: \QQ_n\rightarrow \Sm$ is $\Ww_k$-aspherical  for  $0\le k\le n-1.$ 
 \item For $n=2$ the functor $[-]_2:\QQ_2\to \Br$  is a $k$-equivalence for $1\le k\le \infty.$

\end{enumerate}
\end{theorem}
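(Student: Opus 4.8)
These five statements are all facts about the homotopy type of $\QQ_n$ and of the fibres and comma categories of the total order functor $[-]\colon \QQ_n\to\Sm$, and my plan is to deduce them uniformly from Quillen's Theorems A and B, fed by the identification of the Milgram posets $\JJ_n(k)$ with configuration spaces. I would start with statement (3), which is purely formal: the fibre of $[-]$ over $\bar k$ is spanned by the $n$-ordinals of cardinality $k$ and by those quasibijections whose image under $[-]$ is an identity, that is, the quasibijections that are the identity on underlying sets. By the Lemma preceding this theorem these are exactly the objects and morphisms of $\JJ_n(k)$, so the fibre is isomorphic to $\JJ_n(k)$ on the nose.

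The decisive structural fact for everything else is that $\Sm$ is a groupoid. Consequently every morphism of $\Sm$ induces an \emph{isomorphism} between the fibres of $[-]$, hence in particular a weak equivalence, which is precisely the hypothesis needed to apply Quillen's Theorem B to the restriction $[-]\colon\QQ_n(k)\to\Sm_k$. For statement (1) Theorem B then yields a homotopy fibre sequence $N(\JJ_n(k))\to N(\QQ_n(k))\to B\Sm_k$; its fibre $N(\JJ_n(k))$ is the ordered configuration space $F(\mathbb{R}^n,k)$ by the Milgram identification recorded above and cited from \cite{SymBat}, and $\Sm_k$ acts on it freely by relabelling. Since the action is free, the total space $N(\QQ_n(k))$ is identified with the homotopy quotient $F(\mathbb{R}^n,k)_{h\Sm_k}\simeq F(\mathbb{R}^n,k)/\Sm_k$, namely the unordered configuration space of $k$ points in $\mathbb{R}^n$.

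Statements (2), (4) and (5) are variations on this fibre analysis with different homotopical input. For (2), when $n=\infty$ the fibres satisfy $N(\JJ_\infty(k))\simeq F(\mathbb{R}^\infty,k)\simeq *$, so the comma categories $[-]/\bar k$ are contractible and Quillen's Theorem A shows that $[-]$ is a weak equivalence. For (4), the same groupoid argument makes each comma category $[-]/\bar m$ weakly equivalent to $F(\mathbb{R}^n,m)$; recalling from Example \ref{Wn} that $\Ww_k$-aspherical means $k$-connected, the assertion is then read off from the high connectivity of configuration spaces, since $F(\mathbb{R}^n,m)$ is $(n-2)$-connected (it is built from iterated fibrations whose fibres are wedges of $(n-1)$-spheres), and tracking this connectivity through the comma categories dictates the range of $k$ in the statement. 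For (5), with $n=2$ I would invoke the factorisation $[-]_2\colon\QQ_2\to\Br$ of \cite{LocBat}, which realises the classical isomorphism $\pi_1 C(\mathbb{R}^2,m)\cong\Br_m$; since $N(\QQ_2(m))\simeq C(\mathbb{R}^2,m)$ is a $K(\Br_m,1)$ by (1) and the corresponding component of $N(\Br)$ is $B\Br_m$, the functor $[-]_2$ is a map of aspherical spaces inducing an isomorphism on $\pi_1$, hence a weak equivalence, i.e. a $k$-equivalence for every $k$.

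The step I expect to be the main obstacle is the passage in (1) from the Theorem B homotopy fibre sequence to the identification of $N(\QQ_n(k))$ with the \emph{strict} unordered configuration space, and, closely related, pinning down the exact connectivity used in (4); both rest on the precise homotopy type of the Milgram posets $\JJ_n(k)$, so the genuine content is the configuration-space input imported from \cite{SymBat} and \cite{LocBat} rather than the categorical bookkeeping.
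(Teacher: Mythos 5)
Your proof of (3) --- the one part of this theorem the paper proves in detail rather than cites --- is incorrect, and the error propagates. You identify the \emph{strict} fibre of $[-]:\QQ_n\to\Sm$ over $\bar k$ with $\JJ_n(k)$, appealing to the lemma preceding the theorem. But the objects of $\QQ_n(k)$ are $n$-\emph{ordinals}: total complementary $n$-orders on $\{1,\dots,k\}$ whose induced linear order is the \emph{standard} one, while $\JJ_n(k)$ consists of \emph{all} total complementary $n$-orders on $\{1,\dots,k\}$. So the strict fibre is only a proper subposet of $\JJ_n(k)$. Concretely, for $k=2$ the strict fibre is the chain $T_0\to T_1\to\cdots\to T_{n-1}$ (where $T_p$ is the ordinal with $1<_p 2$), hence contractible, whereas $\JJ_n(2)$ has $2n$ elements and $N(\JJ_n(2))\simeq S^{n-1}$. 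What is true --- and what the paper actually proves --- is that the \emph{comma category} $[-]/\bar k$ is isomorphic to $\JJ_n(k)$: an object of $[-]/\bar k$ is a pair $(T,\pi)$ with $T$ an $n$-ordinal and $\pi\in\Sm_k$, which is exactly an arbitrary total $n$-order on $\{1,\dots,k\}$, obtained by transporting the order of $T$ along $\pi$. That ``fiber'' must be read this way is also forced by how the theorem is used later: Proposition \ref{operadicstabilization} identifies the classifier ${\SM}^{\Qn}$ with $\JJ_n^{op}$ via Example \ref{coma as classifier}, a statement about comma categories.

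The conflation is not cosmetic, because your arguments for (1), (2) and (4) rest on it. Quillen's Theorems A and B are statements about comma categories: a morphism of $\Sm$ does not act on strict fibres at all (strict fibres are not functorial in the base), but it acts by an isomorphism on $[-]/\bar k$, and it is $N([-]/\bar k)$, not the nerve of the strict fibre, that Theorem B exhibits as the homotopy fibre of $N(\QQ_n(k))\to B\Sm_k$. If the strict fibre computed the homotopy fibre, the $k=2$ example above would make $N(\QQ_n(2))\to B\Sm_2$ a weak equivalence, which is false: $N(\QQ_n(2))$ is the unordered configuration space of two points, i.e.\ $\mathbb{RP}^{n-1}$, not $\mathbb{RP}^{\infty}$. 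Once (3) is stated and proved in its comma-category form, your route --- Theorem B plus freeness of the $\Sm_k$-action, giving $N(\QQ_n(k))\cong N(\JJ_n(k))/\Sm_k$, and Theorem A at $n=\infty$ --- does go through and is essentially the argument of \cite{LocBat}, which the paper simply cites for all points except (3). Note finally that in (4) your sentence about ``tracking this connectivity through the comma categories'' is not a proof: reconciling the $(n-2)$-connectivity of the ordered configuration spaces with the indexing conventions for $\Ww_k$-asphericity is the entire content of that statement, and you leave it unresolved.
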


\begin{proof}
Most of the statements of this theorem are just reformulations of statements of Theorem 5.1 and Lemma 5.1 from \cite{LocBat}. 
We  add the proof of the point (3). Asphericity of $[-]$ follows immediately.

It is not hard to see that the group $\Sm_k$ acts  on $\JJ_n(k)$ and the quotient $\JJ_n(k)/\Sm_k$ is isomorphic to $\QQ_n(k).$  
One can think of an element from $\JJ_n(k)$ as a pair $(T,\pi)$ where $T$ is an $n$-ordinal and $\pi$ is a permutation from $\Sm_k $ and  
 $(T,\pi) > (S,\xi)$ in $ \JJ_n(k)$ when there exists a quasibijection $\sigma:T\rightarrow S$ and $\xi\cdot\pi = \sigma .$ 
 
 One can identify then the category $\JJ_n(k)$ with the comma-category $[-]: \QQ_n(k) \to \Sm(k).$ Recall that we consider the group $\Sm(k)$ as a groupoid with a unique object $\bar{k}.$ Then an object of the comma-category $|-|/\bar{k}$  is exactly  $T\in \QQ_n(k)$ equipped with a permutation $\pi:|T|\to \bar{k}$ that is an object of $\JJ_n(k).$ Morphisms  also coincide with the description above.  
   
\end{proof}

\subsection{$n$-operads}

We now recall  the definition of pruned $(n-1)$-terminal  $n$-operad \cite{SymBat}. Since we do not need other types of
$n$-operads in this paper we will call them simply $n$-operads. 
The notation $U_n$ means the terminal $n$-ordinal.  

Let $\VV$ be a symmetric monoidal category. For a morphism of $n$-ordinals $\sigma:T\rightarrow S$ the $n$-ordinal $T_i$ is the fiber $\sigma^{-1}(i) .$ 
\begin{defin}\label{defnoper}  An $n$-operad in $\VV$ is 
a collection $A_T, \ T\in Ord(n)$ of objects of $\VV$ equipped with the following structure :

- a morphism $e: I \rightarrow  A_{U_n}$ (the unit);

- for every morphism $\sigma:T \rightarrow S$ in $Ord(n) ,$ 
a morphism 
$$m_{\sigma}:A_S\otimes A_{T_0}\otimes ... \otimes A_{T_k}
 \rightarrow A_T \mbox{\ \ (the multiplication}).$$

They must satisfy the following identities:

- for any composite $$T\stackrel{\sigma}{\rightarrow} S \stackrel{\omega}{\rightarrow} R ,$$
the associativity diagram

{\unitlength=1mm

\begin{picture}(300,45)(2,0)

\put(20,35){\makebox(0,0){\mbox{$\scriptstyle A_R\otimes
A_{S_{\bullet}}\otimes A_{T_0^{\bullet}} \otimes  ...
\otimes 
 A_{T_i^{\bullet}}\otimes  ... \otimes A_{T_k^{\bullet}}   
$}}}
\put(20,31){\vector(0,-1){12}}

\put(94,31){\vector(0,-1){12}}

\put(88,35){\makebox(0,0){\mbox{$\scriptstyle A_R\otimes
A_{S_{0}}\otimes A_{T_1^{\bullet}} \otimes  ...
\otimes A_{S_{i}}\otimes
 A_{T_i^{\bullet}}\otimes  ... \otimes A_{S_{k}}\otimes
A_{T_k^{\bullet}}   
$ }}}

\put(50,35){\makebox(0,0){\mbox{$\scriptstyle \simeq $}}}

\put(20,15){\makebox(0,0){\mbox{$\scriptstyle A_S\otimes 
A_{T_1^{\bullet}} \otimes  ...
\otimes 
 A_{T_i^{\bullet}}\otimes  ... \otimes A_{T_k^{\bullet}}
$}}}

\put(94,15){\makebox(0,0){\mbox{$\scriptstyle A_R\otimes 
A_{T_{\bullet}} 
$}}}

\put(60,5){\makebox(0,0){\mbox{$ \scriptstyle A_T 
$}}}

\put(35,11){\vector(4,-1){19}}

\put(85,11){\vector(-4,-1){19}}

\end{picture}}

\noindent commutes,
where $$A_{S_{\bullet}}= A_{S_0}\otimes ...
\otimes A_{S_k},$$  
$$A_{T_{i}^{\bullet}} = A_{T_i^0} \otimes ...\otimes A_{T_i^{m_i}}$$
and $$ A_{T_{\bullet} } =  A_{T_0}\otimes ...
\otimes A_{T_k};$$

- for an identity $\sigma = id : T\rightarrow T$ the diagram

{\unitlength=1mm
\begin{picture}(50,25)(30,2)

\put(97,20){\vector(-1,0){20}}

\put(60,17){\vector(0,-1){8}}

\put(60,20){\makebox(0,0){\mbox{\small$A_T\otimes 
A_{U_n}\otimes ... \otimes A_{U_n} 
$}}}

\put(114,20){\makebox(0,0){\mbox{\small$A_T\otimes 
{I}\otimes ... \otimes {I} 
$}}}

\put(60,5){\makebox(0,0){\mbox{\small$A_T 
$}}}

\put(105,15){\vector(-4,-1){30}}

\put(90,9){\makebox(0,0){\mbox{\small$id
$}}}

\end{picture}}

\noindent commutes;

- for the unique morphism $T\rightarrow U_n$ the diagram

{\unitlength=1mm
\begin{picture}(50,25)(30,2)

\put(87,20){\vector(-1,0){15}}

\put(60,17){\vector(0,-1){8}}

\put(60,20){\makebox(0,0){\mbox{\small$A_{U_n}\otimes 
A_T
$}}}

\put(98,20){\makebox(0,0){\mbox{\small$I \otimes
A_T
$}}}

\put(60,5){\makebox(0,0){\mbox{\small$A_T 
$}}}

\put(95,17){\vector(-3,-1){25}}

\put(84,11){\makebox(0,0){\mbox{\small$id
$}}}

\end{picture}}

\noindent commutes.

\end{defin}
 
 We will denote by $O_n(\VV)$ the category of  $n$-operads.
 
 A {\it constant-free $n$-operad} is defined in a way  similar to an $n$-operad but we do not include the object $A_{z^nU_0}$ in the definition.  We also require  the maps of $n$-ordinals used in this definition to be  surjections.
 Analogously we define  a {\it constant-free symmetric operad} as a classical symmetric operad without a $0$-term. 
  The  category  of  constant-free $n$-operads is denoted   $CFO_n(\VV).$ 
  
  \begin{remark} Of course, the category of constant-free $n$-operads is equivalent to the full subcategory of $n$-operads for which $A_{z^nU_0}$ is the initial object. Similarly for constant-free symmetric operads. We give a slightly different definition to emphasise the role of the surjective maps of $n$-ordinals. It will be important for our proof of localisability of the corresponding substitudes. 
  
  \end{remark}

 Finally, a constant-free $n$-operad (constant-free symmetric operad) is called {\it normal} if $A_{U_n}=I$ (resp. $A_1 = I$) is the unit of the category $\VV.$ The  subcategory   of  normal $n$-operads is denoted  by $NO_n(\VV).$ 
  
 To shorten our exposition we accept the following agreement.
\begin{agreement}\label{ag}  Any $n$-operads or symmetric operads we will use may have three types: general, constant free or normal. We assume throughout the text that the  type of operads is fixed and  we will use the notations $Op_n(\VV)$ ($SO(\VV), BO(\VV)$)  for the category of $n$-operads (symmetric operads and braided operads correspondingly) of this fixed type unless we specifically indicate what type of operads we have in mind.  \end{agreement}

\subsection{Symmetrisation of $n$-operads}

Let $\sigma:T\rightarrow S$ be a quasibijection and $A$ be an $n$-operad.
Since a fiber of $\sigma$ is the terminal $n$-ordinal $U_n ,$  the multiplication
$$\mu_{\sigma}: A_{S}\otimes(A_{U_n}\otimes ... \otimes
A_{U_n})\longrightarrow A_{T}$$
in composition with the morphism
$$ A_S\rightarrow  A_{S}\otimes(I\otimes ... \otimes
I) \rightarrow  A_{S}\otimes(A_{U_n}\otimes ... \otimes
A_{U_n})$$ induces a morphism
$$ A(\sigma): A_S\rightarrow A_T.$$
It is not hard to see  that in this way $A$ becomes a  functor on  $\QQ^{op}_n .$ 
 So we have  a forgetful functor from the category of  $n$-operads $Op_n(\VV)$ to the category  $[\QQ_n^{op},\VV].$

The desymmetrisation functor $des_n$ from symmetric operads to $n$-operads, for finite $n$, was defined in \cite{EHBat} using pullback along the functor $[-]:Ord(n)\rightarrow \FinSet .$  It was shown that this functor has a left adjoint which we call symmetrisation and denote $sym_n.$

  Since $n$-operads are algebras of a $\Sigma$-free coloured operad \cite{batanin-berger} whose underlying category is the opposite to the category of quasibijections, one considers a $\Sigma$-free  substitude
$(\ON,\QQ_n^{op})$ whose algebras in $\VV$ are $n$-operads of a given type (see Section \ref{sec:examples}). We also consider a $\Sigma$-free symmetric substitude $(SO,\Sm^{op})$ whose algebras are symmetric operads (we again use Agreement \ref{ag} about different types of operads). Similarly we get a substitude $(BO,\Br^{op})$ 
 for braided operads and braided collections.
\begin{remark} To construct the substitude $(BO,\Br^{op})$ we need to use {\it vines} in $\mathbb{R}^3$ \cite{lavers} instead of planar trees   but otherwise the construction is similar to $({SO},\Sm^{op}).$  \end{remark}
The adjoint pair of symmetrisation and desymmetrisation is then induced by a substitude map:
$$(\tau, [-]^{op}):(\ON,\QQ_n^{op})\to ({SO},\Sm^{op}).$$ 
The first component $\tau$ sends a tree decorated by $n$-ordinals to its underlying tree. 
The desymmetrisation and symmetrisation functors are  $\tau^*$ and $\tau_!$ correspondingly \cite{batanin-baez-dolan-via-semi}.
To shorten the notation we will denote the functor $[-]^{op}$ by $[-].$ Since $\Sm$ is a groupoid, this should not lead to any confusion.  

The adjunction between braided and symmetric operads admits a similar treatment. Namely, the canonical map $\pi$ from braid groups to symmetric groups can be extended to a map of substitudes:
$$ (\theta, \pi^{op}):({BO},\Br^{op})\to ({SO},\Sm^{op}).$$
and $(\tau, [-]^{op}):(O^{(2)},\QQ_2^{op})\to ({SO},\Sm^{op})$ factors as 
$$   (O^{(2)},\QQ_2^{op})\stackrel{(\kappa, [-]_2^{op})}{\longrightarrow}    ({BO},\Br^{op})\stackrel{(\theta, \pi^{op})}{\longrightarrow} ({SO},\Sm^{op}).$$

\begin{proposition}  \label{operadicstabilization}
The morphisms of substitudes $(\tau, [-]^{op}),  (\kappa, [-]_2^{op})$ and      $(\theta, \pi^{op})$ are Beck-Chevalley morphisms.

\end{proposition}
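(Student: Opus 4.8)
The plan is to verify the Beck-Chevalley condition for each of the three substitude morphisms by appealing to Theorem \ref{bc section}, which reduces the question to showing that the induced map of classifiers is a final functor of underlying categories. Concretely, for a morphism of substitudes $(f,g)\colon(P,A)\to(Q,B)$ one has an underlying square of cartesian morphisms of polynomial monads, and by Theorem \ref{bc section} the associated adjunction square is Beck-Chevalley whenever the corresponding square is exact, that is, whenever the induced map of classifiers $\Dd^{\Cc}\to\alpha^*(\Bb^{\Aa})$ is final in the sense of Definition \ref{semiexactdef}. So the whole proposition reduces to three finality verifications, one for each of $(\tau,[-]^{op})$, $(\kappa,[-]^{op}_2)$, and $(\theta,\pi^{op})$.

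First I would isolate the combinatorial content of each classifier map. Following Example \ref{coma as classifier}, the relevant classifiers are built from comma-type constructions associated with the functors $[-]\colon\QQ_n\to\Sm$, $[-]_2\colon\QQ_2\to\Br$, and $\pi\colon\Br\to\Sm$ on the colour categories, enriched by the operadic data encoding trees decorated by $n$-ordinals (respectively by vines, respectively by corollas). For the finality of $\Dd^{\Cc}\to\alpha^*(\Bb^{\Aa})$, I would fix an object in the target — a symmetric-operad operation, i.e.\ a corolla whose inputs carry a symmetric-group labelling — and examine the category of factorisations through operations in the source, exactly as in the monoidal example proved just above (the proposition on strict monoidal functors $F\colon C\to D$). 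The key point, as in that example, is that this factorisation category possesses an initial (or terminal) object: given a target operation, the ``identity on the domain'' factorisation is initial, because a decorated tree in the source maps onto its underlying symmetric corolla, and any other factorisation receives a canonical comparison from this one. The fiber description in Theorem \ref{Milgram}(3), identifying the fiber of $[-]\colon\QQ_n\to\Sm$ with the Milgram poset $\JJ_n(k)$, is what makes these factorisation categories transparent: the relevant comma categories are governed by $\JJ_n(k)$ and its analogues for braids and vines.

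The three cases I would treat in parallel but with attention to their distinctive features. For $(\tau,[-]^{op})$ the source operations are trees decorated by $n$-ordinals and $\tau$ forgets the decoration to the underlying planar/symmetric tree; finality follows because the fiber of a symmetric corolla under $\tau$ is (the nerve of) a connected Milgram-type poset with a terminal object in each relevant factorisation. For $(\theta,\pi^{op})$ the braid-to-symmetric map behaves analogously, with vines playing the role of trees and the surjection $\Br\to\Sm$ supplying the colour-level comma structure; the factorisation category again has an initial object given by the identity on the domain. For $(\kappa,[-]^{op}_2)$, the $2$-operadic-to-braided comparison, the argument is the same with $\QQ_2$ replacing the source and $\Br$ the target. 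In each case I would write down explicitly the category of factorisations of a fixed operation and exhibit the initial object, mirroring the proof for monoidal functors.

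The hard part will be bookkeeping the operadic structure inside the classifiers, since unlike the purely monoidal example the operations are trees (or vines) with several levels of inputs, and the factorisation data involves choosing source operations on each input edge compatibly with the colour-category morphisms. I expect the main obstacle to be verifying that the candidate initial object of the factorisation category is genuinely initial at the operadic level — that is, that the required comparison morphism is unique once one accounts for the equivariance and the fiber structure of the maps of $n$-ordinals (respectively vines). This is where Theorem \ref{Milgram}(3) and the explicit description of morphisms in $\QQ_n$, $\Br$ via domination and braid relations must be invoked carefully; everything else is a direct transcription of the finality argument already carried out for strict monoidal functors together with an application of Theorem \ref{bc section}.
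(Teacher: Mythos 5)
Your reduction of the statement to exactness of squares of polynomial monads and finality of classifier maps via Theorem \ref{bc section} is indeed the paper's strategy, but only for the first morphism $(\tau,[-]^{op})$, and there the decisive step is not what you claim. The finality of ${\SM}^{\Qn}\to \alpha^*(\SO^{\On})$, i.e., of the inclusion $\JJ_n^{op}=\coprod_k(\JJ_n(k))^{op}\subset \SO^{\On}$, cannot be obtained by exhibiting an ``identity on the domain'' initial factorisation as in the monoidal-functor example. An object $a$ of $\SO^{\On}$ is a tree whose vertices are decorated by $n$-ordinals, and an object of the comma category $a/\JJ_n^{op}$ amounts to an $n$-ordinal $S$ whose total complementary $n$-order dominates the $n$-complementary relation that $a$ induces on its leaves. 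For $n\ge 2$ this induced relation is typically not itself a total $n$-order, there is no canonical dominating $S$, and the collection of all such $S$ with quasibijections between them is governed by the Milgram poset $\JJ_n(k)$, whose nerve has the homotopy type of a configuration space and is far from contractible. Nonemptiness and connectivity of these comma categories is precisely Lemma 4.3 of \cite{SymBat}, the central combinatorial result of that earlier paper; the present paper's proof simply invokes it. You correctly flag this as ``the hard part,'' but you offer no argument that would replace it --- the analogy with the strict monoidal case genuinely breaks down here --- so as written the proposal has a gap at its core.

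For the other two morphisms the paper does something quite different and much cheaper, and your parallel finality verifications would be unnecessary extra work. For $(\theta,\pi^{op})$ it never touches classifiers for braided operads: both $\theta_!$ and $\pi_!$ are computed on underlying collections as quotients by the pure braid group actions, so the Beck-Chevalley transformation $\mathbf{bc}:\pi_!U_b\to U\theta_!$ is an isomorphism by direct inspection. For $(\kappa,[-]_2^{op})$ it then applies the two-out-of-three property of Beck-Chevalley morphisms (Proposition \ref{2of3}) to the factorisation of $(\tau,[-]^{op})$ for $n=2$ through $({BO},\Br^{op})$, so no finality statement for the classifier map associated to $\QQ_2\to\Br$ is ever needed. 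To repair your proposal along its own lines you must either reprove or cite Lemma 4.3 of \cite{SymBat} for the first morphism, and you should replace the two braided cases by the pure-braid quotient argument together with Proposition \ref{2of3}.
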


\begin{proof}

We have to prove that 
the following commutative square of adjunctions is Beck-Chevalley.

\begin{equation}\label{BC} \xygraph{!{0;(2.5,0):(0,.5)::}
{Op_n(\VV)}="p0" [r] {SO(\VV)}="p1" [d] {[\Sm^{op},\VV]}="p2" [l] {[\QQ^{op}_{n},\VV]}="p3"
"p0":@<-1ex>@{<-}"p1"_-{des_n}|-{}="cp":@<1ex>"p2"^-{U}|-{}="ut":@<1ex>"p3"^-{[-]^*}|-{}="c":@<-1ex>@{<-}"p0"_-{U_n}|-{}="us"
"p0":@<1ex>"p1"^-{sym_n}|-{}="dp":@<-1ex>@{<-}"p2"_-{F}|-{}="ft":@<-1ex>@{<-}"p3"_-{[-]_!}|-{}="d":@<1ex>"p0"^-{F_n}|-{}="fs"
"dp":@{}"cp"|-{\perp} "d":@{}"c"|-{\perp} "fs":@{}"us"|-{\dashv} "ft":@{}"ut"|-{\dashv}}
\end{equation}

According to Theorem \ref{BCB} we need to establish that the commutative square of polynomial monads
\begin{equation}\label{exact1}\xymatrix{
 \On 
 \ar@{<-}@<0pt>[d]_{\beta}
\ar@<0pt>[r]^{\tau}
& 
\SO 
 \ar@{<-}@<0pt>[d]^{\alpha} \\
\Qn \ar@<2.5pt>[r]^{[-]^{op}}
& 
\SM }
\end{equation} 
is exact. That is that the
 induced morphism of classifiers
\begin{equation} {\SM}^{\Qn}\to \alpha^*(\SO^{\On}) \end{equation} 
is a final functor.

 The classifier $\SO^{\On}$ has been described in \cite{SymBat}. The classifier  ${\SM}^{\Qn}$ is isomorphic to $\JJ_n^{op} =  \coprod_k (\JJ_n(k))^{op}$  as was shown in the proof of Theorem \ref{Milgram} and Example \ref{coma as classifier}.  
The finality of the  inclusion $\JJ_n^{op} \subset \SO^{\On}$ is the content of Lemma 4.3 from \cite{SymBat}. 

For the morphism $(\theta, \pi^{op})$ we have the following square of adjunctions:

\begin{equation*}\label{BrC} \xygraph{!{0;(2.5,0):(0,.5)::}
{BO(\VV)}="p0" [r] {SO(\VV)}="p1" [d] {[\Sm^{op},\VV]}="p2" [l] {[\Br^{op},\VV]}="p3"
"p0":@<-1ex>@{<-}"p1"_-{\theta^*}|-{}="cp":@<1ex>"p2"^-{U}|-{}="ut":@<1ex>"p3"^-{\pi^*}|-{}="c":@<-1ex>@{<-}"p0"_-{U_b}|-{}="us"
"p0":@<1ex>"p1"^-{\theta_!}|-{}="dp":@<-1ex>@{<-}"p2"_-{F}|-{}="ft":@<-1ex>@{<-}"p3"_-{\pi_!}|-{}="d":@<1ex>"p0"^-{F_b}|-{}="fs"
"dp":@{}"cp"|-{\perp} "d":@{}"c"|-{\perp} "fs":@{}"us"|-{\dashv} "ft":@{}"ut"|-{\dashv}}
\end{equation*}

The corresponding Beck-Chevalley map is $\mathbf{bc}:\pi_!U_b\to U \theta_!.$ Now it is well known that $\theta_!$ on underlying collections is given by the quotient with respect to the action of pure braid groups. This again can be proved by the general theory of classifiers and algebraic Kan extensions but it is an elementary fact which does not require any deep theory. Obviously $\pi_!$ is also given by such a quotient. This means that  $\mathbf{bc}$ is an isomorphism.

Finally, the morphism $(\kappa, [-]_2^{op})$ is Beck-Chevalley by Proposition \ref{2of3}.

\end{proof}

\section{Locally constant $n$-operads}\label{sec:locally constant n operads} 
 
In this section we assume that $\Ww$ is a proper fundamental localiser and $\VV$ is a combinatorial monoidal model category with cofibrant unit.

\subsection{Locally constant $n$-operads, recollection}

\begin{defin}  A $\Ww$-locally constant $n$-operad in $\VV$ is an $n$-operad $A$ in $\VV$ such that its underlying $\QQ_n^{op}$-presheaf is  $\Ww$-locally constant.\end{defin}

When $\Ww=\Ww_{\infty}$ this definition coincides with the definition of locally constant $n$-operad from \cite{LocBat}. 

A morphism of $n$-operads  is a weak equivalence if it is a termwise weak equivalence of the collections. The homotopy category of operads is the category of operads localised with respect to the class of weak equivalences.
Let  $LCO^W_n(\VV)$ be  the full subcategory of the homotopy category of $O_n(\VV)$ of  $\Ww$-locally constant $n$-operads. For $\Ww=\Ww_{\infty}$ we will refer to the category $LCO^{\Ww_{\infty}}_n(\VV)$ as the homotopy category of locally constant operads to maintain the terminology of \cite{LocBat}.

For $n=1 $ the category $LCO^{\Ww_{\infty}}_1(\VV)$  is isomorphic to the homotopy category of nonsymmetric operads, and the (derived) symmetrisation functor is given by multiplication on symmetric groups. Recall \cite{LocBat} that an $n$-operad $A$ is called {\it quasisymmetric} 
if for any quasibijection $\sigma:T\to S$ between $n$-ordinals its effect $A(\sigma):A(S)\to A(T)$ is an isomorphism. 
 
The following theorem  combines Theorem 7.1 and 7.2 of \cite{LocBat}.

\begin{theorem}[\cite{LocBat}]\label{lc=2} 
   \begin{enumerate}
\item The categories of quasisymmetric $2$-operads and braided $2$-operads are equivalent.

\item For any $n\ge 3$ the categories of quasisymmetric $n$-operads  and symmetric operads are equivalent. 

\item The homotopy categories of locally constant $2$-operads, quasisymmetric $2$-operads and  braided operads are equivalent. 

\item The homotopy categories of locally constant $\infty$-operads,  quasisymmetric $\infty$-operads, and  symmetric operads are equivalent.

\end{enumerate}
\end{theorem}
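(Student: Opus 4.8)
Since \cite{LocBat} already establishes this statement (as its Theorems~7.1 and~7.2), the most economical proof is simply to cite it; but the apparatus assembled above yields a self-contained reconstruction, which is the route I would take.

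For the strict categorical equivalences (1) and (2), the plan is to observe that a quasisymmetric $n$-operad is precisely an algebra of the substitude $(\ON,\QQ_n^{op})$ whose underlying presheaf inverts every quasibijection, and hence factors through the fundamental groupoid $\Pi_1(\QQ_n^{op})$. I would then identify this groupoid using Theorem~\ref{Milgram}: for $n\ge 3$, part~(4) shows $[-]\colon\QQ_n\to\Sm$ is $\Ww_1$-aspherical, hence a $1$-equivalence, so it induces an equivalence $\Pi_1(\QQ_n)\simeq\Sm$ (the target already being a groupoid); for $n=2$, part~(5) gives $\Pi_1(\QQ_2)\simeq\Br$. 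Because the substitude maps $(\tau,[-])$ and $(\kappa,[-]_2)$ identify the operadic structures (Proposition~\ref{operadicstabilization} records that they are Beck--Chevalley), transporting algebras along these equivalences of groupoids yields the desired equivalences of the categories of quasisymmetric $n$-operads with symmetric, respectively braided, operads.

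For the homotopy-categorical statements (3) and (4), I would work with $\Ww=\Ww_\infty$ and invoke the localisation theory of Part~\ref{part:localization}. First, the three substitudes $(\ON,\QQ_n^{op})$, $(SO,\Sm^{op})$ and $(BO,\Br^{op})$ are left localisable by the criterion of Theorem~\ref{left localisable criteria} (verified in Section~\ref{sec:examples}). Since the morphisms $(\tau,[-])$ and $(\kappa,[-]_2)$ are Beck--Chevalley, and by Theorem~\ref{Milgram}(2),(5) the functors $[-]\colon\QQ_\infty\to\Sm$ and $[-]_2\colon\QQ_2\to\Br$ are $\Ww_\infty$-equivalences, Corollary~\ref{BC_for_W}(2) supplies Quillen equivalences between $\Alg_{\ON}^{\Ww_\infty}(\VV)$ and $\Alg_{SO}^{\Ww_\infty}(\VV)$ (for $n=\infty$) and between $\Alg_{O^{(2)}}^{\Ww_\infty}(\VV)$ and $\Alg_{BO}^{\Ww_\infty}(\VV)$. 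By definition the homotopy category of the source is $LCO^{\Ww_\infty}_n(\VV)$. To identify the target I would note that $\Sm$ and $\Br$ are groupoids, so in any algebra the unary operations act by isomorphisms, every underlying presheaf is already $\Ww_\infty$-locally constant in the sense of Example~\ref{inftyexample}, every fibrant object is local, and hence the localisation is homotopically trivial: $\Ho\,\Alg_{SO}^{\Ww_\infty}(\VV)\simeq\Ho\,\Alg_{SO}(\VV)$, and likewise for $BO$.

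It remains to match \emph{quasisymmetric} with \emph{locally constant} at the homotopy level, and this is the step I expect to be the main obstacle. Being quasisymmetric is a strict condition (quasibijections act by honest isomorphisms), whereas local constancy is a homotopy condition (they act by weak equivalences); reconciling them requires showing that every locally constant operad is weakly equivalent to a quasisymmetric one, by a cofibrant--fibrant replacement whose underlying presheaf can be rigidified along the groupoid, together with the fact that the homotopy category of quasisymmetric operads computes the same localisation. Granting this identification, chaining the equivalences above, via parts (1) and (2) on one side and the Quillen equivalences on the other, delivers all four assertions.
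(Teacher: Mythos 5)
The paper contains no proof of Theorem \ref{lc=2} at all: it is imported from \cite{LocBat} with the one-line attribution that it combines Theorems 7.1 and 7.2 of that paper. So your opening move of simply citing \cite{LocBat} is exactly the paper's own approach, and on that reading your proposal is complete.

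Since you declare the self-contained reconstruction to be "the route I would take," it should also be judged as a proof attempt, and there it has real gaps. Parts (3) and (4) of your sketch are essentially sound and in fact coincide with the paper's later, genuinely independent argument (promised in the remark after Theorem \ref{model=2} and carried out as Theorem \ref{koperadicstabilization}): left localisability of the three substitudes, the Beck--Chevalley property of Proposition \ref{operadicstabilization}, the $\Ww_\infty$-equivalences of Theorem \ref{Milgram}(2),(5), Corollary \ref{BC_for_W}(2), and the identification of homotopy categories in Proposition \ref{lc=3}. But for the strict equivalences (1) and (2), the step ``transporting algebras along these equivalences of groupoids'' is not automatic: a quasisymmetric $n$-operad is not merely a presheaf on $\Pi_1(\QQ_n^{op})$ --- its multiplication is indexed by the whole substitude. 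What the argument actually requires is that the comma categories $[-]/\bar{k}\cong\JJ_n(k)$ are $1$-connected for $n\ge 3$; this is the asphericity content of Theorem \ref{Milgram}(4), which you cite but then weaken to a statement about $\Pi_1(\QQ_n)$. Combined with the Beck--Chevalley isomorphism $U\circ sym_n\cong [-]_!\circ U_n$, it is this simple connectivity of the fibers that forces the unit $A\to des_n\, sym_n\, A$ (for quasisymmetric $A$) and the counit $sym_n\, des_n\, B\to B$ to be isomorphisms, since both reduce to colimits over $\JJ_n(k)^{op}$ of functors inverting every morphism; for $n=2$ the analogous step needs the $K(\pi,1)$ property of planar configuration spaces, which your sketch never invokes. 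Finally, the identification of quasisymmetric with locally constant operads at the homotopy level, which you explicitly grant, is precisely the rigidification theorem that constitutes the hard content of \cite{LocBat}; nothing in the present paper supplies it, so at that point your reconstruction either falls back on the citation or remains incomplete.
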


\subsection{Model theoretical refinement}
The purpose of this section is to lift Theorem \ref{lc=2} to the model categorical level.

 To further simplify notations  we write  $[\QQ^{op}_n,\VV]$ for the model category $[\QQ^{op}_n,\VV]_{proj}.$  We use the notation $Op_{n}^{\Ww}(\VV) = \Alg_{O^{(n)}}^\Ww(\VV)   $ for the category of $n$-operads  with the local semimodel structure lifted from $[\QQ_n^{op},\VV]^\Ww$ (see Theorem \ref{lifting of localisation}(2)). Notice that we do not assume that this semimodel structure is transferred from $[\QQ_n^{op},\VV]^\Ww$.  The fibrant objects in $Op_{n}^{\Ww}(\VV)$ are  termwise fibrant $\Ww$-locally constant $n$-operads.  

We use a similar construction for symmetric operads.  We define the $\Ww$-local semimodel category of symmetric operads $SO^{\Ww}(\VV)$ (braided operads $BO^{\Ww}(\VV)$) as a lifting along  the forgetful functor $U: SO(\VV)\rightarrow [\Sm^{op},\VV] $ ( $BO(\VV)\rightarrow [\Br^{op},\VV] $) of the   localisation  $[\Sm^{op},\VV]\rightarrow [\Sm^{op},\VV]^\Ww$ ($[\Br^{op},\VV]\rightarrow [\Br^{op},\VV]^\Ww.$ 

We then have:
\begin{proposition} \label{lc=3} Let $\VV$ be  symmetric monoidal combinatorial model category  with cofibrant unit. Then:
\begin{enumerate}\item
The homotopy category $Ho(Op_{n}^{\Ww}(\VV))$ is equivalent to the homotopy category of $\Ww$-locally constant $n$-operads $LCO_n^\Ww(\VV).$
In particular, the category $Ho(Op_{n}^{\Ww_{\infty}}(\VV))$ is equivalent to the category of locally constant $n$-operads.
\item 
The homotopy category $Ho(SO^{\Ww}(\VV))$ (resp. $Ho(BO^{\Ww}(\VV))$) is equivalent to the homotopy category of the category of symmetric (braided) operads whose underlying symmetric (braided) collection is fibrant and $\Ww$-locally constant. 

\item For $k\ge 1$ the categories   $SO^{\Ww_k}(\VV)$ and $[\Sm^{op},\VV]^{\Ww_k}$ (resp. $BO^{\Ww_k}(\VV)$ and $[\Br^{op},\VV]^{\Ww_k}$)      are isomorphic to $SO(\VV)$ and $[\Sm^{op},\VV]$ ($BO(\VV)$ and $[\Br^{op},\VV]$)  correspondingly as (semi)model categories. 

\item The category $Ho(SO(\VV)^{\Ww_0})$ (resp. $Ho(BO(\VV)^{\Ww_0})$) is equivalent to the homotopy category of operads whose underlying symmetric (braided) collection has a homotopically trivial action of symmetric (braid) groups (meaning that such a collection is equivalent to a constant collection).

\end{enumerate}

 \end{proposition}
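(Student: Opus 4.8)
The plan is to prove each of the four statements of Proposition \ref{lc=3} by combining the lifting-of-localisation machinery (Theorem \ref{lifting of localisation} and Theorem \ref{loclifting}) with the explicit homotopy-type computations for quasibijections from Theorem \ref{Milgram}. Throughout I would use the identification, recorded just before the statement, that $Op_n^{\Ww}(\VV)$ is the lift along the forgetful functor $U_n$ of the Cisinski-style localisation $[\QQ_n^{op},\VV]\to[\QQ_n^{op},\VV]^{\Ww}$, and similarly for symmetric and braided operads.

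\textbf{Statement (1) and (2).} First I would establish the description of the fibrant objects and weak equivalences. By Theorem \ref{loclifting}(2), the forgetful functor $U_n$ reflects and preserves local fibrant objects; since the fibrant objects of $[\QQ_n^{op},\VV]^{\Ww}$ are exactly the termwise fibrant $\Ww$-locally constant presheaves (Theorem \ref{W local structure}), the fibrant objects of $Op_n^{\Ww}(\VV)$ are precisely the termwise fibrant $\Ww$-locally constant $n$-operads. The homotopy category of a (semi)model category is computed by its fibrant-cofibrant objects and the weak equivalences between them; the weak equivalences of the lifted localisation are the maps $f$ for which $U_n(f)$ is a local equivalence in $[\QQ_n^{op},\VV]^{\Ww}$, i.e. exactly the maps becoming weak equivalences after passing to $\Ww$-locally constant replacements. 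Identifying this with the full subcategory $LCO_n^{\Ww}(\VV)$ of the homotopy category of all $n$-operads is then a formal consequence of the universal property of left Bousfield localisation: the local model structure and the restriction to locally constant objects inside the homotopy category present the same localisation. The case $\Ww=\Ww_\infty$ specialises by the explicit description of $\Ww_\infty$-locally constant presheaves in Example \ref{inftyexample}. Statement (2) is identical with $\QQ_n$ replaced by $\Sm$ (resp. $\Br$) and $U_n$ by $U$ (resp. $U_b$), using the same reflection/preservation of local fibrant objects.

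\textbf{Statement (3).} Here the point is that for $k\ge 1$ the localisation $[\Sm^{op},\VV]\to[\Sm^{op},\VV]^{\Ww_k}$ is trivial, i.e. the identity of semimodel categories, and likewise for $\Br^{op}$. The reason is that every object of the groupoid $\Sm$ (respectively $\Br$) has trivial higher structure relative to $\Ww_k$: since $\Sm$ and $\Br$ are groupoids, every presheaf on them is automatically $\Ww_\infty$-locally constant, hence a fortiori $\Ww_k$-locally constant, so the class of local equivalences already coincides with the termwise weak equivalences. Concretely I would invoke Theorem \ref{WQE} (or directly Theorem \ref{W local structure}) to see that the generating local equivalences become identities, so $[\Sm^{op},\VV]^{\Ww_k}=[\Sm^{op},\VV]$ and the lifted structures $SO^{\Ww_k}(\VV)=SO(\VV)$, $BO^{\Ww_k}(\VV)=BO(\VV)$ agree on the nose. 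The condition $k\ge 1$ is exactly what is needed to exclude the $\Ww_0$ case, where $\pi_0$ is collapsed.

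\textbf{Statement (4) and the main obstacle.} For $\Ww_0$ the localisation is genuine: $\Ww_0$-locally constant presheaves on $\Sm$ (resp. $\Br$) are those equivalent to constant presheaves, i.e. collections on which the symmetric (resp. braid) group action is homotopically trivial, since $\Ww_0$-aspherical means connected and $\Sm(k)$, $\Br(k)$ are connected one-object groupoids. Applying statement (2) with $\Ww=\Ww_0$ then yields the claimed description of $Ho(SO(\VV)^{\Ww_0})$ and $Ho(BO(\VV)^{\Ww_0})$. The step I expect to require the most care is the identification in (1)--(2) of the homotopy category of the \emph{lifted semimodel} structure with the full subcategory of locally constant objects in the ambient homotopy category of all operads: because we are in the semimodel rather than model setting and do not assume the lifted structure is transferred, I would need to argue via cofibrant replacement and the universal property of Theorem \ref{thm:bous-loc-semi} that no fibrancy or cofibrancy pathology obstructs the comparison, rather than quoting a model-categorical statement verbatim. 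The homotopy-type inputs themselves (that $N(\QQ_n(k))$ models configuration spaces, and the asphericity statements of Theorem \ref{Milgram}) are not needed for Proposition \ref{lc=3} as stated, but confirm consistency with Theorem \ref{lc=2}.
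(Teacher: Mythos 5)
Your treatment of (1), (2) and (4) is essentially the paper's: the paper regards (1)--(2) as immediate consequences of Theorem \ref{lifting of localisation} together with the standard identification of the homotopy category of a (semimodel) left Bousfield localisation with its local objects, and its proof of (4) is exactly your observation that $\Sm(k)$ and $\Br(k)$ are connected one-object groupoids, so that $\Ww_0$-local constancy amounts to equivalence to a constant collection.

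However, your proof of (3) contains a genuine error: the step ``every presheaf on a groupoid is $\Ww_\infty$-locally constant, hence a fortiori $\Ww_k$-locally constant'' has the implication backwards. Since $\Ww_\infty$ is the \emph{minimal} fundamental localiser, $\Ww_\infty\subseteq\Ww_k$, so there are \emph{more} $\Ww_k$-aspherical test categories than $\Ww_\infty$-aspherical ones (for instance $S^{k+1}=\Delta/\partial(\Delta_{k+2})$), and $\Ww_k$-local constancy is therefore the \emph{stronger} condition; the correct a fortiori implication is the reverse one. Indeed your argument proves too much: it applies verbatim to $k=0$ (as $\Ww_\infty\subseteq\Ww_0$ as well) and would give $SO^{\Ww_0}(\VV)=SO(\VV)$, contradicting your own statement (4), where the $\Ww_0$-localisation is genuinely nontrivial. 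The paper closes this gap with a different argument: for $k\ge 1$ a $\Ww_k$-aspherical category $A'$ is $k$-connected, in particular has trivial $\pi_0$ and $\pi_1$, so any functor $u\colon A'\to G$ into a groupoid factors as $A'\to\Pi_1(A')\to G$ with $\Pi_1(A')$ the trivial groupoid; hence $u^*(F)$ is equivalent to a constant presheaf for \emph{every} presheaf $F$ on $G$. Thus every presheaf on $\Sm^{op}$ (resp.\ $\Br^{op}$) is $\Ww_k$-locally constant for $k\ge 1$, the local objects coincide with all fibrant objects, and the localisation---hence also its lift to operads---is the identity. With this substitution your proof of (3) is repaired, and the rest of your outline stands.
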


\begin{proof} We only need to prove the  statements (3) and (4). Since $\Sm^{op}$ is a groupoid, then any  functor on it is $\Ww_{\infty}$-locally constant. The statement then amounts to the following general fact. Let $G$ be a groupoid. Then any functor $G\to \VV$ is $\Ww_k$-locally constant for any $k\ge 1.$ Indeed, let $A'$ be a $\Ww_k$-aspherical category and $u:A'\to G$ be a functor. Then $u$ factors through the fundamental groupoid of $A':$
$$A'\to \Pi_1(A')\to G$$
and, since $A'$ has trivial $\pi_0$ and $\pi_1$, the fundamental groupoid $\Pi_1(A')$ is trivial as well. So, $u^*(F)$ is equivalent to a constant functor. 

For the third statement observe that $\Sm_n$ is a connected groupoid for all $n\ge 0.$ Therefore, every $\Ww_0$-locally constant presheaf on it is weakly equivalent to a constant presheaf.   
\end{proof}

From general properties of localisation we get

\begin{proposition} Let $\VV$ be  symmetric monoidal combinatorial model category  with cofibrant unit. Then:
\label{6.3} \begin{enumerate} 
\item The  Quillen adjunction $sym_n: Op_n(\VV) \rightarrow SOp(\VV):des_n$ factors through the $\Ww_k$-local model structure for $1\le k\ge \infty:$

 {\unitlength=1mm
\begin{picture}(200,30)(-5,5)
\put(20,25){\makebox(0,0){\mbox{$Op_n(\VV)$}}}
\put(24,21){\vector(2,-1){16}}
\put(35,13){\vector(-2,1){16}}

\put(51,13){\vector(2,1){16}}
\put(63,21){\vector(-2,-1){16}}

\put(43,25){\makebox(0,0){\mbox{$ $}}}
\put(43,10){\makebox(0,0){\mbox{$Op_{n}^{\Ww_k}(\VV)$}}}
\put(58,26){\vector(-1,0){26}}
\put(32,24){\vector(1,0){26}}
\put(40,21){\shortstack{\mbox{$sym_n $}}}
\put(22,14){\shortstack{\mbox{$id$}}}
\put(32,17){\shortstack{\mbox{$id$}}}

\put(38,17){\shortstack{\mbox{$ $}}}

\put(60,15){\shortstack{\mbox{$sym_{\scriptscriptstyle n}^{\scriptscriptstyle } $}}}
\put(44,16){\shortstack{\mbox{$des_{\scriptscriptstyle n}^{\scriptscriptstyle } $}}}

\put(66,25){\makebox(0,0){\mbox{$SOp(\VV)$}}}
\put(40,27){\shortstack{\mbox{$des_n $}}}
\end{picture}}

\item \label{BSsquare} The following square is a square of Quillen adjunctions for $k\ge 1:$

\begin{equation}\label{BCW} \xygraph{!{0;(2.5,0):(0,.5)::}
{Op^{\Ww_k}_n(\VV)}="p0" [r] {SO(\VV)}="p1" [d] {[\Sm^{op},\VV]}="p2" [l] {[\QQ^{op}_{n},\VV]^{\Ww_k}}="p3"
"p0":@<-1ex>@{<-}"p1"_-{des_n}|-{}="cp":@<1ex>"p2"^-{U}|-{}="ut":@<1ex>"p3"^-{[-]^*}|-{}="c":@<-1ex>@{<-}"p0"_-{U_n}|-{}="us"
"p0":@<1ex>"p1"^-{sym_n}|-{}="dp":@<-1ex>@{<-}"p2"_-{F}|-{}="ft":@<-1ex>@{<-}"p3"_-{[-]_!}|-{}="d":@<1ex>"p0"^-{F_n}|-{}="fs"
"dp":@{}"cp"|-{\perp} "d":@{}"c"|-{\perp} "fs":@{}"us"|-{\dashv} "ft":@{}"ut"|-{\dashv}}
\end{equation}

\item For $n=2$ there are similar statements when we again replace symmetric groups on braid groups, symmetric operads on braided operads and corresponding symmetrisations.
\end{enumerate}

\end{proposition}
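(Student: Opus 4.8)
The plan is to deduce all three parts from the universal property of the localised semimodel structure (Theorem \ref{thm:bous-loc-semi}), together with the commuting square of adjunctions built in Proposition \ref{operadicstabilization} and the triviality of the symmetric/braided localisations recorded in Proposition \ref{lc=3}(3). I first treat part (1), whose only genuine content is that $sym_n$ remains left Quillen once its source carries the $\Ww_k$-local structure; the claimed factorisation is then formal, since $id\colon Op_n(\VV)\to Op_n^{\Ww_k}(\VV)$ is the (left Quillen) localisation map of Theorem \ref{lifting of localisation}(2), and $des_n$ is the corresponding composite of right adjoints. Recall that $Op_n^{\Ww_k}(\VV)$ is obtained, via Theorem \ref{loclifting}, by lifting the localisation $[\QQ_n^{op},\VV]\to[\QQ_n^{op},\VV]^{\Ww_k}$ along the forgetful functor $U_n$; hence its localising set is $F_n(\cat{C}_0)$, where $\cat{C}_0$ is the localising set of $[\QQ_n^{op},\VV]^{\Ww_k}$ (a set of cofibrations between cofibrant objects, as in Definition \ref{defn:admits a localisation and its lifting}) and $F_n$ is the free $n$-operad functor. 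By the universal property in Theorem \ref{thm:bous-loc-semi}, it therefore suffices to check that $sym_n$ — which is left Quillen for the projective structures, as $des_n=\tau^*$ commutes with the forgetful functors via the projectively right Quillen restriction $[-]^*$ — carries $F_n(\cat{C}_0)$ into the weak equivalences of $SO(\VV)$.

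The key step is to rewrite $sym_n\circ F_n$. Taking left adjoints in the commuting square of right adjoints (\ref{BC}), that is, in the identity $U_n\circ des_n=[-]^*\circ U$, yields a natural isomorphism $sym_n\circ F_n\cong F\circ[-]_!$. Now $[-]^{op}\colon\QQ_n^{op}\to\Sm^{op}$ is a functor of small categories, so by Theorem \ref{WQE} the restriction $[-]^*$ is right Quillen for the $\Ww_k$-local projective structures, whence $[-]_!\colon[\QQ_n^{op},\VV]^{\Ww_k}\to[\Sm^{op},\VV]^{\Ww_k}$ is left Quillen. By Proposition \ref{lc=3}(3) the target is simply $[\Sm^{op},\VV]$ for $k\ge 1$, so $[-]_!$ sends the local equivalences $\cat{C}_0$ to genuine weak equivalences between cofibrant objects in $[\Sm^{op},\VV]$ (Ken Brown's lemma, valid in the semimodel setting). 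Applying the left Quillen functor $F$ and Ken Brown once more, we conclude that $sym_n(F_n(\cat{C}_0))=F([-]_!(\cat{C}_0))$ consists of weak equivalences in $SO(\VV)$, which proves part (1).

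Part (2) is then an assembly step. The square (\ref{BCW}) commutes on right adjoints, this being exactly the identity $U_n\circ des_n=[-]^*\circ U$ inherited from (\ref{BC}), and each of its four edges is a Quillen adjunction: $F_n\dashv U_n$ by the construction of $Op_n^{\Ww_k}(\VV)$ in Theorem \ref{lifting of localisation}(2); $F\dashv U$ by the definition of the transferred structure on $SO(\VV)$; $[-]_!\dashv[-]^*$ by Theorem \ref{WQE} combined with the collapse $[\Sm^{op},\VV]^{\Ww_k}=[\Sm^{op},\VV]$ of Proposition \ref{lc=3}(3); and $sym_n\dashv des_n$ by part (1). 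Part (3) is proved verbatim with $\Sm$, $SO$, $sym_n$, $des_n$ replaced by $\Br$, $BO$, $\kappa_!$, $\kappa^*$, and with (\ref{BC}) replaced by its braided analogue from Proposition \ref{operadicstabilization}; the only input requiring recheck is the triviality of the braided-side localisation, again supplied by Proposition \ref{lc=3}(3) since $\Br$ is a groupoid.

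The main obstacle is the reduction carried out in part (1): one must correctly identify the lifted localising set as $F_n(\cat{C}_0)$ and commute $sym_n$ past $F_n$ using the adjoint form of (\ref{BC}); after that, the collapse $[\Sm^{op},\VV]^{\Ww_k}=[\Sm^{op},\VV]$ does the essential work, and the remaining bookkeeping (cofibrancy for the two applications of Ken Brown, and commutativity of the square) is routine.
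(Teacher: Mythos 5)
Your proof is correct, and it is essentially the argument the paper leaves implicit: the paper states this proposition with no written proof beyond the preamble ``From general properties of localisation we get,'' and those general properties are exactly what you spell out — the identification of the lifted localising set as $F_n(\cat{C}_0)$ and the universal property of Theorems \ref{thm:bous-loc-semi} and \ref{loclifting}, the adjoint-square identity $sym_n\circ F_n\cong F\circ[-]_!$ obtained from (\ref{BC}), Theorem \ref{WQE}, and the collapse $[\Sm^{op},\VV]^{\Ww_k}=[\Sm^{op},\VV]$ of Proposition \ref{lc=3}(3). Nothing in your argument is wrong or missing, and parts (2) and (3) are assembled exactly as the paper intends.
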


We now have the following refinement of Theorem \ref{lc=2}. 

\begin{theorem}\label{model=2} Let $\VV$ be  symmetric monoidal combinatorial model category  with cofibrant unit. Then:
   \begin{enumerate}
\item The braided symmetrisation
$$ bsym_2: Op_2^{\Ww_{\infty}}(\VV) \to  BO(\VV)$$
is  a left Quillen equivalence;

\item Similarly the symmetrisation 
$$ sym_{\infty}: Op_{\infty}^{\Ww_{\infty}}(\VV) \to  SO(\VV)$$
is a left Quillen equivalence. 

\end{enumerate}
\end{theorem}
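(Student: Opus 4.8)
The plan is to deduce both statements as instances of the machinery assembled in the earlier parts of the paper, so that very little genuinely new work is required. The key inputs are: first, that the relevant substitudes $(O^{(n)},\QQ_n^{op})$, $(BO,\Br^{op})$, and $(SO,\Sm^{op})$ are left localisable (this is the content of Section \ref{sec:examples}, invoking Theorem \ref{left localisable criteria}, and is assumed available); second, that the morphisms of substitudes $(\kappa,[-]_2^{op})$ and $(\tau,[-]^{op})$ are Beck-Chevalley morphisms, which is exactly Proposition \ref{operadicstabilization}; and third, the homotopical content of Theorem \ref{Milgram}, namely that the total-order functors induce $\Ww_\infty$-equivalences. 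With these in place, the engine is Corollary \ref{BC_for_W}(2): a Beck-Chevalley morphism between left localisable substitudes whose category-level component is a $\Ww$-equivalence induces a Quillen equivalence between the corresponding $\Ww$-local semimodel categories of algebras.

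For statement (2) I would argue as follows. The symmetrisation $sym_\infty$ is $\tau_!$ for the substitude morphism $(\tau,[-]^{op}):(O^{(\infty)},\QQ_\infty^{op})\to(SO,\Sm^{op})$. By Theorem \ref{Milgram}(2), the functor $[-]:\QQ_\infty\to\Sm$ induces a weak equivalence of nerves, i.e. it is a $\Ww_\infty$-equivalence. Since both substitudes are left localisable and the morphism is Beck-Chevalley, Corollary \ref{BC_for_W}(2) with $\Ww=\Ww_\infty$ yields that $sym_\infty\dashv des_\infty$ is a Quillen equivalence between $Op_\infty^{\Ww_\infty}(\VV)$ and $SO^{\Ww_\infty}(\VV)$. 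The remaining point is that $SO^{\Ww_\infty}(\VV)$ is just $SO(\VV)$: because $\Sm^{op}$ is a groupoid, every presheaf on it is automatically $\Ww_\infty$-locally constant, so the localisation $[\Sm^{op},\VV]\to[\Sm^{op},\VV]^{\Ww_\infty}$ is an identity and hence, by Theorem \ref{lifting of localisation}(3), so is the lifted localisation on operads. This is essentially Proposition \ref{lc=3}(3) in the limiting case, and identifies the target as the ordinary semimodel category $SO(\VV)$.

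For statement (1) the plan is identical but routed through braided operads. Here $bsym_2=\kappa_!$ for $(\kappa,[-]_2^{op}):(O^{(2)},\QQ_2^{op})\to(BO,\Br^{op})$, which is Beck-Chevalley by Proposition \ref{operadicstabilization}. By Theorem \ref{Milgram}(5) the functor $[-]_2:\QQ_2\to\Br$ is a $k$-equivalence for all $1\le k\le\infty$, in particular a $\Ww_\infty$-equivalence. Applying Corollary \ref{BC_for_W}(2) with $\Ww=\Ww_\infty$ gives a Quillen equivalence $Op_2^{\Ww_\infty}(\VV)\to BO^{\Ww_\infty}(\VV)$; and since $\Br^{op}$ is a groupoid the same groupoid argument shows $BO^{\Ww_\infty}(\VV)=BO(\VV)$, so $bsym_2$ lands in $BO(\VV)$ as claimed.

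The one step requiring real care — the \emph{hard part} — is verifying that the hypotheses of Corollary \ref{BC_for_W} are actually met, namely the left localisability of all three substitudes. This depends on the substitudes being unary tame with $d(P)$ constantly disconnected (Theorem \ref{left localisable criteria}), and these combinatorial facts must be established for $n$-operads, braided, and symmetric operads in Section \ref{sec:examples}; I would cite those results rather than reprove them here. A secondary subtlety is the identification of the localised target categories with the unlocalised ones: one must confirm that "every presheaf on a groupoid is $\Ww_\infty$-locally constant" is legitimate (immediate from the definition, since any functor from a $\Ww_\infty$-aspherical category factors through the trivial fundamental groupoid, as in the proof of Proposition \ref{lc=3}) and then invoke Theorem \ref{lifting of localisation}(3) to conclude the lifted localisation is trivial. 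Modulo these citations, the proof is a direct two-line application of Corollary \ref{BC_for_W} in each case.
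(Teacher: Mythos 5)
Your proof is correct, but it takes a genuinely different route from the paper's own proof of this statement. The paper proves Theorem \ref{model=2} in one line, by combining Proposition \ref{lc=3} (which identifies $Ho(Op_n^{\Ww}(\VV))$, $Ho(SO^{\Ww}(\VV))$, $Ho(BO^{\Ww}(\VV))$ with homotopy categories of locally constant operads), Proposition \ref{6.3} (which provides the Quillen adjunctions through the local structures), and Theorem \ref{lc=2} — the homotopy-category equivalences imported from \cite{LocBat}; a Quillen adjunction whose derived adjunction is already known to be an equivalence is a Quillen equivalence. Your argument instead runs entirely through the new machinery: Beck-Chevalley morphisms (Proposition \ref{operadicstabilization}), the $\Ww_\infty$-equivalences of Theorem \ref{Milgram}(2),(5), left localisability (Theorem \ref{thm: operads for n-Ops are Reedy locally constant}), Corollary \ref{BC_for_W}(2), and the groupoid observation identifying $SO^{\Ww_\infty}(\VV)$ with $SO(\VV)$ and $BO^{\Ww_\infty}(\VV)$ with $BO(\VV)$. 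This is precisely the strategy the paper itself deploys two sections later to prove the generalisation, Theorem \ref{koperadicstabilization} — indeed the remark immediately following Theorem \ref{model=2} promises exactly such a proof ``independent of Theorem \ref{lc=2}.'' What each approach buys: the paper's proof is essentially free given the prior literature, but depends on Theorems 7.1–7.2 of \cite{LocBat}; yours is self-contained within the paper, and with no extra effort yields the stronger statement (all $2<n<\infty$ and all $\Ww_k$). Two small points of hygiene in your write-up: the clean citation for triviality of the lifted localisation over a groupoid is Proposition \ref{lc=3}(3) (as you note), not Theorem \ref{lifting of localisation}(3), which concerns agreement with the transferred structure; and your appeal to left localisability of $(O^{(\infty)},\QQ_\infty^{op})$ goes slightly beyond the literal statement of Theorem \ref{thm: operads for n-Ops are Reedy locally constant} (stated for $O^{(n)}$), though the combinatorics is uniform in $n$ and the paper makes the same implicit use in Theorem \ref{koperadicstabilization}.
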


\begin{proof} This follows from Propositions \ref{lc=3},~\ref{6.3} and Theorem \ref{lc=2}.
\end{proof}

\begin{remark} In the next two sections we will give a proof of a generalisation of this theorem  independent of Theorem \ref{lc=2}.  
\end{remark}

\section{Localisability of substitudes for operads} \label{sec:examples}

The purpose of this section is to establish the following theorem:

\begin{theorem} \label{thm: operads for n-Ops are Reedy locally constant}
The substitudes $(\ON,{Q}^{op}),({BO},\Br^{op})$ and $({SO},\Sm^{op})$ are left localisable.

\end{theorem}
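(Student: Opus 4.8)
By Theorem \ref{left localisable criteria}, a $\Sigma$-free substitude is left localisable as soon as it is unary tame and its bimodule $d(P)$ is constantly disconnected (Definition \ref{defn: locally constant Reedy bimodule}). All three substitudes are $\Sigma$-free with faithful unit \cite{batanin-berger, SymBat}, so the plan is to verify these two conditions in each case. The underlying polynomial monads are the tame polynomial monads for symmetric, braided, and $n$-operads already analysed in \cite{batanin-berger} and \cite{SymBat}, so the purely operadic part of the combinatorics is available; the additional work is to control the unary operations, which are the symmetric groups, the braid groups, and the quasibijections respectively. Since $\Sm^{op}$ and $\Br^{op}$ are groupoids while $\QQ_n^{op}$ is not (quasibijections are not invertible), the $n$-operad substitude will be the delicate case.

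For unary tameness I would apply the elementary criterion of Lemma \ref{a+a}, which asks for a morphism $\lambda \to \eta^*(\Pp^{\Aa+\Aa})$ whose composite into $\eta^*(\Pp^{\Pp+\Aa})$ is levelwise final and which satisfies the pullback condition (\ref{exact fibration u}). The objects of $\Pp^{\Pp+\Aa}$ are two-coloured corollas, and its extra morphisms over $\Pp^{\Aa+\Aa}$ are generated by decomposing the $X$-coloured edges by genuine operations. Tameness of the underlying polynomial monad already exhibits a discrete final subcategory of $\Pp^{\Pp+\Aa_0}$ built from the maximally decomposed corollas; I would take $\lambda$ to be the sub-$A$-presheaf of $\eta^*(\Pp^{\Aa+\Aa})$ spanned by the terminal objects in the connected components. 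Finality of $\lambda \hookrightarrow \eta^*(\Pp^{\Pp+\Aa})$ would then follow because adjoining the unary operations of $A$ only reorganises each fibre by the action of the symmetric or braid group, or by the Milgram poset of quasibijections, without altering the component structure established by tameness. The pullback condition (\ref{exact fibration u}) would be checked directly: the $\Aa$-algebra structure on $\lambda$ is the restriction of the one on $\eta^*(\Pp^{\Aa+\Aa})$ along a levelwise discrete fibration, exactly the situation that makes the relevant square cartesian.

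The main obstacle is to show that $d(P)$ is constantly disconnected, i.e.\ that $p'^{op}\colon (\gint d(P))^{op}\to(\MM A)^{op}$ is disconnected and $\Ww_0$-locally constant. Disconnectedness, the existence of a terminal object in each component of the relevant comma categories, again comes from tameness, reflecting the maximally composed planar tree with a given input profile. For the symmetric and braided substitudes $\MM A$ is a groupoid, so every morphism of the base is invertible and $\Ww_0$-local constancy of $p'^{op}$ is automatic once disconnectedness is known. For $n$-operads the base morphisms are built from non-invertible quasibijections, and I must prove they induce bijections on the sets of connected components of the fibres. Here I would import the explicit description of the classifier $\SO^{\On}$ from \cite{SymBat}, the finality of $\JJ_n^{op}\subset\SO^{\On}$ from Lemma 4.3 of \cite{SymBat}, and the identification of the fibres of $[-]\colon\QQ_n\to\Sm$ with the Milgram posets $\JJ_n(k)$ from Theorem \ref{Milgram}, showing that a quasibijection acts on each fibre by an order-preserving map that is a bijection on components, which is precisely the required $\Ww_0$-local constancy.

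Granting the two conditions, Theorem \ref{left localisable criteria} yields left localisability of $(\ON,\QQ_n^{op})$, $({SO},\Sm^{op})$ and $({BO},\Br^{op})$. I note that constant disconnectedness of $d(P)$ also makes each substitude left Quillen with respect to the localised presheaf categories via Proposition \ref{locoperad}, so that the Transfer Theorem \ref{semitransfer} applies in the localised setting, which is what Definition \ref{defleftloc} demands. The decisive and least formal step is the $\Ww_0$-local constancy in the $n$-operad case, where the failure of quasibijections to be invertible forces a genuine appeal to the Milgram poset combinatorics rather than a soft groupoid argument.
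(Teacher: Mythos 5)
Your skeleton is the paper's own: reduce to Theorem \ref{left localisable criteria}, verify unary tameness through Lemma \ref{a+a}, and verify that $d(P)$ is constantly disconnected by appeal to the combinatorics of \cite{SymBat}; your $\lambda$ (terminal objects of the tameness classifier, closed under the unary action) is precisely the paper's subcategory $\mathbf{nr}$ of noncontractible retracts, and your observation that $\Sm^{op}$ and $\Br^{op}$ are groupoids is indeed why the paper treats only the $n$-operad case in detail. However, two of your steps, as argued, would fail. First, disconnectedness of $d(\ON)$ is \emph{not} a tameness phenomenon. In the fibre of $p'\colon\gint d(\ON)\to\MM(\QQ_n^{op})$ over a fixed list $(T_1,\ldots,T_k)$, the underlying labelled planar tree never changes: morphisms exist only between operations with the same tree $\tau$, and they are given by quasibijections acting on the \emph{output} ordinal $S$. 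No composition or contraction happens inside the fibre, so ``the maximally composed planar tree with a given input profile'' is not even an object of it and cannot serve as an initial object. What is actually needed is that, for each fixed $\tau$, the poset of $n$-ordinals dominating the complementary relation $\tau(S)$ has an initial object in every component; this is the paper's Lemma \ref{initialobject}, obtained from Lemmas 4.2--4.3 of \cite{SymBat} (connectedness plus the span property) together with finiteness of these posets. In other words, the Milgram-poset input that you reserve for the $\Ww_0$-constancy step is also what proves disconnectedness; in the symmetric and braided cases disconnectedness instead follows from freeness of the group actions, again not from tameness.

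Second, finality of $\lambda$ in $\Pp^{\Pp+\Aa}$ cannot be waved through by saying that the unary operations ``only reorganise each fibre without altering the component structure.'' Quasibijections are not invertible, so passing from $\Pp^{\Pp+\Aa_0}$ to $\Pp^{\Pp+\Aa}$ genuinely enlarges the comma categories $a/\lambda$: objects of $\Pp^{\Pp+\Aa_0}$ lying in different components can become connected, and a given object acquires many new factorisations through $\lambda$ whose connectedness is exactly what finality asserts. The paper's proof of this point is the technical core of the whole theorem: one shows that unary morphisms preserve noncontractible retracts, establishes the diamond property for spans of generators, and then runs a grid/zigzag argument to connect any two morphisms into $\mathbf{nr}$. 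Your choice of subcategory is correct, so this is a gap of proof rather than of strategy, but without the diamond-property argument (or a substitute for it) the finality claim remains an assertion. The pullback condition of Lemma \ref{a+a}, by contrast, does hold essentially for the reason you suggest, since membership in $\mathbf{nr}$ is a property of the underlying coloured tree that unary morphisms both preserve and reflect.
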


The proofs of localisability of  these  substitudes are very similar to each other. We provide a detailed proof
for the case of the substitude $(NO^{(n)}, \QQ_n^{op})$ whose algebras  are normalised $n$-operads. We then explain the changes necessary for this  proof to work for general $n$-operads. The proofs for $({BO},\Br^{op})$ and $({SO},\Sm^{op})$ are, in fact, simpler and we leave them as an exercise for the reader. 
 But we first need some preparation. 
 
\subsection{Polynomial monad for $n$-operads}\label{polyforn}
A detailed description of polynomial monad $\NE$ can be found in \cite[Section 12]{batanin-berger} but we do need to remind the reader of some points from there. 
We also discuss how to identify the underlying category of this monad with $\QQ_n^{op}.$

 We can describe the set of operations of   $\NE$ in terms of $n$-planar  trees \cite{batanin-berger}.   Let  $S$ be an $n$-ordinal and $Trees^{n,k}_S$ denote the set of { labeled, decorated, reduced planar trees with $k$ vertices} (called reduced $n$-planar trees in \cite{batanin-berger}) dominated by $S.$  
 
 We now recall the necessary definitions.
An  $n$-planar tree  consists of a planar tree $\tau$  with $k$ vertices  equipped with: 
\begin{itemize} 
\item  a structure of a totally  $n$-ordered set  $T_v$ (decoration) on the set of incoming edges of every vertex $v$ such that the total linear order generated by $T_v$  coincides with the order coming from the planar structure of $\tau;$
\item  a  labeling of the set of its leaves that is a bijection $\rho_{\tau}: |S|\rightarrow L(\tau)$ between the set of leaves of $\tau$ and the underlying set of the ordinal $S.$
\item a linear order on the set of vertices of $\tau.$
\end{itemize}

An $n$-planar tree is \textit{reduced} if in $\tau$ each vertex has  at least two incoming edges. 

According to \cite{SymBat} any $n$-planar tree $\tau$ determines an $n$-complementary relation on the set $|S|$ in the following way. Let $w$ be a vertex or a leaf of $\tau$ and $v$ be a vertex of $\tau.$ We will say that $w$ is {\it above}  $v$ if there exists a path in $\tau$ from $w$ to $v$ which does not contain 
two consecutive input edges of the same vertex.  For  any two leaves or vertices  there exists a unique vertex $v(k,l)$ that is below  $k,l$ and such that any other vertex below to $k$ and $l$ is below $v.$ 

An $n$-complementary  relation $\tau(S)$ on $|S|$ generated by $\tau$ is constructed as follows. 
For $p,q\in |S|$ let $k,l$ be the corresponding leaves on $\tau$ (using $\rho_\tau$). Let $e_p$ be the input edge in $v(k,l)$, i.e., the last edge in the path from $k$ to $v(k,l).$ Analogously let $e_q$ be the input edge in $v(k,l)$ which is the last edge in the path from $l$ to $v(k,l).$ Let $T(k,l)$ be the $n$-ordinal decorating $v(k,l).$ By definition, $p<_r q$ in $\tau(S)$ if  $e_p<_r e_l$ in $T(k,l).$

A reduced $n$-planar tree $\tau$ belongs to $Trees^{n,k}_S$  if it satisfies the   following condition:
\begin{itemize}
\item[($\triangle$)] $S$ dominates the complementary relation  $\tau(S).$ 

\end{itemize}

{\unitlength=1mm

\begin{picture}(60,50)(-20,0)

\begin{picture}(10,10)(0,0)
\put(14,22.5){\circle{5}}
\put(7,30.5){\makebox(0,0){\mbox{$\scriptstyle  10$}}}
\put(21,30.5){\makebox(0,0){\mbox{$\scriptstyle  1$}}}
\put(14,32){\makebox(0,0){\mbox{$\scriptstyle  7$}}}
\put(15.9,20.8){\line(1,-1){5}}
\put(12.1,24.4){\line(-1,1){5}}
\put(14,25){\line(0,1){6}}
\put(15.9,24.4){\line(1,1){5}}

\put(14,22.5){\makebox(0,0){\mbox{$\scriptstyle T_2 $}}}
\end{picture}

\begin{picture}(10,10)(2.5,8.5)
\put(14,22.5){\circle{5}}

\put(14,25){\makebox(0,0){\mbox{$ $}}}
\put(15.9,20.8){\line(1,-1){5}}
\put(12.1,24.4){\line(-1,1){5}}
\put(14,25){\line(0,1){6}}
\put(15.9,24.4){\line(1,1){5}}
\put(14,32){\makebox(0,0){\mbox{$\scriptstyle  12$}}}
\put(21,30.5){\makebox(0,0){\mbox{$\scriptstyle  11$}}}

\put(14,22.5){\makebox(0,0){\mbox{$\scriptstyle V_4 $}}}
\end{picture}

\begin{picture}(10,10)(5,17)
\put(14,22.5){\circle{5}}

\put(14,25){\makebox(0,0){\mbox{$ $}}}
\put(14,20){\line(0,-1){6}}
\put(15.9,24.4){\line(1,1){5}}

\put(14,22.5){\makebox(0,0){\mbox{$\scriptstyle T_3 $}}}
\end{picture}

\begin{picture}(10,10)(7.5,8.5)
\put(14,22.5){\circle{5}}

\put(14,25){\makebox(0,0){\mbox{$ $}}}
\put(13,25){\line(-1,2){3}}
\put(15.9,24.4){\line(1,1){8.4}}

\put(14,22.5){\makebox(0,0){\mbox{$\scriptstyle S_5 $}}}
\end{picture}

\begin{picture}(10,10)(22.75,-2.5)
\put(14,22.5){\circle{5}}

\put(7,29.5){\makebox(0,0){\mbox{$\scriptstyle  5$}}}
\put(20,29.5){\makebox(0,0){\mbox{$\scriptstyle  8$}}}
\put(14,32){\makebox(0,0){\mbox{$\scriptstyle  3$}}}
\put(12.1,24.4){\line(-1,1){4}}
\put(14,25){\line(0,1){6}}
\put(15.9,24.4){\line(1,1){4}}

\put(14,22.5){\makebox(0,0){\mbox{$\scriptstyle P_7 $}}}
\end{picture}

\begin{picture}(10,10)(18.3,-4)
\put(14,22.5){\circle{5}}

\put(14,25){\makebox(0,0){\mbox{$ $}}}
\put(12.1,24.4){\line(-1,1){4}}
\put(14,25){\line(0,1){4}}
\put(15.9,24.4){\line(1,1){4}}
\put(7,29.5){\makebox(0,0){\mbox{$\scriptstyle  4$}}}
\put(20,29.5){\makebox(0,0){\mbox{$\scriptstyle  13$}}}

\put(14,22.5){\makebox(0,0){\mbox{$\scriptstyle V_1 $}}}
\end{picture}

\begin{picture}(10,10)(29.5,-13)
\put(14,22.5){\circle{5}}

\put(7,30.5){\makebox(0,0){\mbox{$\scriptstyle  2$}}}
\put(21,30.5){\makebox(0,0){\mbox{$\scriptstyle  6$}}}
\put(14,32){\makebox(0,0){\mbox{$\scriptstyle  9$}}}
\put(12.1,24.4){\line(-1,1){5}}
\put(14,25){\line(0,1){6}}
\put(15.9,24.4){\line(1,1){5}}

\put(14,22.5){\makebox(0,0){\mbox{$\scriptstyle W_6 $}}}
\end{picture}

\end{picture}}
\begin{figure}[h]
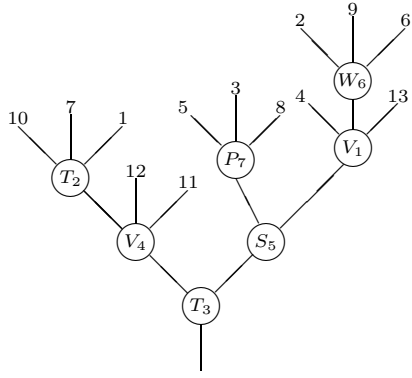
\caption{{\small Typical reduced $n$-planar tree with seven vertices. Here    $V,W,P,T,S$ are $n$-ordinals decorating corresponding vertices and the subscripts indicate the linear order. The numbers on the leaves fix a bijection $\rho_{\tau}.$    }}\label{XKL}\end{figure} 

The coloured  operad $NO^{(n)}$ has nonempty and nonterminal $n$-ordinals as objects. For any  list of $n$-ordinals $T_1,\ldots,T_k;S$ the set of operations
$NO^{(n)}( T_1,\ldots,T_k;S)$  is the subset of $Trees^{n,k}_S$ which consist of $n$-planar trees with decorations exactly $T_1,\ldots,T_k$ whose linear order coincides with the ordering of the list $T_1,\ldots,T_k.$ The symmetric group acts by changing the linear order (but does not change the tree structure). The composition law in this operad is given by an insertion  of a reduced $n$-planar tree to a vertex of another reduced $n$-planar tree. This substitution operation is explained in detail in \cite{batanin-berger}.

\begin{remark} We now indicate the required changes to get the analogous description of general $n$-operads (symmetric, braided) and of constant-free $n$-operad (symmetric, braided). The difference is in the restrictions on the underlying planar tree $\tau.$ In case of general operads we drop the reducedness assumption. For constant free we require half-reducedness, that is we exclude trees with stamps (valency $0$ vertices) but still allow vertices with only one incoming edge. 

The symmetric operad case is easy since we don't need any decorations (except for a bijection $\rho_\tau$) and domination conditions. In the braided case we have to use elements of the braid groups  instead of the bijections $\rho_\tau .$

\end{remark} 

 \begin{lem} \label{lemma:op category of quasibijections} 
The underlying categories of  operads $O^{(n)},CFO^{(n)}$ and $NO^{(n)}$ are the opposite to the category of quasibijections,  the category of quasibijections between nonempty $n$-ordinals   and the category of quasibijections between  nonempty and nonterminal $n$-ordinals correspondingly. \end{lem}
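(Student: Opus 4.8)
The plan is to prove Lemma \ref{lemma:op category of quasibijections} by unwinding the definition of the underlying category of each polynomial monad and matching it against the operadic substitution of unary operations. Recall from Section \ref{sec:convolution} that the underlying category of a substitude is the Kleisli category of the monad $P^1$ in $\VV$-profunctors, which for a $\Sigma$-free substitude coincides with the category whose objects are the colours and whose morphisms are the unary operations, composed via operadic substitution. So the entire lemma reduces to the identification of the set of \emph{unary} operations of each of the three operads, together with their composition law.

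First I would specialise the combinatorial description from Subsection \ref{polyforn} to the case of a single input, i.e. $k=1$ in $NO^{(n)}(T_1;S)$, $CFO^{(n)}(T_1;S)$, and their $O^{(n)}$ analogue. A unary operation is a reduced (resp. half-reduced, resp. arbitrary) $n$-planar tree with a single vertex whose unique decoration is $T_1$ and whose leaf-labelling is a bijection $\rho_\tau:|S|\to L(\tau)$. A one-vertex $n$-planar tree is nothing but a corolla decorated by the $n$-ordinal $T_1$, so the domination condition $(\triangle)$ says precisely that $S$ dominates the $n$-complementary relation $T_1(S)$ read off the corolla. I would then observe that, because the tree has a single vertex, the relation $\tau(S)$ is literally the $n$-order transported from $T_1$ along $\rho_\tau$; hence the condition $(\triangle)$ together with the bijectivity of $\rho_\tau$ says exactly that $\rho_\tau$ exhibits $S$ as dominating a copy of $T_1$ on the same underlying set. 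By Definition \ref{mapofordinals} and the definition of a quasibijection, this data is the same as a morphism of $n$-ordered sets $S\to T_1$ that is a bijection of underlying sets, i.e. a quasibijection $S\to T_1$ — note the variance, which is where the ``opposite'' in the statement comes from.

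Next I would check that operadic substitution of one-vertex trees corresponds to composition of quasibijections. Inserting a one-vertex tree into the unique vertex of another one-vertex tree again yields a one-vertex tree, and on leaf-labellings this is just composition of the two bijections; the domination bookkeeping matches the transitivity built into the domination relation on $\JJ_n$. This makes the assignment functorial and identity-preserving, so one gets an isomorphism of categories between the underlying category of $NO^{(n)}$ and $\QQ_n^{op}$ restricted to nonempty, nonterminal $n$-ordinals. For $CFO^{(n)}$ the half-reducedness assumption removes only the terminal (stump) constraint at the top level, leaving all nonempty $n$-ordinals as objects; for $O^{(n)}$ no reducedness is imposed, so all $n$-ordinals occur, and in each case the one-vertex restriction is insensitive to the reducedness condition since a single vertex with a fixed decoration automatically has at least as many inputs as that decoration prescribes. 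Thus the three object-restrictions in the statement fall out directly from the three tree-shape conventions.

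The main obstacle I expect is purely a matter of bookkeeping the variance and the exact form of the domination condition: one must verify carefully that the reduced $n$-planar tree condition $(\triangle)$ for a corolla is \emph{equivalent} to $\rho_\tau$ being a quasibijection $S\to T_1$ (not merely an arbitrary morphism of $n$-ordered sets), and that this is the right direction of arrow. This is essentially the content of \cite[Lemma 4.3]{SymBat} specialised to a single vertex, and I would cite that together with the description of morphisms of $n$-ordinals in Definition \ref{mapofordinals}. Everything else — functoriality, the handling of the braided and symmetric cases where $\rho_\tau$ is replaced by a braid or carries no decoration — is routine and can be dispatched in a sentence, observing that for $SO$ and $BO$ the objects are simply natural numbers (resp. objects of $\Sm$, $\Br$) and unary operations are elements of the symmetric (resp. braid) groups, recovering $\Sm^{op}$ and $\Br^{op}$.
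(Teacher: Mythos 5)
Your proposal is correct and follows essentially the same route as the paper's proof: identify unary operations with one-vertex (corolla) trees, observe that the domination condition $(\triangle)$ for a corolla is exactly the statement that $\rho_\tau:S\to T$ is a quasibijection (giving the bijection $NO^{(n)}(T;S)\cong \QQ_n(S,T)$ and hence the opposite variance), and check that substitution of corollas corresponds to composition of quasibijections in reverse order. The only cosmetic difference is that you spell out the three variants and the Kleisli/unary-operations framing in more detail, where the paper simply notes the proof is the same in all three cases.
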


\begin{proof}
The proof is the same for all three operads above. We need to calculate $NO^{(n)}(T;S).$  The underlying planar tree for a typical element from $NO^{(n)}(T;S)$ must be a corolla.  The condition that $S$ dominates the complementary relations generated by $T$ is equivalent to the requirement that $\rho_{\tau}$ considered as a map of totally $n$-ordered sets $S\to L(\tau) = T$ is a quasibijection. Hence, we have a bijection $NO^{(n)}(T;S)\rightarrow \QQ_n(S,T).$ This bijection obviously takes the substitution operation in $NO^{(n)}$ to the composition of  quasibijections in reverse order. 
\end{proof}

\subsection{The bimodules $d(NO^{(n)}), d(CFO^{(n)})$ and $d(O^{(n)})$ are constantly disconnected}\label{cd}

The combinatorics of the bimodule $d(NO^{(n)})$ is simpler and we start our proof from it. We then observe that the proof for two other bimodules  can be reduced to the previous case.

Let us denote by $$\tau_{\rho_{\tau}}(T_1,\ldots,T_k;S)\in Trees_S^{n,k}$$ a typical element from $NO^{(n)}( T_1,\ldots,T_k;S).$  One can take the value of the  bimodule $d(NO^{(n)})( T_1,\ldots,T_k;S)
$  equal to the set of $\tau_{\rho_{\tau}}(T_1,\ldots,T_k;S)$ for which the linear order on vertices of $\tau$ coincides with the order generated by  walking around $\tau$ in the clockwise direction starting from the root vertex as first. 
For the first point observe that the fiber $(T_1,\ldots,T_k)/p'$ of the functor $$p':\gint d(NO^{(n)})\rightarrow (\QQ^{op}_n)^k,$$ for a fixed list of $n$-ordinals $T_1,\ldots,T_k$, has the following explicit description.

An object of this fiber is given by an element   $\tau_{\rho_{\tau}}(T_1,\ldots,T_k;S).$  
A morphism in the fiber        $(T_1,\ldots,T_k)/p'$ from $\tau_{\rho_{\tau}}(T_1,\ldots,T_k;S)$ to $\tau'_{\rho'_{\tau'}}(T_1,\ldots,T_k;S')$ exists only if $\tau = \tau'$ and is determined by a quasibijection $\rho:S'\to S$ such that $\rho'_{\tau} = \rho_{\tau}\cdot |\rho|.$ 

It is obvious from this description  that the fiber $(T_1,\ldots,T_k)/p'$ splits as a coproduct of categories  of the form $$\coprod_{\tau} \tau(T_1,\ldots,T_k)/p',$$ 
where the objects of the category $\tau(T_1,\ldots,T_k)/p'$ are elements $\tau_{\rho_{\tau}}(T_1,\ldots,T_k,S)$ with fixed planar tree $\tau.$ 

Let $\mathcal SO^{\scriptstyle \mathcal{N}O^{(n)}}$ be the classifier for internal normal $n$-operads in a categorical symmetric operad (it was denoted $\mathbf{rh}^n$ in  \cite{SymBat}).
\begin{lemma}\label{initialobject} 

The category  $\tau(T_1,\ldots,T_k)/p'$ is isomorphic to the comma-category $\tau/(\JJ_n(k))^{op}$  from \cite[Lemma 4.2]{SymBat} if $\tau$ is considered as an object of $\mathcal SO^{\scriptstyle \mathcal{N}O^{(n)}}$. 
The categories  $\tau/(\JJ_n(k))^{op}$ and $\tau(T_1,\ldots,T_k)/p'$ are finite posets with an initial object.
\end{lemma}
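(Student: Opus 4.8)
The plan is to unwind the explicit combinatorial descriptions already set up in Subsection~\ref{cd}, so that each claim about $\tau(T_1,\ldots,T_k)/p'$ and $\tau/(\JJ_n(k))^{op}$ reduces to a statement about quasibijections out of a fixed labelled planar tree. First I would make the isomorphism $\tau(T_1,\ldots,T_k)/p' \cong \tau/(\JJ_n(k))^{op}$ precise by matching data on both sides. An object of $\tau(T_1,\ldots,T_k)/p'$ is an element $\tau_{\rho_\tau}(T_1,\ldots,T_k;S)$ with the planar tree $\tau$ (and hence the decorations $T_1,\ldots,T_k$) fixed, so the only remaining datum is the target $n$-ordinal $S$ together with the labelling bijection $\rho_\tau\colon |S|\to L(\tau)$. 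The domination condition $(\triangle)$ forces $S$ to dominate the complementary relation $\tau(S)$ generated by $\tau$; as in Lemma~\ref{lemma:op category of quasibijections}, this is exactly the requirement that $\rho_\tau$, read as a map of totally $n$-ordered sets, be a quasibijection $S\to L(\tau)$. On the $\JJ_n(k)$ side, by the description in \cite[Lemma~4.2]{SymBat} an object of $\tau/(\JJ_n(k))^{op}$ is a total complementary $n$-order on $|S|$ dominating the relation read off from $\tau$ at the object $\tau\in\mathcal{S}O^{\scriptstyle \mathcal{N}O^{(n)}}$. I would check that these two sets of objects are the same, and that a morphism in the comma-category — given on the $p'$ side by a quasibijection $\rho\colon S'\to S$ with $\rho'_\tau = \rho_\tau\cdot|\rho|$ — corresponds precisely to a morphism in $\JJ_n(k)$ under the domination relation, with the reversal of direction accounted for by the ${(-)}^{op}$.

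Having fixed the isomorphism, the remaining assertions are all properties of the single poset $\JJ_n(k)$ (equivalently $\tau/(\JJ_n(k))^{op}$), so it suffices to verify them there. That $\JJ_n(k)$ is a \emph{finite} poset is immediate: there are only finitely many total complementary $n$-orders on the $k$-element set $|S|$, and the domination relation is a partial order on this finite set (reflexivity, antisymmetry and transitivity of domination are exactly the content of \cite[Definition~2.6]{SymBat} together with totality of the $n$-orders). For the comma-category $\tau/(\JJ_n(k))^{op}$ one passes to the subposet of those $n$-orders dominating $\tau(S)$, which is again finite and inherits the partial order; since posets have at most one morphism between any two objects, this confirms that both categories are finite posets.

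Next I would exhibit the initial object. The natural candidate is the $n$-order $\tau(S)$ itself — the complementary relation directly induced by the planar tree $\tau$ — equipped with the identity labelling, i.e.\ the object where $S$ is precisely the $n$-ordinal carried by $L(\tau)$ and $\rho_\tau$ is the identity. By construction every object of $\tau/(\JJ_n(k))^{op}$ corresponds to a total complementary $n$-order dominating $\tau(S)$, which is exactly the statement that there is a (unique, since we are in a poset) morphism from $\tau(S)$ to that object in $\JJ_n(k)$, hence a unique morphism into it in the opposite comma-category. Translating back through the isomorphism, this initial object is the element $\tau_{\rho_\tau}(T_1,\ldots,T_k;S)$ whose labelling $\rho_\tau$ is order-preserving, i.e.\ the canonical labelling read directly off the clockwise walk of $\tau$ used to normalise the bimodule $d(NO^{(n)})$.

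The main obstacle I anticipate is not any single deep step but the careful bookkeeping needed to line up the three different encodings: the tree-with-labelling description of operations in $NO^{(n)}$, the poset-of-$n$-orders description of $\JJ_n(k)$, and the comma-category description of the classifier $\mathcal{S}O^{\scriptstyle \mathcal{N}O^{(n)}}$ from \cite{SymBat}. In particular one must verify that the domination relation $(\triangle)$ on the tree side matches the domination order on $\JJ_n(k)$ \emph{with the correct variance}, so that what looks like an initial object on one side is indeed initial (and not terminal) on the other after applying ${(-)}^{op}$. Once the dictionary is fixed, finiteness and the existence of the initial object are essentially forced by the definitions, so the burden of the argument lies entirely in stating this correspondence cleanly and invoking \cite[Lemma~4.2]{SymBat} at the right point.
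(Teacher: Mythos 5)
Your identification of $\tau(T_1,\ldots,T_k)/p'$ with $\tau/(\JJ_n(k))^{op}$, and your finiteness argument, are fine and essentially the same as the paper's. The gap is in the initial-object step. Your candidate initial object is ``the $n$-order $\tau(S)$ itself,'' i.e.\ the complementary relation generated by the planar tree with its canonical labelling. But the relation $\tau(S)$ generated by an $n$-planar tree is in general only a \emph{complementary} $n$-relation, not a \emph{total} one; hence it is not an $n$-ordinal, not an element of the Milgram poset $\JJ_n(k)$, and therefore not an object of either category. Concretely, for $n=2$ take $\tau$ with root vertex decorated by the $2$-ordinal $e_1<_1 e_2$, where the first input edge carries a vertex decorated by $a<_0 b$ (with leaves $a,b$) and the second input edge is a leaf $c$. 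Then $\tau(S)=\{a<_0 b,\; a<_1 c,\; b<_1 c\}$, and totality fails: $a<_0 b$ and $b<_1 c$ would force $a<_0 c$, whereas the relation gives $a<_1 c$. So the object you propose simply does not exist in general; it exists (and is then indeed initial) only in the special case where the tree relation happens to be total, e.g.\ for $n=1$. This non-totality is precisely the source of all the difficulty here: if $\tau(S)$ were always total, the connectivity statement of \cite[Lemma 4.3]{SymBat} would be a triviality, and the whole homotopical theory of symmetrisation would collapse to the nonsymmetric case.

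The paper's proof handles this point in a genuinely different way: it invokes \cite[Lemma 4.3]{SymBat} for nonemptiness and connectedness of $\tau/(\JJ_n(k))^{op}$, observes that the \emph{proof} of that lemma establishes more, namely directedness (for any two objects $S'$, $S''$ there exist an object $S$ and a span $S'\leftarrow S\rightarrow S''$), and then deduces the existence of an initial object from directedness together with finiteness of the poset by an induction over its objects. That directedness-plus-finiteness argument is the actual content of the lemma, and it is entirely absent from your proposal; without it, nothing in your argument produces an object that maps to every other object of the comma category.
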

\begin{proof}
Any element $\tau_{\rho_{\tau}}(T_1,\ldots,T_k,S)$ is identified with an object of $(\JJ_n(k))^{op}$ under $\tau$ because $\rho_{\tau}^{-1}$ provides a labeling of the elements of $|S|$ (that is, an object of $(\JJ_n(k))^{op}$) and under this relabeling there is a unique morphism in $\mathcal SO^{\scriptstyle \mathcal{N}O^{(n)}}$ from $\tau$ to this object. 
It is clear that both categories are finite posets and that the correspondence above establishes an isomorphisms of these posets.

Now we invoke Lemma 4.3 from \cite{SymBat} which claims that the category $\tau/(\JJ_n(k))^{op}$ is nonempty and connected. In fact the proof established more. Namely, that for each objects $S'$ and $S''$ in this category there exists an object $S$ and a span of morphisms $S'\leftarrow S\rightarrow S''.$ Since the category is a directed and finite poset an obvious induction implies that $\tau/(\JJ_n(k))^{op}$ has an initial object.
  \end{proof}

We can now finish the proof that the bimodules $d(NO^{(n)})$ and $d(O^{(n)})$ are constantly disconnected. We have to check that every $k$-tuple of quasibijections $f_1: T_1\to T'_1,\ldots,f_k:T_k\rightarrow T'_k$  induces  a weak equivalence  of nerves  of categories $(T_1,\ldots,T_k)/p'$ and $(T'_1,\ldots,T'_k)/p'.$  But these categories are coproducts of categories over the same indexed set and the $k$-tuple $f_1,\ldots,f_k$ sends a  summand indexed by $\tau$ to the summand indexed by $\tau.$ Since each summand has contractible nerve, we conclude that  $N((T_1,\ldots,T_k)/p') \to N((T_1,\ldots,T_k)/p')$  is a weak equivalence and hence the substitude $(NO^{(n)},\QQ_n^{op})$ is left localisable.

The proofs for the substitudes  $(CFO^{(n)}, \QQ_n^{op})$ and $(O^{(n)}, \QQ_n^{op})$ are similar once we know that the analogue of Lemma \ref{initialobject} holds for these case.  Let $\mathcal SO^{O^{(n)}}$  be the classifier for internal $n$-operads inside categorical symmetric operads (this is $\mathbf{ph}^n$ from \cite{SymBat}). It was shown     in \cite[Lemma 8.2]{SymBat}     that 
the natural operadic functor $p:\mathcal SO^{O^{(n)}} \to \mathcal SO^{NO^{(n)}}$ has a (nonoperadic) right adjoint which is the identity on the objects of  $(\JJ_n(k))^{op}.$  This induces an adjoint pair of functors between $\tau/(\JJ_n(k))^{op}$ and $p(\tau)/(\JJ_n(k))^{op},$ so the nerve of $\tau/(\JJ_n(k))^{op}$ is contractible again. The rest of the proof follows the same argument.  Exactly the same sort of argument works for constant free $n$-operad since \cite[Lemma 8.2]{SymBat}  was based on the possibility to drop-off the stamps and introduce a vertex to in the middle of an edge and then contract an additional  edge again. In constant-free case we don't need to worry about stamps.  

\subsection{$NO^{(n)}, CFO^{(n)}$ and $O^{(n)}$  are unary tame}\label{ut}

This time we check unary tameness of $(\ON, \QQ_n^{op})$ using the techniques from \cite[Section 12.22]{batanin-berger}. The proofs for the other substitudes are very similar and  simpler.

The classifier $(O^n)^{O^n + \mathcal{Q}_n^{op}}$ has as objects $n$-planar trees with additional decoration of each vertex by colours $X$ and $K$  
exactly like it was stated in \cite[Section 12.26]{batanin-berger}. Similarly, the morphisms are generated by contractions of $n$-planar subtrees all whose vertices have $X$-colour but we have additional generators which correspond to the action of quasibijections on vertices with colour $K.$ Observe that these are unary morphisms which do not change the underlying planar tree. 

 The classifier $(O^n)^{\mathcal{Q}_n^{op} + \mathcal{Q}_n^{op}}$ has the same objects again but morphisms are generated by action of quasibijections only. According to Lemma \ref{a+a} we need to find a subcategory of  $(O^n)^{\mathcal{Q}_n^{op} + \mathcal{Q}_n^{op}}$  which will be final in $(O^n)^{O^n + \mathcal{Q}_n^{op}}.$

Let us call an object  $a\in (O^n)^{O^n + \mathcal{Q}_n^{op}}$ {\it noncontractible}  if for any morphism $\phi:a\to b$ it is either a unary or a nullary morphism. If moreover, for any nullary $\phi:a\to b$ there is a morphism $\psi:b\to a$ such that $\psi\cdot\phi$ is the identity on $a$ and $X$ contains at least one $X$-vertex we call $a$ {\it a noncontractible retract}. Recall that nullary generators in a classifier for polynomial monads $\Tt^\Ss$ correspond to constants of the polynomial monad $\Ss,$ i.e., to the operations whose source is empty. Unary generators correspond to operations in $\Ss$ whose source is a singleton. Thus in $(O^n)^{O^n + \mathcal{Q}_n^{op}}$ the nullary generators  are exactly the morphisms of inserting an $X$-vertex of valency  two inside an edge of the $n$-planar tree. The last condition of retractability simply  means  that any insertion of a new $X$-vertex of valency two creates an internal edge connecting two $X$-vertices (we can then contract the subtree consisting of these two vertices) or, equivalently, there are no two $K$-vertices connected by an edge. It follows from this, in particular, that the root vertex of a noncontractible retract must an $X$-vertex.  

Let $\mathbf{nr}\subset  (O^n)^{\mathcal{Q}_n^{op} + \mathcal{Q}_n^{op}}$ be the full subcategory of noncontractible retracts. Let us prove that it is final in $ (O^n)^{O^n + \mathcal{Q}_n^{op}}.$ 

We first need to show that for each $a\in (O^n)^{O^n + \mathcal{Q}_n^{op}}$ there is at least one arrow to a noncontractible retract. Let $v(b)$ be the number of edges of the underlying planar tree of $a.$ Notice that an object $b$ is noncontractible  if and only if for any  morphism $\phi:b\to c$   the number $v(c)\ge v(b)$ and, hence, $b$ is noncontractible if and only if $l(b) = v(b) - \min_{b\to c}v(c) =0.$ We now can use obvious induction by $l(a)$ to prove that there is a morphism $f:a\to b$ to a noncontractible object. Now, if $b$ is noncontractible there is a unique way to map it to a noncontractible retract by inserting an $X$-vertex to each edge connecting two $K$-vertices and on the root edge if necessary.

A further property of noncontractible retracts is: for any unary $\phi:a\to b$ if $a$ is a noncontractible retract then $b$ is a noncontractible retract as well. Indeed, if there exists $\psi:b\to c$  with $v(c) < v(b)$ then the composite $\phi\cdot\psi$ provides a morphism with $v(a)<v(c).$ Hence, $b$ is noncontractible. It is also a noncontractible retract because a unary $\phi$ does can not force two $K$-vertices become connected by an edge in $b.$

The category $(O^n)^{O^n + \mathcal{Q}_n^{op}}$ has the following diamond property. Any span of  generators$$b\stackrel{\phi}{\leftarrow}a\stackrel{\psi}{\to}c$$ in $C$ can be completed to a commutative square by a cospan of generators (or identities)$$b\stackrel{\psi^*}{\to} d\stackrel{\phi^*}{\leftarrow}c.$$
The proof of this fact is exactly the same as in \cite[Section 12.24]{batanin-berger}.

Now, suppose we are given a span of generators  $b\stackrel{\phi}{\leftarrow}a\stackrel{\psi}{\to}c,$ where $c$ is a noncontractible retract. Then we claim that we can complete this span to a commutative square  in such a way that in the cospan $b\stackrel{\psi^*}{\to} d\stackrel{\phi^*}{\leftarrow}c$ the morphism $\phi^*$ is unary and $d$ is a noncontractible retract. Indeed, since $c$ is a noncontractible retract $\phi^*$ is either unary and everything is proved or nullary. If it is nullary there is a retraction $r:d\to c$ and postcomposing this cospan with $r$ we get another cospan in which the right morphism is the identity.

Finally given a span of morphisms  $b\stackrel{\phi}{\leftarrow}a\stackrel{\psi}{\to}c$ in which $b$ and $c$ both are noncontractible retracts we first factor $\phi$ and $\psi$ on generators and then complete the resulting diagram to a commutative grid using the diamond property. Then we see from the previous argument that we always can obtain the grid such that in the chain of morphisms on the boundary
$$b\to d_1\to \ldots \to d_i \leftarrow e_j \ldots \leftarrow e_1 \leftarrow c$$
all morphisms are unary and all objects are noncontractible retracts.
Thus we  proved that the category $a/ (O^n)^{O^n + \mathcal{Q}_n^{op}}$ is connected and so $\mathbf{nr}\subset (O^n)^{O^n + \mathcal{Q}_n^{op}}$ is final.
\begin{proof}[Proof of Theorem \ref{thm: operads for n-Ops are Reedy locally constant}] We finally use the criteria established in  Theorem \ref{left localisable criteria} and the results of Sections \ref{cd} and \ref{ut}.

 \end{proof}

\section{Stabilisation of operads and algebras} \label{sec:stablization}

In this Section we  obtain an extension of Theorem \ref{model=2}  to all intermediate cases  $2<n<\infty.$ It justifies our assertion that $\Ww_{\infty}$-locally constant $n$-operads are indeed a model of higher braided operads. We then apply it to prove a stabilisation theorem for algebras.

\subsection{Stabilisation of higher braided operads}

For an $n$-ordinal $R$ and $0\le p\le n$ we consider its  {\it $p$-suspension} $\mathbf{s}_p(R)$ which is an $(n+1)$-ordinal with the underlying set $R ,$ and the $m$-th order coincides with $<_m $ on $R$ until $m< p,$ the $p$-th order is empty and the $m$-th order for $m>p $ coincides with  $<_{m-1}$ on $R.$  

For example, the suspensions $\mathbf{s}_0(T)$ and $\mathbf{s}_2(T)$ of the $2$-ordinal on the left are  $3$-ordinals displayed on the right:

\[
\psscalebox{0.45} 
{
\begin{pspicture}(2,-2)(6.81,1.7822068)
\psline[linecolor=black, linewidth=0.05](1.06,-0.35779327)(3.06,-1.7577933)(5.06,-0.35779327)(4.96,-0.35779327)
\psline[linecolor=black, linewidth=0.05](0.06,1.0422068)(1.06,-0.35779327)(2.06,1.0422068)(2.06,1.0422068)
\psline[linecolor=black, linewidth=0.05](3.46,1.0422068)(5.06,-0.35779327)(6.66,1.0422068)(6.66,1.0422068)
\psline[linecolor=black, linewidth=0.05](5.06,1.0422068)(5.06,-0.35779327)(5.06,-0.35779327)
\rput[b](0.06,1.5422068){\Huge 0}
\rput[b](2.06,1.5422068){\Huge 1}
\rput[b](3.46,1.5422068){\Huge 2}
\rput[b](5.06,1.5422068){\Huge 3}
\rput[bl](6.66,1.5422068){\Huge 4}
\end{pspicture}
}
\scalebox{0.4} 
{
\begin{pspicture}(-2,-1.9)(7.237695,2.81)
\psline[linewidth=0.05](1.1882031,0.13)(3.188203,-1.2646877)(5.1882033,0.13)(5.088203,0.13)
\psline[linewidth=0.05](0.18820313,1.5353125)(1.1882031,0.13531242)(2.188203,1.5353125)
\psline[linewidth=0.05](3.5882032,1.5353125)(5.1882033,0.13531242)(6.7882032,1.5353125)
\psline[linewidth=0.05](5.18,1.63)(5.18,0.11)
\usefont{T1}{ptm}{m}{n}
\rput(0.1566211,2.4353125){\Huge 0}
\usefont{T1}{ptm}{m}{n}
\rput(2.0829296,2.4353125){\Huge 1}
\usefont{T1}{ptm}{m}{n}
\rput(3.5604882,2.4353125){\Huge 2}
\usefont{T1}{ptm}{m}{n}
\rput(5.1338477,2.4353125){\Huge 3}
\usefont{T1}{ptm}{m}{n}
\rput(6.9860744,2.4353125){\Huge 4}
\psline[linewidth=0.05](3.18,-1.25)(3.18,-2.5)
\end{pspicture} 
}
\scalebox{0.4}{
\begin{pspicture}(-2,-2.1)(6.9376955,2.6773438)
\psline[linewidth=0.05](1.1482031,-1.2426562)(3.1482031,-2.637344)(5.148203,-1.2426562)(5.048203,-1.2426562)
\psline[linewidth=0.05](0.14820312,0.1626563)(1.1482031,-1.2373438)(2.1482031,0.1626563)
\psline[linewidth=0.05](3.5482032,0.1626563)(5.148203,-1.2373438)(6.7482033,0.1626563)
\psline[linewidth=0.05](5.14,0.2573438)(5.14,-1.2626562)
\usefont{T1}{ptm}{m}{n}
\rput(0.1566211,2.2826562){\Huge 0}
\usefont{T1}{ptm}{m}{n}
\rput(2.0429296,2.2826562){\Huge 1}
\usefont{T1}{ptm}{m}{n}
\rput(3.5804882,2.3026564){\Huge 2}
\usefont{T1}{ptm}{m}{n}
\rput(5.1138477,2.3026564){\Huge 3}
\usefont{T1}{ptm}{m}{n}
\rput(6.6860743,2.3026564){\Huge 4}
\psline[linewidth=0.05](0.16,1.6773438)(0.16,0.1573438)
\psline[linewidth=0.05](2.14,1.6773438)(2.14,0.1573438)
\psline[linewidth=0.05](3.56,1.6773438)(3.56,0.1573438)
\psline[linewidth=0.05](5.14,1.7573438)(5.14,0.2373438)
\psline[linewidth=0.05](6.74,1.6773438)(6.74,0.1573438)
\end{pspicture} 
}
\]

Suspension operations   give us a family of  functors
$$\mathbf{s}_p:Ord(n)\rightarrow Ord(n+1), \ 0\le p \le n.$$ We also define an $\infty$-suspension functor $Ord(n)\rightarrow Ord(\infty)$ as follows. For an $n$-ordinal $T$ its $\infty$-suspension is an $\infty$-ordinal $\mathbf{s}_{\infty}T$ whose underlying set is the same as the underlying set of $T$ and $a<_p b$ in $\mathbf{s}^{\infty}T$ if
$a<_{n+p-1} b$ in $T .$ It is not hard to see that the sequence
$$ Ord(0)\stackrel{\mathbf{s}}{\longrightarrow} Ord(1) \stackrel{\mathbf{s}}{\longrightarrow} Ord(2) \longrightarrow \ldots \stackrel{\mathbf{s}}{\longrightarrow} Ord(n) \longrightarrow \ldots \stackrel{\mathbf{s}_{\infty}}{\longrightarrow} Ord(\infty),$$
exhibits $Ord(\infty)$ as a colimit of $Ord(n) .$

Any of the  suspension functors can be restricted to the category of quasibijections. We will denote such a restriction by
$$\mathbf{s}_p:\QQ_n\rightarrow \QQ_{n+1} .$$  
To simplify notation we will denote any suspension functor  just $\mathbf{s}$ since the results below are valid for any $0\le p \le n.$

The suspension functor can be extended to a substitude morphism (cf. \cite{batanin-baez-dolan-via-semi} for explanation) 
$$(\ON,\QQ_n^{op})\to (O^{(n+1)},\QQ_{n+1}^{op})$$
 and in the limiting case  to  a morphism
$$(\ON,\QQ_n^{op})\to (O^{(\infty)},\QQ_{\infty}^{op}).$$
Both these morphisms are over the substitude of symmetric operads $(SO,\Sm).$
Both morphisms will be denoted $(\Sigma,\mathbf{s}^{op})$ to shorten notations, moreover the same notations will be applied for any finite sequence of 
iterated suspensions:
 $$(\Sigma,\mathbf{s}^{op}): (\ON,\QQ_n^{op})\to (O^{(m)},\QQ_{m}^{op}) \ , \ n< m.$$
 
 \begin{proposition}\label{BCSigma} For any $n< m \le \infty$ 
 the morphism $(\Sigma,\mathbf{s}^{op})$ is Beck-Chevalley.  
 \end{proposition}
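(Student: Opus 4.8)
The plan is to deduce the Beck--Chevalley property of the suspension morphism from the corresponding property of the desymmetrisation morphisms, rather than to analyse directly the classifier attached to the suspension square. The key observation is that $(\Sigma,\mathbf{s}^{op})$ is a factor of a desymmetrisation morphism, so the two out of three property of Proposition \ref{2of3} applies.

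First I would record the relevant commutative triangle of substitude morphisms. By construction the suspension morphism and the two desymmetrisation morphisms fit into
\begin{equation*}
\xymatrix@R=2.6em@C=1.6em{
(\ON,\QQ_n^{op}) \ar[rr]^{(\Sigma,\mathbf{s}^{op})} \ar[dr]_{(\tau,[-]^{op})} & & (O^{(m)},\QQ_{m}^{op}) \ar[dl]^{(\tau,[-]^{op})} \\
& (SO,\Sm). &
}
\end{equation*}
This triangle commutes strictly: on the underlying categories it is the identity $[-]\circ\mathbf{s}=[-]$, which holds because a $p$-suspension leaves the total order of an $n$-ordinal unchanged; and on operations both composites send a tree decorated by $n$-ordinals to its underlying undecorated planar tree, since $\Sigma$ merely relabels the decorations of vertices and does not alter the underlying tree. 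This is precisely the assertion, made just before the proposition, that both suspension morphisms lie over $(SO,\Sm)$.

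Next I would invoke Proposition \ref{operadicstabilization}, which states that the desymmetrisation morphism $(\tau,[-]^{op})\colon(O^{(k)},\QQ_k^{op})\to(SO,\Sm)$ is Beck--Chevalley. Taking $k=n$ shows that $(\tau,[-]^{op})\colon(\ON,\QQ_n^{op})\to(SO,\Sm)$ is Beck--Chevalley, and taking $k=m$ shows that $(\tau,[-]^{op})\colon(O^{(m)},\QQ_m^{op})\to(SO,\Sm)$ is Beck--Chevalley. Since the former equals the composite $(\tau,[-]^{op})\circ(\Sigma,\mathbf{s}^{op})$, the two out of three property of Beck--Chevalley morphisms (Proposition \ref{2of3}) forces the remaining morphism $(\Sigma,\mathbf{s}^{op})$ to be Beck--Chevalley. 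The argument is uniform in the pair $n<m$, and since a finite string of iterated suspensions is again a suspension morphism over $(SO,\Sm)$, it covers all finite sequences of suspensions simultaneously.

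The point demanding the most care, and the one I expect to be the main obstacle, is the limiting case $m=\infty$: the argument requires that Proposition \ref{operadicstabilization} hold for the desymmetrisation $(\tau,[-]^{op})\colon(O^{(\infty)},\QQ_\infty^{op})\to(SO,\Sm)$, i.e. that the underlying finality statement (the content of Lemma~4.3 of \cite{SymBat}, namely the inclusion of the Milgram-type poset into the classifier of internal operads) remains valid for $\infty$-ordinals. Should one prefer to avoid appealing to the infinite case of Proposition \ref{operadicstabilization}, the alternative is a direct verification that the square of polynomial monads induced by $(\Sigma,\mathbf{s}^{op})$ is exact in the sense of Definition \ref{semiexactdef}; this amounts to showing that the induced functor of classifiers is final, which can be carried out by the same tree and $n$-ordering combinatorics used in the proofs of Proposition \ref{operadicstabilization} and Theorem \ref{thm: operads for n-Ops are Reedy locally constant}, exhibiting an initial object in each relevant comma category by $p$-suspension of the $n$-orderings that refine a fixed $m$-ordering.
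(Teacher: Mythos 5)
Your proof is correct and is essentially the paper's own argument: the paper likewise deduces the result directly from Proposition \ref{operadicstabilization} and the two out of three property of Proposition \ref{2of3}, using the fact that the suspension morphisms lie over $(SO,\Sm^{op})$. Your extra caution about the $m=\infty$ case is reasonable but not treated separately in the paper, which applies Proposition \ref{operadicstabilization} uniformly for all $n\le\infty$.
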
 
\begin{proof} It follows straightaway from  Propositions \ref{operadicstabilization} and \ref{2of3}. 
\end{proof}

We finally arrive to the following Stabilisation Theorem for higher operads.

\begin{theorem}\label{koperadicstabilization} Let $\VV$ be a combinatorial symmetric monoidal model category with cofibrant unit.
Then for all $n\ge 3$ and $2\le k+1\le n$ 
the  symmetrisation functor
$$sym_{ n}^{}: Op^{\Ww_k}_{n}(\VV) \rightarrow SO(\VV)$$
and the suspension functor 
$$\Sigma_!:Op^{\Ww_k}_{n}(\VV)\rightarrow  Op^{\Ww_k}_{m}(\VV) \ , n<m\le \infty$$ are  left  Quillen
equivalences. 

For $n=2$ and any $1\le k\le \infty$ the functor
$$bsym_{2}^{}: Op^{\Ww_k}_{2}(\VV) \rightarrow BO(\VV)$$
is a left Quillen equivalence.

\end{theorem}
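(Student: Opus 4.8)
The plan is to derive all three statements as instances of Corollary \ref{BC_for_W}(2), which upgrades a Beck-Chevalley morphism $(f,g)$ of left localisable substitudes into a Quillen equivalence $f_!\dashv f^*$ of $\Ww$-local algebra categories precisely when the base functor $g$ is a $\Ww$-equivalence. Every substitude occurring here — $(\ON,\QQ_n^{op})$, $(O^{(m)},\QQ_m^{op})$, $(SO,\Sm^{op})$ and $(BO,\Br^{op})$ — is left localisable by Theorem \ref{thm: operads for n-Ops are Reedy locally constant}, and the three structural maps $(\tau,[-]^{op})$, $(\Sigma,\mathbf{s}^{op})$ and $(\kappa,[-]_2^{op})$ are Beck-Chevalley by Propositions \ref{operadicstabilization} and \ref{BCSigma}. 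So the only genuinely new work in each case is to verify that the relevant base functor lies in $\Ww_k$ for the stated range of $k$, and then to reconcile the resulting $\Ww_k$-local target with the category named in the theorem.

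First I would handle the symmetrisation $sym_n=\tau_!$. Since $\Ww_k$ is closed under $(-)^{op}$ (see \cite[Proposition 1.1.22]{Maltsiniotis}), the base map $[-]^{op}$ is a $\Ww_k$-equivalence exactly when $[-]\colon\QQ_n\to\Sm$ is, and Theorem \ref{Milgram}(4) asserts that $[-]$ is $\Ww_k$-aspherical — hence a $\Ww_k$-equivalence — for $0\le k\le n-1$, i.e. precisely when $2\le k+1\le n$. Corollary \ref{BC_for_W}(2) then makes $sym_n$ a Quillen equivalence $Op_n^{\Ww_k}(\VV)\to SO^{\Ww_k}(\VV)$; and because $\Sm^{op}$ is a groupoid, Proposition \ref{lc=3}(3) identifies $SO^{\Ww_k}(\VV)$ with $SO(\VV)$ as semimodel categories for every $k\ge 1$, giving the first assertion. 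The braided case $n=2$ runs identically, invoking Theorem \ref{Milgram}(5) (which says $[-]_2\colon\QQ_2\to\Br$ is a $k$-equivalence, hence a $\Ww_k$-equivalence, for $1\le k\le\infty$) together with the groupoid identification $BO^{\Ww_k}(\VV)\cong BO(\VV)$ from Proposition \ref{lc=3}(3).

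For the suspension $\Sigma_!$ I would use that $(\Sigma,\mathbf{s}^{op})$ lies over the symmetric-operad substitude, so that the triangle $\QQ_n\xrightarrow{\mathbf{s}}\QQ_m\xrightarrow{[-]_m}\Sm$ commutes with $[-]_n$. For $n\ge 3$ and $1\le k\le n-1$ we have $k\le n-1\le m-1$, so Theorem \ref{Milgram}(4) makes both $[-]_n$ and $[-]_m$ into $\Ww_k$-equivalences (using Theorem \ref{Milgram}(2) and the minimality $\Ww_\infty\subseteq\Ww_k$ when $m=\infty$); the two-out-of-three axiom of the fundamental localiser $\Ww_k$ then forces $\mathbf{s}$, hence $\mathbf{s}^{op}$, to be a $\Ww_k$-equivalence too. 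A final application of Corollary \ref{BC_for_W}(2) promotes $\Sigma_!$ to a Quillen equivalence $Op_n^{\Ww_k}(\VV)\to Op_m^{\Ww_k}(\VV)$, and here no groupoid identification is needed since both sides are already the local categories.

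Because each nontrivial ingredient — left localisability, the Beck-Chevalley property, and the homotopy-type computations of Theorem \ref{Milgram} — is established beforehand, this proof is essentially an assembly and I do not anticipate a real obstacle. The one point demanding care is bookkeeping of the index $k$: one must check simultaneously that the base functor lands in $\Ww_k$ and that the relevant groupoid identification of local with unlocalised operads is valid, and it is exactly this double constraint that pins down the ranges $2\le k+1\le n$ for $n\ge 3$ and $1\le k\le\infty$ for $n=2$.
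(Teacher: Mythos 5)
Your proposal is correct and follows essentially the same route as the paper, which proves the theorem by citing exactly the ingredients you assemble: Theorem \ref{Milgram}(4),(5), left localisability from Theorem \ref{thm: operads for n-Ops are Reedy locally constant}, the Beck-Chevalley property from Propositions \ref{operadicstabilization} and \ref{BCSigma}, and the lifting of Quillen equivalences via Corollary \ref{BC_for_W}. Your explicit bookkeeping — the two-out-of-three argument making $\mathbf{s}$ a $\Ww_k$-equivalence, and the identification $SO^{\Ww_k}(\VV)\cong SO(\VV)$ (resp. $BO^{\Ww_k}(\VV)\cong BO(\VV)$) for $k\ge 1$ via Proposition \ref{lc=3}(3) — simply spells out details the paper's one-line proof leaves implicit.
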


\begin{proof} The statements  follow from  points (4) and (5) of Theorem  \ref{Milgram},   Theorem \ref{thm: operads for n-Ops are Reedy locally constant}, Propositions  \ref{BCSigma} and \ref{operadicstabilization} and Corollaries  \ref{BC_for_W} and  \ref{lifting QE for transfer}.
\end{proof}

In the truncated case we obtain the following

\begin{corollary}\label{truncatedstabilization}  
Let $k\ge 0$ and let $\VV$ be a $k$-truncated combinatorial symmetric monoidal model category with cofibrant  unit.  
Then the symmetrisation functor
$$sym_{n}^{}: Op^{\Ww_\infty}_{n}(\VV) \rightarrow SO(\VV)$$
and the suspension functor 
$$\Sp_!:Op^{\Ww_\infty}_{n}(\VV)\rightarrow  Op^{\Ww_\infty}_{m}(\VV) \ , \ n< m \le \infty$$
are left Quillen equivalences for $3\le k+2\le n \le \infty.$  

For $n=2$  and $0\le k\le \infty$ the functor
$$bsym_{2}^{}: Op^{\Ww_\infty}_{2}(\VV) \rightarrow BO(\VV)$$
is a left Quillen equivalence.

\end{corollary}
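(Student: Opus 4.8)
The plan is to reduce the statement to the already-established Stabilisation Theorem~\ref{koperadicstabilization}, using the truncation hypothesis to replace the minimal localiser $\Ww_\infty$ by a finite localiser $\Ww_{k+1}$ to which that theorem applies. The point is that Theorem~\ref{koperadicstabilization} produces Quillen equivalences only for the localisers $\Ww_j$ of finite index $j\le n-1$, never directly for $\Ww_\infty$; truncation is exactly what lets us identify the $\Ww_\infty$-localisation with a finite one.

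First I would show that, for a $k$-truncated $\VV$, the local semimodel categories $Op_m^{\Ww_{k+1}}(\VV)$ and $Op_m^{\Ww_\infty}(\VV)$ literally coincide, for every $m$ with $n\le m\le\infty$ (and likewise their presheaf counterparts). The input is Theorem~\ref{kloc=lok} applied to the small category $\QQ_m^{op}$: for $k$-truncated $\VV$ it gives that $[\QQ_m^{op},\VV]_{proj}^{\Ww_{k+1}}$ and $[\QQ_m^{op},\VV]_{proj}^{\Ww_\infty}$ have the same local objects, namely the $\Ww_\infty$-locally constant presheaves, and hence are the same model structure. Then I would invoke Theorem~\ref{lifting of localisation}(2), which realises $Op_m^{\Ww}(\VV)$ as the lift of $[\QQ_m^{op},\VV]_{proj}^{\Ww}$ to operads, with fibrant objects the termwise fibrant $\Ww$-locally constant $m$-operads. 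Since the underlying $\Ww_{k+1}$- and $\Ww_\infty$-locally constant presheaves agree, so do the two classes of locally constant operads; as a (semimodel) left Bousfield localisation is determined by its local objects—the local equivalences being the maps inducing weak equivalences of mapping spaces into local objects, with the cofibrations unchanged—the two operad localisations coincide.

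With this identification in hand the conclusion is immediate. For $3\le k+2\le n\le\infty$ one has $1\le k+1\le n-1$, so the finite localiser index $k+1$ meets the hypothesis $2\le (k+1)+1\le n$ of Theorem~\ref{koperadicstabilization}; that theorem then makes $sym_n\colon Op_n^{\Ww_{k+1}}(\VV)\to SO(\VV)$ and $\Sigma_!\colon Op_n^{\Ww_{k+1}}(\VV)\to Op_m^{\Ww_{k+1}}(\VV)$ into left Quillen equivalences, and by the previous step these are the very same functors between the $\Ww_\infty$-local structures. For $n=2$ and arbitrary $0\le k\le\infty$ no truncation is needed at all: the braided case of Theorem~\ref{koperadicstabilization} already applies with the localiser $\Ww_\infty$ (the instance of that theorem taking localiser index $\infty$), giving directly that $bsym_2\colon Op_2^{\Ww_\infty}(\VV)\to BO(\VV)$ is a left Quillen equivalence.

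The main obstacle I anticipate is the second step: justifying that the coincidence of presheaf localisations genuinely lifts to an equality (not merely a Quillen equivalence) of the operad localisations. This rests on the characterisation of a semimodel left Bousfield localisation by its class of fibrant objects, together with the fact that the operad localisations are constructed as honest lifts of the presheaf ones (Theorem~\ref{lifting of localisation} via Theorem~\ref{loclifting}); I would want to check carefully that the lifting procedure depends on the base only through the localised model structure, so that identical bases yield identical lifts. Everything else is a matter of matching the index ranges in Theorems~\ref{koperadicstabilization} and~\ref{kloc=lok}.
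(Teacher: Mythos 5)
Your proposal is correct and takes essentially the same route as the paper: the paper's proof of this corollary is exactly to combine Theorem \ref{kloc=lok} (which, for $k$-truncated $\VV$, identifies the $\Ww_{k+1}$- and $\Ww_{\infty}$-local structures on presheaves and hence on operads) with the already-proved stabilisation Theorem \ref{koperadicstabilization} applied to the finite localiser $\Ww_{k+1}$, the $n=2$ statement coming directly from the braided case of that theorem with localiser $\Ww_{\infty}$. Your careful check that the lifted operad localisations literally coincide (same cofibrations, same local objects, via Theorem \ref{lifting of localisation}) is a detail the paper leaves implicit, but it is the right justification.
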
 

\begin{proof} It follows from Theorem \ref{kloc=lok}. 

\end{proof}

\subsection{Baez-Dolan stabilisation} \label{sec:Baez-Dolan}

We now show how Corollary \ref{truncatedstabilization}   implies a version of the Baez-Dolan stabilisation hypothesis.

Following \cite{batanin-baez-dolan-via-semi} we give the following definitions. Let $Ass_n\in Op_n(\VV)$ be the operad with constant values $(Ass_n)_T = I$, the monoidal unit, for every $T\in Ord(n).$  
Let $\mathcal{G}_{n}\in Op_n(\VV)$ be its cofibrant replacement.      We will denote  by $B_{n}(\VV)$   the category of $G_{n}$-algebras in $\VV.$  Let also $E_{\infty}(\VV)$ be the model category of $E_{\infty}$-algebras in $\VV$, that is, the category of algebras of a cofibrant replacement $E$ of the symmetric operad $Com.$

There is an isomorphism of categories of algebras of any $n$-operad $\mathcal{P}_n$ and the $(n+1)$-operad $\Sp_!(\mathcal{P}_{n})$ \cite[Lemma 2.5]{batanin-baez-dolan-via-semi}.   
Since $\Sp_!$ is a left Quillen functor,  the operad $\Sp_!(\mathcal{G}_{n})$ is cofibrant.
There is a  map of $(n+1)$-operads  $i: \Sp_!(\mathcal{G}_{n})\to \mathcal{G}_{n+1}.$  
Indeed, we have  $\Sp^*(Ass_{n+1}) = Ass_n$ and by adjunction we have  a map $\Sp_!(\mathcal{G}_{n})\to Ass_{n+1}.$ We  also have a trivial fibration $\mathcal{G}_{n+1}\to Ass_{n+1}.$ Since $\Sp_!(\mathcal{G}_{n})$ is cofibrant there is a lifting $i: \Sp_!(\mathcal{G}_{n})\to \mathcal{G}_{n+1}.$  

The morphism  $i$ induces a Quillen adjunction
between algebras of $\Sp_!(\mathcal{G}_{n})$ and algebras of $\mathcal{G}_{n+1}$ and so between algebras of $\mathcal{G}_{n}$ and $\mathcal{G}_{n+1}.$ Slightly abusing notation we will denote this adjunction $i^*\vdash i_!.$

We also have  maps $j_n:sym_n(\mathcal{G}_n)\to E$ for each $n\ge 2$ which induce Quillen adjunctions $j_n^*\vdash (j_n)_!$ between $B_n(\VV)$ and $E_{\infty}(\VV)$ which commute with $i^*$ and $i_!$ \cite[Section 3.5]{batanin-baez-dolan-via-semi}.  

\begin{theorem}\label{BD stabilisation} Let  $k\ge 0$ and  let $\VV$ be a $k$-truncated combinatorial symmetric monoidal model category with cofibrant unit. Then  $$i_!: B_n(\VV)\to B_{n+1}(\VV)$$
  and
  $$(j_n)_!: B_n(\VV)\to E_{\infty}(\VV)$$
  are left Quillen equivalences for $n\ge k+2.$ 
\end{theorem}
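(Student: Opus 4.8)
The plan is to deduce Theorem \ref{BD stabilisation} from the stabilisation result for higher operads, Corollary \ref{truncatedstabilization}, by transporting the Quillen equivalences from the categories of operads to the categories of their algebras. The starting observation is that $B_n(\VV)$ is the category of algebras of the cofibrant $n$-operad $\mathcal{G}_n$, and that the map $i:\Sp_!(\mathcal{G}_n)\to \mathcal{G}_{n+1}$ is a map of cofibrant $(n+1)$-operads. First I would recall from \cite[Lemma 2.5]{batanin-baez-dolan-via-semi} that the categories of algebras of $\mathcal{G}_n$ and of its suspension $\Sp_!(\mathcal{G}_n)$ are isomorphic (and this isomorphism is compatible with the forgetful functors to $\VV$, hence is an isomorphism of semimodel categories). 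Thus the adjunction $i^*\vdash i_!$ between $B_n(\VV)$ and $B_{n+1}(\VV)$ is, up to this isomorphism, exactly the adjunction on algebra categories induced by the operad map $i$.

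The heart of the argument will be a standard homotopical-algebra principle: a weak equivalence between cofibrant operads induces a Quillen equivalence on categories of algebras. Here the ranges $n\ge k+2$ are exactly those for which Corollary \ref{truncatedstabilization} applies: for such $n$ the suspension functor $\Sp_!: Op^{\Ww_\infty}_n(\VV)\to Op^{\Ww_\infty}_{m}(\VV)$ is a left Quillen equivalence. The key point I would establish is that $i:\Sp_!(\mathcal{G}_n)\to \mathcal{G}_{n+1}$ is a weak equivalence of $(n+1)$-operads. Both $\Sp_!(\mathcal{G}_n)$ and $\mathcal{G}_{n+1}$ are cofibrant replacements of $Ass_{n+1}$ (the first because $\Sp_!$ is left Quillen and $\Sp^*(Ass_{n+1})=Ass_n$, so $\Sp_!(\mathcal{G}_n)\to Ass_{n+1}$ is a weak equivalence; the second by construction), and the triangle defining $i$ commutes with the maps to $Ass_{n+1}$ up to the lifting, so by two-out-of-three $i$ is a weak equivalence. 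I would then invoke the fact (a semimodel-categorical analogue of the classical result, available because every result about model categories has a semimodel analogue as noted after Definition \ref{defn:semi}) that a weak equivalence between cofibrant operads with $\Sigma$-cofibrant underlying collections induces a Quillen equivalence on algebra categories; the substitudes for $n$-operads are $\Sigma$-free and $\VV$ has cofibrant unit, so the needed $\Sigma$-cofibrancy holds.

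For the second statement, concerning $(j_n)_!:B_n(\VV)\to E_\infty(\VV)$, I would use the compatibility of the maps $j_n$ with the suspensions recorded in \cite[Section 3.5]{batanin-baez-dolan-via-semi}: the triangle relating $j_n$, $i$, and $j_{n+1}$ commutes, so $j_n$ factors (up to homotopy) through $j_{n+1}\circ i$. Since we have just shown $i_!$ is a Quillen equivalence for $n\ge k+2$, the two-out-of-three property for Quillen equivalences reduces the claim that $(j_n)_!$ is a Quillen equivalence to the claim that $(j_m)_!$ is one in the limit $m=\infty$. For this I would pass to the colimit $n\to\infty$, using that $sym_\infty$ is a Quillen equivalence between $Op^{\Ww_\infty}_\infty(\VV)$ and $SO(\VV)$ (the $m=\infty$ case of Corollary \ref{truncatedstabilization}) together with the fact that $E$ is a cofibrant replacement of $Com$ and $\mathcal{G}_\infty$ maps to it by a weak equivalence; again a weak equivalence of cofibrant symmetric operads gives a Quillen equivalence on algebras.

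The main obstacle I anticipate is the rigorous verification of the invariance principle ``weak equivalence of cofibrant ($\Sigma$-cofibrant) operads $\Rightarrow$ Quillen equivalence on algebras'' in the \emph{semimodel} setting and across different arities ($n$-operads versus $(n+1)$-operads versus symmetric operads), since the categories $B_n(\VV)$ live over different bases. The cleanest route is probably to phrase everything in terms of the Beck--Chevalley machinery already developed: realise each operad map as a morphism of (derivable) substitudes, check it is a homotopy Beck--Chevalley morphism, and apply Corollary \ref{BC_for_W} or Theorem \ref{BCperfect} so that the Quillen equivalence on underlying presheaf categories (which is what $\Sp_!$ being a Quillen equivalence provides) lifts to the algebra categories. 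Managing the truncation hypothesis consistently — ensuring $k$-truncatedness of $\VV$ is exactly what forces the $\Ww_\infty$-local and $\Ww_{k+1}$-local structures to agree via Theorem \ref{kloc=lok}, and that the bound $n\ge k+2$ is propagated correctly through the suspension tower — is the delicate bookkeeping I would need to get right.
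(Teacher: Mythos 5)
Your proposal reverses the paper's order of deduction: you try to prove that $i_!$ is a Quillen equivalence first and then obtain $(j_n)_!$, whereas the paper proves $(j_n)_!$ first and gets $i_!$ by two-out-of-three. This reversal is not a harmless variant, because the step your version rests on has a real gap. Your justification that $\Sp_!(\mathcal{G}_n)\to Ass_{n+1}$ is a weak equivalence ``because $\Sp_!$ is left Quillen'' is false as stated: a left Quillen functor does not send the mate of a weak equivalence to a weak equivalence; one needs the Quillen \emph{equivalence} of Corollary \ref{truncatedstabilization} together with a fibrant-replacement/mate argument. More seriously, even with that repair, what you obtain is that $i:\Sp_!(\mathcal{G}_n)\to\mathcal{G}_{n+1}$ is a weak equivalence in the \emph{localised} category $Op^{\Ww_\infty}_{n+1}(\VV)$, i.e.\ a local equivalence, while the invariance principle you invoke (weak equivalence of $\Sigma$-cofibrant operads gives a Quillen equivalence of algebra categories) requires a \emph{levelwise} weak equivalence, since $B_n(\VV)$ and $B_{n+1}(\VV)$ carry the structures transferred from the unlocalised projective structure. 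A local equivalence between cofibrant objects need not be levelwise; bridging this would require showing $\Sp_!(\mathcal{G}_n)$ is $\Ww_\infty$-locally constant, which you do not address and which is essentially the substance of stabilisation. Your Beck--Chevalley fallback does not repair this either: Corollary \ref{BC_for_W} and Theorem \ref{BCperfect} apply to morphisms of $\Sigma$-free $\Set$-substitudes, whose algebra categories here are the categories of \emph{operads}, not the categories $B_n(\VV)$ of algebras over the $\VV$-operad $\mathcal{G}_n$. The paper's route avoids all of this: the mate argument for $(j_n)_!$ lands in $SO(\VV)$, where by Proposition \ref{lc=3} the localisation is trivial (every symmetric collection is locally constant), so local and levelwise equivalences coincide there, and $i_!$ then follows by two-out-of-three for Quillen equivalences at the level of algebra categories, with no need for $i$ itself to be a levelwise equivalence.

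The second gap is that the case $n=2$, $k=0$, which the statement includes ($n\ge k+2$ allows $n=2$ when $k=0$), is simply outside your argument. You assert that the range $n\ge k+2$ is ``exactly'' where Corollary \ref{truncatedstabilization} applies, but the suspension and symmetrisation statements of that corollary require $3\le k+2\le n$, i.e.\ $n\ge 3$; at $n=2$ the corollary only provides the braided symmetrisation $bsym_2:Op^{\Ww_\infty}_2(\VV)\to BO(\VV)$. This is precisely why the paper devotes a separate argument to $k=0$, $n=2$: it shows that the symmetrisation of the contractible braided operad $X=bsym_2(\mathcal{G}_2)$ is weakly equivalent to $Com$ by proving each quotient $q_n:X_n\to X_n/\mathrm{P}\Br_n$ by the pure braid group is a weak equivalence, using the $0$-truncatedness of $\VV$ and the fixed-point square of mapping spaces. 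Without this, $(j_2)_!$ and hence $i_!:B_2(\VV)\to B_3(\VV)$ are not established. Finally, a smaller point: your reduction of $(j_n)_!$ ``to the limit $m=\infty$'' cannot proceed by two-out-of-three along the infinite tower $B_n\to B_{n+1}\to\cdots$; it must be done by factoring $(j_n)_!$ through the single $\infty$-suspension $B_n(\VV)\to B_\infty(\VV)$, which your framework permits but which needs to be said explicitly.
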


\begin{proof} Let  $n\ge 3.$ The operad $\mathcal{G}_n$ is cofibrant and  is equipped with a trivial fibration to $Ass_n$.
Applying $sym_n$ to this trivial fibration we obtain $sym_n(\mathcal{G}_n)\to sym_n (Ass_n) = Com$, and we will prove this is a weak equivalence. Let $Com\to Com'$ be a fibrant replacement in $SO(\VV)$. Then we have a map $sym_n(\mathcal{G}_n)\to Com'.$ 
This induces a mate $\mathcal{G}_n\to des_n(Com').$ The functor $des_n$ preserves weak equivalence between operads and so $des_n(Com')$ is weakly equivalent to $des_n(Com) = Ass_n.$ So $\mathcal{G}_n\to des_n(Com')$ is a weak equivalence because $\mathcal{G}_n$ is a cofibrant replacement of $Ass_n.$ Observe that $\mathcal{G}_n$ is also a cofibrant object in the localised category $Op^{\Ww_\infty}_{n}(\VV).$ By Corollary \ref{truncatedstabilization} $sym_n$ is a left Quillen equivalence after localisation, so the mate $sym_n(\mathcal{G}_n)\to Com'$ is a weak equivalence, and, hence, $sym_n(\mathcal{G}_n)$ is weakly equivalent to the $E_{\infty}$ operad $E.$ This proves the second statement.  

The statement for $i_!$ follows again from the two out of three property. 

The case $k=0$ and $n=2$ is somewhat special. In this case the contractible $2$-operad $\mathcal{G}_2$ gives a contractible braided operad $X = bsym_2 (\mathcal{G}_2)$ by Corollary \ref{truncatedstabilization}. This means, in particular, that for each $n\ge 0$ there is a weak equivalence of $\Br_n$-objects  $X_n\to I.$ The symmetrisation of $X$ is obtained as the quotient $q_n:X_n\to X_n/\mathrm{P}\Br_n , $ by the pure braid group on $n$-strands for each $n\ge 0.$ Then for any $Y\in \VV$ (choosing an appropriate simplicial resolution $Y_*$ of $Y$) we have an induced commutative square of simplicial sets:

\begin{equation*}\xymatrix{
 Map_V(X_n,Y) 
 \ar@{<-}@<0pt>[d]_{}
\ar@<0pt>[r]^{}
& 
 \pi_0(Map_V(X_n,Y))
 \ar@{<-}@<0pt>[d]^{} \\
 Map_V(X_n/\mathrm{P}\Br_n ,Y) \ar@<0pt>[r]^{}
& 
 \pi_0(Map_V(X_n/\mathrm{P}\Br_n ,Y))}
\end{equation*} 
Since $\VV$ is $0$-truncated, the horizontal arrows in this diagram are weak equivalences. The discrete space in the right bottom corner  is the set of fixed points of the induced action of the pure braid group on the connected components of the corresponding mapping space. Since  $X_n$ is contractible, the action of the braid groups is homotopically trivial, so the right vertical arrow in the diagram is an isomorphism. This means that the left vertical arrow is a weak equivalence, by the two out of three property. Thus, the quotient morphism $q_n$ is a weak equivalence and the symmetrisation of $X$ is weakly equivalent to the symmetric operad $Com$ in $\VV.$ This finishes the proof.

\end{proof}

\begin{remark} Theorem  \ref{BD stabilisation} is an improvement of main result  \cite[Theorem 3.7]{batanin-baez-dolan-via-semi} where the existence of a standard system of simplices is required. On the other hand this Theorem from loc. cit. is proved for an arbitrary cofibrantly generated monoidal model category with cofibrant unit $\VV$ but Theorem \ref{BD stabilisation} asks $\VV$ to  be combinatorial.    
\end{remark}

Recall that Rezk's $(n+k,n)$-categories are fibrant objects in the model category $\Theta_n Sp_k, -2\le k\le \infty $, which is a truncation of the model category of Rezk's complete  $\Theta_n$-spaces $\Theta_n Sp_{\infty}$. Here $Sp_k$ is the $k$-truncation of the category of simplicial sets, and models homotopy $k$-types. The category $\Theta_n Sp_{\infty}$ is itself a certain Bousfield localisation of the category of simplicial presheaves $Sp^{\Theta_n^{op}}$ with its  injective model structure. 
This is a cartesian closed combinatorial model category that is $(n+k)$-truncated and satisfies all the hypotheses of Theorem \ref{BD stabilisation} (see \cite{Rezk}).
Recall also that the  category of  Rezk's $m$-tuply monoidal $(n+k,n)$-categories is the category  of fibrant objects in the (semi)model category $B_m(\Theta_n Sp_k).$ 

We immediately have

\begin{corol}[Stabilisation for Rezk's $(n+k,n)$-categories]\label{wkc} The  suspension functor induces a left Quillen equivalence 
$$ i_!: B_m(\Theta_n Sp_k) \to  B_{m+1}(\Theta_n Sp_k)$$   for   $m\ge n+k+2$ and, hence, an equivalence between homotopy categories of Rezk's $m$-tuply monoidal $(n+k,n)$-categories  and Rezk's $(m+1)$-tuply monoidal $(n+k,n)$-categories. 
\end{corol}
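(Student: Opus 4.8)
The plan is to recognize Corollary \ref{wkc} as a direct instance of Theorem \ref{BD stabilisation}, so the proof reduces to verifying that the base model category $\VV = \Theta_n Sp_k$ satisfies the hypotheses of that theorem for the appropriate truncation level. First I would recall that Theorem \ref{BD stabilisation} asserts, for a $j$-truncated combinatorial symmetric monoidal model category $\VV$ with cofibrant unit, that $i_!: B_m(\VV)\to B_{m+1}(\VV)$ is a left Quillen equivalence for $m\ge j+2$. Thus the whole task is to identify the correct value of the truncation parameter $j$ for $\VV = \Theta_n Sp_k$ and to check the remaining structural hypotheses.

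The key step is the truncation count. As stated in the paragraph preceding the corollary, the category $\Theta_n Sp_k$ is a truncation of Rezk's complete $\Theta_n$-spaces in which the underlying homotopy-type factor $Sp_k$ models homotopy $k$-types, while the $\Theta_n$-direction contributes $n$ further levels of categorical dimension. The mapping spaces $Map_{\Theta_n Sp_k}(X,Y)$ therefore have vanishing homotopy groups above degree $n+k$, which is precisely the statement that $\Theta_n Sp_k$ is $(n+k)$-truncated in the sense of the definition preceding Theorem \ref{kloc=lok}. Combined with the fact, recalled in the excerpt, that $\Theta_n Sp_k$ is cartesian closed and combinatorial (hence symmetric monoidal combinatorial with the terminal object as monoidal, necessarily cofibrant, unit) via Rezk's construction \cite{Rezk}, all the hypotheses of Theorem \ref{BD stabilisation} are met with $j = n+k$.

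Having established that $\VV = \Theta_n Sp_k$ is an $(n+k)$-truncated combinatorial symmetric monoidal model category with cofibrant unit, I would then simply invoke Theorem \ref{BD stabilisation} with $j = n+k$: its conclusion gives that $i_!: B_m(\VV)\to B_{m+1}(\VV)$ is a left Quillen equivalence for all $m\ge (n+k)+2 = n+k+2$, which is exactly the range asserted in the corollary. The passage from the Quillen equivalence on the level of the $B_m$-semimodel categories to an equivalence between the homotopy categories of Rezk's $m$-tuply and $(m+1)$-tuply monoidal $(n+k,n)$-categories is immediate, since the homotopy category of $m$-tuply monoidal $(n+k,n)$-categories is by definition the homotopy category of fibrant objects in $B_m(\Theta_n Sp_k)$, and left Quillen equivalences induce equivalences of homotopy categories.

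The main obstacle I expect is the truncation bookkeeping: one must be careful that the truncation level of the mapping spaces in $\Theta_n Sp_k$ is genuinely $n+k$ and not $k$ alone, since a naive reading might count only the $Sp_k$ factor and arrive at the wrong stable range. The honest justification that $\Theta_n Sp_k$ is $(n+k)$-truncated rests on Rezk's analysis \cite{Rezk} of the homotopy-theoretic content of complete $\Theta_n$-spaces and their truncations; I would cite that source for the precise computation of the connectivity of mapping spaces, and otherwise the argument is a routine specialisation of the already-proved Theorem \ref{BD stabilisation}.
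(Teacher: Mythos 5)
Your proposal is correct and follows essentially the same route as the paper: the paper's proof consists precisely of noting (with a citation to Rezk) that $\Theta_n Sp_k$ is a cartesian closed, combinatorial, $(n+k)$-truncated model category with cofibrant unit, and then invoking Theorem \ref{BD stabilisation} with truncation parameter $n+k$ to obtain the stated range $m\ge n+k+2$. Your emphasis on the truncation bookkeeping — that the correct parameter is $n+k$ rather than $k$ — is exactly the point the paper's preceding paragraph is making.
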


Recall that Tamsamani introduced a definition of weak $n$-categories, expanded by Pellissier and reframed by Simpson \cite{Simpson}. Simpson begins with a cartesian, left proper, and tractable (meaning: combinatorial and with domains of the generating (trivial) cofibrations cofibrant) model category $\M$, then uses an iterated procedure to produce a model category $PC^n(\M)$ of $n$-precategories, then left Bousfield localises to produce a model category (which we denote $Seg^n(\M)$) where the fibrant objects are the {\em Segal $n$-categories}, i.e., satisfy the full Segal condition \cite[Theorem 19.2.1, Proposition 19.4.1]{Simpson}.

Applied to the model category $Set$ (with weak equivalences the isomorphisms, and every map a cofibration and a fibration), this produces \textit{the model category of $n$-prenerves} (also known as $n$-precategories) \cite[Definition 20.2.1]{Simpson}, Simpson's reframing of Tamsamani's model. Applied to the Kan-Quillen model structure on simplicial sets, this produces {\em the model category of Segal $n$-precategories}, defined by Hirschowitz and Simpson. 

\begin{proposition}
Let $\M$ be a combinatorial, monoidal model category with cofibrant unit. Then the categories $PC^n(\M)$ are too, and hence satisfy our Stabilisation Theorem \ref{koperadicstabilization}.
\end{proposition}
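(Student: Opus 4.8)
The plan is to argue by induction on $n$, using that Simpson's construction is a single functorial step with $PC^{n}(\M)=PC(PC^{n-1}(\M))$, and that this step preserves Simpson's admissible class of model categories. The base case $n=0$ is the hypothesis on $\M$ itself. For the inductive step I would put $\M' := PC^{n-1}(\M)$ and assume $\M'$ lies in the admissible class (cartesian, tractable --- in particular combinatorial --- and left proper, with cofibrant unit). Simpson's construction then applies to $\M'$, and his preservation theorems show that $PC(\M') = PC^{n}(\M)$ again lies in this class; I would then read off from membership in this class the three properties named in the statement. I stress at the outset that the induction must transport the whole admissible package, not merely the advertised properties, since left properness and tractability of $\M'$ are exactly what allow the next application of $PC$ to be defined.

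Next I would treat combinatoriality in isolation. The category of $\M'$-precategories is the full subcategory of the diagram category $[\Delta^{\op}, \M']$ (or the appropriate enriched variant) cut out by the discreteness condition on the object of $0$-cells; this condition is closed under limits and filtered colimits, so the subcategory is locally presentable whenever $\M'$ is. Since $\M'$ is combinatorial, $[\Delta^{\op},\M']$ carries a combinatorial (Reedy or projective) model structure, and Simpson's model structure on $PC(\M')$ is presented by an explicit set of generating cofibrations and generating trivial cofibrations. Local presentability together with cofibrant generation is precisely combinatoriality, so this property is preserved, and I would cite the relevant existence and tractability statements in \cite{Simpson}.

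The essential input is preservation of the monoidal structure, and this is where the main obstacle lies. The monoidal structure on $PC(\M')$ is the levelwise cartesian product, and the substantive content --- that $PC$ sends cartesian model categories to cartesian model categories, i.e. that the pushout-product axiom for $\times$ survives the construction --- is the central technical theorem of \cite{Simpson}, which I would invoke rather than reprove. A cartesian model category is in particular a symmetric monoidal model category under $\times$, whose unit is the terminal object; cofibrancy of the terminal precategory is part of the unitality built into Simpson's tractability conclusions, and this supplies the cofibrant unit. The delicate bookkeeping --- and the step I expect to be the real difficulty --- is checking that cofibrancy of the unit, rather than merely the pushout-product axiom, is genuinely preserved at each stage, since it is this conjunction, and not either property alone, that the hypothesis of Theorem \ref{koperadicstabilization} demands.

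Finally I would assemble the induction: $PC^{n}(\M)$ is combinatorial, symmetric monoidal under the cartesian product, and has cofibrant unit, hence satisfies the hypotheses of Theorem \ref{koperadicstabilization} and enjoys the Stabilisation Theorem. Specialising to $\M=\Set$ and to the category of simplicial sets recovers Simpson's reframing of Tamsamani's $n$-precategories and the Hirschowitz--Simpson Segal $n$-precategories respectively, so these models inherit stabilisation as immediate instances.
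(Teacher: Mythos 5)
There is a genuine gap, and it is precisely the one the proposition is designed to avoid. Your induction transports ``the whole admissible package'' — cartesian, tractable, left proper — and invokes Simpson's preservation theorems, which are stated for that class. But the hypothesis of the proposition is strictly weaker: $\M$ is only assumed to be combinatorial and \emph{monoidal} with cofibrant unit. It need not be cartesian (think of chain complexes with tensor product) and need not be left proper. For such an $\M$, Simpson's machinery does not apply at all, so your base case and inductive step never get started. The point of the proposition, as the paper's subsequent remark makes explicit, is exactly that ``tractable'' can be replaced with ``combinatorial'' and ``cartesian'' with ``monoidal with cofibrant unit''; an argument that quotes Simpson's cartesian preservation theorem as a black box cannot establish that weakening. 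Relatedly, you take the monoidal structure on $PC^n(\M)$ to be the cartesian product with terminal unit; in the non-cartesian case the relevant structure is different — levelwise $\otimes$ for the injective structure, Day convolution for the projective and Reedy structures — with unit a representable functor built from the unit $I$ of $\M$, which is cofibrant because $I$ is. Your ``delicate bookkeeping'' for cofibrancy of the unit is aimed at the wrong unit.

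The paper's actual proof sidesteps the induction entirely. It describes an object of $PC^n(\M)$ concretely as a pair $(X,A)$ with $X$ a set and $A:\Delta_X^{\op}\to\M$ a functor satisfying the unitality condition $A(x_0)=\ast$, i.e.\ as a (constrained) diagram category in $\M$. Combinatoriality is then the standard fact that diagram categories over a combinatorial model category are combinatorial in the injective, projective, or Reedy structures, and monoidality with cofibrant unit comes from the levelwise/Day convolution structures just described. No appeal to cartesianness, left properness, or Simpson's localisation technology is needed at this stage — those enter only for the next proposition, where one localises $PC^n(\M)$ to $Seg^n(\M)$ and left properness becomes a genuine hypothesis. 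You have in effect proved a statement with Simpson's original hypotheses, which is weaker than what is claimed; to repair your argument you would need to redo the monoidal analysis for a general monoidal $\M$, at which point you would be reproducing the paper's direct proof.
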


\begin{proof}
This is essentially already proven in \cite{Simpson}, but with more hypotheses on $\M$. An object of $PC^n(\M)$ is a pair $(X,A)$ where $X$ is a set, $\Delta_X$ is the category of finite linearly ordered sets decorated by the elements of $X$, and $A:\Delta_X^{op} \to \M$ is a functor satisfying the unitality condition that $A(x_0) = \ast$ for any sequence of length zero \cite[Chapter 6]{Simpson}. 
It is well-known that such a functor category is combinatorial (with the injective, projective, or Reedy model structure) when $\M$ is, and is monoidal (with respect to the levelwise monoidal structure for the injective case, and to the Day convolution structure for projective and Reedy). The unit is a representable functor built from the unit $I$ of $\M$, and is cofibrant if $I$ is.
\end{proof}

\begin{proposition}
Let $\M$ be a left proper, combinatorial, monoidal model category with cofibrant unit. Then $Seg^n(\M)$ is, too, and hence satisfies our Stabilisation Theorem \ref{koperadicstabilization}.
\end{proposition}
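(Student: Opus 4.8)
The plan is to present $Seg^n(\M)$ as a left Bousfield localisation of $PC^n(\M)$ at the (essentially small) set $S$ of Segal maps, and then to verify that each property required by Theorem~\ref{koperadicstabilization} is inherited by the localisation. By the previous Proposition, $PC^n(\M)$ is combinatorial and (symmetric) monoidal with cofibrant unit; since its model structure is built levelwise from that of $\M$, left properness of $\M$ gives left properness of $PC^n(\M)$. As $PC^n(\M)$ is then left proper and combinatorial, the full Bousfield localisation $Seg^n(\M)=L_S\,PC^n(\M)$ exists by the classical theory \cite{hirschhorn} (and in any case as a semimodel structure by Theorem~\ref{thm:bous-loc-semi}).

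First I would dispatch the three ``soft'' clauses. Left Bousfield localisation of a combinatorial model category at a set of maps is again combinatorial \cite{Barwieck}, and the localisation of a left proper combinatorial model category is again left proper \cite{hirschhorn}. Moreover the cofibrations of $L_S\,PC^n(\M)$ coincide with those of $PC^n(\M)$, so the unit --- a representable presheaf assembled from the cofibrant unit of $\M$ --- remains cofibrant. This settles combinatoriality, left properness, and the cofibrant unit.

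The main obstacle is the monoidal clause: one must show the pushout--product axiom survives the localisation. Because cofibrations are unchanged, $f\boxx g$ is a cofibration whenever $f,g$ are; the real content is that $f\boxx g$ is an $S$-local equivalence when $f$ is a cofibration and $g$ an $S$-local trivial cofibration. By the standard reduction (Ken Brown's lemma together with the detection of local equivalences by mapping into local objects) it suffices to prove that tensoring with a fixed cofibrant object preserves $S$-local equivalences. I would verify this criterion on generators, showing that $s\boxx i$ is an $S$-local equivalence for every Segal map $s\in S$ and every generating cofibration $i$ of $PC^n(\M)$. This holds because the Segal maps are the spine inclusions exhibiting the homotopy product decomposition of a simplex, and the levelwise (resp. Day convolution) monoidal structure preserves the finite limits and the cofibrant cells entering the spine; hence $s\boxx i$ is a retract of a transfinite composite of pushouts of Segal maps, so it is $S$-local. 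The one subtlety in the write-up is that Simpson's Segal localisation \cite{Simpson} is performed inductively over the $n$ levels, so I would run this monoidal compatibility argument level by level, feeding the already-established monoidal structure on $(m-1)$-precategories into the pushout--product check at level $m$.
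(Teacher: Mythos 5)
Your overall skeleton matches the paper's proof: $Seg^n(\M)$ is the left Bousfield localisation of $PC^n(\M)$ at the Segal maps; the soft properties (combinatoriality, left properness, unchanged cofibrations and hence cofibrant unit) pass through localisation; and the only real issue is the pushout--product axiom, which by the criterion of \cite{white-localization} reduces to checking that the Segal maps interact well with the monoidal product. Up to that point you are in step with the paper, and your remark that left properness of $PC^n(\M)$ is inherited levelwise from $\M$ is a reasonable point the paper leaves implicit.

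The gap is in how you discharge that final check. You assert that for a Segal (spine) map $s$ and a generating cofibration $i$, the map $s \boxx i$ ``is a retract of a transfinite composite of pushouts of Segal maps,'' justified only by the phrase that the monoidal structure ``preserves the finite limits and the cofibrant cells entering the spine.'' This is not an argument: nothing you have said produces an $S$-cell decomposition of $s \boxx i$, and there is no formal reason the pushout--product of a spine inclusion with a generating cofibration should be built from spine inclusions. That compatibility of the Segal maps with the product is precisely the nontrivial mathematical content of the proposition; it does not follow from the localisation machinery. The paper does not prove it from scratch either --- it invokes \cite[Theorem 1.4]{Rezk} and \cite[Theorem 12.1.1]{Simpson}, where this verification is actually carried out, and in both sources it requires genuine combinatorial work with the spine inclusions. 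So your proposal correctly isolates the hard step but then replaces it with an unsupported assertion; likewise your closing suggestion to run the argument ``level by level'' through Simpson's inductive construction is a plan rather than a proof. To repair the write-up you would need either to exhibit the claimed cellular decomposition (or some other proof that $s \boxx i$ is an $S$-local equivalence), or to cite the results of Rezk and Simpson where this is established.
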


\begin{proof}
This is a standard application of left Bousfield localisation. The monoidal structure on $Seg^n(\M)$ is the same as that on $PC^n(\M)$ and hence the unit is still cofibrant. As proven in \cite{white-localization}, the pushout product axiom on a localised model structure is equivalent to a condition relating the morphisms one is inverting and the monoidal product. This condition is spelled out and verified in \cite[Theorem 1.4]{Rezk} for the localisation encoding the Segal condition. The condition is also verified in \cite[Theorem 12.1.1]{Simpson}.
\end{proof}

In order to apply our Theorem \ref{BD stabilisation}, to prove Baez-Dolan stabilisation, we need a $k$-truncated model category. There are two ways to proceed. We can either begin with a truncated model category, such as $\M = Sp_k$, or we can build $Seg^n(\M)$ for a general $\M$ and then truncate, by applying a left Bousfield localisation. We prefer the latter approach. We denote this construction $\tau_k Seg^{n+k}(\M)$.  While Simpson does not use $m$-operads to model $m$-tuply monoidal categories, we can do so.

\begin{corollary}\label{cor:tamsamani}
Let $\M$ be a left proper, combinatorial, monoidal model category with cofibrant unit. Then $\tau_k Seg^{n+k}(\M)$ satisfies the conditions of Corollary \ref{truncatedstabilization} and Theorem \ref{BD stabilisation}. In particular, the Baez-Dolan stabilisation hypothesis is true for Tamsamani weak $n$-categories $Seg^n(Set)$ and for higher Segal categories $Seg^n(sSet)$, just as in Corollary \ref{wkc}.
\end{corollary}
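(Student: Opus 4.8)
The plan is to verify that $\tau_k Seg^{n+k}(\M)$ satisfies the three standing hypotheses required by Corollary \ref{truncatedstabilization} and Theorem \ref{BD stabilisation}, namely that it is a combinatorial symmetric monoidal model category with cofibrant unit which is in addition $k$-truncated. The first two properties are already assembled from the two preceding propositions: starting from a left proper, combinatorial, monoidal $\M$ with cofibrant unit, the proposition on $Seg^n(\M)$ gives that $Seg^{n+k}(\M)$ is again left proper, combinatorial, and monoidal with cofibrant unit. The only genuinely new ingredient is the truncation functor $\tau_k$, so the main work is to check that applying it preserves these properties while additionally forcing $k$-truncation.

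First I would recall that $\tau_k$ is itself realised as a left Bousfield localisation, inverting the maps that collapse homotopy above level $k$ (concretely, localising at the set of maps $\Sigma^{k+1}(\partial\Delta_m)_+ \to \Sigma^{k+1}(\Delta_m)_+$ or their analogues detecting $\pi_i$ for $i>k$). Since $Seg^{n+k}(\M)$ is left proper and combinatorial, Hirschhorn's localisation machinery applies, so $\tau_k Seg^{n+k}(\M)$ exists and is combinatorial, with the same cofibrations; hence it is again left proper when $Seg^{n+k}(\M)$ is. For the monoidal structure, I would invoke the same criterion used in the proof of the $Seg^n(\M)$ proposition: by the results of \cite{white-localization}, the pushout product axiom survives a left Bousfield localisation precisely when the inverted maps interact correctly with the monoidal product, and for a truncation localisation this compatibility is automatic because the $k$-truncation functor is compatible with the Cartesian (or Day convolution) product — truncating a product of $k$-truncated objects yields a $k$-truncated object. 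The unit is unchanged by localisation, so it remains cofibrant.

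The heart of the verification, and the step I expect to be the main obstacle, is establishing that the localised category is genuinely $k$-truncated in the sense of the Definition preceding Theorem \ref{kloc=lok}: that for any objects $X,Y$ the mapping space $Map_{\tau_k Seg^{n+k}(\M)}(X,Y)$ is a $\Ww_k$-local simplicial set, i.e.\ has vanishing homotopy groups above degree $k$. This is not purely formal because the mapping spaces in an iterated Segal construction are built from the mapping spaces of $\M$ through repeated levelwise and Segal-localisation steps, and one must track how truncation at the top level propagates. The cleanest route is to argue that the fibrant objects of $\tau_k Seg^{n+k}(\M)$ are exactly the $k$-truncated Segal $(n+k)$-categories, so that derived mapping spaces computed between fibrant-cofibrant replacements land in $k$-truncated spaces; this follows from the description of the local objects of a truncation localisation together with the fact that in a model category obtained by such a localisation, $Map(X,Y)$ for fibrant $Y$ is equivalent to the homotopy function complex into a $k$-truncated object and is therefore itself $k$-truncated.

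Once $k$-truncation is in hand, the conclusion is immediate: Corollary \ref{truncatedstabilization} applies to $\VV = \tau_k Seg^{n+k}(\M)$ to give that $sym_n$ and the suspension $\Sp_!$ are left Quillen equivalences in the range $3 \le k+2 \le n \le \infty$ (and the braided statement for $n=2$), and Theorem \ref{BD stabilisation} then yields that $i_!\colon B_m(\tau_k Seg^{n+k}(\M)) \to B_{m+1}(\tau_k Seg^{n+k}(\M))$ and $(j_m)_!\colon B_m \to E_\infty$ are left Quillen equivalences for $m \ge k+2$. Specialising $\M = Set$ recovers Tamsamani weak $n$-categories $Seg^n(Set)$ and specialising $\M = sSet$ recovers the higher Segal categories $Seg^n(sSet)$, exactly as in Corollary \ref{wkc}, which is the stated Baez--Dolan stabilisation.
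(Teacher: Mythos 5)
Your proof is correct and follows essentially the same route as the paper: existence of the truncation localisation from left properness and combinatoriality of $Seg^{n+k}(\M)$, monoidality of the localisation via the criterion of \cite{white-localization}, cofibrancy of the unit preserved because left Bousfield localisation does not change cofibrant objects, and $k$-truncation of the result. The only difference is emphasis: the step you single out as ``the heart of the verification'' (that derived mapping spaces into local fibrant objects are $\Ww_k$-local) is dismissed in the paper as holding ``by construction'' of the truncation localisation, and your resolution of it is exactly that standard argument.
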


\begin{proof}
The model structure $\tau_k Seg^{n+k}(\M)$ is $k$-truncated by construction. That $k$-truncation is a {\em monoidal} left Bousfield localisation is an easy exercise, just as the case for truncation of spaces, spectra, and chain complexes considered in \cite{white-localization}. That the unit is cofibrant is again automatic since the class of cofibrant objects is unchanged by left Bousfield localisation.
\end{proof}

\begin{remark}
Simpson requires $\M$ to be a tractable, left proper, cartesian model category, and proves that $PC^n(\M)$ and $Seg^n(\M)$ are the same \cite[Theorem 19.3.2]{Simpson}. We note that cartesian model categories have cofibrant unit \cite[Definition 7.7.1]{Simpson}. While Simpson does not spell out that the localisation of a tractable model category (e.g., $Seg^n(\M)$) is tractable, this is clear, e.g., from \cite{white-localization}. The main reason to require left properness is for the existence of the localisation. We conjecture that this condition could be dropped, with Theorem \ref{thm:bous-loc-semi} used to produce a semimodel structure on $Seg^n(\M)$, where stabilisation could be proven. The main reason Simpson requires tractability is for better control over the localisation, but combinatoriality is sufficient for its existence. Simpson requires $\M$ to be cartesian so that it makes sense to enrich in $PC^n(\M)$, then establishes in \cite[Theorem 19.5.1, Section 20.5]{Simpson}, a connection to a $\M$-enriched $(n+1)$-category $n$CAT$(\M)$, the category of strict $\M$-enriched categories. We have seen above that `tractable' can be replaced with `combinatorial' and `cartesian' can be replaced with `monoidal with cofibrant unit,' and one still obtains model categories $PC^n(\M)$ and $Seg^n(\M)$ where the stabilisation hypothesis holds. However, when $\M$ is monoidal but not cartesian, the connection to enrichment is lost. 
\end{remark}

\begin{remark}
In \cite[Chapter 23]{Simpson}, Simpson proves a Baez-Dolan stabilisation result for $Seg^n(Set)$ and points out that his methods hold more generally (e.g., for $Seg^n(sSet)$ as well). Using a Whitehead construction \cite[23.1.1]{Simpson} (which is a left Bousfield localisation, analogous to our truncation), Simpson defines what it means for an $n$-category to be $m$-connected, then recognises that $(m-1)$-connected $(n+m)$-categories model $m$-tuply monoidal weak $n$-categories. We conjecture that our model for $m$-tuply monoidal weak $n$-categories, as algebras over $G_m$, is Quillen equivalent to Simpson's model category of $(m-1)$-connected $(n+m)$-categories. If this conjecture is true, then Corollary \ref{cor:tamsamani} recovers Simpson's Baez-Dolan result \cite[Theorem 23.0.3]{Simpson}. 
\end{remark}

We turn now to another model for weak $n$-categories: the $n$-quasi-categories of Ara \cite{Ara}. The idea is to iterate the construction of quasi-categories and end up with a model category of presheaves on $\Theta_n$, tightly connected (and Quillen equivalent to) Rezk's $\Theta_n$-spaces \cite[Theorem 8.4]{Ara}. One can left Bousfield localise to truncate Ara's model in exactly the same way we truncated Rezk's model, and so obtain a $k$-truncated model of $n$-quasi-categories, which we denote $\tau_k nQcat$.

\begin{corollary}
Ara's model category of $n$-quasi-categories satisfies the conditions of Theorem \ref{koperadicstabilization} and $\tau_k nQcat$ satisfies the conditions of Corollary \ref{truncatedstabilization} and Theorem \ref{BD stabilisation}. Hence, $n$-quasi-categories satisfy Baez-Dolan stabilisation.
\end{corollary}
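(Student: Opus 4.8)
The plan is to reduce the statement about Ara's $n$-quasi-categories to the framework already established for Rezk's $\Theta_n$-spaces, since the Corollary explicitly follows the same template. First I would recall that Ara's model category $nQcat$ of $n$-quasi-categories is a left Bousfield localisation of the injective (or projective) model structure on simplicial presheaves $Sp^{\Theta_n^{op}}$, and that by \cite{Ara} it is Quillen equivalent to Rezk's model category $\Theta_n Sp_\infty$ of complete $\Theta_n$-spaces. The key structural facts I need are: $nQcat$ is combinatorial (being a left Bousfield localisation of a combinatorial model category), it is cartesian closed (hence monoidal with cofibrant unit, since a cartesian model category has cofibrant unit), and it satisfies the pushout product axiom. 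These are precisely the hypotheses required to apply our Stabilisation Theorem \ref{koperadicstabilization}, which does not itself require truncation.

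Next I would address the truncated version $\tau_k nQcat$. Just as in the treatment of $\Theta_n Sp_k$ preceding Corollary \ref{wkc}, the $k$-truncation is obtained as a further left Bousfield localisation of $nQcat$. The two observations to verify are exactly those used in the proof of Corollary \ref{cor:tamsamani}: that $k$-truncation is a \emph{monoidal} left Bousfield localisation (an easy exercise by the criterion of \cite{white-localization}, relating the inverted morphisms to the monoidal product, as already carried out for spaces, spectra, and chain complexes), and that the unit remains cofibrant (automatic, since left Bousfield localisation does not change the class of cofibrations or cofibrant objects). Granting these, $\tau_k nQcat$ is a $k$-truncated combinatorial symmetric monoidal model category with cofibrant unit, so it satisfies the hypotheses of Corollary \ref{truncatedstabilization} and Theorem \ref{BD stabilisation}.

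With these verifications in place, the conclusion is immediate. Applying Theorem \ref{koperadicstabilization} to $\VV = nQcat$ gives the Quillen equivalences of $n$-operadic stabilisation at the level of $\Ww_k$-localised $n$-operads; applying Corollary \ref{truncatedstabilization} and then Theorem \ref{BD stabilisation} to $\VV = \tau_k nQcat$ yields that the suspension functor $i_!: B_m(\tau_k nQcat) \to B_{m+1}(\tau_k nQcat)$ and the comparison $(j_m)_!: B_m(\tau_k nQcat) \to E_\infty(\tau_k nQcat)$ are left Quillen equivalences for $m \ge k+2$. This is exactly the Baez-Dolan stabilisation statement for $n$-quasi-categories, in complete parallel with Corollary \ref{wkc} for Rezk's model.

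The main obstacle I anticipate is not conceptual but bookkeeping: one must confirm that Ara's localisation genuinely lands in a cartesian (hence monoidal, cofibrant-unit) combinatorial model category and that the Segal-type and completeness conditions encoded in his localisation are compatible with the monoidal structure in the sense required by \cite{white-localization}. Since $nQcat$ is Quillen equivalent to $\Theta_n Sp_\infty$, which we have already verified satisfies all hypotheses, the cleanest route may be to transport the needed monoidal-model-categorical properties across this Quillen equivalence, or simply to invoke that $nQcat$ is itself a cartesian closed combinatorial model category as established in \cite{Ara}. Everything else is a routine repetition of the arguments given for Rezk's and Simpson's models.
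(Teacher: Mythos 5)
Your proposal is correct and follows essentially the same route as the paper: invoke Ara's results that $nQcat$ is cartesian and combinatorial (hence monoidal with cofibrant unit), obtain $\tau_k nQcat$ as a further monoidal left Bousfield localisation whose cofibrations (and hence cofibrant unit) are unchanged, and then apply Theorem \ref{koperadicstabilization}, Corollary \ref{truncatedstabilization}, and Theorem \ref{BD stabilisation}. The only substantive points you elide, which the paper makes explicit, are that Ara's model has \emph{all} objects cofibrant --- this is what supplies the left properness guaranteeing that the truncating localisation exists as a genuine model structure --- and that Ara's model lives on presheaves of sets on $\Theta_n$ rather than on simplicial presheaves (your alternative suggestion of transporting monoidal properties across the Quillen equivalence with Rezk's model would not work, since such properties are not invariant under Quillen equivalence, but you correctly fall back on citing Ara directly).
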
 

\begin{proof}
Ara's model category is cartesian, combinatorial, and has all objects cofibrant \cite[Theorem 2.2, Corollary 8.5]{Ara}. In particular, the unit is cofibrant and the model structure is left proper, hence the left Bousfield localisation $\tau_k nQcat$ exists and again has all objects cofibrant. Just as in \cite{Rezk} and \cite{white-localization}, the localised model structure is again cartesian.
\end{proof}

Several other models for weak $n$-categories have been proposed and compared in \cite{BergnerRezk}. For instance, Bergner introduced a model structure for Segal categories $SeCat_c$ that is cartesian, combinatorial, has all objects cofibrant, and whose fibrant objects are the Reedy fibrant Segal categories. It is Quillen equivalent to Rezk's complete Segal spaces, but cannot be iterated in the same way that Rezk's can (because the cartesian property will be lost). However, the Segal machinery (or analogous complete Segal machinery) can be applied to $\Theta_n Sp$ to obtain new models for $(n+1)$ categories, for every $n$. Bergner and Rezk introduce:
\begin{enumerate}
\item $\Theta_n Sp$-Segal categories, a combinatorial, cartesian model structure on functors $\Delta^{op}\to \Theta_n Sp$ whose fibrant objects satisfy a Segal condition \cite[Theorem 5.2]{BergnerRezk}. 
\item A combinatorial, cartesian model structure with all objects cofibrant, whose fibrant objects satisfy a subset of the conditions required of complete Segal spaces \cite[Proposition 5.9]{BergnerRezk}.
\end{enumerate}
Furthermore, they prove these two are equivalent to each other and to Rezk's $\Theta_n Sp$ \cite[Theorem 6.14, Corollary 7.1, Theorem 9.6]{BergnerRezk}. Lastly, both can be left Bousfield localised to make them $k$-truncated.

\begin{corollary}
Both of the Bergner-Rezk model structures above satisfy the conditions of Theorem \ref{koperadicstabilization}, and their truncations satisfy the conditions of Corollary \ref{truncatedstabilization} and Theorem \ref{BD stabilisation}. Hence, the Baez-Dolan stabilisation hypothesis is true for these models.
\end{corollary}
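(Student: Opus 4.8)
The statement to prove is that both Bergner--Rezk model structures satisfy the hypotheses of Theorem \ref{koperadicstabilization}, and that their $k$-truncations satisfy the hypotheses of Corollary \ref{truncatedstabilization} and Theorem \ref{BD stabilisation}. Unwinding these hypotheses, what is required in each case is a base symmetric monoidal combinatorial model category with cofibrant unit (for Theorem \ref{koperadicstabilization}), and in addition $k$-truncatedness together with left properness of the underlying structure so that the truncating left Bousfield localisation exists (for Corollary \ref{truncatedstabilization} and Theorem \ref{BD stabilisation}). The plan is therefore to verify these four standing conditions---combinatoriality, the cartesian (hence symmetric monoidal with cofibrant unit) property, left properness, and $k$-truncatedness after localisation---for each of the two Bergner--Rezk structures, exactly as was done in the preceding corollaries for Rezk's $\Theta_n Sp_k$ and Ara's $n$-quasi-categories.

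First I would recall the properties of the two model structures directly from \cite{BergnerRezk}: the $\Theta_n Sp$-Segal category structure is combinatorial and cartesian by \cite[Theorem 5.2]{BergnerRezk}, and the complete-Segal-type structure is combinatorial, cartesian, and has all objects cofibrant by \cite[Proposition 5.9]{BergnerRezk}. Being cartesian, each is in particular symmetric monoidal, and the monoidal unit---the terminal object---is cofibrant because every object is cofibrant (in the second case by hypothesis, and in the first because the Segal category structure, like Bergner's $SeCat_c$, has all objects cofibrant). This immediately supplies all the hypotheses of Theorem \ref{koperadicstabilization} for the untruncated structures.

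Next I would address the truncations. Both structures are cartesian with all objects cofibrant, hence left proper, so the left Bousfield localisation implementing $k$-truncation exists by the standard theory (as invoked for Rezk and Ara above). The key points to check are that the truncated structures remain combinatorial and cartesian with cofibrant unit and are genuinely $k$-truncated: combinatoriality and the cartesian property of a monoidal left Bousfield localisation follow exactly as in \cite{Rezk} and \cite{white-localization}---the pushout-product (cartesian) axiom for the localisation reduces to a compatibility condition between the inverted morphisms and the product, which holds for the truncation just as for truncations of spaces and spectra; the unit stays cofibrant since left Bousfield localisation does not change the class of cofibrant objects; and $k$-truncatedness holds by construction of the localisation. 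Together these verify the hypotheses of Corollary \ref{truncatedstabilization} and Theorem \ref{BD stabilisation}.

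The only mildly delicate point---and the step I would expect to require the most care---is confirming that \emph{the first} Bergner--Rezk structure ($\Theta_n Sp$-Segal categories) really does have cofibrant unit, since \cite[Theorem 5.2]{BergnerRezk} is stated for combinatoriality and the cartesian property but does not foreground cofibrancy of all objects. Here I would argue as in the case of Bergner's $SeCat_c$: the Segal category model structure is built on the category of functors $\Delta^{op}\to \Theta_n Sp$ with a Reedy-type structure in which, because $\Theta_n Sp$ has all objects cofibrant and the inverted maps are cofibrations, the terminal object remains cofibrant; alternatively, being cartesian forces cofibrancy of the unit, since in any cartesian model category the unit is the terminal object and \cite[Definition 7.7.1]{Simpson} records that cartesian model categories have cofibrant unit. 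Either route closes the gap, and the remaining verifications are routine applications of \cite{white-localization} entirely parallel to the Rezk and Ara corollaries.
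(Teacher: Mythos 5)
Your proposal is correct in substance and checks the same list of hypotheses (combinatoriality, cartesian structure, cofibrant unit, left properness for the existence of the truncating localisation, $k$-truncatedness, and preservation of everything under monoidal left Bousfield localisation as in \cite{Rezk} and \cite{white-localization}), but it reaches the crucial cofibrancy/left-properness step by a genuinely different route. The paper disposes of both structures uniformly with a single observation: each Bergner--Rezk structure is \emph{defined as a left Bousfield localisation of the injective model structure} on the relevant functor category, hence is combinatorial with all objects cofibrant, hence left proper; everything else (existence of truncations, cartesianness of all four structures) then follows exactly as in the preceding corollaries. You instead proceed case by case: for the second structure you quote \cite[Proposition 5.9]{BergnerRezk} (which states all objects are cofibrant), while for the first you fall back on an analogy with Bergner's $SeCat_c$ plus the fact that cartesian model categories have cofibrant unit in Simpson's sense. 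That second fallback is consistent with how the paper itself uses \cite[Definition 7.7.1]{Simpson}, but note its limitation: cofibrancy of the unit alone does not give left properness, so if your analogy argument for ``all objects cofibrant'' in the $\Theta_n Sp$-Segal structure were challenged, the existence of its truncation would be left unsupported. The paper's ``localisation of the injective structure'' observation closes exactly this gap with no case analysis, which is what makes it the more robust argument; your version works, but the load-bearing claim for the first structure is asserted by analogy rather than derived.
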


\begin{proof}
Both model structures are defined as left Bousfield localisations of the injective model structure, and hence are combinatorial and have all objects cofibrant (hence, are left proper). Hence, the truncation model structures exist. All four model structures are cartesian by \cite[Theorem 5.2, Proposition 5.9]{BergnerRezk}, and an argument analogous to the arguments above for truncations.
\end{proof}

\begin{remark}
We have proven the Baez-Dolan stabilisation hypothesis for several models of higher categories: all those we are aware of that possess a monoidal model structure. Possibly, other model of higher categories such as $n$-relative categories, $n$-fold Segal spaces, or simplicial categories, can be equipped with such a tensor product (e.g., higher analogues of the Gray tensor product of $2$-categories) but we are not aware about any complete work in this direction. If this structure appears in the future, our methods should immediately prove the Baez-Dolan stabilisation hypothesis for the corresponding model. Furthermore, any other model of weak $n$-categories that is homotopically equivalent to the models we have treated will automatically satisfy the Baez-Dolan stabilisation hypothesis on the homotopy category level.
\end{remark}

\noindent {\bf Acknowledgements.}    We   wish to express our  gratitude to C.Berger, R.Garner, E.Getzler, A.Joyal, S.Lack, M.Markl, R.Street,  M.Weber for many useful discussions, and to R.Griffiths and A.Campbell for encouraging us to include the application to Tamsamani weak $n$-categories and Ara's $n$-quasi-categories. We furthermore wish to thank the Oberwolfach Research Institute for Mathematics for hosting us for a week in September of 2021, where this helpful conversation with A.Campbell occurred. We also thank the referee for a very helpful report.

The first author is especially grateful to Denis-Charles Cisinski. Most ideas regarding locally constant presheaves belong to him. This paper would never be written without our long conversations and his illuminating explanations.  
The first author also  gratefully acknowledges  the financial
support and inspiring working atmosphere of  Max Plank
Institut f\"{u}r Mathematik, where a substantial part of the paper was written during his 2016 visit,  and Institut des Hautes \'Etude Scientifiques in Paris, where the paper was finished in January 2020.

The second author gratefully acknowledges the support of the National Science Foundation under Grant No. IIA-1414942, the Australian Academy of Science, and the Australian Category Theory Seminar. He is grateful to Macquarie University for hosting him on three occasions while we carried out this research.

\renewcommand{\refname}{Bibliography.}

\end{document}